\title{A rotational hyperbolic theory for surface homeomorphisms}
\date{\today}
\author{Pierre-Antoine Guih\'eneuf}
\address{Sorbonne Universit\'e, Universit\'e Paris Cit\'e, CNRS, IMJ-PRG, F-75005 Paris, France --- 
IRL Jean-Christophe Yoccoz CNRS / IMPA, Estr. Dona Castorina, 110
Jardim Bot\^anico, Rio de Janeiro, Brasil}
\email{pierre-antoine.guiheneuf@imj-prg.fr}
\thanks{P.-A.\ G.\ thanks the Jean-Christophe Yoccoz international laboratory CNRS/IMPA for the semester in Brazil during which the ideas of this work were born.}
\setlist{noitemsep}
\newtheorem{lemma}{Lemma}[section]
\newtheorem{theorem}[lemma]{Theorem}
\newtheorem{theo}{Theorem}
\newtheorem{propo}[theo]{Proposition}
\newtheorem{prop}[lemma]{Proposition}
\newtheorem{coro}[lemma]{Corollary}
\newtheorem{claim}[lemma]{Claim}
\theoremstyle{definition}
\newtheorem{definition}[lemma]{Definition}
\theoremstyle{remark}
\newtheorem{rem}[lemma]{Remark}
\newcommand{\F}{\mathcal{F}}
\newcommand{\Hy}{\mathbf{H}}
\newcommand{\N}{\mathbf{N}}
\newcommand{\R}{\mathbf{R}}
\newcommand{\G}{\mathcal{G}}
\newcommand{\Q}{\mathbf{Q}}
\newcommand{\Z}{\mathbf{Z}}
\newcommand{\X}{\mathcal{X}}
\newcommand{\varep}{\varepsilon}
\newcommand{\Homeo}{\operatorname{Homeo}}
\newcommand{\supp}{\operatorname{supp}}
\newcommand{\Fix}{\operatorname{Fix}}
\newcommand{\rot}{\operatorname{rot}}
\newcommand{\rote}{\operatorname{rot}_{\mathrm{erg}}}
\newcommand{\conv}{\operatorname{conv}}
\newcommand{\diam}{\operatorname{diam}}
\newcommand{\inte}{\operatorname{int}}
\newcommand{\dom}{\operatorname{dom}}
\newcommand{\Id}{\operatorname{Id}}
\newcommand{\dd}{\,\mathrm{d}}
\newcommand{\cl}{\mathcal{N}}
\newcommand{\wt}{\widetilde}
\newcommand{\wh}{\widehat}
\newcommand{\pr}{\operatorname{pr}}
\newcommand{\Tr}{\mathcal{T}}
\newcommand{\Me}{\mathcal{M}^{\mathrm{erg}}}
\newcommand{\Merg}{\mathcal{M}^{\mathrm{erg}}_{\vartheta>0}}
\begin{document}

\maketitle

\begin{abstract}
We develop a rotational hyperbolic theory for surface homeomorphisms. We use the equivalence relation on ergodic measures that have nontrivial rotational behaviour defined in \cite{alepablo} to define a rotational counterpart of homoclinic classes. These allows to produce a network of horseshoes representing the whole rotational behaviour of the homeomorphism.
We also study the counterpart of heteroclinic connections and give 5 different characterizations of such connections.

The main technical tool is the forcing theory of Le Calvez and Tal \cite{lct1, lct2}, and in particular a result of creation of periodic points that can also be seen as a statement of homotopically bounded deviations \cite{paper1PAF}. 

This theoretical article is followed by a paper focused of some applications of it to the case of homeomorphisms with big rotation set \cite{G25Cvx2}.

\end{abstract}

\tableofcontents

\section{Introduction}

The goal of this article is to start building a rotational hyperbolic theory for surface homeomorphisms. We will study the dynamics of $f$ on these hyperbolic-like classes, with a focus on a counterpart of the notion of heteroclinic connections. 

This ``toolkit'' paper is followed up with another work \cite{} focusing on applications of the theory we set up here to homeomorphisms whose rotation set is big enough. We hope this second paper is only an illustration of the interest of this theory and that it could be applied to the study of rotational properties of any homeomorphism of a closed surface of genus $g\ge 2$.

It turns out that a good strategy for defining hyperbolic-like sets is to pass through the help of ergodic theory and define hyperbolic-like (in a rotational meaning) ergodic measures. We first need to describe the rotational dynamics of such measures.

\subsection*{Framework}

More formally, fix $S$ a closed surface (compact, connected, orientable, without boundary) of genus $g\ge 2$. We equip $S$ with a Riemannian metric $d$ of constant curvature $-1$. We denote $\Homeo_0(S)$ the set of homeomorphisms of $S$ that are isotopic to the identity.

We will need to consider $\wt S$ the universal cover of $S$; by the uniformization theorem $\wt S$ is isometric to the hyperbolic plane $\Hy^2$ (with a metric we also denote by $d$). This universal cover (as any Gromov hyperbolic space) has a boundary at infinity that we will denote by $\partial\wt S$. We also denote $\G$ the group of deck transformations of $\wt S$ (\emph{i.e.} the set of lifts of $\Id_S$ to $\wt S$). Every homeomorphism $f\in\Homeo_0(S)$ has a preferred lift $\wt f\in\Homeo_0(\wt S)$ (the only one homotopic to $\Id_{\wt S}$); this lifts commutes with elements of $\G$ and extends continuously to $\wt S\cup\partial \wt S$ with $\Id_{\partial\wt S}$. The compactification $\wt S\cup \partial\wt S$ will be equipped with a finite diameter distance (\emph{e.g.} coming from the euclidean distance on the unit disk in the Poincar\'e disk model).

\subsection*{Rotation sets}

We denote $\mathcal{M}(f)$ the set of $f$-invariant Borel probability measures, and $\mathcal{M}^{\textnormal{erg}}(f)$ the subset of $\mathcal{M}(f)$ made of $f$-ergodic measures. 

Let us define the homological rotation set of a homeomorphism $f\in \Homeo_0(S)$; this definition is due to Schwarzman \cite{MR88720} and was adapted for surface homeomorphisms by Pollicott \cite{pollicott}.
We recall that as $S$ is a closed surface of genus $g$, the homology group $H_1(S,\R)$ is a real vector space of dimension $2g$.
Given \(a \in \G\), we denote \([a] \in H_1(S,\R)\) its {homology class}.

Fix a bounded and measurable fundamental domain \(D \subset \widetilde S\) for the action of \(\G\) on \(\widetilde S\) and denote \(\widetilde x\) the lift of \(x \in S\) to \(D\). 
For each \( y \in S\) let \(a_{y}\) be the unique element of $\G$ such that \(\wt f(\wt y) \in a_y D\).\label{Defay1}
For any path $\beta : [0,1]\to S$, we consider $\wt\beta : [0,1]\to \wt S$ the lift of $\beta$ such that $\wt\beta(0)\in D$, and $T_\beta\in \G$ such that $\wt\beta(1)\in T_\beta D$. This allows to define $[\beta] = [T_\beta]\in H_1(S,\Z)$ .

\begin{definition}\label{DefHomRotVect1}
Given an \(f\)-invariant probability measure \(\mu\), the \emph{homological rotation vector} of \(\mu\) is
\begin{equation}\label{eq:homologyequation1}
\rho(\mu) = \int_{S}[a_y]\dd\mu(y).
\end{equation}
\end{definition}

Note that by Birkhoff ergodic theorem, if moreover \(\mu\) is ergodic, then for \(\mu\)-almost every \(x \in S\)
\begin{equation}\label{eq:homologyequation21}
\rho(\mu) = \int_{S}[a_y]\dd\mu(y) = \lim\limits_{n \to +\infty}\frac{1}{n}\sum_{i=0}^{n-1}[a_{f^i(x)}].
\end{equation}
If $x\in S$ is such that the right equality of \eqref{eq:homologyequation21} holds, we will denote $\rho(x) = \rho(\mu)$. More generally, we will denote $\rho(x)$ the set of accumulation points of the sequence
\[\left(\frac{1}{n}\sum_{i=0}^{n-1}[a_{f^i(x)}]\right)_n.\]

\begin{rem}\label{RemIndpD}
This definition is independent of the choice of the fundamental domain $D$. To see this, note that by $f$-invariance of $\mu$, \eqref{eq:homologyequation1} can be written, for any $n>0$,
\[\rho(\mu) = \frac{1}{n}\int_{S}\sum_{i=0}^{n-1}[a_{f^i(y)}]\dd\mu(y).\]
But the deck transformation $(a_xa_{f(x)}\cdots a_{f^{n-1}(x)})^{-1}$ sends $\wt f^n(\wt x)$ to $D$, and two fundamental domains are at bounded Hausdorff distance,
and hence the sums $\sum_{i=0}^{n-1}[a_{f^i(y)}]$ associated to two different fundamental domains only differ by a constant uniformly bounded in $n$ and $x$.

By construction, the map $\mu\mapsto \rho(\mu)$ is affine. It is also continuous: fix $\mu_0\in\mathcal M(f)$ and choose a fundamental domain $D$ such that $\mu_0(\partial D) = 0$. Then the map $y\mapsto [a_y]$ is piecewise constant with a discontinuity set of zero measure, hence by Portmanteau theorem $\mu\mapsto \rho(\mu)$ is continuous at $\mu_0$. 
\end{rem}

%

\begin{definition}[Homological rotation sets]\label{DefErgHomRot1}
Let $f \in \Homeo_0(S)$. 
The \emph{(homological) rotation set} $\rot(f)$ of $f$ is the set of vectors $\rho\in H_1(S,\R)$ such that there exist $(x_k)_k\in S^\N$ and $(n_k)_k\in\N^\N$ with $\lim_{k\to+\infty} n_k = +\infty$ and such that 
\[\lim\limits_{k \to +\infty}\frac{1}{n_k}\sum_{i=0}^{n_k-1}[a_{f^{i}(x_k)}] = \rho.\]
The \emph{ergodic (homological) rotation set} $\rote(f)$ of $f$ is
\[\rote(f) = \big\{\rho(\mu) \mid \mu \in \Me(f)\big\}.\] 
\end{definition}

We will denote $\conv(A)$ the convex hull of a set $A$. 
%

\subsection*{Rotational properties of ergodic measures}

The following is a combination of \cite[Lemma~1.6]{alepablo} and \cite[Theorem~B]{alepablo}.
As usual, we will parametrize geodesics by arclength. 

\begin{theorem}\label{DefTrackGeod}
Let $\mu\in\Me(f)$. Then there exists a constant \(\vartheta_\mu\in\R_+\) --- called the \emph{rotation speed} of $\mu$ --- such that
for \(\mu\)-almost every point \(z \in S\), there exists a geodesic $\gamma_z\subset T^1S$ --- called the \emph{tracking geodesic} of $z$ ---, and for each lift \(\wt z\) of \(z\) to $\wt S$, a lift $\wt \gamma_{\wt z}$ of $\gamma_z$, such that:
\begin{equation}\label{eq:trackingequation}
\lim\limits_{n \to +\infty}\frac{1}{n}d\big(\wt f^n(\wt z), \wt \gamma_{\wt z}(n \vartheta_\mu)\big) = \lim\limits_{n \to +\infty}\frac{1}{n}d\big(\wt f^{-n}(\wt z), \wt \gamma_{\wt z}(-n \vartheta_\mu)\big) = 0.
\end{equation}	
\end{theorem}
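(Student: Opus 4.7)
The plan is to first extract the scalar rotation speed $\vartheta_\mu$ via Kingman's subadditive ergodic theorem, and then to construct the tracking geodesic using the Gromov hyperbolic geometry of $\wt S \simeq \Hy^2$. Fix a measurable selection $z \mapsto \wt z$ of lifts to a fundamental domain $D$ and set $\phi_n(z) = d(\wt z, \wt f^n(\wt z))$. Since $\wt f$ commutes with the isometric action of $\G$, this quantity is independent of the choice of lift. The triangle inequality combined with $\wt f^{n+m}(\wt z) = \wt f^m(\wt f^n(\wt z))$ yields the subadditive relation
\[
\phi_{n+m}(z) \leq \phi_n(z) + \phi_m(f^n(z)),
\]
and $\phi_1$ is bounded on $S$, hence $\mu$-integrable. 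Kingman's theorem then gives $\mu$-a.e.~convergence of $\phi_n/n$ to an $f$-invariant limit, which by ergodicity equals $\mu$-a.e.~a constant $\vartheta_\mu \in \R_+$.

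If $\vartheta_\mu = 0$, the statement is trivial: any geodesic through $\wt z$ with $\wt\gamma_{\wt z}(0) = \wt z$ works, because $\wt\gamma_{\wt z}(n\vartheta_\mu) = \wt z$ and \eqref{eq:trackingequation} reduces to $\phi_n/n \to 0$. Assume from now on $\vartheta_\mu > 0$. The orbit $(\wt f^n(\wt z))_{n \in \Z}$ should then behave as a rough quasigeodesic in $\Hy^2$, and the first task is to show that its forward (resp.~backward) tail converges to a point $\eta^+(\wt z) \in \partial \wt S$ (resp.~$\eta^-(\wt z)$). Concretely, I would use the Gromov product criterion: in a $\delta$-hyperbolic space, a sequence converges to a boundary point iff its Gromov products at a fixed base point diverge. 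Applying this to $(\wt f^n(\wt z))_n$ and using $\phi_n(z) \geq n(\vartheta_\mu - \varep)$ together with the triangle inequality, one checks that
\[
\tfrac{1}{2}\big(\phi_n(z) + \phi_{n+m}(z) - d(\wt f^n(\wt z), \wt f^{n+m}(\wt z))\big) \xrightarrow[n,m \to +\infty]{} +\infty,
\]
which produces the forward limit $\eta^+(\wt z)$; the backward argument is symmetric.

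Define $\wt\gamma_{\wt z}$ as the bi-infinite geodesic from $\eta^-(\wt z)$ to $\eta^+(\wt z)$, parametrized by arclength so that $\wt\gamma_{\wt z}(0)$ is the foot of the perpendicular from $\wt z$. The distinctness $\eta^-(\wt z) \neq \eta^+(\wt z)$ follows from the linear growth $d(\wt f^{-m}(\wt z), \wt f^n(\wt z)) \geq (n+m)(\vartheta_\mu - \varep)$ obtained from Kingman in both time directions. The sublinear tracking \eqref{eq:trackingequation} then comes from the Morse lemma: since the orbit is a rough quasigeodesic sharing the boundary endpoints of $\wt\gamma_{\wt z}$, it stays within a bounded Hausdorff distance of $\wt\gamma_{\wt z}$, and the Kingman limit $\phi_n/n \to \vartheta_\mu$ pins down the arclength position of $\wt f^n(\wt z)$ along the geodesic as $n\vartheta_\mu + o(n)$. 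Projecting to $T^1S$ gives $\gamma_z$.

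The main obstacle is the boundary-convergence step above, namely upgrading Kingman's scalar limit into convergence of the orbit in the Gromov compactification. Wild excursions transverse to the drift direction could a priori produce several accumulation points in $\partial \wt S$; the combination of positive drift and $\delta$-hyperbolicity of $\Hy^2$ is exactly what rules this out, but making this quantitative in a measurable way---so that $\eta^\pm$ become $\G$-equivariant measurable functions of $z$ and $\gamma_z$ a well-defined geodesic field on $S$---requires a careful double application of the subadditive ergodic theorem and is the technical heart of Theorem~B of \cite{alepablo}.
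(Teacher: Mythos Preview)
The paper does not prove this theorem; it quotes it verbatim as a combination of \cite[Lemma~1.6]{alepablo} and \cite[Theorem~B]{alepablo}. So there is no in-paper argument to compare against, and your sketch is already more than what the paper provides.

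That said, your outline has a real gap beyond the one you flag at the end. The step ``since the orbit is a rough quasigeodesic \dots\ it stays within a bounded Hausdorff distance of $\wt\gamma_{\wt z}$'' is not justified and is in general false. Kingman gives only $\phi_n(z)/n \to \vartheta_\mu$ pointwise; it does \emph{not} give a uniform lower bound $d(\wt f^n(\wt z),\wt f^m(\wt z)) \ge \lambda^{-1}|n-m| - c$, which is what the Morse lemma needs. Concretely, $d(\wt f^n(\wt z),\wt f^{n+m}(\wt z)) = \phi_m(f^n(z))$, and although each $f^n(z)$ is itself $\mu$-typical, the convergence $\phi_m(f^n(z))/m \to \vartheta_\mu$ is not uniform in $n$; the orbit can make long transverse excursions and still satisfy Kingman's conclusion. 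For the same reason your Gromov-product computation does not go through: the crude upper bound $\phi_m(f^n(z)) \le m\sup\phi_1$ only yields $(x_n|x_{n+m})_{\wt z} \gtrsim n\vartheta_\mu + \tfrac{m}{2}(\vartheta_\mu - \sup\phi_1)$, which is useless when $m$ is large relative to $n$.

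The way the cited references (and, ultimately, Karlsson--Margulis) get around this is not via the Morse lemma but via horofunctions: one applies Kingman to the subadditive cocycle $b_\xi(\wt f^n(\wt z))$ for Busemann functions $b_\xi$ (or to a carefully chosen supremum over a compact family of $1$-Lipschitz functions), extracts a horofunction along which the orbit drifts at speed exactly $\vartheta_\mu$, and reads off the geodesic ray as the gradient line of that horofunction. Sublinear tracking then falls out of the comparison $d(\wt f^n(\wt z),\wt\gamma(n\vartheta_\mu)) \le \phi_n(z) - b_\xi(\wt f^n(\wt z)) + o(n)$, with no quasigeodesic hypothesis needed. Your final paragraph gestures at this difficulty, but the body of the sketch commits to the wrong mechanism (Morse lemma) rather than the right one (horofunction cocycle).
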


Note that if $\vartheta_\mu = 0$, then $\gamma_z$ can be chosen as any tracking geodesic of $S$; otherwise $\gamma_z$ is unique.

We denote by \(\Merg(f)\) the set of $\mu\in\Me(f)$ such that $\vartheta_\mu>0$. 
The geodesic $\wt\gamma_{\wt z}$ will be parametrized such that $d(\wt z,\wt\gamma_{\wt z}) = d(\wt z,\wt\gamma_{\wt z}(0))$. 


There is no reason for the map $z\mapsto\gamma_z$ to be $\mu$-a.e.~constant (and there are examples where it is not, see \cite[Subsection~7.1]{pa}). The following result gives a sense to the expression ``the closure of the set of tracking geodesics associated to a measure'' (\emph{a priori}, tracking geodesics are only defined almost everywhere):

\begin{theorem}[{\cite[Theorem~D]{alepablo}}]\label{thm:equidistributiontheoremintro}
For each \(\mu \in \Merg\) there exists a closed set $\dot\Lambda_{\mu}\subset T^1 S$ that is invariant under the geodesic flow and satisfies 
\[\dot\Lambda_{\mu}:= \overline{\dot\gamma_z(\R)} \]
for \(\mu\)-a.e.~\(z \in S\). Moreover, for \(\mu\)-a.e.~\(z \in S\), the geodesic $\gamma_z$ is recurrent.
\end{theorem}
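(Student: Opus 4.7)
The plan is two-fold: first establish that $\overline{\dot\gamma_z(\R)}$ is $\mu$-almost surely constant by an $f$-invariance plus ergodicity argument, then derive recurrence by transferring Poincar\'e recurrence from $(S,f,\mu)$ to the geodesic flow on $T^1S$ via the tracking map.

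For the first part, fix a $\mu$-generic $z$ and a lift $\wt z\in\wt S$. Applying \eqref{eq:trackingequation} to both $z$ and $f(z)$ (using the lift $\wt f(\wt z)$ of $f(z)$) and using $\wt f^n(\wt z)=\wt f^{n-1}(\wt f(\wt z))$, the triangle inequality yields
\[d\bigl(\wt\gamma_{\wt z}(n\vartheta_\mu),\wt\gamma_{\wt f(\wt z)}((n-1)\vartheta_\mu)\bigr)=o(n)\]
as $n\to\pm\infty$. Since two geodesics of $\Hy^2$ whose mutual distance is sublinear at both infinities must coincide as subsets (any two distinct geodesics of $\Hy^2$ either diverge exponentially or are asymptotic on only one side), $\wt\gamma_{\wt f(\wt z)}$ is a reparametrisation of $\wt\gamma_{\wt z}$, so after projecting to $T^1S$ the set $\dot\gamma_z(\R)$ depends only on the $f$-orbit of $z$. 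The map $z\mapsto\overline{\dot\gamma_z(\R)}$, valued in the compact space of closed subsets of $T^1S$ with the Hausdorff topology, is therefore $f$-invariant; after a routine measurability check, ergodicity of $\mu$ gives a closed set $\dot\Lambda_\mu\subset T^1S$ with $\overline{\dot\gamma_z(\R)}=\dot\Lambda_\mu$ for $\mu$-a.e.\ $z$, and $\dot\Lambda_\mu$ is automatically invariant under the geodesic flow.

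For recurrence, I first sharpen the previous analysis. Writing $\wt\gamma_{\wt f^n(\wt z)}(\,\cdot\,)=\wt\gamma_{\wt z}(\,\cdot\,+s_n)$, the closest-point normalisation means that $\wt\gamma_{\wt z}(s_n)$ is the foot of the perpendicular from $\wt f^n(\wt z)$ to $\wt\gamma_{\wt z}$, so a triangle-inequality argument in $\Hy^2$ gives $|s_n-n\vartheta_\mu|\le 2\,d(\wt f^n(\wt z),\wt\gamma_{\wt z}(n\vartheta_\mu))=o(n)$; in particular $s_n\to+\infty$. Now push $\mu$ forward by the measurable map $\Psi\colon z\mapsto\dot\gamma_z(0)\in T^1S$. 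For each $\varep>0$, Lusin's theorem furnishes a compact set $A_\varep\subset S$ with $\mu(A_\varep)>1-\varep$ on which $\Psi$ is continuous, and Poincar\'e recurrence applied along a countable base of open neighbourhoods produces, for $\mu$-a.e.\ $z\in A_\varep$, a sequence $n_k\to+\infty$ with $f^{n_k}(z)\in A_\varep$ and $f^{n_k}(z)\to z$. Continuity of $\Psi$ on $A_\varep$ yields $\dot\gamma_{f^{n_k}(z)}(0)\to\dot\gamma_z(0)$, and by the shift estimate this vector equals $\phi_{s_{n_k}}(\dot\gamma_z(0))$ with $s_{n_k}\to+\infty$, proving positive recurrence of $\dot\gamma_z$. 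Running the same argument for $f^{-1}$ gives negative recurrence, and letting $\varep\to 0$ covers a full-measure set.

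The main obstacle is precisely this passage from measure-theoretic recurrence in $(S,f,\mu)$ to continuous-time recurrence of the geodesic flow on $T^1S$: $\Psi$ is only measurable, and the time shift $s_n$ is controlled only up to error $o(n)$, so there is no clean semi-conjugacy to push forward. The combination of Lusin's theorem (restoring continuity on a set of large measure) with the sharp estimate $s_n=n\vartheta_\mu+o(n)$ (guaranteeing that the returning times $s_{n_k}$ genuinely tend to $+\infty$) is what makes this passage possible. The $f$-invariance step is by contrast a soft consequence of the tracking theorem together with the exponential divergence of distinct geodesics of $\Hy^2$.
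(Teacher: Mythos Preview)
The paper does not supply its own proof of this statement: Theorem~\ref{thm:equidistributiontheoremintro} is simply quoted from \cite[Theorem~D]{alepablo}, so there is no in-paper argument to compare against.

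That said, your proposed proof is sound. The $f$-invariance of $z\mapsto\overline{\dot\gamma_z(\R)}$ follows exactly as you say from the tracking equation and the fact that distinct geodesics in $\Hy^2$ separate at least linearly at one end; you should perhaps note explicitly that the reparametrisation $\wt\gamma_{\wt f(\wt z)}(\cdot)=\wt\gamma_{\wt z}(\cdot+s_1)$ is orientation-preserving (both geodesics are tracked in the \emph{positive} direction by the forward $\wt f$-orbit since $\vartheta_\mu>0$), so that the equality holds at the level of $T^1S$ and not just $S$. The recurrence argument via Lusin plus Poincar\'e recurrence on the sets $A_\varep\cap U_j$ for a countable basis $\{U_j\}$ is a clean and standard way to pass from measurable recurrence of $f$ to topological recurrence of the geodesic flow; the estimate $|s_n-n\vartheta_\mu|\le 2\,d(\wt f^n(\wt z),\wt\gamma_{\wt z}(n\vartheta_\mu))$ is correct and is exactly what guarantees $s_{n_k}\to+\infty$. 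The only steps you have left implicit --- measurability of $z\mapsto\overline{\dot\gamma_z(\R)}$ into the hyperspace of closed subsets of the compact space $T^1S$, and measurability of $\Psi$ --- are indeed routine.
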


\begin{definition}\label{DefRelEqMeas}
We define the equivalence relation $\sim$ on $\Merg$ by: $\mu_1\sim \mu_2$ if one of the following is true:
\begin{itemize}
\item $\dot\Lambda_{\mu_1} = \dot\Lambda_{\mu_2}$;
\item There exist $\tau_1,\dots,\tau_m\in\Merg$ such that $\tau_1=\mu_1$, $\tau_m=\mu_2$ and for all $1\le i<m$, the measures $\tau_i$ and $\tau_{i+1}$ are \emph{dynamically transverse}, \emph{i.e.}~there exist two geodesics $\gamma\subset \dot\Lambda_{\tau_i}$ and $\gamma'\subset \dot\Lambda_{\tau_{i+1}}$ that have a transverse intersection.
\end{itemize}  
We then denote $\{\cl_i\}_{i\in I}=\Merg/\sim$ the equivalence classes of $\sim$. For $i\in I$, we denote 
\begin{equation}\label{EqDefRhoi}
\rho_i = \{\rho(\mu)\mid \mu\in \cl_i\}
\qquad\text{and}\qquad
\dot\Lambda_i = \bigcup_{\mu\in \cl_i}\dot\Lambda_\mu.
\end{equation}
\end{definition}

\begin{definition}\label{DefClassesI1}
We call $I^1$ the set of classes with the property that any two measures $\mu_1$ and $\mu_2$ of $\cl_i$ satisfy $\dot\Lambda_{\mu_1} = \dot\Lambda_{\mu_2}$; by \cite[Theorem~5.8]{alepablo} this implies that the geodesics spanned by vectors in \(\dot\Lambda_{\mu_1}\) are simple.
Let $I_{\mathrm h}$ denote the other classes, which are such that for any $\mu\in \cl_i$ with \(i \in I_{\mathrm h}\), there exists $\mu'\in\Merg$ such that $\mu$ and $\mu'$ are dynamically transverse. Classes $\cl_i$ for which $i\in I_{\mathrm h}$ will be called \emph{chaotic classes}. 
\end{definition}

For $z$ a periodic point, we will sometimes use the abuse of notation $z\in\cl_i$ when the uniform measure on the orbit of $z$ belongs to $\cl_i$. 
%
%

\subsection*{Dynamics on chaotic classes}

Let us come to the results of this article.
We will show some results suggesting that the rotational dynamics associated to chaotic classes is quite similar to the one on a hyperbolic set of a $C^1$-diffeomorphism. In particular, there is a phenomenon resembling Markov partitions and a shadowing in rotation (Subsection~\ref{SubSecConstrG}).

As an application of these ideas, we will get the following result. Points 2.~and 3.~are partially adaptations in higher genus of the results of \cite{zbMATH00009916, llibremackay} for torus homeomorphisms (Theorem~\ref{thm:ShapeRotationSet} 
states that for $i\in I_{\mathrm h}$, the set $\rho_i$ is ``almost convex'').

\begin{propo}\label{PropRealByCompact}
Let $f\in\Homeo_0(S)$. Then:
\begin{enumerate}
\item For any $i\in I_{\mathrm{h}}$ and any $\rho\in \overline{\rho_i}$, there exists $x\in S$, such that $\rho(x) = \rho$.
\item If $\rho\in \inte(\rho_i)$ (the interior is taken inside the span of the convex set), then there exists a compact $f$-invariant set $K_\rho\subset S$ and $L_\rho>0$ such that for any $x\in K_\rho$ and any $n\in\N$,
\[d\big([a_x^n],\, n\rho \big)\le L_\rho.\]
\item If $C\subset \inte(\rho_i)$ is a compact connected set, then there exists $x\in S$ such that $\rho(x) = C$. 
\end{enumerate}
\end{propo}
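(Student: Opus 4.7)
The plan is to deduce all three statements from the network-of-horseshoes / shadowing-in-rotation structure for chaotic classes that will be constructed in Subsection~\ref{SubSecConstrG}. Concretely, the output I would use is: for $i\in I_{\mathrm h}$, finitely many horseshoes $\Hh_1,\dots,\Hh_k$ carrying periodic orbits of rational rotation vectors $\rho_1,\dots,\rho_k$ whose convex hull covers $\inte(\rho_i)$, each of bounded diameter in $\wt S$ modulo the deck group $\G$, together with a shadowing mechanism allowing one to concatenate pseudo-orbits spending prescribed bounded durations in each $\Hh_j$ before transitioning to a neighbouring one. The key quantitative consequence is that an excursion of duration $\ell$ in $\Hh_j$ contributes $\ell\,\rho_j$ to the homological displacement up to a uniform error, and each transition has bounded displacement. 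Given this Markov-type skeleton, points 1--3 become variants of the Llibre--MacKay realization scheme, adapted from the torus to the higher genus setting.

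For point 2, I fix $\rho\in\inte(\rho_i)$ and write $\rho=\sum \lambda_j\rho_j$ with $\lambda_j>0$ summing to one. The set $K_\rho$ is obtained by shadowing those symbol sequences whose asymptotic density of visits to $\Hh_j$ equals $\lambda_j$; it is nonempty, compact and $f$-invariant. Summing the uniform excursion/transition errors over $n$ steps gives $d([a_x^n], n\rho)\le L_\rho$, with $L_\rho$ depending only on the combinatorics of the chosen decomposition of $\rho$. For point 1, if $\rho\in\inte(\rho_i)$ then point 2 already provides such an $x$; if $\rho\in\overline{\rho_i}\setminus\inte(\rho_i)$ I pick $\rho_n\to\rho$ in $\inte(\rho_i)$ and concatenate, via the shadowing, orbit segments of length $T_n$ realizing $\rho_n$ up to error $L_{\rho_n}$, with $T_n$ growing fast enough compared to $\sum_{k<n}T_k$ and to $L_{\rho_n}$ so that the Birkhoff averages of the resulting orbit converge to $\rho$.

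Point 3 follows from the classical Sigmund-type argument: given a compact connected $C\subset\inte(\rho_i)$, pick a sequence $(\rho_n)$ dense in $C$ with consecutive terms arbitrarily close (possible by compactness and connectedness) and shadow a concatenation of blocks realizing $\rho_n$, tuning the block lengths so that the partial averages visit every $\rho_n$ (giving $C\subset\rho(x)$) while the drift from near $\rho_n$ to near $\rho_{n+1}$ stays in an $\varep_n$-neighbourhood of $C$ with $\varep_n\to 0$ (giving $\rho(x)\subset C$).

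The main obstacle is really the very first step: producing the network of horseshoes with uniform shadowing-in-rotation, including uniform bounds on transition times and displacements. This is the rotational hyperbolic skeleton announced in the introduction and developed in Subsection~\ref{SubSecConstrG}, relying on the creation of periodic points from \cite{paper1PAF} applied to the transverse tracking geodesics furnished by the chaotic nature of $\cl_i$. Once this Markov-type model is in hand, the three points are essentially combinatorial manipulations of symbol sequences, standard on the torus; the present generalisation mainly requires working intrinsically with $H_1(S,\R)$ and the non-abelian deck group $\G$ rather than $\Z^2$.
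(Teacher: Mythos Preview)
Your plan is correct and aligns with the paper's proof: the proposition is deduced directly from the Markovian network of rectangles built in Subsection~\ref{SubSecConstrG} (the graph $G_i$, shown strongly connected in Lemma~\ref{LemLotConnectionsRectangles}) together with the general realisation statement Proposition~\ref{PropConnectRectEnsRot}, whose Points~\ref{P2PropConnectRectEnsRot}--\ref{P4PropConnectRectEnsRot} are precisely the Kwapisz/Llibre--MacKay arguments you describe. The only packaging difference is that the paper works with a countable dense family of rectangles rather than finitely many horseshoes, which lets Point~\ref{P2PropConnectRectEnsRot} handle all of $\overline{\rot(G_i)}=\overline{\rho_i}$ at once without your separate boundary-approximation step.
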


As noted in \cite[Figure~14]{alepablo}, there is no ``exactness'' property of periods of periodic points in chaotic classes (see also \cite[Figure~4]{G25Cvx2}) as it holds for the torus \cite{MR0958891}, \emph{i.e.}~if $\rho\in \inte(\rho_i)\cap q^{-1}H_1(s,\Z)$ for some $q\in\N^*$, then there is not necessarily $z\in S$ that is $q$-periodic and satisfies $\rho(z) = \rho$. 
However, for any $i\in I_{\mathrm h}$, one can prove that for any finite collection $v_1,\dots,v_\ell\in\inte(\rho_i)$, there exists $M>0$ such that if $p\in q H_1(S,\Z) \cap q \conv(\{v_1,\dots,v_\ell\})$, then there exists a periodic point of period $\le qM$ realizing the rotation vector $(1/q)p$ (see the paragraph after \cite[Remark~6.6]{alepablo}). 
One can wonder if a stronger result holds, that is: for $i\in I_{\mathrm h}$, does there exist $M>0$ such that if $p\in q H_1(S,\Z) \cap q \rho_i$, then there exists a periodic point of period $\le qM$ realizing the rotation vector $(1/q)p$?

In this article we do not study generalizations of stable/unstable manifolds for such homeomorphisms, we refer to \cite{garcia2024fullychaotic, militon2024generalizedfoliations} for recent avenues in this direction that could be used in further works. Another natural question is to determine to what extend the network of horseshoes associated to a chaotic class is related to the chaotic sea defined in \cite{zbMATH06908424}.

\subsection*{Heteroclinic connections}

We will also focus on heteroclinic connections between  classes of $I_{\mathrm h}$. We will define 5 relations between the classes of $I_{\mathrm{h}}$: 
\begin{itemize}
\item $\overset\F\to$ that is stated in terms of $\F$-transverse intersections in the sense of Le Calvez-Tal(Definition~\ref{DefToStar});
\item $\overset *\to$ that is stated in terms of convergence of sequences of empirical measures in weak-$*$ topology (Definition~\ref{DefRelTo});
\item $\overset M\to$ that is stated in terms of Markovian intersections between rectangles in $G$ (Definition~\ref{DefRelMarkov});
\item $\overset \wedge\to$ that is stated in terms of intersections of essential loops (Definition~\ref{DefToWedge});
\item $\overset O\to$ that is stated in terms of intersections of open sets (Definition~\ref{DefToOpen}).
\end{itemize}

\begin{theo}\label{TheoEquiConnec2}
For any $f\in \Homeo_0(S)$, the five relations $\overset\F\to$, $\overset *\to$, $\overset M\to$, $\overset \wedge\to$ and $\overset O\to$ coincide. 
\end{theo}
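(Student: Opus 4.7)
The natural strategy is to prove a cycle of implications linking the five relations, so that going around the cycle establishes pairwise equivalence. Given the central role of the forcing theory of Le Calvez--Tal in this paper and the fact that Markov-like structures for chaotic classes are constructed precisely from $\F$-transverse intersections, I would aim for an implication cycle of the form
\[
\overset M\to \;\Longrightarrow\; \overset \F\to \;\Longrightarrow\; \overset *\to \;\Longrightarrow\; \overset \wedge\to \;\Longrightarrow\; \overset O\to \;\Longrightarrow\; \overset M\to,
\]
keeping the ``strongest/most structured'' relation $\overset M\to$ at both ends.

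The first step, $\overset M\to \Rightarrow \overset \F\to$, should be almost tautological from the construction of the grid $G$ in Subsection~\ref{SubSecConstrG}: the rectangles are built so that a Markov intersection is witnessed by orbits crossing between rectangles via an $\F$-transverse intersection of associated transverse trajectories. For $\overset \F\to \Rightarrow \overset *\to$, I would feed a single $\F$-transverse connecting trajectory into the forcing theorem to produce, by iterating along the two classes, long closed orbits that spend a prescribed proportion of time shadowing measures in $\cl_i$ and $\cl_j$; taking empirical averages yields sequences of measures converging in weak-$*$ to the target convex combinations. The implication $\overset *\to \Rightarrow \overset \wedge\to$ follows by extracting, from empirical measures whose supports straddle both classes, long orbit segments whose projections to $S$ accumulate on homologically nontrivial loops intersecting both $\dot\Lambda_i$ and $\dot\Lambda_j$; recurrence of tracking geodesics (Theorem~\ref{thm:equidistributiontheoremintro}) lets one close these segments up to essential loops with the required intersection. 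Finally $\overset \wedge\to \Rightarrow \overset O\to$ is essentially immediate since the transverse intersections of essential loops produce open neighborhoods in which the dynamics realizes both classes.

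The main obstacle, and the step that I expect to be both the longest and the most technical, is the closing implication $\overset O\to \Rightarrow \overset M\to$. Here one starts from the weakest possible piece of information --- merely open sets where orbits of both classes meet --- and must promote it to the rigid combinatorial structure of a Markov intersection of rectangles in the grid $G$. The plan is to use an intersecting point between lifts of orbits shadowing geodesics in $\dot\Lambda_i$ and $\dot\Lambda_j$ respectively, invoke Theorem~\ref{DefTrackGeod} to control the long-term trajectories in $\wt S$, and then use the shadowing-in-rotation philosophy of Subsection~\ref{SubSecConstrG} to conjure up rectangles of $G$ around the two pieces of trajectory meeting at this point. An iterate of $f$ will then map one rectangle across the other in the Markovian way, providing $\overset M\to$.

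One subtle point that I would treat separately is the asymmetry of the relations (they are oriented connections, not just equivalences), so care is needed at each step to track which class plays the role of the source and which of the target; the cleanest way is probably to fix notation $\cl_i \to \cl_j$ throughout and check that the constructions of orbits, loops, rectangles and empirical measures all respect this orientation. A second point worth isolating as a lemma is the ability, in each of steps 2--4, to arrange the produced object (periodic orbit, essential loop, open set) to approximate arbitrarily well the supports of the two measures, which is what allows the next implication to feed back into the grid $G$ at the appropriate scale.
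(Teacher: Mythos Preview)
Your cycle differs from the paper's (which runs $\overset O\to\Rightarrow\overset\F\to\Rightarrow\overset M\to\Rightarrow\overset\wedge\to\Rightarrow\overset O\to$ together with a side branch $\overset M\to\Rightarrow\overset *\to\Rightarrow\overset\F\to$), and two of your steps contain genuine gaps. First, $\overset M\to\Rightarrow\overset\F\to$ is not ``almost tautological'': a Markovian intersection $\wt f^{\,n}(R_\omega)\cap TR_{\omega'}$ produces a point passing between the two rectangles, but it does not hand you an admissible transverse path $\beta$ carrying the two self-$\F$-transverse intersections with the homotopy constraints of Definition~\ref{DefToStar}. The paper never proves this arrow directly; it is obtained as the composite $\overset M\to\Rightarrow\overset *\to\Rightarrow\overset\F\to$, and the second of these (Lemma~\ref{LemStarImpliesF}) requires a nontrivial application of Theorem~\ref{ThGT}. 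Second, and more seriously, your step $\overset *\to\Rightarrow\overset\wedge\to$ misreads the target relation. The definition of $\overset\wedge\to$ is \emph{universal}: for \emph{every} pair of essential loops $\alpha_i,\alpha_j$ with $[\alpha_i]\in\pi_1(S_i)$ and $[\alpha_j]\in\pi_1(S_j)$, some forward iterate of $\alpha_i$ must meet $\alpha_j$. Your plan to ``close up orbit segments to essential loops with the required intersection'' would only \emph{exhibit} one such pair. The paper instead deduces $\overset\wedge\to$ from $\overset M\to$ (Lemma~\ref{LemMImpliesWedge}): Claim~\ref{ClaimLemMImpliesStar} yields a single orbit in $\wt S$ tracking a geodesic of $\dot\Lambda_i$ in backward time and one of $\dot\Lambda_j$ in forward time, and since $\wt f$ acts trivially on $\partial\wt S$, such an orbit is forced to cross \emph{any} lifts of $\alpha_i$ and $\alpha_j$ that separate the relevant endpoints.

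Your instinct that the step starting from $\overset O\to$ is the crux is correct, but the mechanism is not shadowing in the sense you describe. The paper proves $\overset O\to\Rightarrow\overset\F\to$ by \emph{constructing} explicit open sets $B_i^\pm$ (Proposition~\ref{LemOImpliesF}), engineered so that any orbit joining $B_i$ to $B_j$ already accumulates enough geometric crossings with the tracking geodesics of a fixed finite family of periodic points to fire Theorem~\ref{ThGT} and manufacture the transverse path $\beta$. Since $\overset O\to$ is universal over admissible pairs of open sets, it suffices to exhibit this one family and show that intersection there forces $\overset\F\to$; no promotion ``from open sets directly to Markov rectangles'' is attempted.
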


These 5 identical binary relations are in fact order relations (Proposition~\ref{PropToOrderRel}). 

Finally, we link heteroclinic connections with the geometry of the surface, with the help of a graph we denote $\Tr$ (Subsection~\ref{SubSecTree}). 

These considerations allow to exhibit some subsets of the rotation set of $f$ (Corollary~\ref{CoroPropRotfEqualRotTr}) and identify some essential $f$-invariant open subsets of $f$ bearing some rotational properties of $f$ (Proposition~\ref{LemOImpliesF}).
\bigskip

In the companion paper \cite{}, building on the present work, we conduct a case study of homeomorphisms whose rotation set is big enough (the precise condition is $\inte(\conv(\rot(f)))\neq\emptyset$). These homeomorphisms can be considered as having a ``rotational Axiom A'' behaviour; one can understand very well a lot of their rotational properties, including: the shape of their rotation sets, bounded deviation results and realization results (see also \cite{MR4578317} for the study of rotation sets of Axiom A surface diffeomorphisms).

\subsection*{Tools}

We will set two theoretical tools. The first one is the rotation set associated to a collection of Markovian intersections of rectangles, it is included in the rotation set of the homeomorphism (Proposition~\ref{PropConnectRectEnsRot}). 
The second tool is a simple criterion of creation of heteroclinic connections between topological horseshoes in terms of the forcing theory (Theorem~\ref{ThConnectionHorse}, see Figure~\ref{Fig1}).

\begin{figure}
\begin{center}

\tikzset{every picture/.style={line width=0.75pt}} 

\begin{tikzpicture}[x=0.75pt,y=0.75pt,yscale=-1,xscale=1]

\draw [color={rgb, 255:red, 144; green, 19; blue, 254 }  ,draw opacity=1 ]   (80,151) .. controls (111.36,127.48) and (166.5,192.25) .. (124,192.75) .. controls (81.5,193.25) and (124.35,140.91) .. (166.5,138.75) .. controls (208.65,136.59) and (241,167.25) .. (217,181.75) .. controls (193,196.25) and (157.5,135.75) .. (225.5,118.25) ;
\draw  [draw opacity=0][fill={rgb, 255:red, 245; green, 166; blue, 35 }  ,fill opacity=1 ] (118.5,177.63) .. controls (118.5,175.76) and (120.01,174.25) .. (121.88,174.25) .. controls (123.74,174.25) and (125.25,175.76) .. (125.25,177.63) .. controls (125.25,179.49) and (123.74,181) .. (121.88,181) .. controls (120.01,181) and (118.5,179.49) .. (118.5,177.63) -- cycle ;
\draw  [draw opacity=0][fill={rgb, 255:red, 245; green, 166; blue, 35 }  ,fill opacity=1 ] (203.5,164.63) .. controls (203.5,162.76) and (205.01,161.25) .. (206.88,161.25) .. controls (208.74,161.25) and (210.25,162.76) .. (210.25,164.63) .. controls (210.25,166.49) and (208.74,168) .. (206.88,168) .. controls (205.01,168) and (203.5,166.49) .. (203.5,164.63) -- cycle ;
\draw  [color={rgb, 255:red, 0; green, 93; blue, 203 }  ,draw opacity=1 ][fill={rgb, 255:red, 0; green, 93; blue, 203 }  ,fill opacity=0.1 ] (346.79,121.86) -- (406.82,128.01) -- (400.67,188.04) -- (340.64,181.89) -- cycle ;
\draw  [color={rgb, 255:red, 208; green, 2; blue, 27 }  ,draw opacity=1 ][fill={rgb, 255:red, 208; green, 2; blue, 27 }  ,fill opacity=0.1 ] (477.36,141.98) -- (529.7,123.14) -- (548.53,175.48) -- (496.19,194.31) -- cycle ;
\draw  [color={rgb, 255:red, 0; green, 93; blue, 203 }  ,draw opacity=1 ][fill={rgb, 255:red, 0; green, 93; blue, 203 }  ,fill opacity=0.1 ] (378,97.13) .. controls (404.75,97.13) and (381.73,197.55) .. (393.73,200.64) .. controls (405.73,203.73) and (493.73,223.91) .. (511,211) .. controls (528.27,198.09) and (505,139.73) .. (501.55,126.27) .. controls (503.55,125.55) and (504.82,124.45) .. (507.91,124.09) .. controls (510.27,138.09) and (534.27,196.45) .. (514.82,214.64) .. controls (495.36,232.82) and (405.73,218.45) .. (386.45,203.36) .. controls (367.18,188.27) and (399.5,111.13) .. (378.25,106.13) .. controls (357.75,102.63) and (354.45,183.18) .. (354.14,199.57) .. controls (349.24,200.04) and (352.71,200.14) .. (344.14,198.71) .. controls (347.91,181) and (350.25,98.38) .. (378,97.13) -- cycle ;
\draw  [color={rgb, 255:red, 208; green, 2; blue, 27 }  ,draw opacity=1 ][fill={rgb, 255:red, 208; green, 2; blue, 27 }  ,fill opacity=0.1 ] (495.25,97.88) .. controls (524.16,85.51) and (544.27,182.82) .. (549.91,193.36) .. controls (546.64,194.45) and (545.36,195.91) .. (541.91,197.18) .. controls (537.18,182.64) and (517.2,96.97) .. (497.75,105.88) .. controls (478.3,114.78) and (503,186.27) .. (512.09,204.27) .. controls (508.27,205) and (506.64,206.45) .. (504.32,207.44) .. controls (498.09,191.73) and (466.34,110.24) .. (495.25,97.88) -- cycle ;
\draw  [draw opacity=0][fill={rgb, 255:red, 245; green, 166; blue, 35 }  ,fill opacity=1 ] (372.25,117.38) .. controls (372.25,115.51) and (373.76,114) .. (375.63,114) .. controls (377.49,114) and (379,115.51) .. (379,117.38) .. controls (379,119.24) and (377.49,120.75) .. (375.63,120.75) .. controls (373.76,120.75) and (372.25,119.24) .. (372.25,117.38) -- cycle ;
\draw  [draw opacity=0][fill={rgb, 255:red, 245; green, 166; blue, 35 }  ,fill opacity=1 ] (499.25,117.13) .. controls (499.25,115.26) and (500.76,113.75) .. (502.63,113.75) .. controls (504.49,113.75) and (506,115.26) .. (506,117.13) .. controls (506,118.99) and (504.49,120.5) .. (502.63,120.5) .. controls (500.76,120.5) and (499.25,118.99) .. (499.25,117.13) -- cycle ;
\draw [color={rgb, 255:red, 144; green, 19; blue, 254 }  ,draw opacity=1 ]   (132,150.63) .. controls (142.5,143.75) and (155.67,139.21) .. (166.5,138.75) ;
\draw [shift={(151.63,141.47)}, rotate = 160.72] [fill={rgb, 255:red, 144; green, 19; blue, 254 }  ,fill opacity=1 ][line width=0.08]  [draw opacity=0] (8.04,-3.86) -- (0,0) -- (8.04,3.86) -- (5.34,0) -- cycle    ;

\draw (268,147.4) node [anchor=north west][inner sep=0.75pt]  {$\implies$};

\end{tikzpicture}

\caption{Idea of the statement of Theorem~\ref{ThConnectionHorse}: if there is a trajectory under the isotopy like the one in the left of the figure in the space of leaves, then there exists two rotational horseshoes for $f$ having a heteroclinic connection.}\label{Fig1}
\end{center}
\end{figure}
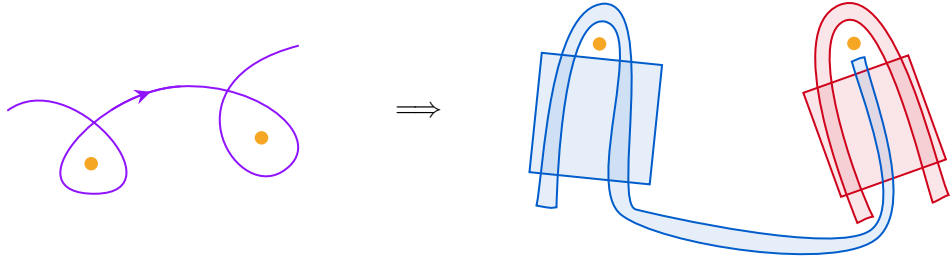

Besides these two results, we will make a systematic use the forcing theory of Le Calvez and Tal \cite{lct1}, and also a result (Theorem~\ref{ThGT}) due to the author and Tal \cite{paper1PAF} (and itself also based on the forcing theory), that allows to create periodic orbits with prescribed rotational behaviour when there exist some orbit with big deviations with respect to some other periodic orbits.

\section{Preliminaries}

For $\alpha$ a loop, the notation $[\alpha]$ will denote either its class in $\pi_1(S)$, or its class in $H_1(S,\R)$; whether it is the first or the second one will be clear from the context.

\subsection{Forcing theory}

\paragraph{Foliations and isotopies.}

Given an identity isotopy $I = \{f_t\}_{t\in[0,1]}$ for $f$ (\emph{i.e.}~$I^0 = \Id_S$ and $I^1 = f$), we define its fixed point set $\Fix(I) = \bigcap_{t\in[0,1]} \Fix(f_t)$, and denote its \emph{domain} $\textnormal{dom}(I) := S \backslash \Fix(I)$. Note that $\dom(I)$ is an oriented boundaryless surface, not necessarily closed, not necessarily connected.	

In this subsection we will consider an oriented surface $\Sigma$ without boundary, not necessarily closed or connected (with the idea to apply it to $\Sigma = \dom(I)$), and a non singular oriented topological foliation $\F$ on $\Sigma$. We will denote $\wh\Sigma$ the universal covering space of $\Sigma$ and $\wh{\F}$ the lifted foliation on $\wh{\Sigma}$.

For every point $z\in\Sigma$, we denote $\phi_{z}$ the leaf of $\F$ containing $z$. 
The complement of any simple injective proper path $\wh\alpha$ of $\wh\Sigma$ has two connected components, denoted by $L(\wh\alpha)$ and $R(\wh\alpha)$, chosen accordingly to some fixed orientation of $\wh\Sigma$ and the orientation of $\wh\alpha$.
Given a simple injective oriented proper path $\wh\alpha$ and $\wh z\in \wh\alpha$, we denote $\wh\alpha^+_{\wh z}$ and $\wh\alpha^-_{\wh z}$ the connected components of $\wh\alpha\setminus\{\wh z\}$, chosen accordingly to the orientation of $\wh\alpha$; their respective projections on $\Sigma$ are denoted respectively $\alpha^+_{z}$ and $\alpha^-_{z}$.

\paragraph{$\F$-transverse paths and $\F$-transverse intersections.}

We say that path $\eta:J\to\Sigma$ is \emph{positively transverse}\footnote{In the sequel, ``transverse'' will mean ``positively transverse''.} to $\F$ if it crosses locally each leaf of $\F$ it meets from left to right. 
The property of being positively transverse stays true for every lift $\wh\eta:J\to\wh\Sigma$ of a positively transverse path $\eta$ and that for every $a<b$ in $J$, the path $\wh\eta|_{[a,b]}$ meets once every leaf $\wh\phi$ of $\wh \F$ such that $L(\wh\phi_{\wh \eta(a)})\subset L(\wh\phi)\subset L(\wh\phi_{\wh \eta(b)})$ and that $\wh\eta|_{[a,b]}$ does not meet any other leaf.

Two transverse paths $\wh\eta_1:J_1\to\wh\Sigma$ and $\wh\eta_2:J_2\to\wh\Sigma$ are called \emph{equivalent} if they meet the same leaves of $\wh{\F}$. Two transverse paths $\eta_1:J_1\to\Sigma$ and $\eta_2:J_2\to\Sigma$ are \emph{equivalent} if they have lifts to $\wh\Sigma$ that are equivalent.
\bigskip

We will say that a transverse path $\alpha : [a, b] \to \dom(I )$ is \emph{admissible of order $n$} if it is equivalent to a path $I^{[0,n]}_\F(z)$ for some $z\in\dom(I)$.

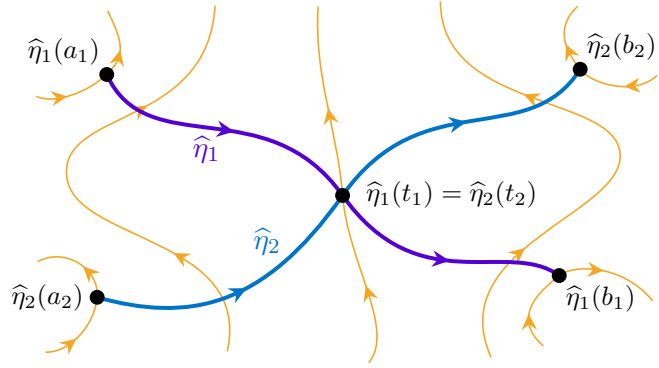
\begin{figure}
\begin{center}

\tikzset{every picture/.style={line width=0.6pt}} 

\begin{tikzpicture}[x=0.75pt,y=0.75pt,yscale=-1,xscale=1]

\draw [color={rgb, 255:red, 245; green, 166; blue, 35 }  ,draw opacity=1 ]   (261.43,50.73) .. controls (259.25,85.76) and (271.02,112.03) .. (273.64,145.74) .. controls (276.25,179.45) and (300.23,213.16) .. (286.72,229.8) ;
\draw [shift={(266.37,101.91)}, rotate = 78.81] [fill={rgb, 255:red, 245; green, 166; blue, 35 }  ,fill opacity=1 ][line width=0.08]  [draw opacity=0] (8.04,-3.86) -- (0,0) -- (8.04,3.86) -- (5.34,0) -- cycle    ;
\draw [shift={(285.23,191.05)}, rotate = 71.75] [fill={rgb, 255:red, 245; green, 166; blue, 35 }  ,fill opacity=1 ][line width=0.08]  [draw opacity=0] (8.04,-3.86) -- (0,0) -- (8.04,3.86) -- (5.34,0) -- cycle    ;
\draw [color={rgb, 255:red, 245; green, 166; blue, 35 }  ,draw opacity=1 ]   (156.35,52.92) .. controls (164.64,69.56) and (165.07,76.12) .. (155.92,84.88) .. controls (146.76,93.64) and (130.19,100.2) .. (120.6,95.83) ;
\draw [shift={(162.66,72.57)}, rotate = 103] [fill={rgb, 255:red, 245; green, 166; blue, 35 }  ,fill opacity=1 ][line width=0.08]  [draw opacity=0] (8.04,-3.86) -- (0,0) -- (8.04,3.86) -- (5.34,0) -- cycle    ;
\draw [shift={(136.28,95.94)}, rotate = 172] [fill={rgb, 255:red, 245; green, 166; blue, 35 }  ,fill opacity=1 ][line width=0.08]  [draw opacity=0] (8.04,-3.86) -- (0,0) -- (8.04,3.86) -- (5.34,0) -- cycle    ;
\draw [color={rgb, 255:red, 245; green, 166; blue, 35 }  ,draw opacity=1 ]   (391,50.6) .. controls (378.79,61.98) and (385.69,73.93) .. (392.23,82.25) .. controls (398.77,90.57) and (420.57,95.39) .. (430.6,85.32) ;
\draw [shift={(385.1,69.68)}, rotate = 66] [fill={rgb, 255:red, 245; green, 166; blue, 35 }  ,fill opacity=1 ][line width=0.08]  [draw opacity=0] (8.04,-3.86) -- (0,0) -- (8.04,3.86) -- (5.34,0) -- cycle    ;
\draw [shift={(414.19,90.98)}, rotate = -5] [fill={rgb, 255:red, 245; green, 166; blue, 35 }  ,fill opacity=1 ][line width=0.08]  [draw opacity=0] (8.04,-3.86) -- (0,0) -- (8.04,3.86) -- (5.34,0) -- cycle    ;
\draw [color={rgb, 255:red, 245; green, 166; blue, 35 }  ,draw opacity=1 ]   (209.98,50.29) .. controls (206.49,112.03) and (133.24,101.96) .. (135.86,135.67) .. controls (138.48,169.38) and (228.73,165.88) .. (216.09,224.11) ;
\draw [shift={(176.16,100.54)}, rotate = 149.51] [fill={rgb, 255:red, 245; green, 166; blue, 35 }  ,fill opacity=1 ][line width=0.08]  [draw opacity=0] (8.04,-3.86) -- (0,0) -- (8.04,3.86) -- (5.34,0) -- cycle    ;
\draw [shift={(189.24,174.67)}, rotate = 28.12] [fill={rgb, 255:red, 245; green, 166; blue, 35 }  ,fill opacity=1 ][line width=0.08]  [draw opacity=0] (8.04,-3.86) -- (0,0) -- (8.04,3.86) -- (5.34,0) -- cycle    ;
\draw [color={rgb, 255:red, 245; green, 166; blue, 35 }  ,draw opacity=1 ]   (322.47,48.1) .. controls (322.47,103.27) and (410.46,94.79) .. (412.2,125) .. controls (413.94,155.21) and (323.34,172.01) .. (347.76,227.17) ;
\draw [shift={(364.02,94.87)}, rotate = 22.65] [fill={rgb, 255:red, 245; green, 166; blue, 35 }  ,fill opacity=1 ][line width=0.08]  [draw opacity=0] (8.04,-3.86) -- (0,0) -- (8.04,3.86) -- (5.34,0) -- cycle    ;
\draw [shift={(364.67,172.23)}, rotate = 140.57] [fill={rgb, 255:red, 245; green, 166; blue, 35 }  ,fill opacity=1 ][line width=0.08]  [draw opacity=0] (8.04,-3.86) -- (0,0) -- (8.04,3.86) -- (5.34,0) -- cycle    ;
\draw [color={rgb, 255:red, 84; green, 0; blue, 213 }  ,draw opacity=1 ][line width=1.5]    (155.92,84.88) .. controls (175.97,129.54) and (235.71,91.89) .. (273.64,145.74) .. controls (311.57,199.59) and (356.04,167.19) .. (381.77,186.02) ;
\draw [shift={(219.54,113.55)}, rotate = 188.99] [fill={rgb, 255:red, 84; green, 0; blue, 213 }  ,fill opacity=1 ][line width=0.08]  [draw opacity=0] (8.75,-4.2) -- (0,0) -- (8.75,4.2) -- (5.81,0) -- cycle    ;
\draw [shift={(327.1,178.63)}, rotate = 187.51] [fill={rgb, 255:red, 84; green, 0; blue, 213 }  ,fill opacity=1 ][line width=0.08]  [draw opacity=0] (8.75,-4.2) -- (0,0) -- (8.75,4.2) -- (5.81,0) -- cycle    ;
\draw [color={rgb, 255:red, 0; green, 116; blue, 201 }  ,draw opacity=1 ][line width=1.5]    (151.12,196.96) .. controls (196.46,209.66) and (233.96,202.65) .. (273.64,145.74) .. controls (313.31,88.82) and (366.07,123.85) .. (392.23,82.25) ;
\draw [shift={(225.84,192.34)}, rotate = 153.5] [fill={rgb, 255:red, 0; green, 116; blue, 201 }  ,fill opacity=1 ][line width=0.08]  [draw opacity=0] (8.75,-4.2) -- (0,0) -- (8.75,4.2) -- (5.81,0) -- cycle    ;
\draw [shift={(335.43,108.69)}, rotate = 169.41] [fill={rgb, 255:red, 0; green, 116; blue, 201 }  ,fill opacity=1 ][line width=0.08]  [draw opacity=0] (8.75,-4.2) -- (0,0) -- (8.75,4.2) -- (5.81,0) -- cycle    ;
\draw [color={rgb, 255:red, 245; green, 166; blue, 35 }  ,draw opacity=1 ]   (121.47,179.01) .. controls (134.99,171.57) and (152.43,182.51) .. (151.12,196.96) .. controls (149.81,211.41) and (137.17,224.98) .. (125.4,224.55) ;
\draw [shift={(144.21,181.45)}, rotate = 45.25] [fill={rgb, 255:red, 245; green, 166; blue, 35 }  ,fill opacity=1 ][line width=0.08]  [draw opacity=0] (8.04,-3.86) -- (0,0) -- (8.04,3.86) -- (5.34,0) -- cycle    ;
\draw [shift={(140.71,217.82)}, rotate = 141.55] [fill={rgb, 255:red, 245; green, 166; blue, 35 }  ,fill opacity=1 ][line width=0.08]  [draw opacity=0] (8.04,-3.86) -- (0,0) -- (8.04,3.86) -- (5.34,0) -- cycle    ;
\draw [color={rgb, 255:red, 245; green, 166; blue, 35 }  ,draw opacity=1 ]   (428.42,198.71) .. controls (412.72,179.45) and (387.87,180.76) .. (381.77,186.02) .. controls (375.66,191.27) and (355.61,205.72) .. (373.92,221.48) ;
\draw [shift={(403.94,183.63)}, rotate = 185.09] [fill={rgb, 255:red, 245; green, 166; blue, 35 }  ,fill opacity=1 ][line width=0.08]  [draw opacity=0] (8.04,-3.86) -- (0,0) -- (8.04,3.86) -- (5.34,0) -- cycle    ;
\draw [shift={(366.62,205.34)}, rotate = 88.52] [fill={rgb, 255:red, 245; green, 166; blue, 35 }  ,fill opacity=1 ][line width=0.08]  [draw opacity=0] (8.04,-3.86) -- (0,0) -- (8.04,3.86) -- (5.34,0) -- cycle    ;
\draw  [fill={rgb, 255:red, 0; green, 0; blue, 0 }  ,fill opacity=1 ] (270.32,145.74) .. controls (270.32,143.9) and (271.81,142.41) .. (273.64,142.41) .. controls (275.47,142.41) and (276.95,143.9) .. (276.95,145.74) .. controls (276.95,147.58) and (275.47,149.07) .. (273.64,149.07) .. controls (271.81,149.07) and (270.32,147.58) .. (270.32,145.74) -- cycle ;
\draw  [fill={rgb, 255:red, 0; green, 0; blue, 0 }  ,fill opacity=1 ] (388.92,82.25) .. controls (388.92,80.41) and (390.4,78.92) .. (392.23,78.92) .. controls (394.06,78.92) and (395.55,80.41) .. (395.55,82.25) .. controls (395.55,84.09) and (394.06,85.58) .. (392.23,85.58) .. controls (390.4,85.58) and (388.92,84.09) .. (388.92,82.25) -- cycle ;
\draw  [fill={rgb, 255:red, 0; green, 0; blue, 0 }  ,fill opacity=1 ] (378.45,186.02) .. controls (378.45,184.18) and (379.94,182.69) .. (381.77,182.69) .. controls (383.6,182.69) and (385.08,184.18) .. (385.08,186.02) .. controls (385.08,187.86) and (383.6,189.35) .. (381.77,189.35) .. controls (379.94,189.35) and (378.45,187.86) .. (378.45,186.02) -- cycle ;
\draw  [fill={rgb, 255:red, 0; green, 0; blue, 0 }  ,fill opacity=1 ] (147.8,196.96) .. controls (147.8,195.12) and (149.29,193.63) .. (151.12,193.63) .. controls (152.95,193.63) and (154.44,195.12) .. (154.44,196.96) .. controls (154.44,198.8) and (152.95,200.29) .. (151.12,200.29) .. controls (149.29,200.29) and (147.8,198.8) .. (147.8,196.96) -- cycle ;
\draw  [fill={rgb, 255:red, 0; green, 0; blue, 0 }  ,fill opacity=1 ] (152.6,84.88) .. controls (152.6,83.04) and (154.09,81.55) .. (155.92,81.55) .. controls (157.75,81.55) and (159.23,83.04) .. (159.23,84.88) .. controls (159.23,86.72) and (157.75,88.21) .. (155.92,88.21) .. controls (154.09,88.21) and (152.6,86.72) .. (152.6,84.88) -- cycle ;

\draw (284.07,145.38) node [anchor=west] [inner sep=0.75pt]  [font=\small]  {$\wh{\eta }_{1}( t_{1}) =\wh{\eta }_{2}( t_{2})$};
\draw (205.68,114.4) node [anchor=north] [inner sep=0.75pt]  [color={rgb, 255:red, 84; green, 0; blue, 213 }  ,opacity=1 ]  {$\wh{\eta }_{1}$};
\draw (244.15,176.67) node [anchor=south east] [inner sep=0.75pt]  [color={rgb, 255:red, 0; green, 116; blue, 201 }  ,opacity=1 ]  {$\wh{\eta }_{2}$};
\draw (394.23,78.85) node [anchor=south west] [inner sep=0.75pt]  [font=\small]  {$\wh{\eta }_{2}( b_{2})$};
\draw (383.77,189.42) node [anchor=north west][inner sep=0.75pt]  [font=\small]  {$\wh{\eta }_{1}( b_{1})$};
\draw (145.8,196.96) node [anchor=east] [inner sep=0.75pt]  [font=\small]  {$\wh{\eta }_{2}( a_{2})$};
\draw (153.92,81.48) node [anchor=south east] [inner sep=0.75pt]  [font=\small]  {$\wh{\eta }_{1}( a_{1})$};

\end{tikzpicture}

\caption{Example of $\wh{\F}$-transverse intersection.\label{Fig:extransverse}}
\end{center}
\end{figure}

\begin{definition}[$\wh\F$-transverse intersection]\label{DefInterTrans}
Let $\wh \phi_1$, $\wh\phi_2$ and $\wh\phi_3$ be three leaves of $\wh \F$. We say that $\wh\phi_1$ \emph{is above $\wh \phi_2$ relative to $\wh\phi_3$} if there exist disjoint paths $\wh \delta_1$ and $\wh\delta_2$ linking $\wh\phi_1$ resp. $\wh\phi_2$ to $\wh\phi_3$, disjoint from these leaves but at their extremities, and such that $\wh\delta_1\cap\wh\phi_3$ is after $\wh\delta_2\cap\wh\phi_3$ for the order on $\wh\phi_3$.

Let $\wh\eta_1:J_1\to \wh\Sigma$ and $\wh\eta_2:J_2\to \wh \Sigma$ be two transverse paths such that there exist $t_1\in J_1$ and $t_2\in J_2$ satisfying $\wh\eta_1(t_1)=\wh\eta_2(t_2)$. We will say that $\wh\eta_1$ and $\wh\eta_2$ have an \emph{$\wh{\F}$-transverse intersection} at $\wh\eta_1(t_1)=\wh\eta_2(t_2)$ (see Figure~\ref{Fig:extransverse}) 
if there exist $a_1, b_1\in J_1$ satisfying $a_1<t_1<b_1$  and $a_2, b_2\in J_2$ satisfying $a_2<t_2<b_2$ such that:
\begin{itemize}
	\item $\wh\phi_{\wh\eta_1(a_1)}$ is above $\wh\phi_{\wh\eta_2(a_2)}$ relative to $\wh\phi_{\wh\eta_2(t_2)}$;
	\item $\wh\phi_{\wh\eta_1(b_1)}$ is below $\wh\phi_{\wh\eta_2(b_2)}$ relative to $\wh\phi_{\wh\eta_2(t_2)}$.
\end{itemize}
\end{definition}

A transverse intersection means that there is a ``crossing'' between the two paths naturally defined by $\wh\eta_1$ and $\wh\eta_2$ in the space of leaves of $\widehat{\F}$, which is a one-dimensional topological manifold, usually non Hausdorff.

\bigskip
Now, let $\eta_1:J_1\to \Sigma$ and $\eta_2:J_2\to \Sigma$ be two transverse paths such that there exist $t_1\in J_1$ and $t_2\in J_2$ satisfying $\eta_1(t_1)=\eta_2(t_2)$. We say that $\eta_1$ and $\eta_2$ have an \emph{${\F}$-transverse intersection} at $\eta_1(t_1)=\eta_2(t_2)$ if, given $\wh\eta_1:J_1\to \wh \Sigma$ and $\wh\eta_2:J_2\to \wh \Sigma$ any two lifts of $\eta_1$ and $\eta_2$ such that $\wh\eta_1(t_1)=\wh\eta_2(t_2)$, we have that $\wh\eta_1$ and $\wh\eta_2$ have a $\wh{\F}$-transverse intersection at $\wh\eta_1(t_1)=\wh\eta_2(t_2)$.
If $\eta_1=\eta_2$ one speaks of a \emph{$\F$-transverse self-intersection}. In this case, if $\widehat \eta_1$ is a lift of $\eta_1$, then there exists $T\in\mathcal G$ such that $\widehat\eta_1$ and $T\widehat\eta_1$ have a $\widehat{\F}$-transverse intersection at $\widehat\eta_1(t_1)=T\widehat\eta_1(t_2)$.

\paragraph{Recurrence and equivalence.} 

We will say a transverse path $\eta:\R\to \Sigma$ is \emph{positively recurrent} if, for every $a<b$, there exist $c<d$, with $b<c$, such that $\eta|_{[a,b]}$ and $\eta|_{[c,d]}$  are equivalent. Similarly, $\eta$ is \emph{negatively recurrent} if $t\mapsto\eta(-t)$ is positively recurrent. Finally $\eta$ is \emph{recurrent} if it is both positively and negatively recurrent.
\bigskip 

Two transverse paths $\eta_1:\R\to \Sigma$ and $\eta_2:\R\to \Sigma$ are said \emph{equivalent at $+\infty$} (denoted $\eta_1\sim_{+\infty}\eta_2$) if there exists $a_1,a_2\in \R$ such that $\eta_1{}|_{[a_1,+\infty)}$ and  $\eta_2{}|_{[a_2,+\infty)}$ are equivalent. Similarly $\eta_1$ and $\eta_2$ are \emph{equivalent at $-\infty$} (denoted $\eta_1\sim_{-\infty}\eta_2$) if $t\mapsto \eta_1(-t)$ and $t\mapsto \eta_2(-t)$ are equivalent at $+\infty$.

\paragraph{Accumulation property}

We say that a transverse path $\eta_1:\R\to S$ \emph{accumulates positively} on the transverse path $\eta_2:\R\to \Sigma$ if there exist real numbers $a_1$ and $a_2<b_2$ such that  $\eta_1{}|_{[a_1,+\infty)}$ and $\eta_2{}|_{[a_2,b_2)}$ are $\F$-equivalent. Similarly, $\eta_1$ \emph{accumulates negatively} on $\eta_2$ if there exist $b_1$ and $a_2<b_2$ such that $\eta_1{}|_{(-\infty,b_1]}$ and $\eta_2{}|_{(a_2,b_2]}$ are $\F$-equivalent. 
Finally $\eta_1$ \emph{accumulates} on $\eta_2$ if it accumulates  positively or negatively on $\eta_2$.

\paragraph{Brouwer-Le Calvez foliations and forcing theory}

If $\F$ is a singular foliation of a surface $S$, denote $\mathrm{Sing}(\F)$ the set of singularities of $\F$, and $\dom(\F) = S \setminus \mathrm{Sing}(\F)$. 
The forcing theory grounds on the following result of existence of transverse foliations, which can be obtained as a combination of the main theorems of \cite{lecalvezfoliations} and \cite{bguin2016fixed}.

\begin{theorem}\label{TheoExistIstop}
Let $S$ be a surface and $f\in\Homeo_0(S)$.
Then there exist an identity isotopy $I$ for $f$ and a transverse topological oriented singular foliation $\F$ on $S$ with $\dom(\F) = \dom(I)$, such that:
For any $z\in \dom(\F)$, there exists an $\F$-transverse path $\big(I_\F^t(z)\big)_{t\in[0,1]}$ linking $z$ to $f(z)$ and that is homotopic in $\dom(\F)$, relative to endpoints, to the isotopy path $(I^t(z))_{t\in[0,1]}$.
\end{theorem}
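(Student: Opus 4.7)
The plan is to obtain the statement as a two-step combination: first build an identity isotopy whose fixed point set cannot be enlarged, and then apply the Le Calvez-Brouwer machinery to produce a transverse foliation for that isotopy. The statement itself is the standard existence result underpinning all of forcing theory, so I would not try to reinvent the construction, but rather explain how the two cited ingredients fit together and why the domain condition $\dom(\F)=\dom(I)$ and the homotopy property are automatic outcomes of that construction.

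First, starting from an arbitrary identity isotopy $I_0$ for $f$ (which exists because $f\in\Homeo_0(S)$), I would invoke the main result of \cite{bguin2016fixed}: there exists an identity isotopy $I$ for $f$ with $\Fix(I_0)\subset\Fix(I)$ and which is \emph{maximal}, in the sense that for any $x\in\Fix(f)\setminus\Fix(I)$, there is no identity isotopy $I'$ for $f$ with $\Fix(I)\cup\{x\}\subset\Fix(I')$. The heart of this step is a local extension lemma in the spirit of Brouwer's plane translation theorem: if $x$ is a fixed point of $f$ whose isotopy trajectory is contractible in $S\setminus\Fix(I_0)$ (equivalently, has vanishing local Lefschetz-like index contribution), then one can modify $I_0$ to add $x$ to the fixed point set; a Zorn's lemma argument on chains of such extensions yields maximality.

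Second, once $I$ is maximal, I would apply the foliation theorem of \cite{lecalvezfoliations}. The input there is precisely a maximal identity isotopy: on the open surface $\dom(I)=S\setminus\Fix(I)$, lifting to the universal cover $\wh{\dom(I)}$, the lift $\wt f$ of the end of $I$ acts as a fixed point free orientation preserving homeomorphism, so Brouwer's theory provides, near each point, trivializing Brouwer bricks through which one can push a singular oriented foliation that is transverse to the dynamics. Gluing these local pieces, using maximality of $I$ to control the topology at boundary singularities, produces an oriented topological singular foliation $\F$ of $S$ with $\mathrm{Sing}(\F)=\Fix(I)$, hence $\dom(\F)=\dom(I)$. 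The Brouwer trivialization in each brick produces, for each $z\in\dom(I)$, an $\F$-transverse path $(I_\F^t(z))_{t\in[0,1]}$ from $z$ to $f(z)$, and by construction this path is homotopic rel endpoints in $\dom(\F)$ to the original isotopy arc $(I^t(z))_{t\in[0,1]}$: indeed, both paths lie in a simply connected neighbourhood obtained from the union of consecutive bricks visited during the isotopy, which is the homotopy witness.

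The main obstacle is the first step: the existence of a maximal isotopy for an arbitrary $f\in\Homeo_0(S)$ is not at all formal, since one has to check that Zorn's lemma applies, which requires showing that an increasing union of identity isotopies (in the sense of increasing fixed point set) still defines an identity isotopy whose fixed point set is the union. This compactness-like property is exactly the content of \cite{bguin2016fixed} and involves a careful use of the Jiang-Guaschi type index theory for isotopies on surfaces. Once this is granted, the rest is essentially a citation of \cite{lecalvezfoliations}, together with a routine verification that the homotopy class of the transverse path constructed there coincides with the homotopy class of the isotopy trajectory in $\dom(I)$.
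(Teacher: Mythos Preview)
Your proposal is correct and matches the paper's approach: the paper does not give a proof but simply states that the theorem ``can be obtained as a combination of the main theorems of \cite{lecalvezfoliations} and \cite{bguin2016fixed},'' which is exactly the two-step combination (maximal isotopy, then transverse foliation) that you outline. Your write-up just makes explicit the mechanism behind that citation.
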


This allows to define the path $I_{\F}^\Z (x)$ as the concatenation of the paths $\big(I_\F^t(f^n(z))\big)_{t\in[0,1]}$ for $n\in\Z$.

The following statement is a reformulation of the main technical result of the forcing theory \cite[Proposition~20]{lct1}:

\begin{prop}\label{propFondalct1}
Suppose that $I^{[t, t']}_\F(z)$ and $I^{[s,s']}_\F(z')$ intersect $\F$-transversally at $I^{t''}_\F(z) = I^{s''}_\F(z')$. Then the path $I^{[t, t'']}_\F(z) I^{[s'',s']}_\F(z')$ is $f$-admissible or order $t'-t+s'-s$.
\end{prop}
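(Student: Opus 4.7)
The statement is a direct reformulation of \cite[Proposition~20]{lct1}; I would not reprove the forcing theorem from scratch but rather check that the reformulation matches the statement of Le Calvez and Tal. Let me indicate the structure of their argument.

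First, I would lift the data to the universal cover $\wh\Sigma$ of $\dom(I)$: choose lifts $\wh z,\wh z'$ and a common lift $\wh p = I^{t''}_{\wh\F}(\wh z) = I^{s''}_{\wh\F}(\wh z')$ witnessing the transverse intersection. Definition~\ref{DefInterTrans} then provides four reference leaves framing $\wh\phi_{\wh p}$ in a ``crossing'' configuration. The objective is to exhibit a point $\wh w\in\wh\Sigma$ such that $I^{[0,N]}_{\wh\F}(\wh w)$ is $\wh\F$-equivalent to the concatenation $I^{[t,t'']}_{\wh\F}(\wh z)\cdot I^{[s'',s']}_{\wh\F}(\wh z')$, where $N:=(t'-t)+(s'-s)$.

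The core of the proof in \cite{lct1} is a Brouwer-type connectedness argument. One defines a topological rectangle $R$ around $\wh z$ whose successive iterates under the isotopy sweep out the leaves crossed by $I^{[t,t']}_{\wh\F}(\wh z)$, a dual rectangle $R'$ around $\wh z'$ whose backward iterates sweep out the leaves crossed by $I^{[s,s']}_{\wh\F}(\wh z')$, and uses the crossing configuration at $\wh p$ together with the local behaviour of $\wh f$ near $\wh p$ to force the existence of a point $\wh w\in R$ whose orbit $\{\wh f^k(\wh w)\}_{k=0}^N$ reaches $R'$ while realising the prescribed succession of leaves.

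The main point requiring care is the ``padding'' of the order to $N=(t'-t)+(s'-s)$, rather than the naive $(t''-t)+(s'-s'')$: this gain of dynamical time comes from the fact that the transverse crossing lets the orbit of $\wh w$ inherit the leaf classes of the unused segments $I^{[t'',t']}_{\wh\F}(\wh z)$ and $I^{[s,s'']}_{\wh\F}(\wh z')$ through the wrapping of the two original paths around $\wh p$. Once this is verified, the equivalence with the original formulation of \cite[Proposition~20]{lct1} reduces to matching conventions on isotopies and on the notion of admissibility, which is purely notational.
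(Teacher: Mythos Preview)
Your proposal matches the paper's own treatment: the paper gives no proof of this proposition, introducing it explicitly as ``a reformulation of the main technical result of the forcing theory \cite[Proposition~20]{lct1}'', exactly as you do. Your additional sketch of the Le Calvez--Tal argument is reasonable context but goes beyond anything the paper itself provides.
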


\subsection{Classification of ergodic rotation sets}\label{SubSecRotAlepablo}

%

The following is contained in \cite[Theorem~F]{alepablo}.

\begin{theorem}[Shape of ergodic rotation sets]\label{thm:ShapeRotationSet}
Let $f \in \Homeo_0(S)$, where $S$ has genus $g$. Then, its ergodic rotation set $\rote(f)$ can be written as
\[\rote(f) = \rho^1 \cup \rho^{\mathrm h},\]
where
\begin{enumerate}
\item The set $\rho^1$ is included in the union of at most $3g-3$ lines.
\item The set $\rho^{\mathrm h}$ is the union of at most $2g-2$ sets $(\rho_i)_{i\in I_{\mathrm h}}$, such that, for every ${i\in I_{\mathrm h}}$:
\begin{itemize}
\item The set ${\rho_i}$ spans a linear subspace $V_i$ which has a basis formed by elements of $H_1(S,\Z)$;
\item The set $\overline{\rho_i}$ is a convex set containing $0$;
\item We have $\operatorname{int}_{V_i}(\overline{\rho_i})= \operatorname{int}_{V_i}(\rho_i)$ (in other words, $\rho_i$ is convex up to the fact that elements of $\partial_{V_i}(\rho_i)\setminus\operatorname{extrem}(\rho_i)$ can be in the complement of $\rho_i$);
\item Every element of $\operatorname{int}_{V_i}(\rho_i) \cap H_1(S,\Q)$ is the rotation vector of some $f$-periodic orbit (because $V_i$ has a rational basis, such elements are dense in $\operatorname{int}_{V_i}(\rho_i)$). 
\end{itemize} 
\end{enumerate}
\end{theorem}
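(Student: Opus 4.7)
The plan is to treat the simple part $\rho^1$ (classes $i\in I^1$) and the chaotic part $\rho^{\mathrm{h}}$ (classes $i\in I_{\mathrm{h}}$) separately. For $\rho^1$ the bound and the linear structure will come from the topology of simple geodesic laminations on a hyperbolic surface, and for $\rho^{\mathrm{h}}$ the convex and realization properties will come from constructing rotational horseshoes via the forcing theory of Le Calvez--Tal together with Theorem~\ref{ThGT}.

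\emph{Classes in $I^1$.} Fix $i\in I^1$. By Definition~\ref{DefClassesI1}, every $\mu\in\cl_i$ shares the same tracking lamination $\dot\Lambda_i\subset T^1S$, whose geodesics are simple. Theorem~\ref{DefTrackGeod} implies that the Birkhoff sum $\frac1n\sum_{j=0}^{n-1}[a_{f^j(z)}]$ coincides, up to an $o(1)$ error, with $\frac1n$ times the homological displacement along $\wt\gamma_{\wt z}$ over arclength $n\vartheta_\mu$; since $\dot\Lambda_i$ is simple and recurrent, this displacement equals $n\vartheta_\mu\, v_i+O(1)$ for a vector $v_i\in H_1(S,\R)$ depending only on $\dot\Lambda_i$. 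Hence $\rho_i\subset \R v_i$. Two distinct classes $i\neq j$ in $I^1$ cannot have transversally intersecting tracking laminations (otherwise they would be dynamically transverse and merge under $\sim$), so $\{\dot\Lambda_i\}_{i\in I^1}$ is a family of pairwise disjoint simple recurrent laminations on $S$. Standard hyperbolic topology (pants decomposition) bounds such a family by $3g-3$, which gives item~1.

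\emph{Classes in $I_{\mathrm{h}}$.} For $i\in I_{\mathrm{h}}$, the lamination $\dot\Lambda_i$ contains a geodesic with a transverse self-intersection, hence fills a subsurface $\Sigma_i\subset S$ of Euler characteristic $\chi(\Sigma_i)\le -1$. As above, distinct chaotic classes have laminations without transverse intersection, so the $\Sigma_i$ have pairwise disjoint interiors; since $\chi(S)=2-2g$, subadditivity of $\chi$ yields $\#I_{\mathrm{h}}\leq 2g-2$. The subspace $V_i=\spn(\rho_i)$ lies in the image of $H_1(\Sigma_i,\Z)\to H_1(S,\Z)$, which furnishes a basis of $V_i$ in $H_1(S,\Z)$.

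The main obstacle is the convexity equality $\mathrm{int}_{V_i}(\overline{\rho_i})=\mathrm{int}_{V_i}(\rho_i)$, the inclusion $0\in\overline{\rho_i}$, and the realization of rational interior points by periodic orbits. The unifying idea is that a dynamically transverse pair inside $\cl_i$ can be upgraded, via Theorem~\ref{ThGT} and the criterion of heteroclinic connection (Theorem~\ref{ThConnectionHorse}), into a rotational horseshoe $K\subset S$ whose rotation set contains a full $V_i$-neighborhood of any prescribed interior vector $v$, with periodic orbits realizing all rational vectors in that neighborhood (and in particular an orbit with rotation vector $0$ obtained by cancelling the homological contributions). The technical crux is to convert the geometric transverse self-intersection of tracking geodesics (provided by Theorem~\ref{thm:equidistributiontheoremintro}) into an $\F$-transverse intersection of admissible paths in the sense of Definition~\ref{DefInterTrans}, so that Proposition~\ref{propFondalct1} applies; once this translation is carried out, iterating the horseshoe construction and taking convex combinations of invariant measures on the horseshoes yields the full convex/realization statement.
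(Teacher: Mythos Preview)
The paper does not prove Theorem~\ref{thm:ShapeRotationSet}; it is stated there as a quotation of \cite[Theorem~F]{alepablo}. So there is no ``paper's own proof'' to compare against, only the proof in \cite{alepablo}. Your outline is structurally aligned with that source: split into $I^1$ and $I_{\mathrm h}$, count classes by the topology of disjoint laminations/subsurfaces, and use forcing theory to produce horseshoes for the convexity and realization statements in the chaotic case.

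That said, your $I^1$ argument has a genuine gap. You assert that for $i\in I^1$ the homological displacement along any leaf of $\dot\Lambda_i$ is $n\vartheta_\mu v_i+O(1)$ for a vector $v_i$ \emph{depending only on} $\dot\Lambda_i$. For a simple closed geodesic this is clear, but $\dot\Lambda_i$ can be a minimal lamination with non-closed leaves (the paper explicitly exhibits such a class $\cl_7$ in Figure~\ref{FigTree0}). For such a lamination the asymptotic cycle depends on the transverse measure, not just on the support, so two $f$-ergodic measures with the same $\dot\Lambda_i$ may push forward to different geodesic-flow measures and yield rotation vectors that are not proportional. Consequently the step ``$\rho_i\subset\R v_i$'' is unjustified, and with it the pants-decomposition count of lines. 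The argument in \cite{alepablo} handles this by a more careful analysis of how the homology class of a long geodesic arc in a simple lamination is constrained; your sketch does not supply this.

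For $I_{\mathrm h}$ you correctly identify the technical crux---turning a transverse intersection of tracking geodesics into an $\F$-transverse intersection of admissible paths so that Proposition~\ref{propFondalct1} and Theorem~\ref{ThConnectionHorse} apply---but you leave it as a black box. This step is exactly Lemma~\ref{LemNotSimpleIntersect} in the present paper (itself relying on \cite[Theorem~5.8]{alepablo} and \cite[Proposition~3.3]{guiheneuf2023area}); without it the horseshoe construction does not start. The remaining assertions (Euler-characteristic bound $\#I_{\mathrm h}\le 2g-2$, rational basis of $V_i$, convexity and periodic realization from the horseshoe) are the right targets, and once the $\F$-transversality lemma is in hand they follow along the lines you indicate.
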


Let us define some surfaces associated with the classes $\cl_i$, \(i \in I_{\mathrm h}\). Consider the projection $\Lambda_i$ of $\dot\Lambda_i$ on $S$, and the lift $\wt\Lambda_i$ of $\Lambda_i$ to $\wt S$. Take a connected component $\wt C$ of $\wt\Lambda_i$, denote $\wt S_i = \conv(\wt C)$ (for the hyperbolic metric) and set $S_i$ as the projection of $\wt S_i$ on $S$ (see Figure~\ref{FigTree0} page~\pageref{FigTree0} for an example of such surfaces). 
\cite[Lemma~6.7]{alepablo} asserts that $S_i$ is an open surface whose boundary is made of closed geodesics, and \cite[Lemma~6.8]{alepablo} states that for $i,j\in I_{\mathrm{h}}$, $i\neq j$, one has $S_i\cap S_j = \emptyset$. 
\bigskip

Let us finish this subsection with two technical results. 
%
%

\begin{prop}\label{LemNotSimpleTracking}
Let $f\in \Homeo_0(S)$, $\mu$ a measure belonging to a chaotic class and $z$ a $\mu$-typical point. Then for any $\varep>0$ there exists $z'$ a periodic orbit of $f$, belonging to the same chaotic class\footnote{Recall that when $z$ is periodic, we say that $z\in\cl_i$ if the uniform measure on the orbit of $z$ belongs to $\cl_i$.} as $z$, whose tracking geodesic $\gamma_{z'}$ is not simple and has a lift $\wt\gamma_{\wt z'}$ to $\wt S$ that is $\varep$-close to a lift $\wt\gamma_{\wt z}$ of $\gamma_z$ to $\wt S$, and such that $\|\rho(z)-\rho(z')\|\le \varep$. 
\end{prop}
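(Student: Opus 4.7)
The strategy is to combine the density/recurrence of the tracking geodesic $\gamma_z$ inside $\dot\Lambda_\mu$ (Theorem~\ref{thm:equidistributiontheoremintro}) with the transverse intersections furnished by the chaoticity of $\cl_i$ (Definition~\ref{DefClassesI1}) and the rotational horseshoe/shadowing machinery of the paper in order to produce a periodic orbit whose tracking geodesic is both close to $\gamma_z$ and non-simple.

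First, since $\mu\in\cl_i$ with $i\in I_{\mathrm h}$, Definition~\ref{DefClassesI1} furnishes $\mu'\in\Merg$ dynamically transverse to $\mu$, and hence geodesics $\gamma\subset\dot\Lambda_\mu$ and $\gamma'\subset\dot\Lambda_{\mu'}$ meeting transversally at some point $p\in S$. Since $z$ is $\mu$-typical, Theorem~\ref{thm:equidistributiontheoremintro} gives $\dot\Lambda_\mu=\overline{\dot\gamma_z(\R)}$ together with recurrence of $\gamma_z$. Lifting to $\wt S$ and fixing lifts $\wt\gamma_{\wt z}$ and $\wt\gamma$ of $\gamma_z$ and $\gamma$, I would then select times $t_1\ll 0\ll t_2$ with $t_2-t_1$ arbitrarily large and deck transformations $T_1,T_2\in\G$ with $T_1\neq T_2$ such that the vectors $T_j^{-1}\dot{\wt\gamma}_{\wt z}(t_j)$ both become $\varep$-close to $\dot{\wt\gamma}(0)$. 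Such a choice is available by density in $\dot\Lambda_\mu$ and recurrence, and it encodes two quasi-returns of $\gamma_z$ near $p$ along genuinely different ``branches''.

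Next, using the transverse intersection with $\gamma'$ as a crossing, I would glue a long arc of $\gamma_z$ between such quasi-returns with a short excursion along $\gamma'$, producing a piecewise-geodesic loop that possesses a transverse self-intersection near $p$ and stays in an $\varep/2$-neighborhood of $\gamma_z$. Its geodesic representative $c$ in its free homotopy class lies in $\dot\Lambda_i$ (by geodesic-flow invariance and closedness of $\dot\Lambda_i$), remains $\varep$-close to $\gamma_z$ on the long segment, and remains non-simple since a transverse self-intersection is stable under geodesic straightening.

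The most delicate step is to realize $c$ as the tracking geodesic of an actual periodic orbit of $f$ in $\cl_i$. The plan is to transport the geometric construction above into the language of the forcing theory: the dynamical transversality of $\mu$ and $\mu'$ provides, via the results of \cite{alepablo}, an $\F$-transverse intersection between admissible paths $I^{\Z}_\F(w)$ and $I^{\Z}_\F(w')$ for $w,w'$ generic for $\mu,\mu'$. Applying Proposition~\ref{propFondalct1} to the concatenation one obtains an admissible closed transverse path shadowing $c$, and Theorem~\ref{ThGT} (combined with the network-of-horseshoes construction in Subsection~\ref{SubSecConstrG}) produces a periodic point $z'$ whose orbit lifts within bounded distance of the axis of $c$, so that $\gamma_{z'}=c$. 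Almost all of the $z'$-orbit shadows the $z$-orbit, whence $\|\rho(z')-\rho(z)\|\le\varep$ after choosing the parameters appropriately; moreover $z'\in\cl_i$ because $\dot\gamma_{z'}(\R)\subset\dot\Lambda_i$ and $z'$ inherits a dynamical transversality with $\mu'$.

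The main obstacle is this last step: producing a periodic orbit whose \emph{tracking geodesic itself} is a prescribed non-simple closed geodesic of $\dot\Lambda_i$, not merely a periodic orbit sharing its homological rotation vector. This requires the full rotational shadowing package of the paper (forcing theory \cite{lct1} together with the creation-of-periodic-orbits result \cite{paper1PAF}); everything else in the argument is of a soft geometric nature.
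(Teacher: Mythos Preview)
Your plan is workable in spirit but substantially overcomplicated, and it misses the simple trick that makes the paper's proof a five-line argument. The paper does \emph{not} construct a specific non-simple closed geodesic $c$ and then try to realize it as a tracking geodesic. Instead, it first fixes a second measure $\mu''\in\cl_i$ with a typical point $z''$ whose tracking geodesic $\wt\gamma_{\wt z''}$ crosses $\wt\gamma_{\wt z}$ transversally (this exists by Definition~\ref{DefClassesI1} and density, Theorem~\ref{thm:equidistributiontheoremintro}). Then it invokes \cite[Theorem~5.8]{alepablo} as a black box: that result directly furnishes a periodic point $z'\in\cl_i$ with $\rho(z')\in B(\rho(z),\varepsilon)$ and, crucially, \emph{two} lifts $\wt z'_1,\wt z'_2$ of $z'$ such that $\wt\gamma_{\wt z'_1}$ is $\varepsilon$-close to $\wt\gamma_{\wt z}$ and $\wt\gamma_{\wt z'_2}$ is $\varepsilon$-close to $\wt\gamma_{\wt z''}$. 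Since $\wt\gamma_{\wt z}$ and $\wt\gamma_{\wt z''}$ cross transversally, for $\varepsilon$ small enough the two lifts $\wt\gamma_{\wt z'_1}$ and $\wt\gamma_{\wt z'_2}$ of the \emph{same} geodesic $\gamma_{z'}$ also cross, forcing $\gamma_{z'}$ to be non-simple. That is the entire proof.

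The step you flag as ``the main obstacle'' --- prescribing a closed geodesic and then finding a periodic orbit tracking exactly it --- is simply not needed, and indeed Theorem~\ref{ThGT} as stated does not give you this (it requires the input geodesics to \emph{already} be tracking geodesics of ergodic measures). Your geometric construction of the loop $c$, the appeal to stability of self-intersections under straightening, and the forcing-theory realization are all attempts to rebuild what \cite[Theorem~5.8]{alepablo} already packages; once you know that result produces a periodic point whose tracking geodesic simultaneously approximates two given transversal geodesics of $\dot\Lambda_i$, non-simplicity drops out for free from the elementary fact that nearby geodesics inherit transversal crossings.
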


\begin{proof}
Let $\mu\in \cl_i$ for some $i\in I_{\mathrm h}$ and $z\in S$ that is $\mu$-typical. By definition, there exist $\mu''\in \cl_i$ and $z''$ that is typical for $\mu''$ such that $\gamma_z$ and $\gamma_{z''}$ intersect transversally. Let $\wt z$ and $\wt z''$ be lifts of $z$ and $z''$ to $\wt S$ such that $\wt \gamma_{\wt z}$ and $\wt \gamma_{\wt z''}$ intersect transversally.

Let $\varep>0$. By \cite[Theorem~5.8]{alepablo}, there exists $\mu'\in \cl_i$, $z'$ that is typical for $\mu'$ and periodic and $\wt z'_1,\wt z'_2$ two lifts of $z'$ such that $\rho(z')\in B(\rho(z),\varepsilon)$, $d(\wt\gamma_{\wt z'_1},\wt\gamma_{\wt z})<\varepsilon$ and $d(\wt\gamma_{\wt z'_2},\wt\gamma_{\wt z''})<\varepsilon$. As $\wt \gamma_{\wt z}$ and $\wt \gamma_{\wt z''}$ intersect transversally, if $\varep$ is small enough, the two geodesics $\wt\gamma_{\wt z'_1}$ and $\wt\gamma_{\wt z'_2}$ intersect transversally, which means that $\gamma_{z'}$ is not simple.
\end{proof}

\begin{lemma}\label{LemNotSimpleIntersect}
Let $f\in \Homeo_0(S)$ and $\mu_1, \mu_2\in\Merg(f)$. Suppose that $\mu_1$ and $\mu_2$ are dynamically transverse and that neither $\dot\Lambda_{\mu_1}$ nor $\dot\Lambda_{\mu_2}$ are made of a single simple closed geodesic.

Then for $\mu_1$-a.e.\ $z_1$ and $\mu_2$-a.e.\ $z_2$ the transverse trajectories $I^\Z_\F(z_1)$ and $I^\Z_\F(z_2)$ intersect $\F$-transversally.
More precisely, if $\wt z_1$ and $\wt z_2$ are lifts of $z_1$ and $z_2$ to $\wt S$ such that $\wt\gamma_{\wt z_1}$ and $\wt\gamma_{\wt z_2}$ intersect transversally, then the transverse trajectories $I^\Z_{\wt\F}(\wt z_1)$ and $I^\Z_{\wt\F}(\wt z_2)$ intersect $\wt\F$-transversally.
\end{lemma}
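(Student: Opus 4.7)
I would transfer the transverse intersection of the tracking geodesics $\wt\gamma_{\wt z_1}$, $\wt\gamma_{\wt z_2}$ in $\wt S\simeq\Hy^2$ to an $\wt\F$-transverse intersection of the transverse trajectories $I^\Z_{\wt\F}(\wt z_1)$ and $I^\Z_{\wt\F}(\wt z_2)$; the full almost-everywhere statement will then follow by a density argument.

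First, I would control the behaviour of the transverse trajectories at infinity. The isotopy on $S$ being of bounded displacement and the foliation $\F$ being chosen as in Theorem~\ref{TheoExistIstop}, each piece $I^{[n,n+1]}_{\wt\F}(\wt z_i)$ has uniformly bounded diameter, so $I^\Z_{\wt\F}(\wt z_i)$ is a proper path in $\wt S$. Theorem~\ref{DefTrackGeod} yields
\[ d\big(I^n_{\wt\F}(\wt z_i),\;\wt\gamma_{\wt z_i}(n\vartheta_{\mu_i})\big)=o(n), \]
so $I^\Z_{\wt\F}(\wt z_i)$ and $\wt\gamma_{\wt z_i}$ share the same pair of endpoints $\xi_i^\pm\in\partial\wt S$ in the hyperbolic compactification. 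Transverse intersection of the two geodesics means the four points $\xi_1^\pm,\xi_2^\pm$ are pairwise distinct and interleave on $\partial\wt S$, so the two proper transverse paths must cross topologically at some $\wt w = I^{t_1}_{\wt\F}(\wt z_1) = I^{t_2}_{\wt\F}(\wt z_2)$.

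The crux of the proof is then to promote this topological crossing to an $\wt\F$-transverse intersection in the sense of Definition~\ref{DefInterTrans}. Set $\wh\phi_3=\wh\phi_{\wt w}$; for $N$ large, the initial sub-arcs $I^{[-N,t_1]}_{\wt\F}(\wt z_1)$ and $I^{[-N,t_2]}_{\wt\F}(\wt z_2)$ would serve as candidate approach paths $\wh\delta_1,\wh\delta_2$ to $\wh\phi_3$: the interleaving of the $\xi_i^\pm$ on $\partial\wt S$ forces that, after shortening them if necessary, these sub-arcs can be made disjoint and reach $\wh\phi_3$ in the order dictated by the interleaving, witnessing that $\wh\phi_{I^{-N}_{\wt\F}(\wt z_1)}$ is above $\wh\phi_{I^{-N}_{\wt\F}(\wt z_2)}$ relative to $\wh\phi_3$; the symmetric argument at time $+N$ yields the reversed ordering. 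The hypothesis that neither $\dot\Lambda_{\mu_i}$ reduces to a single simple closed geodesic is used here to rule out the degenerate situation where the two paths stay $\wt\F$-equivalent on arbitrarily long pieces instead of genuinely crossing: when $\dot\Lambda_{\mu_i}$ is richer, both trajectories eventually leave any given leaf's neighborhood in both time directions, ensuring a clean crossing. This step is the main obstacle, as it requires delicate control of the leaf space of $\wt\F$ near $\partial\wt S$ (which is only a one-dimensional, possibly non-Hausdorff manifold) and a careful selection of the disjoint approach paths.

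Finally, the almost-everywhere statement follows from the refined one by Theorem~\ref{thm:equidistributiontheoremintro}: for $\mu_i$-a.e.~$z_i$ the tracking geodesic $\gamma_{z_i}$ is recurrent with $\overline{\dot\gamma_{z_i}(\R)}=\dot\Lambda_{\mu_i}$. Combined with the transversely intersecting pair of geodesics in $\dot\Lambda_{\mu_1}$ and $\dot\Lambda_{\mu_2}$ supplied by dynamical transversality, this density forces typical tracking geodesics $\gamma_{z_1},\gamma_{z_2}$ to themselves intersect transversally. Choosing lifts realising such a transverse intersection then places us in the hypothesis of the refined statement, yielding an $\F$-transverse intersection of $I^\Z_\F(z_1)$ and $I^\Z_\F(z_2)$ on $S$ by projection.
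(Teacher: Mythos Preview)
Your approach differs from the paper's. The paper's proof is a two-line citation: by the analysis in the proof of \cite[Theorem~5.8]{alepablo}, when the lifted tracking geodesics cross transversally there is a trichotomy for the two transverse trajectories --- either $I^\Z_\F(z_1)$ accumulates on $I^\Z_\F(z_2)$, or vice versa, or they intersect $\F$-transversally --- and both accumulation cases are then ruled out by \cite[Proposition~3.3]{guiheneuf2023area}.

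Your geometric route (interleaved endpoints on $\partial\wt S$ force a topological crossing, then promote to an $\wt\F$-transverse intersection) is natural, and your first and last paragraphs are fine, but the promotion step has a genuine gap. A topological crossing of two transverse paths need not be $\wt\F$-transverse: the obstruction is exactly the accumulation phenomenon, in which $I^{[a_1,+\infty)}_{\wt\F}(\wt z_1)$ remains $\wt\F$-equivalent to a finite piece $I^{[a_2,b_2)}_{\wt\F}(\wt z_2)$, so that the leaves it meets pile up against $\wh\phi_{I^{b_2}_{\wt\F}(\wt z_2)}$ without ever passing it. You correctly flag this as ``the main obstacle'', but the sentence ``when $\dot\Lambda_{\mu_i}$ is richer, both trajectories eventually leave any given leaf's neighborhood in both time directions'' is a description of the desired conclusion, not an argument for it. Excluding accumulation under the hypothesis that $\dot\Lambda_{\mu_i}$ is not a single simple closed geodesic is a nontrivial fact --- it is precisely the content of the external result the paper invokes, and it uses the recurrence of the trajectories together with the lamination structure in an essential way. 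Without either supplying that argument or citing the relevant result, the proof is incomplete at the one step that carries the lemma.
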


Note that this lemma can be applied to a single measure $\mu$ such that $\dot\Lambda_{\mu}$ is not a geodesic lamination, it implies that for $\mu$-a.e.\ $z$ the transverse trajectory $I^\Z_\F(z)$ has a self $\F$-transverse intersection.

\begin{proof}
By the proof of \cite[Theorem~5.8]{alepablo}, there are three possibilities (as explained in the beginning of Paragraph 5.3.1, the very end of Paragraph 5.3.1, and the beginning of Paragraph 5.3.2 of \cite{alepablo}):
\begin{enumerate}
\item\label{item1} either $I^\Z_\F(z_1)$ accumulates in $I^\Z_\F(z_2)$;
\item\label{item2} or $I^\Z_\F(z_2)$ accumulates in $I^\Z_\F(z_1)$;
\item\label{item3} or $I^\Z_\F(z_1)$ and $I^\Z_\F(z_2)$ intersect $\F$-transversally.
\end{enumerate}
But both \ref{item1}.~and \ref{item2}.~are impossible, because of \cite[Proposition 3.3]{guiheneuf2023area}.
\end{proof}

\subsection{Bounded deviations in homotopy}

An important part of this article's proofs is based on the following criterion of existence of periodic orbits with certain rotational behaviour \cite[Corollary~4.10]{paper1PAF}.

For $\alpha\subset S$ a loop and $\beta : [a,b]\to S$ a path, we call \emph{geometric intersection number} between $\alpha$ and $\beta$ the minimal number of sets $T\wt\alpha$ a path homotopic to $\wt\beta$ rel.~endpoints intersects, where $T\in\G$ and $\wt\alpha$, $\wt\beta$ are lifts of $\alpha$ and $\beta$ to $\wt S$.

For $E$ a set and $R>0$, denote $B_R(E) = \{x\mid d(x, E)<R\}$.

\begin{theorem}\label{ThGT}
Let $f\in\Homeo_0(S)$ and $\gamma_1, \gamma_2$ two closed geodesics that are tracking geodesics for some $f$-ergodic measures and that are not simple geodesics. 
Let $T_1, T_2\in \G$ be primitive deck transformations associated to these closed geodesics.

Then there exist periodic points $z_1$ and $z_2$ such that $\gamma_{z_1} = \gamma_1$ and $\gamma_{z_2} = \gamma_2$.

Moreover, for any $M>0$ there exists $D'>0$ and $m_1\ge 0$ such that the following is true.
For $i=1,2$, suppose that there exist $4$ deck transformations $(R_i^j)_{1\le j\le 4}\in\G$ such that the following properties hold:
\begin{itemize}
\item the sets $R_i^j B_{D'}(\wt\gamma_i)$ are pairwise disjoint and have the same orientation;
\item there exists $0\le n'_0\le n_0$, with $n'_0\ge m_1$ and $n_0-n'_0\ge m_1$ such that for any $1\le j\le 4$, the points $\wt y_0$ and $\wt f^{n'_0}(\wt y_0)$ lie in different sides of the complement of $R_1^j B_{D'}(\wt\gamma_1)$, and the points $\wt f^{n'_0}(\wt y_0)$ and $\wt f^{n_0}(\wt y_0)$ lie in different sides of the complement of $R_2^j B_{D'}(\wt\gamma_2)$.
\end{itemize}

Then there exists an $\wt f$-admissible transverse path $\wt\beta$ of order $n_0+2m_1$ and parametrized by $[t_0, t_2]$, and some $t_1\in (t_0, t_2)$ such that $\wt\beta|_{[t_0, t_1]}$ and $R_3^1 T_1^3(R_2^1)^{-1}\wt\beta|_{[t_0, t_1]}$ intersect $\F$-transversally, and that $\wt\beta|_{[t_1, t_2]}$ and $R_2^2 T_2^{-3}(R_3^2)^{-1}\wt\beta|_{[t_1, t_2]}$ intersect $\F$-transversally.

The path $\wt\beta$ is made of the concatenation of some paths $I^{[s_1,t_1]}_\F(z_1)$, $I^{[u_1, u_2]}_\F(y_0)$ and $I^{[s_2,t_2]}_\F(z_2)$, with $t_1-s_1\ge M$ and $t_2-s_2\ge M$.

Finally, if $\gamma_1 = \gamma_2$, then there exists a constant $d_0>0$ depending only on $z$ (and neither on $y_0$ nor on $n_0$) such that the tracking geodesic $\gamma_p$ of $p$ is freely homotopic to the concatenation $I^{[t_2,t_3]}_{\F}( y_0)\delta$, where $\diam(\wt\delta)\le d_0$ (with $\wt\delta$ a lift of $\delta$ to $\wt S$). 
\end{theorem}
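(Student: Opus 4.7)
The strategy is to combine three ingredients: (i) the equidistribution/approximation results of \cite{alepablo} used already in the proof of Proposition~\ref{LemNotSimpleTracking}, which produce periodic points with prescribed tracking geodesic; (ii) Lemma~\ref{LemNotSimpleIntersect} guaranteeing that non-simple tracking geodesics come with $\F$-transverse self-intersections of the transverse trajectories; and (iii) Proposition~\ref{propFondalct1} (Le~Calvez--Tal's forcing lemma), which concatenates two admissible paths into a longer one at each $\F$-transverse intersection.

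\emph{Step 1. Production of $z_1,z_2$.} Each $\gamma_i$ is a non-simple tracking geodesic of an ergodic measure, so \cite[Theorem~5.8]{alepablo} provides a periodic point $z_i$ with $\gamma_{z_i}=\gamma_i$. Because $\gamma_i$ is not simple, Lemma~\ref{LemNotSimpleIntersect} shows that the lifted transverse trajectory $I^\Z_{\wt\F}(\wt z_i)$ has a $\wt\F$-transverse self-intersection: there exist $S_i\in\G$ and times such that $I^{[\alpha_i,\beta_i]}_{\wt\F}(\wt z_i)$ and $S_i I^{[\alpha'_i,\beta'_i]}_{\wt\F}(\wt z_i)$ cross $\wt\F$-transversally. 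Iterating using the periodicity of $z_i$, one may replace $S_i$ by $S_i T_i^k$ for any $k\in\Z$ and extract arbitrarily long segments on both sides of the intersection, in particular of length at least $M$.

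\emph{Step 2. Fitting in the excursions of $y_0$.} Fix $M$. Choose $D'$ larger than the Hausdorff distance between $\wt\gamma_i$ and the finite segment of $I^\Z_{\wt\F}(\wt z_i)$ realizing the self-intersection, and $m_1$ large enough that a trajectory of length $m_1$ of $z_i$ sweeps a full period of $\gamma_i$ inside any $R_i^j B_{D'}(\wt\gamma_i)$. The hypothesis that $\wt y_0$ and $\wt f^{n'_0}(\wt y_0)$ lie on opposite sides of each corridor $R_1^j B_{D'}(\wt\gamma_1)$, for $j=1,\dots,4$, forces $I^{[0,n'_0]}_{\wt\F}(\wt y_0)$ to traverse each of the four disjoint, coherently oriented corridors along $\gamma_1$; symmetrically for the second half and $\gamma_2$. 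Two of the four corridors (those indexed by $1,2$) are reserved to host the long $z_1$-segment, while the remaining two (indices $3,4$) allow one to detect an $\wt\F$-transverse intersection between the concatenation of $y_0$- and $z_1$-segments and its $R_3^1 T_1^3 (R_2^1)^{-1}$-translate; the identical scheme applies on the second half with the transformation $R_2^2 T_2^{-3}(R_3^2)^{-1}$.

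\emph{Step 3. Forcing and the final statement.} Proposition~\ref{propFondalct1} applied to each of the two $\wt\F$-transverse intersections produces the admissible path $\wt\beta$ of total order $n_0+2m_1$, as the concatenation of $I^{[s_1,t_1]}_\F(z_1)$, $I^{[u_1,u_2]}_\F(y_0)$ and $I^{[s_2,t_2]}_\F(z_2)$ described in the statement. In the case $\gamma_1=\gamma_2$, a further application of the forcing lemma to the transverse self-intersection of $\wt\beta$ yields a periodic point $p$ whose transverse trajectory is $\wt\F$-equivalent to $\wt\beta$; since the contributions of the $z_1$- and $z_2$-segments to the homotopy class in $\pi_1(S)$ are bounded in terms of $z$ alone, the tracking geodesic of $p$ is freely homotopic to $I^{[t_2,t_3]}_\F(y_0)\delta$ with $\diam(\wt\delta)\le d_0$, for a constant $d_0$ depending only on $z$.

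\emph{Main obstacle.} The most delicate point is the geometric argument of Step~2: one must check that four disjoint, coherently oriented translates $R_i^j B_{D'}(\wt\gamma_i)$ actually provide enough room to realize the $\wt\F$-transverse intersections with exactly the prescribed deck transformations $R_3^1 T_1^3 (R_2^1)^{-1}$ and $R_2^2 T_2^{-3}(R_3^2)^{-1}$. This requires careful bookkeeping of orientations of the corridors and of the order in which $\wt y_0$ traverses them, and is where the primitivity of $T_1$ and $T_2$ enters decisively, since it ensures that the group element realized by crossing three consecutive translates of $B_{D'}(\wt\gamma_i)$ is indeed $T_i^3$ and not a proper power thereof.
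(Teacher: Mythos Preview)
The paper does not actually contain a proof of this theorem: it is quoted verbatim as \cite[Corollary~4.10]{paper1PAF} (the companion paper by the author and Tal on homotopically bounded deviations), and is used here purely as a black box. There is therefore no ``paper's own proof'' to compare your attempt against.

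That said, your sketch identifies the correct ingredients --- periodic points from \cite[Theorem~5.8]{alepablo}, self-$\F$-transverse intersections from Lemma~\ref{LemNotSimpleIntersect}, and concatenation via Proposition~\ref{propFondalct1} --- but Step~2 is not a proof: it is a description of what one would like to happen. The statement ``forces $I^{[0,n'_0]}_{\wt\F}(\wt y_0)$ to traverse each of the four disjoint, coherently oriented corridors'' conflates the \emph{orbit} crossing the $D'$-neighbourhoods (which is the hypothesis) with the \emph{transverse trajectory} doing so in a way that yields $\wt\F$-transverse intersections with the periodic trajectories of $z_i$; these are different conditions, and bridging them is precisely the content of the bounded-deviation machinery of \cite{paper1PAF}. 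Likewise, your choice of $D'$ (``larger than the Hausdorff distance between $\wt\gamma_i$ and the finite segment\ldots'') is circular: the self-intersection segment of $z_i$ already stays within bounded distance of $\wt\gamma_i$, but one needs $D'$ large enough that \emph{any} trajectory crossing the corridor is forced to have an $\wt\F$-transverse intersection with the periodic transverse path, and this requires the quantitative forcing estimates developed in \cite{paper1PAF}, not just a Hausdorff bound. Your ``Main obstacle'' paragraph correctly locates the gap but does not close it; the bookkeeping of the specific deck transformations $R_3^1 T_1^3 (R_2^1)^{-1}$ and $R_2^2 T_2^{-3}(R_3^2)^{-1}$, and the role of the number~4 of corridors, are outputs of a careful pigeonhole-plus-orientation argument carried out in \cite{paper1PAF} that your sketch does not reproduce.
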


This theorem will often be combined with the following result \cite[Lemma~2.2]{paper1PAF}.

\begin{lemma}\label{LemUseResidFinite}
Let $\gamma$ be a closed geodesic on $S$. Then for any $M_0>0$ and any $R>0$, there exists $N_0\in\N$ such that for any path $\alpha : [0,1]\to S$ whose geometric intersection number with $\gamma$ is bigger than $N_0$, any lift $\wt\alpha$ of $\alpha$ to $\wt S$ crosses geometrically $M_0$ lifts of $\gamma$ that are pairwise disjoint, have the same orientation and are pairwise at distance $\ge R$.
\end{lemma}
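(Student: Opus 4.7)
The plan is to combine the LERF (subgroup separability) property of surface groups, due to Scott, with a pigeonhole argument on a suitable finite cover of $S$. The overall idea is: in a well-chosen finite Galois cover $\bar S$ of $S$, every component of the preimage of $\gamma$ is a simple closed geodesic whose lifts to $\wt S$ are pairwise at distance $\ge R$, and once such a cover exists, pigeonhole produces a component crossed many times by the lift $\bar\alpha$ of $\alpha$, the corresponding lifts in $\wt S$ being automatically far apart.

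First I would handle the orientation condition by a cheap pigeonhole: the $N$ transverse crossings of $\wt\alpha$ with lifts of $\gamma$ each carry a sign $\pm 1$ depending on the orientations of $\gamma$ and $\wt\alpha$, so at least $N/2$ share a sign; restricting to these will ensure the selected lifts have the same orientation. The key geometric fact I would record next is that the set $F_R$ of $S\in\G\setminus\langle T_\gamma\rangle$ with $d(\wt\gamma, S\wt\gamma)<R$ is the union of finitely many double cosets in $\langle T_\gamma\rangle\backslash\G/\langle T_\gamma\rangle$. This is because such double cosets are in bijection with self-approaches of the compact curve $\gamma$ in $S$ at distance less than $R$, which are finite in number by compactness. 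Call $S_1,\dots,S_p$ representatives.

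By LERF applied to the finitely generated subgroup $\langle T_\gamma\rangle$ and the finite avoidance set $\{S_1,\dots,S_p\}$, I can find a finite-index subgroup $H_0\le\G$ containing $\langle T_\gamma\rangle$ and avoiding each $S_i$; the inclusion $\langle T_\gamma\rangle\subseteq H_0$ automatically upgrades this to $H_0\cap F_R=\emptyset$. I would then replace $H_0$ by its normal core $H=\bigcap_{g\in\G}gH_0g^{-1}\triangleleft\G$, which is still of finite index and still satisfies $H\cap F_R=\emptyset$. Although $H$ need not contain $\langle T_\gamma\rangle$, the intersection $H\cap\langle T_\gamma\rangle$ equals $\langle T_\gamma^m\rangle$ for some $m\ge 1$. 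A direct computation using normality then shows that in the Galois cover $\bar S=H\backslash\wt S$, every component of the preimage of $\gamma$ has pairwise $R$-separated lifts in $\wt S$: if $h_1T_j\wt\gamma$ and $h_2T_j\wt\gamma$ were distinct lifts at distance $<R$, then $T_j^{-1}h_1^{-1}h_2T_j$ would lie in $T_j^{-1}HT_j\cap(F_R\cup\langle T_\gamma\rangle)=H\cap\langle T_\gamma\rangle=\langle T_\gamma^m\rangle$, which forces $h_1^{-1}h_2\in T_j\langle T_\gamma\rangle T_j^{-1}=\operatorname{Stab}(T_j\wt\gamma)$, a contradiction.

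Finally I would lift $\alpha$ to a path $\bar\alpha$ in $\bar S$. The preimage of $\gamma$ has a finite number $k$ of components (bounded in terms of $R,\gamma,S$), and the $N$ intersections of $\bar\alpha$ with this preimage project bijectively onto the $N$ intersections of $\alpha$ with $\gamma$. Combining with the orientation pigeonhole, some component $\bar\gamma^*$ is crossed at least $N/(2k)$ times with a fixed sign; the corresponding lifts in $\wt S$ are, by the previous paragraph, pairwise at distance $\ge R$ (hence pairwise disjoint) and have the same orientation. Setting $N_0:=2kM_0$ yields the conclusion. The main obstacle I anticipate is the bookkeeping around the passage to the normal core $H$: one loses the containment $\langle T_\gamma\rangle\subseteq H$, and verifying that \emph{every} component (not just the distinguished one) has the required separation property relies crucially on the normality of $H$ via the conjugation identity $T_j^{-1}HT_j=H$.
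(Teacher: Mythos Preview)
Your argument is correct. The paper does not prove this lemma itself but imports it from \cite[Lemma~2.2]{paper1PAF}; the lemma's internal label indicates that the cited proof likewise proceeds by passing to a finite cover obtained from subgroup separability, so your approach matches the intended one.

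Two minor remarks on the write-up. In the final step it is cleaner to run the pigeonhole directly in $\wt S$ rather than speaking of ``intersections of $\bar\alpha$'': the $N$ distinct lifts of $\gamma$ separating $\wt\alpha(0)$ from $\wt\alpha(1)$ partition according to the $k=|H\backslash\G/\langle T_\gamma\rangle|$ components of the preimage of $\gamma$ in $\bar S$, so at least $N/(2k)$ of those sharing a crossing sign lie over one component $\bar\gamma_j$; these are distinct $H$-translates of $T_j\wt\gamma$ and hence pairwise $R$-separated (in particular disjoint) by your normality computation. Second, the property you actually invoke is cyclic subgroup separability (Scott's LERF theorem) and not bare residual finiteness: you need the image of each $S_i$ in the finite quotient to lie \emph{outside the image of} $\langle T_\gamma\rangle$, which residual finiteness alone does not guarantee. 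Your passage to the normal core, and the verification via $T_j^{-1}HT_j=H$ that the separation property holds for \emph{every} component, is exactly the right way to handle this.
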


\section{Heteroclinic horseshoes in forcing theory}

\subsection{Markovian intersections}\label{SubSecmarkov}

We now recall some properties of Markovian intersections as stated in \cite[Chapter 9, Section 2]{pa}. 
Note that \cite[Proposition 9.12]{pa} is false and is replaced here by Proposition~\ref{LemPointFixe}, which is sufficient in practice (and also in all the applications made in \cite{pa}).

\begin{definition}\label{Defmarkov}
Let $S$ be a surface. We call \emph{rectangle} of $S$ a subset $R \subset S$ satisfying $R = h([0,1]^2)$ for some homeomorphism $h: [0,1]^2\to h([0,1]^2)\subset S$. We call \emph{sides} of $R$ the image by $h$ of the sides of $[0,1]^2$. We call \emph{horizontal} the sides $R^- = h([0,1] \times \left\{ 0 \right\})$ and $R^+ = h([0,1] \times \left\{ 1 \right\})$ and \emph{vertical} the two others. We say that a rectangle $R' \subset R$ is a \emph{horizontal} (resp. vertical) \emph{subrectangle} of $R$ if the vertical (resp. horizontal) sides of $R'$ are included in those of $R$.
\end{definition}

Note that the relation ``being a horizontal subrectangle'' is transitive: a horizontal subrectangle $R''$ of a horizontal subrectangle $R'$ of a rectangle $R$ is a horizontal subrectangle of $R$ (and the same holds for vertical subrectangles). 

Given $x \in \R^2$, we will denote by $\pi_2(x)$ its second coordinate. Following \cite{MR2060531}, we define Markovian intersections in the following way:

\begin{definition}\label{def:markov}
Let $R_1$ and $R_2$ be two rectangles of a surface $S$. We say that the intersection $R_1 \cap R_2$ is \emph{pre-Markovian} if there exists a homeomorphism $h$ from a neighbourhood of $R_1\cup R_2$ to an open subset of $\R^2$ such that (see Figure~\ref{FigExMarkov}):
\begin{itemize}
\item $h(R_2) = [0,1]^2$;
\item either $h(R_1^+) \subset \left\{x\mid \pi_2(x) > 1 \right\}$ and $h(R_1^-) \subset \left\{x\mid \pi_2(x) < 0 \right\}$,\\
or $h(R_1^-) \subset \left\{x\mid \pi_2(x) > 1 \right\}$ and $h(R_1^+) \subset \left\{x\mid \pi_2(x) < 0 \right\}$;
\item $h(R_1) \subset \left\{x\mid \pi_2(x) < 0 \right\} \cup [0,1]^2 \cup \left\{x\mid \pi_2(x) > 1 \right\}$.
\end{itemize}

We say that the intersection $R_1 \cap R_2$ is \emph{Markovian}, and denote it $R_1\cap_M R_2$, if there exists a horizontal subrectangle $R'_1$ of $R_1$ such that the intersection $R'_1\cap R_2$ is pre-Markovian\footnote{Equivalently, one can replace this definition by asking that there exist a vertical subrectangle $R'_2$ of $R_2$ such that the intersection $R_1\cap R'_2$ is pre-Markovian.}.
\end{definition}

\begin{figure}[h!]
\begin{center}

\tikzset{every picture/.style={line width=0.75pt}} 

\begin{tikzpicture}[x=0.75pt,y=0.75pt,yscale=-.7,xscale=.7]

\draw  [fill={rgb, 255:red, 100; green, 100; blue, 100 }  ,fill opacity=.3 ] (210,120) -- (370,120) -- (370,280) -- (210,280) -- cycle ;
\draw  [dash pattern={on 4.5pt off 4.5pt}]  (100,120) -- (210,120) ;
\draw  [dash pattern={on 4.5pt off 4.5pt}]  (370,120) -- (470,120) ;
\draw  [dash pattern={on 4.5pt off 4.5pt}]  (100,280) -- (210,280) ;
\draw  [dash pattern={on 4.5pt off 4.5pt}]  (370,280) -- (470,280) ;
\draw  [fill={rgb, 255:red, 107; green, 192; blue, 254 }  ,fill opacity=.5 ] (268.15,116.25) .. controls (215,182.25) and (208,66.75) .. (208.15,66.25) .. controls (208.29,65.75) and (265.5,39.25) .. (300,50) .. controls (334.5,60.75) and (360.67,70.17) .. (360,70) .. controls (359.33,69.83) and (353,94.25) .. (348.15,116.25) .. controls (343.29,138.25) and (292.15,298) .. (338.15,306.25) .. controls (384.15,314.5) and (377.29,346.67) .. (378.15,346.25) .. controls (379,345.83) and (321.5,329.25) .. (300,330) .. controls (278.5,330.75) and (227.96,346) .. (228.15,346.25) .. controls (228.33,346.5) and (246,290.25) .. (230,250) .. controls (214,209.75) and (321.29,50.25) .. (268.15,116.25) -- cycle ;
\draw [color={rgb, 255:red, 208; green, 2; blue, 27 }  ,draw opacity=1, line width=1.2pt ]   (208.15,66.25) .. controls (207.5,66.25) and (265.67,39.89) .. (300,50) .. controls (334.33,60.11) and (360.5,70.75) .. (360,70) ;
\draw [color={rgb, 255:red, 208; green, 2; blue, 27 }  ,draw opacity=1, line width=1.2pt ]   (228.15,346.25) .. controls (227.67,345) and (283.44,330.56) .. (300,330) .. controls (316.56,329.44) and (379,345.22) .. (378.15,346.25) ;

\draw (378,222.4) node [anchor=north west][inner sep=0.75pt]    {$h(R_{2})$};
\draw (247,200) node [anchor=north west][inner sep=0.75pt]  [color={rgb, 255:red, 0; green, 9; blue, 99 }  ,opacity=1 ]  {$h(R_{1})$};
\draw (246,55) node [anchor=north west][inner sep=0.75pt]  [color={rgb, 255:red, 142; green, 0; blue, 16 }  ,opacity=1 ]  {$h(R_{1}^{+})$};
\draw (251,295) node [anchor=north west][inner sep=0.75pt]  [color={rgb, 255:red, 142; green, 0; blue, 16 }  ,opacity=1 ]  {$h(R_{1}^{-})$};
\end{tikzpicture}
\hfill
\begin{tikzpicture}[x=0.75pt,y=0.75pt,yscale=-.95,xscale=.95]

\draw  [fill={rgb, 255:red, 74; green, 74; blue, 74 }  ,fill opacity=0.08 ] (260,80) -- (380,80) -- (380,200) -- (260,200) -- cycle ;
\draw  [dash pattern={on 4.5pt off 4.5pt}]  (200,80) -- (260,80) ;
\draw  [dash pattern={on 4.5pt off 4.5pt}]  (380,80) -- (440,80) ;
\draw  [dash pattern={on 4.5pt off 4.5pt}]  (200,200) -- (260,200) ;
\draw  [dash pattern={on 4.5pt off 4.5pt}]  (380,200) -- (440,200) ;
\draw  [fill={rgb, 255:red, 0; green, 118; blue, 255 }  ,fill opacity=0.15 ] (370,120) .. controls (364.6,119) and (357.4,117.8) .. (350,120) .. controls (353,96.6) and (336.86,53.08) .. (330,70) .. controls (323.14,86.92) and (357.6,210) .. (355.8,232.2) .. controls (354,254.4) and (324.17,263.68) .. (306.2,252.6) .. controls (288.23,241.52) and (277,113.8) .. (280,50) .. controls (294.38,50.26) and (288.6,50.6) .. (300,50) .. controls (304.2,151.4) and (308.63,239.17) .. (330,240) .. controls (351.37,240.83) and (310.7,76.82) .. (320,50) .. controls (329.3,23.18) and (371.4,69.4) .. (370,120) -- cycle ;
\draw  [draw opacity=0][fill={rgb, 255:red, 0; green, 114; blue, 255 }  ,fill opacity=0.3 ] (330,70) .. controls (323.14,86.92) and (357.6,210) .. (355.8,232.2) .. controls (346.11,233.44) and (340.11,247) .. (330,240) .. controls (351.37,240.83) and (310.7,76.82) .. (320,50) .. controls (324.11,55.22) and (325.22,62.33) .. (330,70) -- cycle ;
\draw [color={rgb, 255:red, 208; green, 2; blue, 27 }  ,draw opacity=1 ]   (320,50) .. controls (325.22,59.89) and (329.22,70.33) .. (330,70) ;
\draw [color={rgb, 255:red, 208; green, 2; blue, 27 }  ,draw opacity=1 ]   (330,240) .. controls (338.33,247.44) and (348.11,233.44) .. (355.8,232.2) ;

\draw (293.57,225.19) node [anchor=east] [inner sep=0.75pt]  [color={rgb, 255:red, 0; green, 89; blue, 193 }  ,opacity=1 ]  {$h( R_{1})$};
\draw (259.23,132.52) node [anchor=east] [inner sep=0.75pt]  [color={rgb, 255:red, 0; green, 0; blue, 0 }  ,opacity=1 ]  {$h( R_{2})$};
\draw (340.33,135.86) node [anchor=west] [inner sep=0.75pt]  [color={rgb, 255:red, 0; green, 89; blue, 193 }  ,opacity=1 ]  {$h( R_{1}')$};

\end{tikzpicture}

\caption{A pre-Markovian intersection (left) and a Markovian intersection(right). The horizontal sub-rectangle for the pre-markovian intersection is denoted $R'_1$.\label{FigExMarkov}}

\end{center}
\end{figure}
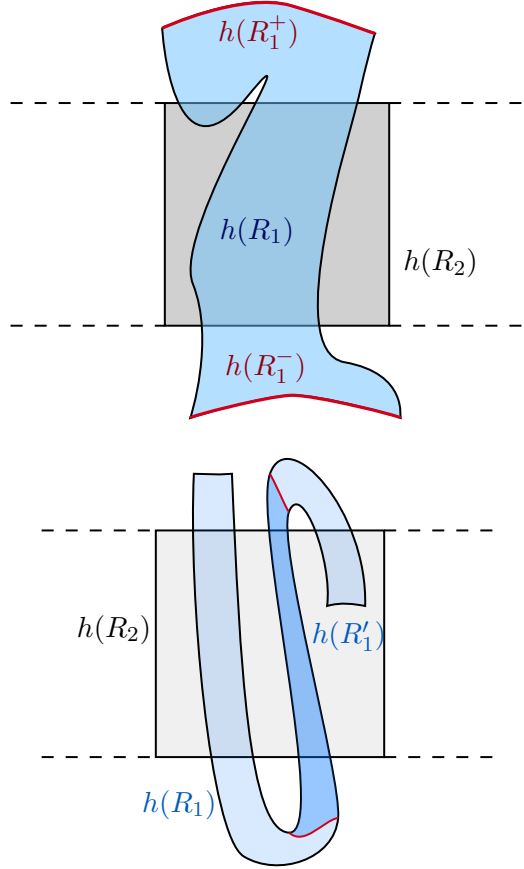

The following is a particular case of Homma's generalization \cite{MR58194} of Schoenflies theorem.

\begin{theorem}[Homma]\label{PropHomma}
Any homeomorphism of 
\[\Big(\big((\R\times\{0\})\cup (\R\times \{1\}) \cup (\{0\}\times [0,1]) \cup (\{1\}\times [0,1]) \big)\cap B(0,10)\Big)\cup \partial B(0,10)\]
to its image in $\R^2$ can be extended to a self-homeomorphism of $\R^2$.
\end{theorem}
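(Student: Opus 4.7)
Let
\[X = \Big(\big((\R\times\{0\})\cup(\R\times\{1\})\cup(\{0\}\times[0,1])\cup(\{1\}\times[0,1])\big)\cap B(0,10)\Big)\cup\partial B(0,10)\]
denote the domain of $h$. The plan is to decompose $\R^2$ (or rather its one-point compactification $S^2=\R^2\cup\{\infty\}$) into finitely many closed topological disks bounded by pieces of $X$, and to extend $h$ face by face using the classical Schoenflies theorem.

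First I would endow $X$ with the structure of a finite planar graph. Each open horizontal segment $(-10,10)\times\{0\}$ and $(-\sqrt{99},\sqrt{99})\times\{1\}$ has both endpoints on the circle $\partial B(0,10)$, so $X$ is closed and connected. Taking as vertices the eight points $(\pm 10,0)$, $(\pm\sqrt{99},1)$, $(0,0)$, $(0,1)$, $(1,0)$, $(1,1)$, each of degree three, $X$ becomes a planar graph with twelve edges. A direct inspection identifies the six components of $S^2\setminus X$: the interior of the unit square; two ``caps'' in $\{y>1\}\cap B(0,10)$ and $\{y<0\}\cap B(0,10)$; two ``wings'' in $\{0<y<1\}\cap B(0,10)$ on either side of the unit square; and the outer component containing $\infty$. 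Each of these is a Jordan domain whose boundary is a concatenation of edges of $X$.

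Next, the graph $X$ is $3$-connected (one checks directly that no pair of vertices disconnects it), so by Whitney's theorem its planar embedding is unique up to reflection. Consequently $h(X)\subset\R^2$ has the same face lattice in $S^2$: six Jordan domains, each bounded by the $h$-image of the corresponding boundary cycle of $X$. After post-composing $h$ with a reflection of $\R^2$ if necessary, I may assume that the induced correspondence identifies interiors to interiors. For each of the six face pairs I then apply the Schoenflies theorem: the restriction of $h$ to the bounding Jordan curve extends to a homeomorphism between the closed face and the closed image face (on the unbounded pair I work in $S^2$ and send $\infty$ to $\infty$). These six extensions all coincide with $h$ on $X$, hence agree with each other on their overlaps, and glue into a continuous self-map of $\R^2$ extending $h$. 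The same construction applied to $h^{-1}$ provides a continuous two-sided inverse, so the resulting map is a self-homeomorphism of $\R^2$.

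The main obstacle will be the rigidity step invoking Whitney's theorem: without $3$-connectedness of $X$, one could not exclude pathological re-embeddings by $h$ (as in the classical plus-sign example, where swapping two rays yields a self-homeomorphism of the graph that does not extend to $\R^2$). It is the specific structure of $X$ — the circle together with the ``ladder'' inside it — that forces the face lattice of $h(X)$ to match that of $X$, which in turn makes the six Schoenflies extensions compatible for gluing.
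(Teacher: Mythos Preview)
Your face-by-face Schoenflies argument, using Whitney's uniqueness theorem to match up the face lattices, correctly shows that $h$ extends to a self-homeomorphism of $S^2$; the paper itself gives no proof but simply cites Homma, and Homma's theorem is indeed this $S^2$ statement. The gap lies in the clause ``on the unbounded pair I work in $S^2$ and send $\infty$ to $\infty$'': Whitney only identifies the two face lattices \emph{in $S^2$}, and nothing forces the unbounded face of $\R^2\setminus h(X)$ to be the one bounded by $h(\partial B(0,10))$. You have not excluded $h$ sending the ladder to the \emph{outside} of the Jordan curve $h(\partial B(0,10))$.

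In fact the $\R^2$ statement, as written, is false. The abstract graph underlying $X$ carries an involution $\sigma$ exchanging the four circle vertices with the four square vertices --- explicitly $(10,0)\leftrightarrow(0,1)$, $(\sqrt{99},1)\leftrightarrow(0,0)$, $(-\sqrt{99},1)\leftrightarrow(1,0)$, $(-10,0)\leftrightarrow(1,1)$ --- and one checks directly that $\sigma$ respects adjacency, hence can be realised as a self-homeomorphism $h:X\to X$. Any extension $H:\R^2\to\R^2$ would preserve the unbounded component of $\R^2\setminus X$ and therefore send $\partial B(0,10)$ to itself, whereas $h(\partial B(0,10))$ is the boundary of the unit square; so no such $H$ exists. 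Your argument becomes a valid proof once $\R^2$ is replaced by $S^2$, and that version is all the paper actually needs for its applications.
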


Homma's theorem will be used to find rectangles and Markovian intersections.

\begin{rem}\label{RemHomma}
Homma's theorem (Theorem~\ref{PropHomma}) also implies directly that if the intersection $R_1\cap R_2$ is pre-Markovian, then for any vertical subrectangle $R'_1$ of $R_1$ and any horizontal subrectangle $R'_2$ of $R_2$, the intersection $R'_1\cap R'_2$ is also pre-Markovian.
\end{rem}

The proof of the following result can be obtained as a combination of Theorem 16 and Corollary 12 of \cite{MR2060531}. They are stated in terms of (following our terminology) pre-Markovian intersections but the previous paragraph ensures they are also valid for Markovian intersections. 
%

\begin{prop}\label{LemPointFixe}
Let $(R_i)_{0\le i\le n}$ be rectangles and $(f_i)_{1\le i\le n}$ be homeomorphisms of $S$ such that for any $1\le i\le n$, the intersection $f_i(R_{i-1}) \cap R_i$ is Markovian. Then there exists $x\in \inte(R_0)$ such that for any $1\le i\le n$, we have $f_if_{i-1}\dots f_1 (x)\in \inte(R_i)$.

Moreover, if $R_0=R_n$, then we can suppose that $f_nf_{n-1}\dots f_1 (x) = x$.
\end{prop}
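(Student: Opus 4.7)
My plan is to combine a nested-rectangles construction with a Brouwer-type fixed point argument, along the lines of Kennedy--Yorke \cite{MR2060531}. By forward induction on $k \in \{0, \ldots, n\}$, I will construct a horizontal subrectangle $H_k \subset R_0$ and a vertical subrectangle $V_k \subset R_k$, with $H_0 = V_0 = R_0$ and $H_k \subset H_{k-1}$, such that the composition $g_k := f_k \circ \cdots \circ f_1$ restricts to a homeomorphism $H_k \to V_k$. For the inductive step, the image $f_k(V_{k-1})$ is a vertical subrectangle of $f_k(R_{k-1})$; the Markovian hypothesis $f_k(R_{k-1}) \cap_M R_k$ provides a horizontal subrectangle $R'_{k-1}$ of $f_k(R_{k-1})$ with $R'_{k-1} \cap R_k$ pre-Markovian. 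The ``box'' $B_k := R'_{k-1} \cap f_k(V_{k-1})$ is then a vertical subrectangle of $R'_{k-1}$ and a horizontal subrectangle of $f_k(V_{k-1})$, so by Remark~\ref{RemHomma} the intersection $B_k \cap R_k$ is again pre-Markovian, cutting out a vertical subrectangle $V_k \subset R_k$ which is simultaneously a horizontal subrectangle of $B_k$, hence of $f_k(V_{k-1})$. Pulling back through $g_k$ then defines $H_k \subset H_{k-1}$.

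\paragraph{Interior and fixed point.} Every point $x \in H_n$ will satisfy $g_k(x) \in V_k$ for $k = 0, \ldots, n$; the pre-Markovian definition forces each $V_k$ to be strictly inside $R_k$ away from its horizontal sides, and after a preliminary slight shrinking of the $R_k$'s (which preserves Markovianity by continuity of the involved homeomorphisms) one also pushes $V_k$ away from the vertical sides of $R_k$, so that $x \in \inte(R_0)$ and $g_k(x) \in \inte(R_k)$ for all $k$. For the periodic case $R_0 = R_n$, I would straighten $R_0$ to $[0,1]^2$ using Homma's theorem (Theorem~\ref{PropHomma}); then $H_n$ becomes a horizontal strip and $V_n$ a vertical strip of the unit square, and the homeomorphism $g_n : H_n \to V_n$ sends the vertical sides of $H_n$ (on $\partial R_0$) onto the vertical sides of $V_n$ (interior to $R_0$) and the horizontal sides of $H_n$ (interior) onto the horizontal sides of $V_n$ (on $\partial R_0$). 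The classical horseshoe degree argument (equivalently, an intermediate value theorem applied along horizontal slices to the displacement $x \mapsto g_n(x) - x$ in straightened coordinates) then produces a fixed point of $g_n$ inside $H_n \cap V_n$, which lies in $\inte(R_0)$ by the strictness already observed.

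\paragraph{Main obstacle.} The hard part will be the bookkeeping of the horizontal/vertical structure through the inductive step. ``Horizontal subrectangle'' is a relative notion depending on the ambient rectangle, and the various $f_k$ a priori do not respect any fixed horizontal/vertical axes; moreover the hypothesis provides only \emph{Markovian} (not pre-Markovian) intersections, so one must at each step pass through the auxiliary horizontal subrectangle $R'_{k-1}$ in order for Remark~\ref{RemHomma} to promote the intersection back to pre-Markovian. This is presumably the gap in \cite[Proposition~9.12]{pa} that the present statement corrects, and is why I would prefer to route the argument through the Kennedy--Yorke results (Theorem~16 and Corollary~12 of \cite{MR2060531}) rather than attempting a direct combinatorial proof. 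The correspondence-of-sides property of $g_k$ that emerges from the construction is exactly what makes the $2$-dimensional fixed-point argument work in the periodic case.
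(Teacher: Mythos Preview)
Your proposal is correct and matches the paper's approach: the paper also routes the argument through Theorem~16 and Corollary~12 of \cite{MR2060531} for the pre-Markovian case, and its only displayed work is exactly the Markovian-to-pre-Markovian reduction you identify (pick, for each $i$, a horizontal subrectangle $R'_{i-1}\subset R_{i-1}$ with $f_i(R'_{i-1})\cap R_i$ pre-Markovian, then use Remark~\ref{RemHomma} to conclude $f_i(R'_{i-1})\cap R'_i$ is pre-Markovian). Your nested $H_k,V_k$ construction is essentially an explicit unfolding of what Kennedy--Yorke prove; the paper simply cites them and does the reduction in one stroke rather than inductively.
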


\begin{proof}
Let us prove that the property for pre-Markovian intersections implies the property for Markovian intersections.

For any $1\le i\le n$, the intersection $f_i(R_{i-1}) \cap R_i$ is Markovian, so there exists a horizontal subrectangle $R'_{i-1}$ of $R_{i-1}$ such that the intersection $f_i(R'_{i-1})\cap R_i$ is pre-Markovian. Using Remark~\ref{RemHomma}, we deduce that for any $1\le i\le n$, the intersection $f_i(R'_{i-1}) \cap R'_i$ is pre-Markovian, and one can apply the property for pre-Markovian intersections.
\end{proof}

The next property is a direct consequence of the definition.

\begin{lemma}\label{LemPerturbHorse}
Let $R_1, R_2$ be two rectangles such that the intersection $R_1 \cap R_2$ is Markovian. Then there exists a neighbourhood $V$ of $\Id_S$ in $\Homeo(S)$ such that for any $g\in V$, the intersection $g(R_1) \cap R_2$ is Markovian.
\end{lemma}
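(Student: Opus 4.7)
The plan is to exploit the openness of the pre-Markovian conditions, after passing to a slightly smaller horizontal subrectangle. By hypothesis there exist a horizontal subrectangle $R'_1$ of $R_1$ and a chart $h : U \to \R^2$, with $U$ an open neighbourhood of $R'_1 \cup R_2$, such that $R'_1 \cap R_2$ is pre-Markovian: $h(R_2) = [0,1]^2$, say $h((R'_1)^+) \subset \{\pi_2(x) > 1\}$ and $h((R'_1)^-) \subset \{\pi_2(x) < 0\}$, and $h(R'_1) \cap K = \emptyset$, where $K := \{x \in \R^2 : 0 \le \pi_2(x) \le 1,\ \pi_1(x) \notin [0,1]\}$.

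Parametrise $R_1 = H_1([0,1]^2)$, so $R'_1 = H_1([0,1] \times [c_0, d_0])$ for some $0 \le c_0 < d_0 \le 1$. Since $h \circ H_1([0,1] \times \{c_0\})$ is a compact subset of the open half-plane $\{\pi_2 < 0\}$, uniform continuity of $h \circ H_1$ yields $\varep, \delta > 0$ such that the horizontal subrectangle $R''_1 := H_1([0,1] \times [c_0 + \varep, d_0 - \varep])$ of $R_1$ satisfies $h((R''_1)^-) \subset \{\pi_2 \le -\delta\}$ and $h((R''_1)^+) \subset \{\pi_2 \ge 1 + \delta\}$. For any $g \in \Homeo(S)$ sufficiently $C^0$-close to $\Id_S$, the set $g(R''_1)$ is contained in a small neighbourhood of $R''_1$ inside $U$, and uniform continuity of $h$ on compact subsets of $U$ gives
\[
h(g((R''_1)^-)) \subset \{\pi_2 < 0\}, \quad h(g((R''_1)^+)) \subset \{\pi_2 > 1\}, \quad h(g(R''_1)) \cap K = \emptyset.
\]
Now $g(R''_1) = (g \circ H_1)([0,1] \times [c_0 + \varep, d_0 - \varep])$ is a horizontal subrectangle of $g(R_1) = (g \circ H_1)([0,1]^2)$, so this exhibits $g(R''_1) \cap R_2$ as pre-Markovian with chart $h$, and therefore $g(R_1) \cap R_2$ is Markovian.

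The main subtlety is the third inclusion of the display: $K$ is not closed, so compactness of $h(R''_1)$ alone does not give a positive distance to $K$, unless $h(R''_1)$ is also disjoint from $\bar K$ (which additionally contains the vertical sides of $[0,1]^2$). Generically this is automatic from the strict inclusion $h(R'_1) \cap K = \emptyset$; in the potentially degenerate case where $h(R''_1)$ touches the vertical sides of $[0,1]^2$, one instead reconstructs the chart for the perturbed pair using Homma's theorem (Theorem~\ref{PropHomma}), exploiting that the topological pattern of intersection of $\partial g(R''_1)$ with $\partial R_2$ is preserved under small perturbations of $g$.
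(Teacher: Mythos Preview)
Your argument in the non-degenerate case is correct and is exactly the ``direct consequence of the definition'' the paper intends (the paper supplies no further details). As a minor remark, the shrinking to $R''_1$ is not needed: $h((R'_1)^\pm)$ are already compact subsets of the open half-planes $\{\pi_2<0\}$ and $\{\pi_2>1\}$, so the first two conditions are automatically stable under small $g$, and once $h(R'_1)\cap\overline K=\emptyset$ the third is stable as well by compactness.

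You are right to flag the degenerate case, but your Homma-based repair does not go through: the intersection pattern of $\partial g(R''_1)$ with $\partial R_2$ is \emph{not} preserved there, and the lemma as literally stated is in fact false. In a coordinate disk of $S$ take $R_2=[0,1]^2$ and $R_1=[0,1]\times[-1,2]$ (horizontal sides at height $-1$ and $2$); with $h=\Id$ the intersection $R_1\cap R_2$ is pre-Markovian, hence Markovian. For $g(x,y)=(x-\varep,y)$ with small $\varep>0$ (extended to a homeomorphism of $S$) one has $g(R_1)=[-\varep,1-\varep]\times[-1,2]$. Every horizontal subrectangle $R'$ of $g(R_1)$ has left vertical side $\gamma=\{-\varep\}\times[c,d]$, which is disjoint from $R_2$. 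But if some chart $h'$ made $R'\cap R_2$ pre-Markovian, then $h'(\gamma)$ would be a path joining $\{\pi_2<0\}$ to $\{\pi_2>1\}$ inside $\{\pi_2<0\}\cup[0,1]^2\cup\{\pi_2>1\}$, hence meeting $[0,1]^2=h'(R_2)$ at some height $\pi_2=1/2$; this would force $\gamma\cap R_2\neq\emptyset$, a contradiction. Thus $g(R_1)\cap R_2$ is not Markovian for any $\varep>0$. The statement therefore requires the extra (generic, and satisfied in all the paper's applications) hypothesis that $R'_1$ can be chosen with $h(R'_1)$ disjoint from the vertical sides of $[0,1]^2$; under that hypothesis your first argument already suffices.
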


The following definition is a variation over the concept of rotational horseshoe defined in \cite{MR3784518} and used in \cite{lct2}.

\begin{definition}\label{DefRotHorse}
Let $S$ be a surface with negative Euler characteristic and $f$ a homeomorphism of $S$. We denote by $\wt f$ the canonical lift of $f$ to $\wt S\simeq\Hy^2$.

We say that $f$ has a \emph{rotational horseshoe with deck transformations $T_1,\dots,T_k$} if there exists a rectangle $R$ of $\wt S$ such that, for any $1\le i \le k$, the intersection $T_i R \cap \wt f(R)$ is Markovian.
\end{definition}

For any finite set $\{1,\dots,k\}^\Z$, we denote by $\sigma : \{1,\dots,k\}^\Z \rightarrow \{1,\dots,k\}^\Z$ the shift map, \emph{i.e} the map which, to a sequence $(a_{i})_{i \in \Z}$, associates the sequence $(a_{i+1})_{i \in \Z}$. 

From Proposition~\ref{LemPointFixe}, it follows a ``semi-conjugacy'' result (which allows to link our notion of horseshoe with the one of \cite{lct2}), see Propositions~9.16 and 9.17 of \cite{pa}. 

\subsection{Heteroclinic connections of horseshoes and rotation sets}

\begin{definition}\label{DefConnecRect}
Let $R_1$ and $R_2$ be two rectangles. If there exists $n\in\N$ and $T\in \G$ such that the intersection $\wt f^{n}(R_1)\cap TR_{2}$ is Markovian, we denote $R_1\to R_2$. We will also use labels on the edges: in the above configuration we will denote $R_1\overset\tau\to R_2$, where $\tau = (n,T)$.
\end{definition}

This allows to talk about the graph spanned by a family of rectangles $(R_i)_{i\in I}\subset \wt S$ and Markivian intersections between them: $G$ is the (multi)graph whose vertices are the $(R_i)_{i\in I}$ and whose edges are of the form $R_i\overset\tau\to R_j$.

\begin{definition}\label{DefConnectRectEnsRot}
Let $f\in\Homeo_0(S)$ and $\wt f$ a lift of $f$ to $\wt S$.
Suppose that there exists a family $I$ (not necessarily finite) and rectangles $(R_i)_{i\in I}\subset \wt S$ such that for any $i\in I$, the rectangle $R_i$ is a rotational horseshoe with deck transformations $T_1^i,\dots,T_{k_i}^i$ for $f^{r_i}$. For $i\in I$, denote
\[\rot_i = \conv\left\{\frac{[T_j^i]}{r_i} \ \big|\  1\le j\le k_i\right\},\]
and
\[\rot(G) = \bigcup_{R_{i_1}\to R_{i_2}\to \cdots \to R_{i_\ell}}\conv\Big(\bigcup_{1\le k\le\ell}\rot_{i_k} \Big).\]
\end{definition}

Recall that a graph $G$ is \emph{strongly connected} if for any two edges of $G$ there exists a path going from the first one to the second one and a path going from the second one to the first one.
The following proposition says that if one considers a path in the graph spanned by rectangles, the elements of the convex hull of the rotation sets of rotational horseshoes associated to those rectangles are in fact rotation vectors of the homeomorphism. If one replaces ``path'' by ``strongly connected connected component'', then the obtained elements are moreover realised as rotation vectors of some orbit. 

\begin{prop}\label{PropConnectRectEnsRot}
Let $f\in\Homeo_0(S)$ and $\wt f$ a lift of $f$ to $\wt S$. Let us place ourselves within the framework of Definition~\ref{DefConnectRectEnsRot}.

Then:
\begin{enumerate}
\item\label{P1PropConnectRectEnsRot} We have
\[\overline{\rot(G)} \subset \rot(f);\]
\item\label{P2PropConnectRectEnsRot} if $G$ is strongly connected, then any element of $\overline{\rot(G)}$ is realised as the rotation vectors of a point;
\item\label{P3PropConnectRectEnsRot} if $G$ is strongly connected, then for any $\rho\in \inte(\rot(G))$, there exists a compact $f$-invariant set $K_\rho\subset S$ such that for any $x\in K_\rho$ we have $\rho(x) = \{\rho\}$;
\item\label{P4PropConnectRectEnsRot} if $G$ is strongly connected, then for all compact connected set $C\subset\inte(\rot(G))$, there exists $x\in S$ such that $\rho(x) = C$.
\end{enumerate}
\end{prop}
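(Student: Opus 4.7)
The common engine is Proposition~\ref{LemPointFixe}: given any finite itinerary in $G$ --- a chain of rectangles linked by Markovian intersections, each provided either by an internal horseshoe loop $(T_j^{i})^{-1}\wt f^{r_i}$ inside some $R_i$ or by a transition $T_k^{-1}\wt f^{n_k}$ between consecutive rectangles --- there is a point in $\wt S$ whose $\wt f$-orbit follows the prescribed itinerary. Each internal loop $j$ of $R_i$ accumulates deck displacement $[T_j^i]$ over time $r_i$; transitions contribute a bounded cost in both. For (1), given $\rho\in\conv(\bigcup_k\rot_{i_k})$ for some path $R_{i_1}\overset{(n_1,T_1)}{\to}\cdots\overset{(n_{\ell-1},T_{\ell-1})}{\to}R_{i_\ell}$, I would approximate $\rho$ by rational convex combinations
\[
\rho_N=\sum_{k,j}\frac{p^N_{k,j}}{P_N}\cdot\frac{[T_j^{i_k}]}{r_{i_k}},\qquad\rho_N\to\rho,\;P_N\to\infty,
\]
and apply Proposition~\ref{LemPointFixe} to the itinerary that executes, in each $R_{i_k}$, the $j$-th internal loop exactly $p^N_{k,j}$ times and then the transition to $R_{i_{k+1}}$. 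The resulting point $\wt x_N\in R_{i_1}$ has an $\wt f$-orbit of duration $m_N=\sum_k r_{i_k}\sum_j p^N_{k,j}+O(1)$ accumulating deck displacement $P_N\rho_N+O(1)$, so the Birkhoff average at time $m_N$ converges to $\rho$. Since $\rot(f)$ is closed (by a diagonal extraction from its definition), this gives $\overline{\rot(G)}\subset\rot(f)$.

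Strong connectivity gives $\rot(G)=\conv(\bigcup_i\rot_i)$, a convex set, since any finite vertex set lies on a single closed walk. For (2), given $\rho\in\overline{\rot(G)}$, I would choose rationals $\rho_N\to\rho$ in $\rot(G)$ with finite itineraries $W_N$ of length $m_N$ from (1), and concatenate them using short graph connectors (available by strong connectivity) into an infinite itinerary $W$, arranging that $m_{N+1}\gg\sum_{k\le N}m_k$. The iterated preimages of the Markov intersections form a decreasing family of horizontal subrectangles of $R_{i_1}$; by compactness their intersection contains a point $\wt x$ whose $\wt f$-orbit follows $W$, and the growth condition on $m_N$ forces the Birkhoff averages along the $f$-orbit of $\wt x$ to converge to $\rho$, giving $\rho(\wt x)=\{\rho\}$. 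For (3), I would decompose $\rho\in\inte(\rot(G))$ as $\rho=\sum_{k=1}^m\lambda_k\rho^{(k)}$ with rationals $\rho^{(k)}$ realized by $f$-periodic orbits with finite loops $L_k$ in $G$ of length $q_k$, fix a minimal uniquely ergodic subshift $\Sigma\subset\{1,\ldots,m\}^\Z$ with symbol frequencies proportional to $\lambda_k/q_k$, and let $K_\rho\subset S$ be the set of points whose $\wt f$-orbit follows $\cdots L_{\sigma(-1)}L_{\sigma(0)}L_{\sigma(1)}\cdots$ (with short connectors keeping the concatenation admissible) for some $\sigma\in\Sigma$. Nesting of Markov rectangles makes $K_\rho$ compact; shift-invariance of $\Sigma$ makes it $f$-invariant; the uniform convergence of symbol frequencies under unique ergodicity, combined with the fact that the deck-displacement cocycle is determined by the itinerary, gives $\rho(x)=\{\rho\}$ for every $x\in K_\rho$.

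For (4), I would use compactness plus connectedness of $C$ to construct a sequence $(v_n)\subset C$ dense in $C$ with $\|v_{n+1}-v_n\|\to 0$ (assembling $\varepsilon_k$-nets of $C$ ordered along short chains), approximate each $v_n$ by a nearby rational realized by a periodic loop $L_n$ in $G$, and form the itinerary that repeats $L_n$ exactly $N_n$ times before passing to $L_{n+1}$'s starting rectangle via a short connector, where $N_n$ is chosen so large that at the end of the $n$-th block the Birkhoff average lies within $1/n$ of $v_n$. The main obstacle is ensuring the accumulation set of Birkhoff averages is \emph{exactly} $C$: the inclusion $C\subset\rho(x)$ follows from density of $(v_n)$ and the explicit fitting of the $N_n$'s, but the reverse inclusion $\rho(x)\subset C$ requires that during the transition from block $n$ to block $n+1$ the Birkhoff averages evolve at rate $o(1)$, so the trajectory in $H_1(S,\R)$ stays in an $o(1)$-neighborhood of the chain $(v_n)$ and hence of $C$. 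This is guaranteed by $\|v_{n+1}-v_n\|\to 0$ combined with $N_n\to\infty$ at a carefully chosen rate balancing the various approximation scales (block lengths, rational approximation of the $v_n$'s, and connector costs).
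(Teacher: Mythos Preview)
Your Point~1 follows the paper's approach. The genuine gap is in Point~2: the growth condition $m_{N+1}\gg\sum_{k\le N}m_k$ controls the Birkhoff average only at the \emph{end} of each block, not at intermediate times. Your itineraries $W_N$ from Point~1 traverse $R_{i_1},\dots,R_{i_\ell}$ sequentially, so partway through $W_{N+1}$ --- say just after completing the loops in $R_{i_1}$ --- the partial average of that block lies near $\rot_{i_1}$, which may be far from $\rho$. Since your condition makes block $N+1$ dominate everything before it, at a time $T$ with $\sum_{k\le N}m_k\ll T-\sum_{k\le N}m_k\ll m_{N+1}$ the overall average is governed by this partial block average; hence $\rho(x)$ can be strictly larger than $\{\rho\}$. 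The paper repairs this by making each block a \emph{power} $(w^s)^{p_s}$ of one short closed word with $\rho((w^s))$ already $2^{-s}$-close to $\rho$: inside block $s{+}1$ the average then returns near $\rho$ after every repetition of $w^{s+1}$, and the drift within a single repetition is bounded by $(2M_{s+1}+k_{s+1}C_{s+1})/p_s$, made small by first choosing $p_s$ large. The paper implements this as an induction on $s$ with two simultaneous conditions (endpoint control and uniform control throughout the next block).

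For Points~3 and~4 the paper simply invokes the proofs of \cite{zbMATH00009916} and \cite{llibremackay}, noting that their Markov-partition arguments adapt verbatim; so your explicit constructions go beyond what the paper writes. They are in the right spirit, but your Point~3 sketch needs one more ingredient: with one connector per symbol, connector costs are of the same order as the loop contributions, so the realized rotation vector is shifted away from $\sum_k\lambda_k\rho^{(k)}$ by an amount depending on the pair frequencies of your subshift $\Sigma$. You must either base all the $L_k$ at a common rectangle (absorbing the conjugating paths into the loops themselves), or use $\rho\in\inte(\rot(G))$ to compensate the shift by perturbing the $\lambda_k$.
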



The proof of this proposition is quite technical in terms of notations but rather straightforward. Points~\ref{P3PropConnectRectEnsRot}.~ and \ref{P4PropConnectRectEnsRot}.~will be obtained as direct consequences of \cite[Theorem A]{zbMATH00009916} and \cite[Theorem~1, (iv)]{llibremackay} (the arguments for Markov partitions of pseudo-Anosov maps used in these papers adapt directly to the case of Markovian intersections).

\begin{proof}
\textbf{Proof of Point~\ref{P1PropConnectRectEnsRot}.}
The rotation set of $f$ being closed, it is sufficient to prove that for any $i_1,\dots,i_\ell$ such that $R_{i_1}\to R_{i_2}\to \cdots \to R_{i_\ell}$, we have
\[\conv\Big(\bigcup_{1\le k\le\ell}\rot_{i_k} \Big) \subset \rot(f).\]

For any edge $w$ of $G$, denote denote $\tau(w)$ its label: $\tau(w) = (n(w), T(w))\in \N^*\times\G$, and $s(w)$ and $e(w)$ its starting and ending vertices.

If $(w_k)_{0\le k\le k_0}$ is a finite path, one can define 
\[\rho(w_k) = \frac{\sum_{j=0}^{k_0} [T(w_j)]}{\sum_{j=0}^{k_0} n(w_j)} \in H_1(S,\R).\]

Let us consider a subgraph $G'$ of $G$ whose vertices are the $R_{i_1}, \cdots , R_{i_\ell}$ and whose edges are 
\begin{itemize}
\item the edges of $G$ from one rectangle $R_{i_m}$ to itself coming from the rotational horseshoe;
\item for any $1\le m<\ell$, one edge $w'_m$ from $R_{i_m}$ to $R_{i_{m+1}}$.
\end{itemize}
The graph $G'$ can be supposed to have the following form:
\begin{center}
\begin{tikzpicture}
\node (A) at (0,0) {$R_{i_1}$};
\node (B) at (2.5,0) {$R_{i_2}$};
\node (C) at (5,0) {$\cdots$};
\node (D) at (7.5,0) {$R_{i_\ell}$};
\draw[->, >=stealth] (A) -- (B) node[midway, above] {$\tau_{i_1}$};
\draw[->, >=stealth] (B) -- (C) node[midway, above] {$\tau_{i_1}$};
\draw[->, >=stealth] (C) -- (D) node[midway, above] {$\tau_{i_{\ell-1}}$};
\draw[->, >=stealth] (A) to [looseness=5, out= 70, in=110]node[midway, above] {$(r_{i_1}, T_1^{i_1})$} (A);
\draw[->, >=stealth] (A) to [looseness=5, out= -70, in=-110]node[midway, below] {$(r_{i_1}, T_{k_{i_1}}^{i_1})$} (A);
\draw[->, >=stealth] (B) to [looseness=5, out= 70, in=110]node[midway, above] {$(r_{i_2}, T_1^{i_2})$} (B);
\draw[->, >=stealth] (B) to [looseness=5, out= -70, in=-110]node[midway, below] {$(r_{i_2}, T_{k_{i_2}}^{i_2})$} (B);
\draw[->, >=stealth] (D) to [looseness=5, out= 70, in=110]node[midway, above] {$(r_{i_\ell}, T_1^{i_\ell})$} (D);
\draw[->, >=stealth] (D) to [looseness=5, out= -70, in=-110]node[midway, below] {$(r_{i_\ell}, T_{k_{i_\ell}}^{i_\ell})$} (D);
\end{tikzpicture}
\end{center}


Let $\rho\in \conv\big\{\rot_{i_m} \mid 1\le k\le m\big\}$. This implies that there is $\sigma_{i_1},\dots,\sigma_{i_\ell} \in [0,1]^\ell$ such that $\sum_{m=1}^\ell \sigma_{i_m} = 1$ and, for all $1\le m\le \ell$, some $\rho_m\in \rot_{i_m}$ such that $\rho = \sum_{m=1}^\ell \sigma_{i_m} \rho_m$. We endow $H_1(S,\R)\simeq \R^{2g}$ with a norm $\|\cdot\|$. 

Given $\varep>0$, each $\rho_m$ is approximated by the rotation vector of some finite path $(w_k^m)_{0\le k\le k_m}$ living in the subgraph of $G'$ made of all edges going from $R_{i_m}$ to $R_{i_m}$: 
\begin{equation}\label{EqQQ0}
\big\|\rho_m-\rho((w_k^m))\big\|\le\varep.
\end{equation} 

For any $q\in\N$ large enough, choose a family $(p_{m}^q)_{1\le m\le \ell}$ of positive integers such that for any $1\le m\le \ell$, we have 
\begin{equation}\label{EqQQ2}
\frac{p_{m}^q k_m r_{i_m}}{q}\underset{q\to +\infty}\longrightarrow \sigma_{i_m}.
\end{equation}
This implies that 
\begin{equation}\label{EqQQ1}
\sum_{1\le m\le \ell} p_{m}^q k_m r_{i_m} \underset{q\to+\infty}\sim q.
\end{equation}
For any $1\le m\le \ell-1$, denote $w'_m$ the edge linking $R_{i_m}$ to $R_{i_{m+1}}$. 

Using Proposition~\ref{LemPointFixe}, for any path $(w_k)_k$ in $G$ (finite or infinite), there exists $\wt x\in R_{s(w_0)}\subset \wt S$ such that for any $k$, we have
\begin{equation}\label{EqDisplaceW}
\wt f^{\,\sum_{j=0}^{k} n(w_j)}(\wt x) \in T(w_0)T(w_1) \cdots T(w_{k})\, R_{e(w_{k})}.
\end{equation}
This is in particular true for the path 
\[(W^q) := (w_k^1)^{p_1^q}w'_1 (w_k^2)^{p_2^q}w'_2\cdots w'_{\ell-1}(w_k^\ell)^{p_\ell^q}\]
of $G'$, so there exists $\wt x^q \in R_{i_1}$ and $T'_q\in\G$ such that \eqref{EqDisplaceW} holds for the path $(W^q)$; in other words
$\wt f^{\tau^q}(\wt x^q) \in T'_q R_{i_\ell}$, with $\tau^q = \sum_{m=1}^{\ell} p_m^q k_m r_{i_m} + \sum_{m=1}^{\ell-1}n(w'_m)$. 
A fundamental domain $D\subset \wt S$ of $S$ being fixed, there exists $T_1^q, T_\ell^q\in \G$ such that $\wt x^q \in T_1^q D$ and $\wt f^{\tau^q}(\wt x^q) \in T'_q T_\ell^q D$;
as the Hausdorff distances between $R_{i_1}$ and $D$, and between $R_{i_\ell}$ and $D$, are finite, the homology classes $[T_1^q]$ and $[T_\ell^q]$ are uniformly bounded in $q$. 
It remains to compute
\[\frac{\big[T'_q T_\ell^q(T_1^q)^{-1}\big]}{\tau^q} = \frac{\sum_{m=1}^{\ell} p_{m}^q k_m r_{i_m} \rho((w_k^m)) + \sum_{m=1}^{\ell-1}[T(w'_m)] + [T_\ell^q] - [T_1^q]}{\sum_{m=1}^{\ell} p_m^q k_m r_{i_m} + \sum_{m=1}^{\ell-1}n(w'_m)}\]
Because of \eqref{EqQQ1}, and because of the boundedness of $\sum_{m=1}^{\ell-1}[T(w'_m)] + [T_\ell^q] - [T_1^q]$ and $\sum_{m=1}^{\ell-1}n(w'_m)$, we deduce that
\[\frac{\big[T'_q T_\ell^q(T_1^q)^{-1}\big]}{\tau^q} \underset{q\to+\infty}\sim \frac{\sum_{m=1}^{\ell} p_{m}^q k_m r_{i_m} \rho((w_k^m))}{q}\underset{q\to+\infty}\sim \sum_{m=1}^{\ell}  \sigma_{i_m} \rho((w_k^m))\]
(the second equivalence is due to \eqref{EqQQ2}). Recall that $\sum_{m=1}^\ell \sigma_{i_m} = 1$, hence by \eqref{EqQQ0} for any $q$ large enough we have 
\[\left\|\frac{\big[T'_q T_\ell^q(T_1^q)^{-1}\big]}{\tau^q} - \rho\right\|
 = \left\|\frac{\big[T'_q T_\ell^q(T_1^q)^{-1}\big]}{\tau^q} - \sum_{m=1}^{\ell}  \sigma_{i_m} \rho_m\right\| \le 2\varep.\]
\bigskip

\noindent\textbf{Proof of Point~\ref{P2PropConnectRectEnsRot}.}
The general idea is quite similar to the one of the first part.

By the fact that $G$ is strongly connected, it suffices to prove that for any $i_0\in I$, any vector
\[\rho\in \overline{\left\{\conv\Big(\bigcup_{0\le k\le\ell}\rot_{i_k} \Big) \ \Big|\ R_{i_0} \to R_{i_1}\to \cdots \to R_{i_\ell} \to R_{i_0}\right\}} \]
is realised as the rotation vectors of a point.
This means that there exists a sequence $(\rho_s)_{s\in\N}$ such that $\rho_s\to \rho$ and $\rho_s\in \conv\big(\bigcup_{0\le k\le\ell}\rot_{i_ k} \big)$ with $R_{i_0^s} \to R_{i_1^s}\to \cdots \to R_{i^s_{\ell_s}} \to R_{i_0^s}$ with $i_0^s = i_0$. Up to taking a subsequence we can suppose that $\|\rho-\rho_s\|\le 2^{-s}$. 


By the proof of the first part of the proposition, we know that for any $s$ there is a word $(w_k^s)_{0\le k\le k_s}$ with $s(w_0^s) = e(w_{k_s}^s) = R_{i_0}$ such that $\|\rho((w_k^s)) - \rho_s\| < 2^{-s}$. For any sequence $(p_s)_{s\in\N}$ of integers, $(\omega_k):=(w_k^0)^{p_0}(w_k^1)^{p_1}(w_k^2)^{p_2}\dots$ is a path of $G$. Hence, there exists $\wt x\in R_{i_0}$ such that \eqref{EqDisplaceW} holds for the path $(\omega_k)$. Let us show that if $(p_s)_s$ grows sufficiently fast, then $\rho(\wt x) = \rho$. 

As already noticed, fixing a fundamental domain $D\subset \wt S$ of $S$, for any $k\in \N$ there exists $T'_k\in \G$ such that 
\[\wt f^{\,\sum_{j=0}^{k} n(\omega_j)}(\wt x) \in T(\omega_0)T(\omega_1) \cdots T(\omega_{k}) T'_k\, D,\]
while $\wt x\in T^{\prime -1}_0 D$.
For $k\in\N$, denote 
\[\rho'_k = \frac{\big[T^{\prime -1}_0 T(\omega_0)T(\omega_1) \cdots T(\omega_{k}) T'_k\big]}{\sum_{j=0}^{k} n(\omega_j)};\]
to prove the statement one has to prove that $\rho'_k$ tends to $\rho$.

As the rectangles $R_i$ are compact, the following is finite and independent of the choice of $(p_s)_s$:
\[M_{s_0} = \sup\Big\{\big\|[T'_k]\big\| \ \Big|\  0\le k\le \sum_{s=0}^{s_0} p_s k_s\Big\}.\] 
Denote also 
\[C_s = \sup\Big\{\big\|[T(w_k^s)]\big\| \ \big|\  0\le k\le k_s\Big\}.\] 

Let us build the sequence $(p_s)$ by induction, so that for any $s\in\N$:
\begin{enumerate}[label= \alph*)]
\item\label{ItemARect} for any $\sum_{s=0}^{s_0} p_s k_s \le k' < \sum_{s=0}^{s_0+1} p_s k_s$ we have $\|\rho'_{k'} - \rho\|\le 2^{-s+3}$;
\item\label{ItemBRect} for $k' = \sum_{s=0}^{s_0} p_s k_s$ we have $\|\rho'_{k'}  - \rho\|\le 2^{-s+2}$.
\end{enumerate}
So suppose that the sequence $(p_s)$ is built until $s_0-1\in\N$ and let us choose $p_{s_0}$. It can be easily seen that Condition~\ref{ItemBRect} is satisfied whenever $p_{s_0}$ is large enough: if $p_{s_0}$ is large enough then (for $k' = \sum_{s=0}^{s_0} p_s k_s$) $\rho'_{k'}$ is arbitrarily close to $\rho((\omega_k)_{0\le k\le k'})$ (the constant $M_s$ appearing in the bound of the difference between those two is divided by a number greater than $p_{s_0+1}$, hence this term can be made arbitrarily small), which itself is arbitrarily close to $\rho((w_k^{s_0}))$, which is at distance at most $2^{-s_0}$ of $\rho_{s_0}$, which is at distance at most $2^{-s_0}$ of $\rho$.

Let us prove that if $p_{s_0}$ is large enough, then Condition~\ref{ItemARect} holds for any $p_{s_0+1}\in\N$. Take $\sum_{s=0}^{s_0} p_s k_s \le k' < \sum_{s=0}^{s_0+1} p_s k_s$. One can write $\sum_{s=0}^{s_0} p_s k_s + p'k_{s_0+1} \le k' < \sum_{s=0}^{s_0} p_s k_s + (p'+1)k_{s_0+1}$ ($p'$ counts the number of complete paths $(w_k^{s_0+1})$ already browsed).	Note that for $k''=\sum_{s=0}^{s_0} p_s k_s + p'k_{s_0+1}$, one has $\|\rho((\omega_k)_{0\le k\le k''}) - \rho\|\le 2^{-s_0+1}$: in this case $\rho((\omega_k)_{0\le k\le k''})$ is a convex combination of $\rho((\omega_k)_{0\le k\le \sum_{s=0}^{s_0} p_s k_s})$ and of $\rho((w_k^{s_0+1}))$, both of them being at distance at most $2^{-s_0+1}$ of $\rho$. Using again the bound with the constant $M_s$, we deduce that $\|\rho'_{k''}  - \rho\|\le 2^{-s_0+2}$.
Now, we have 
\begin{align*}
\|\rho'_{k'}  - \rho\| \le & \|\rho'_{k'} - \rho'_{k''} \| + \|\rho'_{k''}  - \rho\|\\
\le & \left\|\frac{\big[T^{\prime -1}_0 T(\omega_0) \cdots T(\omega_{k'}) T'_{k'}\big]}{\sum_{j=0}^{k'} n(\omega_j)} - \frac{\big[T^{\prime -1}_0 T(\omega_0) \cdots T(\omega_{k''}) T'_{k''}\big]}{\sum_{j=0}^{k''} n(\omega_j)}\right\| + 2^{-s_0+2}\\
\le & \frac{\big\|[T'_{k'}]\big\| + \big\|[T'_{k''}]\big\| + \left\|\sum_{i=k''+1}^{k'}\big[T(\omega_i)\big]\right\|}{\sum_{j=0}^{k''} n(\omega_j)} \\
& + \big\|\rho'_{k''}\big\| \frac{\sum_{j=k''+1}^{k'} n(\omega_j)}{\sum_{j=0}^{k'} n(\omega_j)} + 2^{-s_0+2}\\
\le & \frac{2M_{s_0+1}+ k_{s_0}C_{s_0+1}}{p_{s_0}} + \big(\|\rho\|+1\big) \frac{\sum_{j=1}^{k_{s_0}} n(w_j^{s_0})}{p_{s_0}} + 2^{-s_0+2}.
\end{align*}
Choosing $p_{s_0}$ large enough, the latter can be made smaller than $2^{-s_0+3}$.

\bigskip

\noindent\textbf{Proof of Points~\ref{P3PropConnectRectEnsRot}.~and ~\ref{P4PropConnectRectEnsRot}.}
Point~\ref{P3PropConnectRectEnsRot}.~of the proposition is obtained by a straightforward application of the proof of \cite[Theorem A]{zbMATH00009916} (the fact about bounded deviations is not stated in the theorem but written explicitly in \cite[Equation~(9)]{zbMATH00009916}).

Similarly, point~\ref{P4PropConnectRectEnsRot}.~of the proposition is obtained by a straightforward application of the proof of \cite[Theorem~1, (iv)]{llibremackay}.
\end{proof}

\subsection{Creation of heteroclinic connections of horseshoes by forcing theory}

The following is an improvement of \cite[Theorem M]{lct2}:

\begin{theorem}\label{ThConnectionHorse}
Suppose there exist an admissible transverse path $\gamma:[a,b]\to \mathrm{dom}(\F)$ of order $r$, a lift $\wh \gamma$ of $\gamma$ to the universal covering space $\wt{\dom}(\F)$ and a covering automorphism $T$ such that $\wh\gamma$ and $T(\wh\gamma)$ have an $\wh{\mathcal F}$-transverse intersection at $\wh\gamma(t)=T(\wh\gamma)(s)$, where $s<t$.
Then for any $k\ge 1$, there exists a rectangle $\wh R\subset \wt{\dom}(\F)$ that is a rotational horseshoe with deck transformations $T, \dots, T^k$ for $f^{kr}$.

More generally, suppose there exist $a<b<c$ and $\gamma:[a,c]\to \mathrm{dom}(\F)$ a transverse path such that $\gamma|_{[a,b]}$ is admissible of order $r_1$ and $\gamma|_{[b,c]}$ is admissible of order $r_2$. Suppose also that there exist covering automorphisms $T_1,T_2$ such that $\wh\gamma|_{[a,b]}$ and $T_1(\wh\gamma|_{[a,b]})$ have an $\wh{\mathcal F}$-transverse intersection at $\wh\gamma(t_1)=T_1(\wh\gamma)(s_1)$, where $s_1<t_1$, and that $\wh\gamma|_{[b,c]}$ and $T_2(\wh\gamma|_{[b,c]})$ have an $\wh{\mathcal F}$-transverse intersection at $\wh\gamma(t_2)=T_2(\wh\gamma)(s_2)$, where $s_2<t_2$. Choose $k_1,k_2\ge 2$.
Denote $\wh R_1$ the rectangle given by the first part of the theorem for the path $\wh\gamma|_{[a,b]}$ and $k_1$, and $\wh R_2$ the rectangle given by the first part of the theorem for the path $\wh\gamma|_{[b,c]}$ and $k_2$.
Then there exists a deck transformation $U$ such that the intersection $\wh f^{k_1r_1+r_1+r_2}(\wh R_1)\cap U \wh R_2$ is Markovian.
\end{theorem}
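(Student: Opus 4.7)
\textbf{Part 1 (construction of the rotational horseshoe).} The plan is to iterate the forcing mechanism of Proposition~\ref{propFondalct1} along translates of the self-intersection to build a single long admissible transverse path with $k$ branching points, then turn that path into a rectangle via Homma's theorem. First I observe that applying the deck transformation $T^{i-1}$ to the hypothesis yields an $\wh{\mathcal F}$-transverse intersection between $T^{i-1}\wh\gamma$ and $T^i\wh\gamma$ at $T^{i-1}\wh\gamma(t)=T^i\wh\gamma(s)$ for every $i\in\Z$. Applying Proposition~\ref{propFondalct1} recursively, crossing through each of these translates, produces an admissible transverse path
\[
\wh\beta \;=\; \wh\gamma|_{[a,t]}\cdot T\wh\gamma|_{[s,t]}\cdot T^2\wh\gamma|_{[s,t]}\cdots T^{k-1}\wh\gamma|_{[s,b]}
\]
of order $kr$, whose total displacement is $T^k$. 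Crucially, for each $i=1,\dots,k$, truncating at the $i$-th crossing gives an admissible subpath of order $ir$ of displacement $T^i$, and extending it along the tail of $\wh\beta$ (which, thanks to the crossings already used, can be re-routed along translates) provides an admissible path of order $kr$ realizing each intermediate displacement $T^i$.

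\textbf{Construction of the rectangle.} Next I build $\wh R\subset\wt{\dom}(\mathcal F)$ as a small topological box around $\wh\gamma(a)$: pick two short arcs of leaves $\wh\phi_-,\wh\phi_+$ of $\wh{\mathcal F}$ lying on opposite sides of $\wh\phi_{\wh\gamma(a)}$ and sufficiently close to it, together with two short transverse arcs connecting their endpoints. Homma's theorem (Theorem~\ref{PropHomma}) promotes this topological boundary to a genuine rectangle $\wh R$ whose vertical sides are arcs of leaves. To check the Markovian intersection $\wh f^{kr}(\wh R)\cap T^i\wh R$ for each $i$, I use the admissible path of order $kr$ with displacement $T^i$ constructed above: by Proposition~\ref{propFondalct1} and the $\mathcal F$-transverse nature of the crossings, orbits in a horizontal subrectangle of $\wh R$ follow transverse trajectories equivalent to this path, so $\wh f^{kr}$ stretches that subrectangle across $T^i\wh R$ from one vertical side to the other in the sense of Definition~\ref{def:markov}. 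Homma's theorem, applied once more to the boundary configuration of $\wh f^{kr}(\wh R)$ around $T^i\wh R$, turns this into a genuine Markovian intersection. Varying $i=1,\dots,k$ yields the required rotational horseshoe.

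\textbf{Part 2 (heteroclinic connection).} Apply Part~1 separately to $\wh\gamma|_{[a,b]}$ with parameter $k_1$ to obtain $\wh R_1$, and to $\wh\gamma|_{[b,c]}$ with parameter $k_2$ to obtain $\wh R_2$. The key observation is that the admissible path
\[
\wh\gamma|_{[a,t_1]}\cdot T_1\wh\gamma|_{[s_1,t_1]}\cdots T_1^{k_1-1}\wh\gamma|_{[s_1,b]}\cdot T_1^{k_1}\wh\gamma|_{[b,c]}
\]
is admissible of order $k_1 r_1+r_2$ (an additional concatenation at $b$ comes for free since $\wh\gamma$ is already a single transverse path across $b$), and by one more application of Proposition~\ref{propFondalct1} through the first crossing inside $\wh\gamma|_{[a,b]}$, one can produce an admissible path of order $k_1 r_1+r_1+r_2$ of the form ``path associated to $\wh R_1$ followed by a displacement of $T_1^{k_1}$ and then the continuation along $T_1^{k_1}\wh\gamma|_{[b,c]}$''. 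Setting $U=T_1^{k_1}$ (times the deck transformation identifying the starting leaf of $\wh\gamma|_{[b,c]}$ with its copy in the box construction of $\wh R_2$), the same box-and-Homma argument as in Part~1 shows that $\wh f^{k_1 r_1+r_1+r_2}(\wh R_1)$ stretches across $U\wh R_2$ in a Markovian way.

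\textbf{Main obstacle.} The main technical difficulty lies in the third step of Part~1: verifying that a \emph{single} rectangle $\wh R$ simultaneously produces Markovian intersections with all $k$ translates $T^i\wh R$ under the same iterate $\wh f^{kr}$. This requires careful choice of the sizes of the vertical/horizontal sides (so that each of the $k$ branches of $\wh\beta$ gives a horizontal subrectangle of $\wh R$ stretching all the way across $T^i\wh R$) and a uniform control of the foliation structure along the entire length-$kr$ path, as well as several uses of Homma's theorem to promote transverse-path configurations to honest rectangles.
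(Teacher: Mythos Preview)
Your argument has a genuine gap at the step where you pass from an admissible transverse path to a Markovian intersection. Proposition~\ref{propFondalct1} only produces \emph{one} orbit whose transverse trajectory is equivalent to the concatenated path; it says nothing about the trajectories of nearby points. Your sentence ``orbits in a horizontal subrectangle of $\wh R$ follow transverse trajectories equivalent to this path, so $\wh f^{kr}$ stretches that subrectangle across $T^i\wh R$'' is therefore unjustified: there is no shadowing lemma in this $C^0$ setting, and taking $\wh R$ to be an arbitrary small box around $\wh\gamma(a)$ gives you no control on where $\wh f^{kr}$ sends its horizontal sides. Homma's theorem lets you \emph{build} rectangles from a given boundary configuration, but it does not manufacture the required separation of horizontal sides in Definition~\ref{def:markov}.

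The paper's construction (following Le~Calvez--Tal) avoids this problem by choosing the sides of $\wh R_1$ very specifically: the vertical sides are pieces of the leaves $\wh\phi_a$ and $T_1^{-1}\wh\phi_a$, while the horizontal sides are pieces of $\wh f^{-k_1r_1}(T_1^j\wh\phi_b)$ (connected components in the sets $\X_p$). The point is that after applying $\wh f^{k_1r_1}$, the horizontal sides become pieces of actual leaves $T_1^j\wh\phi_b$, whose positions relative to $T_1^{i+1}\wh R_1$ (and, in Part~2, relative to $T_2^2\wh R_2$) are determined by the Brouwer-line property and by the fact that $\wh f^{k_1r_1}(\wh\phi_a)$ meets all the relevant $T_1^j\wh\phi_b$. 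The combinatorics of how many such components exist and how they are ordered (Lemmas~\ref{Lemma10} and~\ref{Lemma11}) is what guarantees that a single rectangle works for all $k_1$ translates simultaneously --- precisely the issue you flag as the ``main obstacle'' but do not resolve. In short, the admissible path $\wh\beta$ you build is the right first step, but it should be used to locate the image of the leaf $\wh\phi_a$, not to control a tube of orbits.
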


The deck transformation $U$ appearing in Theorem~\ref{ThConnectionHorse} is a product $T_1^{\ell_1} T_2^{\ell_2}$: up to taking appropriate translates of $\wh R_1$ by a power of $T_1$ and translates of $\wh R_2$ by a power of $T_2$, one can say that the intersection $\wh f^{k_1+r_1+r_2}(\wh R_1)\cap  \wh R_2$ is Markovian.

\begin{proof}
The proof follows the strategy of \cite[Section 3]{lct2} (see also \cite[Section 9.6]{pa}). The reader should refer to these references for the parts of the proof that are not detailed here.
The beginning of the proof is depicted in Figures~\ref{FigConnectionHorse0} and \ref{FigConnectionHorse1}.

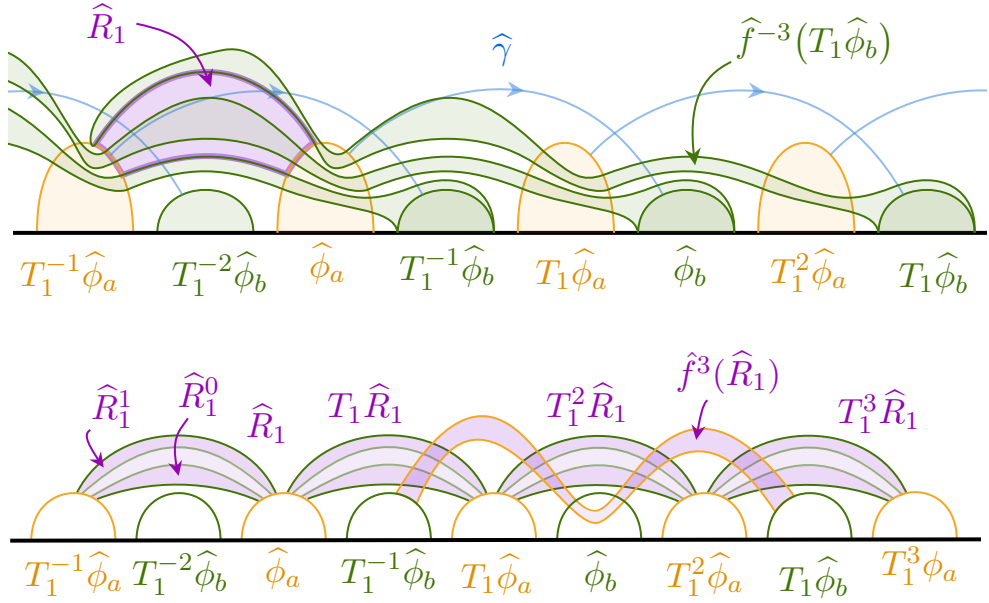
\begin{figure}
\begin{center}

\tikzset{every picture/.style={line width=0.75pt}} 

\begin{tikzpicture}[x=0.75pt,y=0.75pt,yscale=-1.2,xscale=1.2]

\clip (148, 92) rectangle (555, 220);

\draw  [draw opacity=0][fill={rgb, 255:red, 255; green, 255; blue, 255 }  ,fill opacity=0.5 ] (419.6,172.44) .. controls (427.1,178.44) and (431.6,179.53) .. (438.69,169.85) .. controls (440.62,170.44) and (439.85,170.44) .. (441.94,172.28) .. controls (430.27,184.28) and (429.35,185.19) .. (417.6,174.78) .. controls (418.52,173.28) and (418.52,173.11) .. (419.6,172.44) -- cycle ;
\draw  [color={rgb, 255:red, 127; green, 2; blue, 208 }  ,draw opacity=0.5 ][fill={rgb, 255:red, 127; green, 2; blue, 208 }  ,fill opacity=0.15 ][line width=2.25]  (184.3,152.87) .. controls (212.12,110.32) and (252.69,114.8) .. (275.19,152.14) .. controls (271.36,154.42) and (266.6,160.04) .. (263.87,165.44) .. controls (240.64,155.86) and (223.41,154.52) .. (195.55,165.73) .. controls (193.62,161.94) and (188.28,153.67) .. (184.3,152.87) -- cycle ;
\draw  [draw opacity=0][fill={rgb, 255:red, 255; green, 255; blue, 255 }  ,fill opacity=0.25 ] (188.73,155.73) .. controls (226.88,125.42) and (240.66,127.75) .. (270.73,155.87) .. controls (267.33,158.12) and (268.77,157.08) .. (266.58,160.58) .. controls (238.1,144.64) and (219.21,147.97) .. (192.73,160.15) .. controls (190.55,157.53) and (190.55,157.53) .. (188.73,155.73) -- cycle ;
\draw [color={rgb, 255:red, 74; green, 144; blue, 226 }  ,draw opacity=0.6 ]   (289.92,156.76) .. controls (314.07,129.94) and (382.16,106.14) .. (420.13,173.6) ;
\draw [shift={(362.59,130.46)}, rotate = 185.44] [fill={rgb, 255:red, 74; green, 144; blue, 226 }  ,fill opacity=0.6 ][line width=0.08]  [draw opacity=0] (7.14,-3.43) -- (0,0) -- (7.14,3.43) -- (4.74,0) -- cycle    ;
\draw [color={rgb, 255:red, 65; green, 117; blue, 5 }  ,draw opacity=1 ][fill={rgb, 255:red, 65; green, 117; blue, 5 }  ,fill opacity=0.1 ]   (210,190) .. controls (210.35,165.82) and (249.95,166.08) .. (250,190) ;
\draw [color={rgb, 255:red, 245; green, 166; blue, 35 }  ,draw opacity=1 ][fill={rgb, 255:red, 245; green, 166; blue, 35 }  ,fill opacity=0.1 ]   (160,190) .. controls (159.45,139.49) and (199.45,139.89) .. (200,190) ;
\draw [line width=1.5]    (150,190) -- (560,190) ;
\draw [color={rgb, 255:red, 65; green, 117; blue, 5 }  ,draw opacity=1 ][fill={rgb, 255:red, 65; green, 117; blue, 5 }  ,fill opacity=0.1 ]   (310,190) .. controls (310.35,165.82) and (349.95,166.08) .. (350,190) ;
\draw [color={rgb, 255:red, 65; green, 117; blue, 5 }  ,draw opacity=1 ][fill={rgb, 255:red, 65; green, 117; blue, 5 }  ,fill opacity=0.1 ]   (410,190) .. controls (410.35,165.82) and (449.95,166.08) .. (450,190) ;
\draw [color={rgb, 255:red, 245; green, 166; blue, 35 }  ,draw opacity=1 ][fill={rgb, 255:red, 245; green, 166; blue, 35 }  ,fill opacity=0.1 ]   (260,190) .. controls (259.45,139.49) and (299.45,139.89) .. (300,190) ;
\draw [color={rgb, 255:red, 245; green, 166; blue, 35 }  ,draw opacity=1 ][fill={rgb, 255:red, 245; green, 166; blue, 35 }  ,fill opacity=0.1 ]   (360,190) .. controls (359.45,139.49) and (399.45,139.89) .. (400,190) ;
\draw [color={rgb, 255:red, 245; green, 166; blue, 35 }  ,draw opacity=1 ][fill={rgb, 255:red, 245; green, 166; blue, 35 }  ,fill opacity=0.1 ]   (460,190) .. controls (459.45,139.49) and (499.45,139.89) .. (500,190) ;
\draw [color={rgb, 255:red, 65; green, 117; blue, 5 }  ,draw opacity=1 ][fill={rgb, 255:red, 65; green, 117; blue, 5 }  ,fill opacity=0.1 ]   (510,190) .. controls (510.35,165.82) and (549.95,166.08) .. (550,190) ;
\draw [color={rgb, 255:red, 74; green, 144; blue, 226 }  ,draw opacity=0.6 ]   (390.25,157.26) .. controls (414.41,130.44) and (482.49,106.64) .. (520.46,174.1) ;
\draw [shift={(462.92,130.96)}, rotate = 185.44] [fill={rgb, 255:red, 74; green, 144; blue, 226 }  ,fill opacity=0.6 ][line width=0.08]  [draw opacity=0] (7.14,-3.43) -- (0,0) -- (7.14,3.43) -- (4.74,0) -- cycle    ;
\draw [color={rgb, 255:red, 74; green, 144; blue, 226 }  ,draw opacity=0.6 ]   (490.08,157.26) .. controls (514.24,130.44) and (582.32,106.64) .. (620.29,174.1) ;
\draw [shift={(562.75,130.96)}, rotate = 185.44] [fill={rgb, 255:red, 74; green, 144; blue, 226 }  ,fill opacity=0.6 ][line width=0.08]  [draw opacity=0] (7.14,-3.43) -- (0,0) -- (7.14,3.43) -- (4.74,0) -- cycle    ;
\draw [color={rgb, 255:red, 74; green, 144; blue, 226 }  ,draw opacity=0.6 ]   (190.43,157.6) .. controls (214.59,130.77) and (282.67,106.98) .. (320.64,174.43) ;
\draw [shift={(263.1,131.29)}, rotate = 185.44] [fill={rgb, 255:red, 74; green, 144; blue, 226 }  ,fill opacity=0.6 ][line width=0.08]  [draw opacity=0] (7.14,-3.43) -- (0,0) -- (7.14,3.43) -- (4.74,0) -- cycle    ;
\draw [color={rgb, 255:red, 74; green, 144; blue, 226 }  ,draw opacity=0.6 ]   (90.43,157.6) .. controls (114.59,130.77) and (182.67,106.98) .. (220.64,174.43) ;
\draw [shift={(163.1,131.29)}, rotate = 185.44] [fill={rgb, 255:red, 74; green, 144; blue, 226 }  ,fill opacity=0.6 ][line width=0.08]  [draw opacity=0] (7.14,-3.43) -- (0,0) -- (7.14,3.43) -- (4.74,0) -- cycle    ;
\draw [color={rgb, 255:red, 65; green, 117; blue, 5 }  ,draw opacity=1 ][fill={rgb, 255:red, 65; green, 117; blue, 5 }  ,fill opacity=0.1 ]   (310,190) .. controls (308.71,179.04) and (285.71,180.79) .. (261.96,171.54) .. controls (238.21,162.29) and (222.71,161.04) .. (197.21,172.04) .. controls (189.82,173.98) and (178.88,168.52) .. (166.58,160.58) .. controls (146.06,147.29) and (116.81,147.29) .. (92.73,160.15) .. controls (81.28,166.02) and (80.61,162.13) .. (75.19,152.14) .. controls (56,118.14) and (13.33,107.31) .. (-15.7,152.87) .. controls (-19.9,158.57) and (-23.68,125.24) .. (37.69,113.32) .. controls (65.13,108.52) and (74.61,166.57) .. (88.73,155.73) .. controls (126.81,123.29) and (141.89,128.7) .. (170.73,155.87) .. controls (182.48,166.12) and (186.62,168.78) .. (195.55,165.73) .. controls (222.31,155.79) and (238.81,155.29) .. (263.87,165.44) .. controls (288.93,175.59) and (295.96,178.79) .. (316.21,170.79) .. controls (336.46,162.79) and (350.96,172.29) .. (350,190) ;
\draw [color={rgb, 255:red, 65; green, 117; blue, 5 }  ,draw opacity=1 ][fill={rgb, 255:red, 65; green, 117; blue, 5 }  ,fill opacity=0.1 ]   (510,190) .. controls (508.71,179.04) and (485.71,180.79) .. (461.96,171.54) .. controls (438.21,162.29) and (422.71,161.04) .. (397.21,172.04) .. controls (389.82,173.98) and (378.88,168.52) .. (366.58,160.58) .. controls (346.06,147.29) and (316.81,147.29) .. (292.73,160.15) .. controls (281.28,166.02) and (280.61,162.13) .. (275.19,152.14) .. controls (256,118.14) and (213.33,107.31) .. (184.3,152.87) .. controls (180.1,158.57) and (176.32,125.24) .. (237.69,113.32) .. controls (265.13,108.52) and (274.61,166.57) .. (288.73,155.73) .. controls (326.81,123.29) and (341.89,128.7) .. (370.73,155.87) .. controls (382.48,166.12) and (386.62,168.78) .. (395.55,165.73) .. controls (422.31,155.79) and (438.81,155.29) .. (463.87,165.44) .. controls (488.93,175.59) and (495.96,178.79) .. (516.21,170.79) .. controls (536.46,162.79) and (550.96,172.29) .. (550,190) ;
\draw [color={rgb, 255:red, 148; green, 4; blue, 177 }  ,draw opacity=1 ]   (201.33,102.33) .. controls (220.43,106.43) and (225.76,111.71) .. (231.35,126.21) ;
\draw [shift={(232.33,128.83)}, rotate = 250.02] [fill={rgb, 255:red, 148; green, 4; blue, 177 }  ,fill opacity=1 ][line width=0.08]  [draw opacity=0] (7.14,-3.43) -- (0,0) -- (7.14,3.43) -- (4.74,0) -- cycle    ;
\draw [color={rgb, 255:red, 65; green, 117; blue, 5 }  ,draw opacity=1 ]   (448.75,118.33) .. controls (435.95,124.41) and (430.65,136.33) .. (433.37,158.8) ;
\draw [shift={(433.75,161.67)}, rotate = 261.75] [fill={rgb, 255:red, 65; green, 117; blue, 5 }  ,fill opacity=1 ][line width=0.08]  [draw opacity=0] (7.14,-3.43) -- (0,0) -- (7.14,3.43) -- (4.74,0) -- cycle    ;
\draw [color={rgb, 255:red, 65; green, 117; blue, 5 }  ,draw opacity=1 ][fill={rgb, 255:red, 65; green, 117; blue, 5 }  ,fill opacity=0.1 ]   (410,190) .. controls (408.71,179.04) and (385.71,180.79) .. (361.96,171.54) .. controls (338.21,162.29) and (322.71,161.04) .. (297.21,172.04) .. controls (289.82,173.98) and (278.88,168.52) .. (266.58,160.58) .. controls (246.06,147.29) and (216.81,147.29) .. (192.73,160.15) .. controls (181.28,166.02) and (180.61,162.13) .. (175.19,152.14) .. controls (156,118.14) and (113.33,107.31) .. (84.3,152.87) .. controls (80.1,158.57) and (76.32,125.24) .. (137.69,113.32) .. controls (165.13,108.52) and (174.61,166.57) .. (188.73,155.73) .. controls (226.81,123.29) and (241.89,128.7) .. (270.73,155.87) .. controls (282.48,166.12) and (286.62,168.78) .. (295.55,165.73) .. controls (322.31,155.79) and (338.81,155.29) .. (363.87,165.44) .. controls (388.93,175.59) and (395.96,178.79) .. (416.21,170.79) .. controls (436.46,162.79) and (450.96,172.29) .. (450,190) ;

\draw (173.5,196.4) node [anchor=north] [inner sep=0.75pt]  [color={rgb, 255:red, 225; green, 141; blue, 0 }  ,opacity=1 ,xscale=1.2,yscale=1.2]  {$T_{1}^{-1} \wh\phi _{a}$};
\draw (236,196.4) node [anchor=north] [inner sep=0.75pt]  [color={rgb, 255:red, 65; green, 117; blue, 5 }  ,opacity=1 ,xscale=1.2,yscale=1.2]  {$T_{1}^{-2} \wh\phi _{b}$};
\draw (280.83,192.6) node [anchor=north] [inner sep=0.75pt]  [color={rgb, 255:red, 225; green, 141; blue, 0 }  ,opacity=1 ,xscale=1.2,yscale=1.2]  {$\wh\phi _{a}$};
\draw (330.73,193.74) node [anchor=north] [inner sep=0.75pt]  [color={rgb, 255:red, 65; green, 117; blue, 5 }  ,opacity=1 ,xscale=1.2,yscale=1.2]  {$T_{1}^{-1} \wh\phi _{b}$};
\draw (382.43,195.4) node [anchor=north] [inner sep=0.75pt]  [color={rgb, 255:red, 225; green, 141; blue, 0 }  ,opacity=1 ,xscale=1.2,yscale=1.2]  {$T_{1} \wh\phi _{a}$};
\draw (431.76,194.04) node [anchor=north] [inner sep=0.75pt]  [color={rgb, 255:red, 65; green, 117; blue, 5 }  ,opacity=1 ,xscale=1.2,yscale=1.2]  {$\wh\phi _{b}$};
\draw (481.23,195) node [anchor=north] [inner sep=0.75pt]  [color={rgb, 255:red, 225; green, 141; blue, 0 }  ,opacity=1 ,xscale=1.2,yscale=1.2]  {$T_{1}^{2} \wh\phi _{a}$};
\draw (532.96,196.84) node [anchor=north] [inner sep=0.75pt]  [color={rgb, 255:red, 65; green, 117; blue, 5 }  ,opacity=1 ,xscale=1.2,yscale=1.2]  {$T_{1}\wh \phi _{b}$};
\draw (199.33,102.33) node [anchor=east] [inner sep=0.75pt]  [color={rgb, 255:red, 148; green, 4; blue, 177 }  ,opacity=1 ,xscale=1.2,yscale=1.2]  {$\wh R_{1}$};
\draw (347.83,105.21) node [anchor=north west][inner sep=0.75pt]  [color={rgb, 255:red, 8; green, 103; blue, 215 }  ,opacity=1 ,xscale=1.2,yscale=1.2]  {$\wh\gamma $};
\draw (448.88,109.67) node [anchor=west] [inner sep=0.75pt]  [color={rgb, 255:red, 65; green, 117; blue, 5 }  ,opacity=1 ,xscale=1.2,yscale=1.2]  {$\wh{f}^{-3}\big( T_{1}\wh{\phi }_{b}\big)$};

\end{tikzpicture}

\tikzset{every picture/.style={line width=0.75pt}} 

\begin{tikzpicture}[x=0.75pt,y=0.75pt,yscale=-1.05,xscale=1.05]

\draw  [draw opacity=0][fill={rgb, 255:red, 127; green, 2; blue, 208 }  ,fill opacity=0.15 ] (410.58,135.09) .. controls (429.11,98.99) and (488.93,98.99) .. (505.15,135.38) .. controls (501.75,137.63) and (500.93,136.26) .. (496.88,139.11) .. controls (473.65,129.53) and (448.02,126.99) .. (420.15,138.2) .. controls (416.2,136.44) and (414.56,135.9) .. (410.58,135.09) -- cycle ;
\draw  [draw opacity=0][fill={rgb, 255:red, 255; green, 255; blue, 255 }  ,fill opacity=0.5 ] (412.89,135.68) .. controls (442.37,105.44) and (478.82,107.66) .. (503.11,136.08) .. controls (499.71,138.32) and (503.04,134.97) .. (498.99,137.82) .. controls (471.32,118.16) and (449.32,116.04) .. (417.17,137.11) .. controls (413.22,135.35) and (416.87,136.48) .. (412.89,135.68) -- cycle ;
\draw [color={rgb, 255:red, 65; green, 117; blue, 5 }  ,draw opacity=1 ]   (410.58,135.09) .. controls (429.06,98.38) and (489.06,98.74) .. (505.15,135.38) ;
\draw [color={rgb, 255:red, 65; green, 117; blue, 5 }  ,draw opacity=1 ]   (420.15,138.2) .. controls (445.24,128.38) and (475.06,129.29) .. (496.88,139.11) ;
\draw [color={rgb, 255:red, 65; green, 117; blue, 5 }  ,draw opacity=0.5 ]   (412.89,135.68) .. controls (446.42,105.39) and (472.42,106.39) .. (503.11,136.08) ;
\draw [color={rgb, 255:red, 65; green, 117; blue, 5 }  ,draw opacity=0.5 ]   (417.17,137.11) .. controls (451.14,116.68) and (467.14,116.39) .. (498.99,137.82) ;

\draw  [draw opacity=0][fill={rgb, 255:red, 127; green, 2; blue, 208 }  ,fill opacity=0.15 ] (210.82,134.88) .. controls (229.36,98.78) and (289.17,98.78) .. (305.4,135.17) .. controls (302,137.42) and (301.17,136.05) .. (297.13,138.9) .. controls (273.9,129.32) and (248.26,126.78) .. (220.4,137.99) .. controls (216.45,136.23) and (214.81,135.69) .. (210.82,134.88) -- cycle ;
\draw  [draw opacity=0][fill={rgb, 255:red, 255; green, 255; blue, 255 }  ,fill opacity=0.5 ] (213.14,135.47) .. controls (242.62,105.23) and (279.07,107.45) .. (303.36,135.87) .. controls (299.96,138.12) and (303.29,134.77) .. (299.24,137.61) .. controls (271.57,117.95) and (249.57,115.83) .. (217.42,136.9) .. controls (213.47,135.14) and (217.12,136.27) .. (213.14,135.47) -- cycle ;
\draw [color={rgb, 255:red, 65; green, 117; blue, 5 }  ,draw opacity=1 ]   (210.82,134.88) .. controls (229.31,98.17) and (289.31,98.54) .. (305.4,135.17) ;
\draw [color={rgb, 255:red, 65; green, 117; blue, 5 }  ,draw opacity=1 ]   (220.4,137.99) .. controls (245.49,128.17) and (275.31,129.08) .. (297.13,138.9) ;
\draw [color={rgb, 255:red, 65; green, 117; blue, 5 }  ,draw opacity=0.5 ]   (213.14,135.47) .. controls (246.67,105.18) and (272.67,106.18) .. (303.36,135.87) ;
\draw [color={rgb, 255:red, 65; green, 117; blue, 5 }  ,draw opacity=0.5 ]   (217.42,136.9) .. controls (251.39,116.47) and (267.39,116.18) .. (299.24,137.61) ;
\draw  [draw opacity=0][fill={rgb, 255:red, 127; green, 2; blue, 208 }  ,fill opacity=0.15 ] (310.52,135.19) .. controls (329.05,99.09) and (388.87,99.09) .. (405.09,135.48) .. controls (401.69,137.72) and (400.87,136.36) .. (396.82,139.21) .. controls (373.59,129.63) and (347.96,127.09) .. (320.09,138.3) .. controls (316.14,136.54) and (314.5,135.99) .. (310.52,135.19) -- cycle ;
\draw  [draw opacity=0][fill={rgb, 255:red, 255; green, 255; blue, 255 }  ,fill opacity=0.5 ] (312.83,135.78) .. controls (342.31,105.54) and (378.76,107.76) .. (403.05,136.18) .. controls (399.65,138.42) and (402.98,135.07) .. (398.94,137.92) .. controls (371.26,118.26) and (349.26,116.14) .. (317.12,137.2) .. controls (313.16,135.45) and (316.82,136.58) .. (312.83,135.78) -- cycle ;
\draw [color={rgb, 255:red, 65; green, 117; blue, 5 }  ,draw opacity=1 ]   (310.52,135.19) .. controls (329,98.48) and (389,98.84) .. (405.09,135.48) ;
\draw [color={rgb, 255:red, 65; green, 117; blue, 5 }  ,draw opacity=1 ]   (320.09,138.3) .. controls (345.18,128.48) and (375,129.39) .. (396.82,139.21) ;
\draw [color={rgb, 255:red, 65; green, 117; blue, 5 }  ,draw opacity=0.5 ]   (312.83,135.78) .. controls (346.36,105.49) and (372.36,106.49) .. (403.05,136.18) ;
\draw [color={rgb, 255:red, 65; green, 117; blue, 5 }  ,draw opacity=0.5 ]   (317.12,137.2) .. controls (351.08,116.78) and (367.08,116.49) .. (398.94,137.92) ;

\draw  [color={rgb, 255:red, 245; green, 166; blue, 35 }  ,draw opacity=1 ][fill={rgb, 255:red, 127; green, 2; blue, 208 }  ,fill opacity=0.15 ] (262.52,135.32) .. controls (314.81,46.59) and (336.86,145.61) .. (357.57,143.75) .. controls (373.86,140.75) and (396.41,61.39) .. (451.78,136.76) .. controls (449.12,138.01) and (446.3,139.19) .. (442.77,144.25) .. controls (400.01,77.39) and (373.86,146.46) .. (358,149.46) .. controls (340.86,152.18) and (302.81,64.99) .. (272.17,140.4) .. controls (269.95,138.04) and (267.24,136.75) .. (262.52,135.32) -- cycle ;
\draw  [draw opacity=0][fill={rgb, 255:red, 255; green, 255; blue, 255 }  ,fill opacity=0.5 ] (347.67,139.44) .. controls (355.17,145.44) and (359.67,146.53) .. (366.76,136.85) .. controls (368.69,137.44) and (367.92,137.44) .. (370.01,139.28) .. controls (358.34,151.28) and (357.42,152.19) .. (345.67,141.78) .. controls (346.59,140.28) and (346.59,140.11) .. (347.67,139.44) -- cycle ;
\draw  [draw opacity=0][fill={rgb, 255:red, 127; green, 2; blue, 208 }  ,fill opacity=0.15 ] (110.75,134.96) .. controls (129.28,98.85) and (189.1,98.85) .. (205.32,135.25) .. controls (201.92,137.49) and (201.1,136.13) .. (197.05,138.98) .. controls (173.82,129.4) and (148.19,126.85) .. (120.32,138.07) .. controls (116.37,136.31) and (114.73,135.76) .. (110.75,134.96) -- cycle ;
\draw  [draw opacity=0][fill={rgb, 255:red, 255; green, 255; blue, 255 }  ,fill opacity=0.5 ] (113.06,135.55) .. controls (142.54,105.31) and (178.99,107.53) .. (203.28,135.95) .. controls (199.88,138.19) and (203.21,134.84) .. (199.17,137.69) .. controls (171.49,118.03) and (149.49,115.91) .. (117.35,136.97) .. controls (113.39,135.22) and (117.05,136.35) .. (113.06,135.55) -- cycle ;
\draw [color={rgb, 255:red, 65; green, 117; blue, 5 }  ,draw opacity=1 ]   (110.75,134.96) .. controls (129.23,98.25) and (189.23,98.61) .. (205.32,135.25) ;
\draw [color={rgb, 255:red, 65; green, 117; blue, 5 }  ,draw opacity=1 ]   (120.32,138.07) .. controls (145.42,128.25) and (175.23,129.16) .. (197.05,138.98) ;
\draw [color={rgb, 255:red, 65; green, 117; blue, 5 }  ,draw opacity=0.5 ]   (113.06,135.55) .. controls (146.6,105.26) and (172.6,106.26) .. (203.28,135.95) ;
\draw [color={rgb, 255:red, 65; green, 117; blue, 5 }  ,draw opacity=0.5 ]   (117.35,136.97) .. controls (151.31,116.55) and (167.31,116.26) .. (199.17,137.69) ;

\draw [color={rgb, 255:red, 65; green, 117; blue, 5 }  ,draw opacity=1 ]   (239,158) .. controls (239.31,127.84) and (279.09,127.4) .. (279,158) ;
\draw [color={rgb, 255:red, 245; green, 166; blue, 35 }  ,draw opacity=1 ]   (289,158) .. controls (289.31,127.84) and (329.09,127.4) .. (329,158) ;
\draw [color={rgb, 255:red, 65; green, 117; blue, 5 }  ,draw opacity=1 ]   (139,158) .. controls (139.31,127.84) and (179.09,127.4) .. (179,158) ;
\draw [color={rgb, 255:red, 245; green, 166; blue, 35 }  ,draw opacity=1 ]   (89,158) .. controls (89.31,127.84) and (129.09,127.4) .. (129,158) ;
\draw [color={rgb, 255:red, 65; green, 117; blue, 5 }  ,draw opacity=1 ]   (339,158) .. controls (339.31,127.84) and (379.09,127.4) .. (379,158) ;
\draw [color={rgb, 255:red, 245; green, 166; blue, 35 }  ,draw opacity=1 ]   (189,158) .. controls (189.31,127.84) and (229.09,127.4) .. (229,158) ;
\draw [color={rgb, 255:red, 245; green, 166; blue, 35 }  ,draw opacity=1 ]   (389,158) .. controls (389.31,127.84) and (429.09,127.4) .. (429,158) ;
\draw [color={rgb, 255:red, 65; green, 117; blue, 5 }  ,draw opacity=1 ]   (439,158) .. controls (439.31,127.84) and (479.09,127.4) .. (479,158) ;
\draw [color={rgb, 255:red, 148; green, 4; blue, 177 }  ,draw opacity=1 ]   (410.44,90.75) .. controls (408.15,92.18) and (406.6,100.52) .. (405.57,106.85) ;
\draw [shift={(405.11,109.75)}, rotate = 278.97] [fill={rgb, 255:red, 148; green, 4; blue, 177 }  ,fill opacity=1 ][line width=0.08]  [draw opacity=0] (7.14,-3.43) -- (0,0) -- (7.14,3.43) -- (4.74,0) -- cycle    ;
\draw [color={rgb, 255:red, 245; green, 166; blue, 35 }  ,draw opacity=1 ]   (489,157.56) .. controls (489.31,127.4) and (529.09,126.95) .. (529,157.56) ;
\draw [line width=1.5]    (79,158) -- (540.51,157.38) ;
\draw [color={rgb, 255:red, 148; green, 4; blue, 177 }  ,draw opacity=1 ]   (163.49,99.39) .. controls (159.33,105.43) and (155.17,108.62) .. (158.44,124.15) ;
\draw [shift={(159.09,126.99)}, rotate = 256.26] [fill={rgb, 255:red, 148; green, 4; blue, 177 }  ,fill opacity=1 ][line width=0.08]  [draw opacity=0] (7.14,-3.43) -- (0,0) -- (7.14,3.43) -- (4.74,0) -- cycle    ;
\draw [color={rgb, 255:red, 148; green, 4; blue, 177 }  ,draw opacity=1 ]   (117.09,104.99) .. controls (114.58,106.55) and (113.89,109.09) .. (121.85,120.66) ;
\draw [shift={(123.49,122.99)}, rotate = 234.48] [fill={rgb, 255:red, 148; green, 4; blue, 177 }  ,fill opacity=1 ][line width=0.08]  [draw opacity=0] (7.14,-3.43) -- (0,0) -- (7.14,3.43) -- (4.74,0) -- cycle    ;

\draw (109.3,161.6) node [anchor=north] [inner sep=0.75pt]  [color={rgb, 255:red, 225; green, 141; blue, 0 }  ,opacity=1 ,xscale=1.2,yscale=1.2]  {$T_{1}^{-1} \wh\phi _{a}$};
\draw (159.82,161.04) node [anchor=north] [inner sep=0.75pt]  [color={rgb, 255:red, 65; green, 117; blue, 5 }  ,opacity=1 ,xscale=1.2,yscale=1.2]  {$T_{1}^{-2} \wh\phi _{b}$};
\draw (208.9,159.6) node [anchor=north] [inner sep=0.75pt]  [color={rgb, 255:red, 225; green, 141; blue, 0 }  ,opacity=1 ,xscale=1.2,yscale=1.2]  {$\wh\phi _{a}$};
\draw (259.02,160.24) node [anchor=north] [inner sep=0.75pt]  [color={rgb, 255:red, 65; green, 117; blue, 5 }  ,opacity=1 ,xscale=1.2,yscale=1.2]  {$T_{1}^{-1} \wh\phi _{b}$};
\draw (310.5,162.4) node [anchor=north] [inner sep=0.75pt]  [color={rgb, 255:red, 225; green, 141; blue, 0 }  ,opacity=1 ,xscale=1.2,yscale=1.2]  {$T_{1} \wh\phi _{a}$};
\draw (359.82,161.04) node [anchor=north] [inner sep=0.75pt]  [color={rgb, 255:red, 65; green, 117; blue, 5 }  ,opacity=1 ,xscale=1.2,yscale=1.2]  {$\wh\phi _{b}$};
\draw (409.3,162) node [anchor=north] [inner sep=0.75pt]  [color={rgb, 255:red, 225; green, 141; blue, 0 }  ,opacity=1 ,xscale=1.2,yscale=1.2]  {$T_{1}^{2} \wh\phi _{a}$};
\draw (461.02,163.84) node [anchor=north] [inner sep=0.75pt]  [color={rgb, 255:red, 65; green, 117; blue, 5 }  ,opacity=1 ,xscale=1.2,yscale=1.2]  {$T_{1} \wh\phi _{b}$};
\draw (189.4,112.78) node [anchor=south west] [inner sep=0.75pt]  [color={rgb, 255:red, 148; green, 4; blue, 177 }  ,opacity=1 ,xscale=1.2,yscale=1.2]  {$\wh R_{1}$};
\draw (248.25,103.13) node [anchor=south] [inner sep=0.75pt]  [color={rgb, 255:red, 148; green, 4; blue, 177 }  ,opacity=1 ,xscale=1.2,yscale=1.2]  {$T_{1} \wh R_{1}$};
\draw (353.34,104.98) node [anchor=south] [inner sep=0.75pt]  [color={rgb, 255:red, 148; green, 4; blue, 177 }  ,opacity=1 ,xscale=1.2,yscale=1.2]  {$T_{1}^{2} \wh R_{1}$};
\draw (420.54,89.41) node [anchor=south] [inner sep=0.75pt]  [color={rgb, 255:red, 148; green, 4; blue, 177 }  ,opacity=1 ,xscale=1.2,yscale=1.2]  {$\hat{f}^{3}( \wh R_{1})$};
\draw (470.79,107.98) node [anchor=south west] [inner sep=0.75pt]  [color={rgb, 255:red, 148; green, 4; blue, 177 }  ,opacity=1 ,xscale=1.2,yscale=1.2]  {$T_{1}^{3} \wh R_{1}$};
\draw (511.63,159) node [anchor=north] [inner sep=0.75pt]  [color={rgb, 255:red, 225; green, 141; blue, 0 }  ,opacity=1 ,xscale=1.2,yscale=1.2]  {$T_{1}^{3} \phi _{a}$};
\draw (115.8,105.98) node [anchor=south west] [inner sep=0.75pt]  [color={rgb, 255:red, 148; green, 4; blue, 177 }  ,opacity=1 ,xscale=1.2,yscale=1.2]  {$\wh R_{1}^{1}$};
\draw (157,99.98) node [anchor=south west] [inner sep=0.75pt]  [color={rgb, 255:red, 148; green, 4; blue, 177 }  ,opacity=1 ,xscale=1.2,yscale=1.2]  {$\wh R_{1}^{0}$};

\end{tikzpicture}

\caption{Beginning of the proof of Theorem~\ref{ThConnectionHorse} for $k_1=3$: construction of the rectangle $\wh R_1$ (top) and Markovian intersections of the image $\wh f^3(\wh R_1)$ with $T_1\wh R_1$, $T_1^2\wh R_1$ and $T_1^3\wh R_1$ (bottom).}\label{FigConnectionHorse0}
\end{center}
\end{figure}

Fix $k_1, k_2\ge 2$. Denote $\wh \phi_a = \wh\phi_{\wh\gamma(a)}$, $\wh \phi_b = \wh\phi_{\wh\gamma(b)}$ and $\wh \phi_c = \wh\phi_{\wh\gamma(c)}$. By hypothesis, the paths $\wh\gamma|_{[a,b]}$ and $T_1^{-1}\wh\gamma|_{[a,b]}$ intersect $\wh\F$-transversally. By successive applications of Proposition~\ref{propFondalct1}, this implies that for any $-1\le j\le k_1-1$, we have $\wh f^{k_1}(\wh\phi_a) \cap T_1^j \wh\phi_b \neq\emptyset$ (see \cite[Lemma 9]{lct2} for details). 

As in \cite[Section 3.1]{lct2}, we define $R_a = \bigcap_{k\in \Z} R(T_1^k\wh\phi_a)$ and, for $p\in\Z$, the set $\X_{p}$ of paths joining $T_1^{-1}\wh\phi_a$ to $\wh\phi_a$ whose interior is a connected component of $T_1^p \wh f^{-k_1r_1}(\wh\phi_b) \cap R_a$. The following is \cite[Lemma 10]{lct2}:

\begin{lemma}\label{Lemma10}
Every simple path $\delta : [c,d]\to\wt\dom(\F)$ that joins $T_1^{-p_0}\wh \phi_a$ to $T_1^{p_1} \wh \phi_a$, with $p_0,p_1>0$, and which is $T_1$-free, meets $L(\wh \phi_a)$.
%
\end{lemma}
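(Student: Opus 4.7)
The plan is to argue by contradiction: assume $\delta \cap L(\wh\phi_a) = \emptyset$, so that $\delta \subset \overline{R(\wh\phi_a)}$, and derive a contradiction from the $T_1$-freeness and the hypothesis $p_0, p_1 > 0$. The geometric picture I have in mind is the following: the leaf $\wh\phi_a$ is a proper simple arc separating $\wt\dom(\F)$ into $L(\wh\phi_a)$ and $R(\wh\phi_a)$, and its iterates $T_1^k\wh\phi_a$ are pairwise disjoint simple proper arcs. Each translate is therefore contained entirely in one of the two sides of $\wh\phi_a$.

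First, I would observe that under the standing contradiction hypothesis, the endpoints $\delta(c) \in T_1^{-p_0}\wh\phi_a$ and $\delta(d) \in T_1^{p_1}\wh\phi_a$ lie in $\overline{R(\wh\phi_a)}$; since $T_1^{-p_0}\wh\phi_a$ and $T_1^{p_1}\wh\phi_a$ are disjoint from $\wh\phi_a$, they must both be entirely contained in $R(\wh\phi_a)$. Using that $T_1$ is orientation-preserving (as a deck transformation of an oriented cover), $T_1$ sends $R(\wh\phi_a)$ to $R(T_1\wh\phi_a)$, so iterating gives $\wh\phi_a \subset R(T_1^{p_0}\wh\phi_a)$ on the one hand and $T_1^{p_1}\wh\phi_a \subset R(\wh\phi_a) \subset R(T_1^{-p_0}\wh\phi_a)^c$-type relations; spelling these out, the collection $\{T_1^k\wh\phi_a\}_k$ is linearly ordered by the separation relation, and the ordering is non-trivial since $T_1^{-p_0}\wh\phi_a \neq T_1^{p_1}\wh\phi_a$.

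Next, I would concatenate $\delta$ with a subarc of $T_1^{-p_0}\wh\phi_a$ and a subarc of $T_1^{p_1}\wh\phi_a$ (extended out to the ends of those leaves) to form a proper bi-infinite simple arc $\Delta \subset \overline{R(\wh\phi_a)}$. Applying $T_1$, the arc $T_1\Delta$ is again a proper bi-infinite simple arc which is disjoint from $\Delta$ by the $T_1$-freeness (on the $\delta$-piece) and by disjointness of leaves (on the ray pieces, with a possible modification in a small neighbourhood of the endpoints to separate the rays that lie on the same leaf). Thus $\Delta$ and $T_1\Delta$ bound a closed topological strip $B \subset \overline{R(\wh\phi_a)}$. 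Iterating, $T_1^k \Delta$ for $k \in \Z$ produces a sequence of disjoint bi-infinite simple arcs inside $\overline{R(\wh\phi_a)}$, exhausting an invariant region that accumulates on itself under $T_1$.

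The final step, which I expect to be the main obstacle, is to extract a contradiction from the fact that $\wh\phi_a$ itself is then trapped in a peculiar position: the leaf $\wh\phi_a$ lies in $R(T_1^{p_0}\wh\phi_a)$ and in $R(T_1^{-p_1}\wh\phi_a)^c$ (by applying $T_1^{-p_1}$ to the relation $T_1^{p_1}\wh\phi_a \subset R(\wh\phi_a)$), so $\wh\phi_a$ is sandwiched between two translates on the \emph{same} side of itself, which is impossible for a properly embedded arc separating $\wt\dom(\F)$ into two half-planes. The technical difficulty is to make this sandwich argument rigorous when $\wt\dom(\F)$ may fail to be simply connected and the leaves may not be properly embedded in a uniform way; I would handle this by working in the universal cover of $\dom(\F)$ (which is $\wt\dom(\F)$ by assumption) and invoking the standard fact that any simple proper arc in a simply connected surface separates it into two topological half-planes, so that the linear order on the translates $T_1^k\wh\phi_a$ is indeed well-defined and incompatible with the derived sandwich.
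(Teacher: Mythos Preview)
The paper does not prove this lemma; it is quoted verbatim as \cite[Lemma~10]{lct2}, so there is no in-paper argument to compare against --- only the original Le~Calvez--Tal proof.

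Your outline has the right opening (assume $\delta\subset\overline{R(\wh\phi_a)}$ and deduce that both endpoint leaves lie in $R(\wh\phi_a)$), but the remaining steps do not close. Two concrete gaps:

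\medskip
\noindent\textbf{The sandwich step is miscomputed and, even once corrected, yields no contradiction.} From $T_1^{p_1}\wh\phi_a\subset R(\wh\phi_a)$, applying the orientation-preserving homeomorphism $T_1^{-p_1}$ gives $\wh\phi_a\subset T_1^{-p_1}\big(R(\wh\phi_a)\big)=R(T_1^{-p_1}\wh\phi_a)$, \emph{not} its complement. So both derived inclusions read $\wh\phi_a\subset R(T_1^{p_0}\wh\phi_a)$ and $\wh\phi_a\subset R(T_1^{-p_1}\wh\phi_a)$, and these are mutually compatible: for two disjoint oriented Brouwer lines in the plane all four $L/R$ combinations can occur. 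A pure ``which side is which'' bookkeeping cannot produce the contradiction here; the $T_1$-freeness of $\delta$ must enter more substantially than it does in your argument.

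\medskip
\noindent\textbf{The strip construction is unjustified.} You only know $\delta\cap T_1\delta=\emptyset$; this does not imply $\delta\cap T_1^k\delta=\emptyset$ for $|k|\ge 2$, so the claim that the $T_1^k\Delta$ are pairwise disjoint is unproven. Even for $k=1$: your line $\Delta$ is $\delta$ glued to half-leaves of $T_1^{-p_0}\wh\phi_a$ and $T_1^{p_1}\wh\phi_a$, while $T_1\Delta$ carries half-leaves of $T_1^{-p_0+1}\wh\phi_a$ and $T_1^{p_1+1}\wh\phi_a$. Nothing in the hypotheses prevents $\delta$ itself from meeting these latter leaves (it may well cross many translates $T_1^j\wh\phi_a$), so $\Delta\cap T_1\Delta$ need not be empty. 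The ``small modification near the endpoints'' you mention does not address this, since the potential intersections occur in the interior of $\delta$.

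\medskip
In short, the contradiction has to come from a genuine interaction between the single translate $T_1\delta$ and the separating leaf $\wh\phi_a$, not from an asymptotic nesting of infinitely many translates; as written, your argument never actually exploits the hypothesis $p_0,p_1>0$ beyond the first paragraph.
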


The same statement holds with $\wh\phi_b$ instead of $\wh\phi_a$.

\begin{figure}
\begin{center}

\tikzset{every picture/.style={line width=0.75pt}} 

\begin{tikzpicture}[x=0.75pt,y=0.75pt,yscale=-.9,xscale=.9]

\draw [color={rgb, 255:red, 65; green, 117; blue, 5 }  ,draw opacity=0.5 ][fill={rgb, 255:red, 65; green, 117; blue, 5 }  ,fill opacity=0.1 ]   (280,30) .. controls (274.42,88.15) and (350.57,26.55) .. (385.87,47.7) .. controls (421.16,68.85) and (453.71,66.32) .. (475.41,47.85) .. controls (497.1,29.39) and (547.93,50.22) .. (550,66.04) .. controls (552.07,81.86) and (531.53,143.87) .. (545.25,143.04) .. controls (558.97,142.22) and (565.08,83.88) .. (554.75,59.54) .. controls (544.42,35.21) and (495.83,23.96) .. (472.17,41.7) .. controls (448.52,59.44) and (415.36,59.35) .. (388.02,42.01) .. controls (360.68,24.67) and (325.35,53.93) .. (320,30) ;
\draw  [draw opacity=0][fill={rgb, 255:red, 208; green, 2; blue, 27 }  ,fill opacity=0.15 ] (385.87,47.7) .. controls (420.35,69.72) and (454.51,65.35) .. (475.41,47.85) .. controls (477.51,52.45) and (477.53,51.99) .. (481.28,54.76) .. controls (444.88,77.56) and (409.94,74.8) .. (380.81,53.55) .. controls (383.81,50.75) and (382.91,52.7) .. (385.87,47.7) -- cycle ;
\draw  [draw opacity=0][fill={rgb, 255:red, 127; green, 2; blue, 208 }  ,fill opacity=0.15 ] (373.67,201.48) .. controls (404.6,177.61) and (456.24,180.78) .. (483.3,203.05) .. controls (479.91,205.3) and (478.55,205.93) .. (476.4,209.56) .. controls (453.44,196.12) and (408.92,194) .. (382.12,207.2) .. controls (379.21,204.02) and (377.12,203.3) .. (373.67,201.48) -- cycle ;
\draw [color={rgb, 255:red, 245; green, 166; blue, 35 }  ,draw opacity=1 ]   (110,130) -- (600,130) ;
\draw [color={rgb, 255:red, 65; green, 117; blue, 5 }  ,draw opacity=1 ]   (110.4,130.88) -- (600.4,130.88) ;
\draw [color={rgb, 255:red, 65; green, 117; blue, 5 }  ,draw opacity=1 ]   (410,30) .. controls (410.18,59.85) and (449.58,59.65) .. (450,30) ;
\draw [color={rgb, 255:red, 245; green, 166; blue, 35 }  ,draw opacity=1 ]   (340,30) .. controls (341.08,69.79) and (389.58,69.79) .. (390,30) ;
\draw [color={rgb, 255:red, 245; green, 166; blue, 35 }  ,draw opacity=1 ]   (470,30) .. controls (471.08,69.79) and (519.58,69.79) .. (520,30) ;
\draw [color={rgb, 255:red, 65; green, 117; blue, 5 }  ,draw opacity=1 ]   (550,30) .. controls (550.18,59.85) and (589.58,59.65) .. (590,30) ;
\draw [color={rgb, 255:red, 65; green, 117; blue, 5 }  ,draw opacity=1 ]   (280,30) .. controls (280.18,59.85) and (319.58,59.65) .. (320,30) ;
\draw [color={rgb, 255:red, 245; green, 166; blue, 35 }  ,draw opacity=1 ]   (120,30) .. controls (121.08,69.79) and (169.58,69.79) .. (170,30) ;
\draw [color={rgb, 255:red, 65; green, 117; blue, 5 }  ,draw opacity=1 ]   (210.25,30.5) .. controls (210.43,60.35) and (249.83,60.15) .. (250.25,30.5) ;
\draw [color={rgb, 255:red, 245; green, 166; blue, 35 }  ,draw opacity=1 ]   (410,230) .. controls (410.31,199.84) and (450.09,199.4) .. (450,230) ;
\draw [color={rgb, 255:red, 65; green, 117; blue, 5 }  ,draw opacity=1 ]   (470,230) .. controls (470.31,190.06) and (519.86,189.84) .. (520,230) ;
\draw [color={rgb, 255:red, 245; green, 166; blue, 35 }  ,draw opacity=1 ]   (550,230) .. controls (550.31,199.84) and (590.09,199.4) .. (590,230) ;
\draw [color={rgb, 255:red, 65; green, 117; blue, 5 }  ,draw opacity=1 ]   (340,230) .. controls (340.31,190.06) and (389.86,189.84) .. (390,230) ;
\draw [color={rgb, 255:red, 245; green, 166; blue, 35 }  ,draw opacity=1 ]   (280,230) .. controls (280.31,199.84) and (320.09,199.4) .. (320,230) ;
\draw [color={rgb, 255:red, 245; green, 166; blue, 35 }  ,draw opacity=1 ]   (210,230) .. controls (210.31,199.84) and (250.09,199.4) .. (250,230) ;
\draw [color={rgb, 255:red, 65; green, 117; blue, 5 }  ,draw opacity=1 ]   (120,230) .. controls (120.31,190.06) and (169.86,189.84) .. (170,230) ;
\draw [line width=1.5]    (110,230) -- (600,230) ;
\draw [line width=1.5]    (110,30) -- (600,30) ;
\draw [color={rgb, 255:red, 65; green, 117; blue, 5 }  ,draw opacity=1 ]   (385.87,47.7) .. controls (420.48,69.24) and (453.97,66.07) .. (475.41,47.85) ;
\draw [color={rgb, 255:red, 245; green, 166; blue, 35 }  ,draw opacity=1 ]   (373.67,201.48) .. controls (404.71,177.09) and (456.58,181.34) .. (483.3,203.05) ;
\draw [color={rgb, 255:red, 245; green, 166; blue, 35 }  ,draw opacity=1 ]   (382.12,207.2) .. controls (409.46,193.59) and (454.46,196.09) .. (476.4,209.56) ;
\draw [color={rgb, 255:red, 245; green, 166; blue, 35 }  ,draw opacity=0.5 ][fill={rgb, 255:red, 245; green, 166; blue, 35 }  ,fill opacity=0.1 ]   (210,230) .. controls (203.71,192.83) and (262.17,193) .. (296.42,192) .. controls (330.67,191) and (333.15,208.75) .. (362.3,197.89) .. controls (391.44,187.03) and (405.49,172.04) .. (445.71,176.27) .. controls (485.94,180.49) and (511.71,226.28) .. (483.3,203.05) .. controls (454.9,179.82) and (401.79,178.71) .. (373.67,201.48) .. controls (345.55,224.24) and (326.58,195.89) .. (297.34,197.82) .. controls (268.1,199.75) and (258.87,196.46) .. (250,230) ;
\draw [color={rgb, 255:red, 245; green, 166; blue, 35 }  ,draw opacity=0.5 ][fill={rgb, 255:red, 245; green, 166; blue, 35 }  ,fill opacity=0.1 ]   (280,230) .. controls (270.42,198.63) and (303.64,200.08) .. (320.92,209.13) .. controls (338.19,218.17) and (356.46,220.67) .. (382.12,207.2) .. controls (407.78,193.74) and (452.66,195.74) .. (476.4,209.56) .. controls (500.15,223.38) and (508.09,202.84) .. (523.34,183.82) .. controls (538.59,164.8) and (546.9,125.38) .. (557.12,124.27) .. controls (567.34,123.16) and (519.9,216.53) .. (503.79,220.04) .. controls (487.67,223.56) and (457.67,206.45) .. (432.57,204.9) .. controls (407.46,203.34) and (374.05,218.67) .. (360.57,220.67) .. controls (347.09,222.67) and (321.68,219.79) .. (320,230) ;
\draw [color={rgb, 255:red, 65; green, 117; blue, 5 }  ,draw opacity=0.5 ][fill={rgb, 255:red, 65; green, 117; blue, 5 }  ,fill opacity=0.1 ]   (210.25,30.5) .. controls (202.58,85.49) and (259.92,81.61) .. (300.08,72.65) .. controls (340.25,63.69) and (351.53,43.99) .. (386.92,67.65) .. controls (422.31,91.32) and (452.92,82.65) .. (476.92,65.82) .. controls (500.92,48.99) and (514.97,32.04) .. (481.28,54.76) .. controls (447.58,77.49) and (409.58,74.49) .. (380.81,53.55) .. controls (352.05,32.61) and (329.25,63.69) .. (299.58,63.69) .. controls (269.92,63.69) and (255.6,54.43) .. (250.25,30.5) ;
\draw [color={rgb, 255:red, 65; green, 117; blue, 5 }  ,draw opacity=1 ]   (380.81,53.55) .. controls (410.08,74.49) and (445.42,77.99) .. (481.28,54.76) ;
\draw [color={rgb, 255:red, 74; green, 144; blue, 226 }  ,draw opacity=1 ]   (430.39,207.22) .. controls (429.72,181.22) and (429.88,140.8) .. (430.68,52.8) ;
\draw [shift={(430.09,127.36)}, rotate = 90.31] [fill={rgb, 255:red, 74; green, 144; blue, 226 }  ,fill opacity=1 ][line width=0.08]  [draw opacity=0] (7.14,-3.43) -- (0,0) -- (7.14,3.43) -- (4.74,0) -- cycle    ;
\draw [color={rgb, 255:red, 74; green, 144; blue, 226 }  ,draw opacity=1 ]   (570.34,206.64) .. controls (470.87,120.94) and (271.61,120.11) .. (147.42,200.12) ;
\draw [shift={(356.46,141.25)}, rotate = 359.61] [fill={rgb, 255:red, 74; green, 144; blue, 226 }  ,fill opacity=1 ][line width=0.08]  [draw opacity=0] (7.14,-3.43) -- (0,0) -- (7.14,3.43) -- (4.74,0) -- cycle    ;
\draw [color={rgb, 255:red, 74; green, 144; blue, 226 }  ,draw opacity=1 ]   (301.57,206.92) .. controls (324.5,155.29) and (464,135.79) .. (490.1,200.08) ;
\draw [shift={(398.34,159.77)}, rotate = 175.82] [fill={rgb, 255:red, 74; green, 144; blue, 226 }  ,fill opacity=1 ][line width=0.08]  [draw opacity=0] (7.14,-3.43) -- (0,0) -- (7.14,3.43) -- (4.74,0) -- cycle    ;
\draw [color={rgb, 255:red, 74; green, 144; blue, 226 }  ,draw opacity=1 ]   (229.93,207.1) .. controls (241.84,175.4) and (317.8,158.52) .. (360.56,200.81) ;
\draw [shift={(296.32,175.55)}, rotate = 178.72] [fill={rgb, 255:red, 74; green, 144; blue, 226 }  ,fill opacity=1 ][line width=0.08]  [draw opacity=0] (7.14,-3.43) -- (0,0) -- (7.14,3.43) -- (4.74,0) -- cycle    ;
\draw [color={rgb, 255:red, 74; green, 144; blue, 226 }  ,draw opacity=1 ]   (142.75,60.25) .. controls (259.53,141.86) and (452.23,136.61) .. (570.63,52.21) ;
\draw [shift={(360.74,118.36)}, rotate = 178.47] [fill={rgb, 255:red, 74; green, 144; blue, 226 }  ,fill opacity=1 ][line width=0.08]  [draw opacity=0] (7.14,-3.43) -- (0,0) -- (7.14,3.43) -- (4.74,0) -- cycle    ;
\draw [color={rgb, 255:red, 74; green, 144; blue, 226 }  ,draw opacity=1 ]   (495.25,59.75) .. controls (475.96,96.82) and (327.5,96.29) .. (300.92,52.25) ;
\draw [shift={(394.39,86.19)}, rotate = 2.15] [fill={rgb, 255:red, 74; green, 144; blue, 226 }  ,fill opacity=1 ][line width=0.08]  [draw opacity=0] (7.14,-3.43) -- (0,0) -- (7.14,3.43) -- (4.74,0) -- cycle    ;
\draw [color={rgb, 255:red, 74; green, 144; blue, 226 }  ,draw opacity=1 ]   (367.19,60) .. controls (351.99,77.6) and (295.55,98.3) .. (230.75,53.5) ;
\draw [shift={(295.95,79.62)}, rotate = 5.45] [fill={rgb, 255:red, 74; green, 144; blue, 226 }  ,fill opacity=1 ][line width=0.08]  [draw opacity=0] (7.14,-3.43) -- (0,0) -- (7.14,3.43) -- (4.74,0) -- cycle    ;
\draw [color={rgb, 255:red, 208; green, 87; blue, 102 }  ,draw opacity=1 ]   (395,102.78) .. controls (416.44,102.14) and (413.61,88.6) .. (414.24,66.28) ;
\draw [shift={(414.33,63.44)}, rotate = 92.39] [fill={rgb, 255:red, 208; green, 87; blue, 102 }  ,fill opacity=1 ][line width=0.08]  [draw opacity=0] (7.14,-3.43) -- (0,0) -- (7.14,3.43) -- (4.74,0) -- cycle    ;
\draw [color={rgb, 255:red, 141; green, 43; blue, 204 }  ,draw opacity=1 ]   (415.33,263.11) .. controls (396.58,262.46) and (391.31,228.89) .. (394.48,199.3) ;
\draw [shift={(394.8,196.57)}, rotate = 97.17] [fill={rgb, 255:red, 141; green, 43; blue, 204 }  ,fill opacity=1 ][line width=0.08]  [draw opacity=0] (7.14,-3.43) -- (0,0) -- (7.14,3.43) -- (4.74,0) -- cycle    ;

\draw (132.46,122.48) node [anchor=south] [inner sep=0.75pt]  [color={rgb, 255:red, 225; green, 141; blue, 0 }  ,opacity=1 ,xscale=1.2,yscale=1.2]  {$\wh \phi _{b}$};
\draw (130.12,133) node [anchor=north] [inner sep=0.75pt]  [color={rgb, 255:red, 65; green, 117; blue, 5 }  ,opacity=1 ,xscale=1.2,yscale=1.2]  {$\wh \phi _{b}$};
\draw (570.62,233.4) node [anchor=north] [inner sep=0.75pt]  [color={rgb, 255:red, 225; green, 141; blue, 0 }  ,opacity=1 ,xscale=1.2,yscale=1.2]  {$T_{1} \wh \phi _{a}$};
\draw (430.12,231) node [anchor=north] [inner sep=0.75pt]  [color={rgb, 255:red, 225; green, 141; blue, 0 }  ,opacity=1 ,xscale=1.2,yscale=1.2]  {$\wh \phi _{a}$};
\draw (300.62,233.9) node [anchor=north] [inner sep=0.75pt]  [color={rgb, 255:red, 225; green, 141; blue, 0 }  ,opacity=1 ,xscale=1.2,yscale=1.2]  {$T_{1}^{-1} \wh \phi _{a}$};
\draw (231.12,231.4) node [anchor=north] [inner sep=0.75pt]  [color={rgb, 255:red, 225; green, 141; blue, 0 }  ,opacity=1 ,xscale=1.2,yscale=1.2]  {$T_{1}^{-2} \wh \phi _{a}$};
\draw (496.12,230.4) node [anchor=north] [inner sep=0.75pt]  [color={rgb, 255:red, 65; green, 117; blue, 5 }  ,opacity=1 ,xscale=1.2,yscale=1.2]  {$T_{1}^{-1} \wh\phi _{b}$};
\draw (367.62,232.9) node [anchor=north] [inner sep=0.75pt]  [color={rgb, 255:red, 65; green, 117; blue, 5 }  ,opacity=1 ,xscale=1.2,yscale=1.2]  {$T_{1}^{-2} \wh \phi _{b}$};
\draw (143.62,231.4) node [anchor=north] [inner sep=0.75pt]  [color={rgb, 255:red, 65; green, 117; blue, 5 }  ,opacity=1 ,xscale=1.2,yscale=1.2]  {$T_{1} \wh \phi _{b}$};
\draw (431.12,23.48) node [anchor=south] [inner sep=0.75pt]  [color={rgb, 255:red, 65; green, 117; blue, 5 }  ,opacity=1 ,xscale=1.2,yscale=1.2]  {$\wh \phi _{c}$};
\draw (572.12,26.38) node [anchor=south] [inner sep=0.75pt]  [color={rgb, 255:red, 65; green, 117; blue, 5 }  ,opacity=1 ,xscale=1.2,yscale=1.2]  {$T_{2}^{-1} \wh \phi _{c}$};
\draw (303.62,23.98) node [anchor=south] [inner sep=0.75pt]  [color={rgb, 255:red, 65; green, 117; blue, 5 }  ,opacity=1 ,xscale=1.2,yscale=1.2]  {$T_{2} \wh \phi _{c}$};
\draw (232.12,23.48) node [anchor=south] [inner sep=0.75pt]  [color={rgb, 255:red, 65; green, 117; blue, 5 }  ,opacity=1 ,xscale=1.2,yscale=1.2]  {$T_{2}^{2} \wh \phi _{c}$};
\draw (497.62,24.98) node [anchor=south] [inner sep=0.75pt]  [color={rgb, 255:red, 225; green, 141; blue, 0 }  ,opacity=1 ,xscale=1.2,yscale=1.2]  {$T_{2} \wh \phi _{b}$};
\draw (364.62,23.48) node [anchor=south] [inner sep=0.75pt]  [color={rgb, 255:red, 225; green, 141; blue, 0 }  ,opacity=1 ,xscale=1.2,yscale=1.2]  {$T_{2}^{2}\wh \phi _{b}$};
\draw (142.62,25.98) node [anchor=south] [inner sep=0.75pt]  [color={rgb, 255:red, 225; green, 141; blue, 0 }  ,opacity=1 ,xscale=1.2,yscale=1.2]  {$T_{2}^{-1} \wh \phi _{b}$};
\draw (550.4,161.59) node [anchor=west] [inner sep=0.75pt]  [color={rgb, 255:red, 203; green, 140; blue, 35 }  ,opacity=1 ,xscale=1.2,yscale=1.2]  {$\wh {f}^{k_{1} r_{1}}\big( T_{1}^{-1}\wh {\phi }_{a}\big)$};
\draw (564.8,64.6) node [anchor=north west][inner sep=0.75pt]  [color={rgb, 255:red, 65; green, 117; blue, 5 }  ,opacity=1 ,xscale=1.2,yscale=1.2]  {$\wh {f}^{-k_{2} r_{2}}\big( T_{2}\wh {\phi }_{c}\big)$};
\draw (348,100) node [anchor=west] [inner sep=0.75pt]  [font=\small,color={rgb, 255:red, 208; green, 87; blue, 102 }  ,opacity=1 ,xscale=1.2,yscale=1.2]  {$T_{2}^{2}\wh {R}_{2}$};
\draw (413.61,275) node [anchor=west] [inner sep=0.75pt]  [font=\small,color={rgb, 255:red, 141; green, 43; blue, 204 }  ,opacity=1 ,xscale=1.2,yscale=1.2]  {$\wh {f}^{k_{1} r_{1}}\big( T_{1}^{-i-1}\wh {R}_{1}^{i}\big)$};

\end{tikzpicture}

\caption{Proof of Theorem~\ref{ThConnectionHorse} in the case $k_1 = k_2 = 2$: construction of the rectangles $\wh R_1$ and $\wh R_2$.}\label{FigConnectionHorse1}
\end{center}
\end{figure}
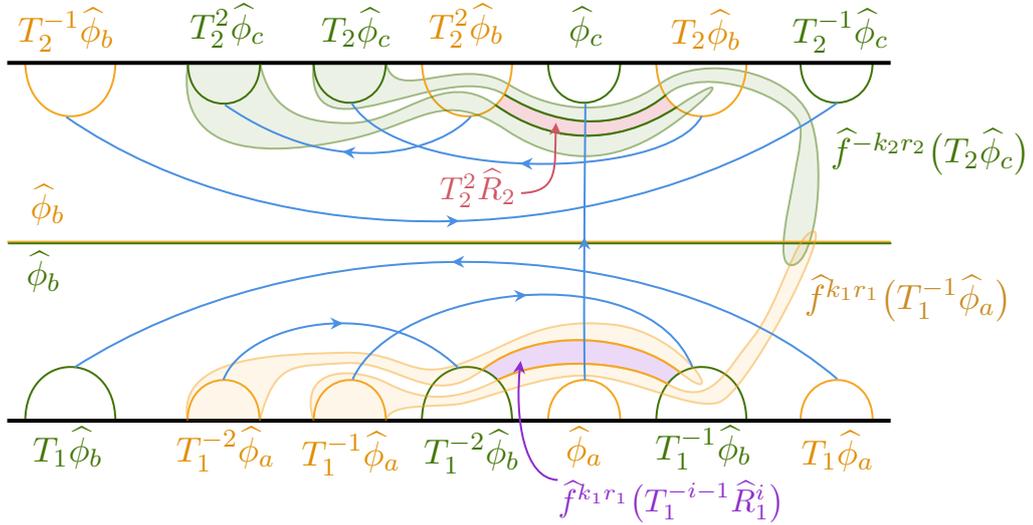

From this lemma one can deduce the following \cite[Lemma 11]{lct2}:

\begin{lemma}\label{Lemma11}
We have the following:
\begin{enumerate}
\item For any $-1\le p\le k_1-2$, we have $\X_p\neq\emptyset$;
\item For $-1\le p_0<p_2<p_1\le k_1-2$, for every $\delta_0\in \X_{p_0}$ and $\delta_1\in \X_{p_1}$, there exist at least two paths in $\X_{p_2}$ between $\delta_0$ and $\delta_1$. 
\end{enumerate}
\end{lemma}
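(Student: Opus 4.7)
The plan is to establish both items by carefully examining how the simple curves $\wh\psi_p := T_1^p\wh f^{-k_1 r_1}(\wh\phi_b)$, $-1 \le p \le k_1-1$, cross the $T_1$-orbit of $\wh\phi_a$, and then exploiting the topology of the strip-like region $R_a$. The starting observation is that the intersections $\wh f^{k_1 r_1}(\wh\phi_a) \cap T_1^j\wh\phi_b \neq \emptyset$ for $-1\le j\le k_1-1$, established just before the statement, translate (after applying $\wh f^{-k_1 r_1}$ and a suitable relabelling) into $\wh\psi_p \cap \wh\phi_a \neq \emptyset$ and $\wh\psi_p\cap T_1^{-1}\wh\phi_a \neq \emptyset$ for every $-1\le p\le k_1-2$, so that $\wh\psi_p$ crosses both sides of $R_a$.

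For item~1, given such a curve $\wh\psi_p$ hitting both $\wh\phi_a$ and $T_1^{-1}\wh\phi_a$, I would follow $\wh\psi_p$ along its parametrisation between two such crossings, and argue that, because the leaves $T_1^k\wh\phi_a$ are proper, pairwise disjoint, and linearly ordered by $T_1$, some connected component of $\wh\psi_p\cap R_a$ has one endpoint on $\wh\phi_a$ and the other on $T_1^{-1}\wh\phi_a$. The delicate point is to rule out the configuration in which every candidate component runs instead between $\wh\phi_a$ and a far translate $T_1^k\wh\phi_a$ with $k\notin\{0,-1\}$: this is precisely where Lemma~\ref{Lemma10} intervenes, since a $T_1$-free sub-arc of $\wh\psi_p$ joining $T_1^{-p_0}\wh\phi_a$ to $T_1^{p_1}\wh\phi_a$ with $p_0,p_1>0$ would have to cross $L(\wh\phi_a)$, incompatible with it being a piece of a connected component contained in $R_a$.

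For item~2, given $\delta_0 \in \X_{p_0}$ and $\delta_1 \in \X_{p_1}$ with $p_0<p_1$, I would first exhibit a topological closed disk $D\subset R_a$ whose boundary consists of $\delta_0$, $\delta_1$ and two sub-arcs of $\wh\phi_a$ and $T_1^{-1}\wh\phi_a$. Applying item~1 at the intermediate level $p_2$ yields at least one arc $\delta\in\X_{p_2}$; the goal is then to count the traversals of $D$ by the simple curve $\wh\psi_{p_2}$. A Jordan-arc / parity argument, combined with the crucial remark that $\wh\psi_{p_2}$, $\wh\psi_{p_0}$ and $\wh\psi_{p_1}$ are pairwise $T_1$-translates of the same simple curve (so that their mutual intersection pattern depends only on the differences $p_i-p_j$), should force $\wh\psi_{p_2}$ to enter and exit $D$ in at least two separate crossings, each of which determines one element of $\X_{p_2}$ whose closure joins $\delta_0$ to $\delta_1$.

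The main difficulty will be of a topological/combinatorial nature: tracking, inside the non-Hausdorff leaf space and inside $R_a$, the cyclic/linear order of the successive intersections of $\wh\psi_{p_2}$ with $\delta_0$ and $\delta_1$, and systematically using Lemma~\ref{Lemma10} to discard the parasitic components that would otherwise escape the strip. This is in substance the argument already carried out in the proof of \cite[Lemma~11]{lct2}, which adapts with essentially no modification to the present setting and from which the second (heteroclinic) half of Theorem~\ref{ThConnectionHorse} will be derived by applying the same statement to the second curve $T_2^p \wh f^{-k_2 r_2}(\wh\phi_c)$.
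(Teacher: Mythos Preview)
Your proposal is correct and matches the paper's treatment: the paper does not give an independent proof of this lemma but simply records it as \cite[Lemma~11]{lct2}, deduced from Lemma~\ref{Lemma10}, and the sketch you outline (using the disjointness of the translates $\wh\psi_p=T_1^p\wh f^{-k_1 r_1}(\wh\phi_b)$, the crossing information $\wh\psi_p\cap\wh\phi_a\neq\emptyset$, $\wh\psi_p\cap T_1^{-1}\wh\phi_a\neq\emptyset$, and Lemma~\ref{Lemma10} to rule out parasitic components) is exactly the argument carried out there. The only cosmetic slip is in the last sentence of your item~2: the two paths in $\X_{p_2}$ join $T_1^{-1}\wh\phi_a$ to $\wh\phi_a$ and lie \emph{between} $\delta_0$ and $\delta_1$, rather than joining $\delta_0$ to $\delta_1$.
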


This allows to pick, for any $-1< p\le k_1-2$, one path $\delta_p\in\X_p$, and for $-1\le p< k_1-2$, one path $\delta'_p\in\X_p$, such that the family $\delta'_{-1}, \delta_0, \delta'_0,\dots, \delta'_{k_1-3}, \delta_{k_1-2}$ is well ordered\footnote{This orientation is given by \cite[Lemma~9.29]{pa}, but we will not need this fact.}. 
Applying Lemma~\ref{Lemma11} again, one can moreover suppose that for $0\le i\le k_1-2$, there is no element of $\bigcup_p \X_p$ between $\delta'_{i-1}$ and $\delta_{i}$.

This allows to define $\wh R_1$ as the set delimited by $\delta'_{-1}$, $\delta_{k_1-2}$ and the pieces of $\wh\phi_a$ and $T_1^{-1}\wh\phi_a$ lying between $\delta'_{-1}$ and $\delta_{k_1-2}$ (using Schoenflies theorem). By convention, $\delta'_{-1}$ and $\delta_{k_1-2}$ are supposed to be the horizontal sides of $\wh R_1$. 
This rectangle $\wh R_1$ has $k_1-1$ horizontal subrectangles $\wh R_1^i$ (for $0\le i\le k_1-2$) delimited by the paths $\delta'_{i-1}$ and $\delta_{i}$ (note that if $k_1=2$, then this subrectangle is equal to $\wh R_1$). 
By Lemma~\ref{Lemma10} applied to $\wh\phi_b$ and the sets $\wh f^k(\wh\phi_a)$, and the hypothesis made on the $\delta_i$ and $\delta'_i$, the interior of the subrectangles $\wh R_1^i$ do not intersect elements of $T_1^j \wh f^{-k_1r_1}(\wh\phi_b)$ (for any $j\in\Z$).

\begin{lemma}
For any $0\le i\le k_1-2$, the rectangle $\wh f^{k_1r_1}(\wh R_1^i)$ has a pre-Markovian intersection (in the sense of Definition~\ref{def:markov}) with both $T_1^{i+1} \wh R_1$ and $T_1^{i+2} \wh R_1$.
\end{lemma}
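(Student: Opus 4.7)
The plan is to verify the pre-Markovian condition by a direct geometric inspection, reading off the horizontal and vertical sides of the two rectangles from the construction and then applying Homma's theorem to normalise the picture. The argument follows closely the scheme of \cite[Section~3]{lct2}, but must be carried out for the sub-rectangle $\wh R_1^i$ rather than $\wh R_1$ itself.

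First I would catalogue the sides. By definition, the horizontal sides of $\wh R_1^i$ are $\delta'_{i-1}\in\X_{i-1}$ and $\delta_i\in\X_i$, which are contained in $T_1^{i-1}\wh f^{-k_1r_1}(\wh\phi_b)$ and $T_1^i\wh f^{-k_1r_1}(\wh\phi_b)$ respectively; using that $\wh f$ commutes with $T_1$, the horizontal sides of $\wh f^{k_1r_1}(\wh R_1^i)$ lie in $T_1^{i-1}\wh\phi_b$ and $T_1^i\wh\phi_b$, while its vertical sides lie in $\wh f^{k_1r_1}(\wh\phi_a)$ and $T_1^{-1}\wh f^{k_1r_1}(\wh\phi_a)$. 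On the other hand, the vertical sides of $T_1^{i+1}\wh R_1$ lie in $T_1^{i+1}\wh\phi_a$ and $T_1^i\wh\phi_a$, while its horizontal sides $T_1^{i+1}\delta'_{-1}$ and $T_1^{i+1}\delta_{k_1-2}$ are contained respectively in $T_1^i\wh f^{-k_1r_1}(\wh\phi_b)$ and $T_1^{i+k_1-1}\wh f^{-k_1r_1}(\wh\phi_b)$.

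Next I would invoke the intersection information coming from the $\wh\F$-transverse intersection: by the repeated application of Proposition~\ref{propFondalct1} used just before Lemma~\ref{Lemma10}, the leaves $\wh f^{k_1r_1}(\wh\phi_a)$ and $T_1^j\wh\phi_b$ meet non-trivially (hence transversely as they are leaves of the same foliation of the plane) for every $-1\le j\le k_1-1$. Translating by $T_1^{-1}$, the leaf $T_1^{-1}\wh f^{k_1r_1}(\wh\phi_a)$ crosses $T_1^j\wh\phi_b$ for $-2\le j\le k_1-2$. For $0\le i\le k_1-2$ this gives, in particular, that each vertical side of $\wh f^{k_1r_1}(\wh R_1^i)$ crosses both leaves $T_1^{i-1}\wh\phi_b$ and $T_1^i\wh\phi_b$, meaning these two horizontal sides genuinely bound a topological rectangle stretched along the leaf of $\wh f^{k_1r_1}(\wh\phi_a)$. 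A similar statement holds for $T_1^{i+1}\wh R_1$: the leaves $T_1^i\wh\phi_a$ and $T_1^{i+1}\wh\phi_a$ carrying its vertical sides each cross the horizontal leaves $T_1^{i-1}\wh\phi_b$ and $T_1^i\wh\phi_b$, in view of the transitivity of the $T_1$-action on the family of transverse intersections.

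The core step is then to assemble these crossings into a pre-Markovian configuration via Homma's theorem (Theorem~\ref{PropHomma}): extend a homeomorphism that sends the boundary of $T_1^{i+1}\wh R_1$ to $\partial[0,1]^2$, with its vertical sides on $\{0,1\}\times[0,1]$, and then track the image of $\wh f^{k_1r_1}(\wh R_1^i)$. The main obstacle is to check that the two horizontal sides of $\wh f^{k_1r_1}(\wh R_1^i)$ exit $T_1^{i+1}\wh R_1$ through \emph{opposite} vertical sides (equivalently, that the image horizontal sides lie on opposite sides of $[0,1]^2$ in the second coordinate) and that the whole of $\wh f^{k_1r_1}(\wh R_1^i)$ stays in the vertical strip bounded by $T_1^i\wh\phi_a$ and $T_1^{i+1}\wh\phi_a$ (extended). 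The first point is the essential one: it follows from the fact that $\delta'_{i-1}$ and $\delta_i$ belong to $\X_{i-1}$ and $\X_i$, together with Lemma~\ref{Lemma10} applied to $\wh\phi_b$ (to rule out any extra crossings between them) and the order on the paths $(\delta'_p,\delta_p)$ inherited from the construction of $\wh R_1$; this order is exactly what forces $T_1^{i+1}\wh R_1$ to lie ``between'' the two horizontal sides of $\wh f^{k_1r_1}(\wh R_1^i)$ in the correct way. The second containment follows from Schoenflies applied to the images of the vertical sides together with the fact that no $T_1^j$-translate of $\wh\phi_a$ meets the interior of $\wh f^{k_1r_1}(\wh R_1^i)$ for $j\notin\{0,-1\}$, which is a consequence of the original construction of $R_a=\bigcap_k R(T_1^k\wh\phi_a)$. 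The statement for $T_1^{i+2}\wh R_1$ is then proved by exactly the same argument, with every index shifted by one, since the pair $(T_1^{i+1}\wh\phi_a, T_1^{i+2}\wh\phi_a)$ plays the role previously played by $(T_1^i\wh\phi_a, T_1^{i+1}\wh\phi_a)$ and the intersections with $T_1^{i-1}\wh\phi_b$, $T_1^i\wh\phi_b$ remain in the allowed range.
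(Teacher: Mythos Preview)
Your general strategy---catalogue the sides, normalise with Homma, verify the three bullets of Definition~\ref{def:markov}---is exactly the paper's, but your verification of the third bullet (the containment condition) contains a genuine error.

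You claim that ``the whole of $\wh f^{k_1r_1}(\wh R_1^i)$ stays in the vertical strip bounded by $T_1^i\wh\phi_a$ and $T_1^{i+1}\wh\phi_a$'', justified by the assertion that ``no $T_1^j$-translate of $\wh\phi_a$ meets the interior of $\wh f^{k_1r_1}(\wh R_1^i)$ for $j\notin\{0,-1\}$'', which you say follows from the construction of $R_a$. This is false. The inclusion $\wh R_1^i\subset R_a$ says nothing about $\wh f^{k_1r_1}(\wh R_1^i)$, and indeed the image rectangle stretches across \emph{several} strips $T_1^j\wh\phi_a$ (this is precisely what Figure~\ref{FigConnectionHorse0} shows: $\wh f^{k_1r_1}(\wh R_1)$ has Markovian intersections with $T_1\wh R_1,\dots,T_1^{k_1}\wh R_1$ simultaneously). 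So the image certainly crosses $T_1^j\wh\phi_a$ for many $j$, and your containment statement cannot hold.

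The ingredient you are missing is the one singled out just before the lemma: the interiors of the $\wh R_1^i$ were chosen so that they do not meet any $T_1^j\wh f^{-k_1r_1}(\wh\phi_b)$ (this is why one insisted there be no element of $\bigcup_p\X_p$ between $\delta'_{i-1}$ and $\delta_i$). Equivalently, the interior of $\wh f^{k_1r_1}(\wh R_1^i)$ is disjoint from every $T_1^j\wh\phi_b$. The paper's Homma homeomorphism then sends the full leaves $T_1^i\wh\phi_a$ and $T_1^{i+1}\wh\phi_a$ (not just their sub-arcs on $\partial(T_1^{i+1}\wh R_1)$) to curves framing the square, and the required separation of the horizontal sides $T_1^{i-1}\wh\phi_b$, $T_1^i\wh\phi_b$ follows from the disjointness $T_1^k\wh\phi_a\cap T_1^j\wh\phi_b=\emptyset$ together with the fact that no $T_1^j\wh\phi_b$ enters the interior. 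You should replace your strip-containment argument by this property and redo the Homma normalisation accordingly.
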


This lemma is depicted in the bottom of Figure~\ref{FigConnectionHorse0}. 

\begin{proof}
We explain the proof for the intersection with $T_1^{i+1} \wh R_1$, the other one being identical.

Because of the property stated before the lemma, one can apply Homma's Theorem (Theorem~\ref{PropHomma}) to the rectangle $T_1^{i+1} \wh R_1$ and the leaves $T_1^{i}\wh\phi_a$ and $T_1^{i+1}\wh\phi_a$ to get a homeomorphism $h : \wt\dom(\F)\to\R^2$ sending the horizontal sides of $T_1^{i+1} \wh R_1$ on $\{0\}\times [0,1]$ and $\{1\}\times [0,1]$, the vertical sides of $T_1^{i+1} \wh R_1$ on $[0,1]\times\{0\}$ and $[0,1]\times\{1\}$ and the leaves $T_1^{i}\wh\phi_a$ and $T_1^{i+1}\wh\phi_a$ to respectively $\big((-\infty,0]\times\{1\}\big)\cup \big(\{0\}\times[0,1]\big) \cup\big((-\infty,0]\times\{0\}\big)$ and $\big([1,+\infty)\times\{1\}\big) \cup \big(\{1\}\times[0,1]\big) \cup \big([1,+\infty)\times\{1\}\big)$.

The horizontal sides of the rectangle $\wh f^{k_1}(\wh R_1^i)$ are made of pieces of $T_1^{i-1}\wh\phi_b$ and $T_1^i\wh\phi_b$, hence are disjoint from the horizontal sides of the rectangle $T_1^{i+1} \wh R_1$, that are pieces of some $\X_j$ (because $\wh\phi_b$ is a Brouwer line). They are also disjoint from the vertical sides of the rectangle $T_1^{i+1} \wh R_1$, that are pieces of some $T_1^j \wh\phi_a$ (because of the transverse intersections, we have that $T_1^j\wh\phi_a\cap \wh\phi_b = \emptyset$ for any $j\in\Z$).

The vertical sides of the rectangle $\wh f^{k_1}(\wh R_1^i)$ are made of pieces of $\wh f^{k_1}(T_1^{i}\wh\phi_a)$ and $\wh f^{k_1}(T_1^{i+1}\wh\phi_a)$. Hence, they are disjoint from the vertical sides of the rectangle $T_1^{i+1} \wh R_1$ (that are made of pieces of $T_1^j\wh\phi_a$). 
Finally, the horizontal sides of the rectangle $\wh f^{k_1}(\wh R_1^i)$ lie in different connected components of $T_1^{i+1} \wh R_1 \cup T_1^{i}\wh\phi_a \cup T_1^{i+1}\wh\phi_a$. This proves we are in the configuration of Definition~\ref{def:markov}. 
\end{proof}

We can define similarly a rectangle $\wh R_2$ having its vertical sides included in $T_2^{-1}\wh\phi_b$ and $\wh\phi_b$, and some horizontal sub-rectangles $(\wh R_2^i)_{0\le i\le k_2-2}$ with the property that for any $0\le i\le k_2-2$, the rectangle $\wh f^{k_2}(\wh R_2^i)$ has a pre-Markovian intersection with both $T_2^{i+1} \wh R_2$ and $T_2^{i+2} \wh R_2$ (see Figure~\ref{FigConnectionHorse1}).

\begin{lemma}
For any $0\le i\le k_1-2$, the intersection $\wh f^{k_1r_1+r_1+r_2}(T_1^{-i-1}\wh R_1^i)\cap T_2^2\wh R_2$ is Markovian.
\end{lemma}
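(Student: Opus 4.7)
The plan is to extend the proof of the preceding lemma by $r_1+r_2$ additional iterations of $\wh f$ in order to bridge the horseshoe on $\wh R_1$ and the horseshoe on $\wh R_2$ via the second $\wh{\F}$-transverse self-intersection. First, I will apply the preceding lemma in its $T_1^{i+2}$ variant: $\wh f^{k_1 r_1}(\wh R_1^i)$ has pre-Markovian intersection with $T_1^{i+2}\wh R_1$. Translating by the deck transformation $T_1^{-i-1}$ (which commutes with $\wh f$ and, as is immediate from Definition~\ref{def:markov}, preserves pre-Markovian intersections), this yields that $\wh f^{k_1 r_1}(T_1^{-i-1}\wh R_1^i)$ has pre-Markovian intersection with $T_1\wh R_1$. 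In particular I will obtain a vertical subrectangle whose horizontal sides lie on pieces of $T_1^{-2}\wh\phi_b$ and $T_1^{-1}\wh\phi_b$, stretching across the strip between $\wh\phi_b$ and $T_1\wh\phi_b$.

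Next, I will mimic the construction of $\wh R_1$ for the pair $(\wh\phi_b,\wh\phi_c)$ and the deck transformation $T_2$: setting $R_b=\bigcap_{k\in\Z}R(T_2^k\wh\phi_b)$ and $\X^{(2)}_p$ the set of connected components of $T_2^p\wh f^{-k_2 r_2}(\wh\phi_c)\cap R_b$, the $\wh{\F}$-transverse intersection of $\wh\gamma|_{[b,c]}$ with $T_2(\wh\gamma|_{[b,c]})$, combined with Proposition~\ref{propFondalct1}, will yield analogues of Lemma~\ref{Lemma10} and Lemma~\ref{Lemma11}. Consequently, the shifted rectangle $T_2^2\wh R_2$ has vertical sides on $T_2\wh\phi_b$ and $T_2^2\wh\phi_b$ and horizontal sides on two shifted leaves lying in $T_2^p\wh f^{-k_2 r_2}(\wh\phi_c)$.

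The core step is then to apply $\wh f^{r_1+r_2}$ to the subrectangle obtained above and certify a pre-Markovian configuration with $T_2^2\wh R_2$. Using the admissibility of $\gamma|_{[a,c]}$ of order $r_1+r_2$ and repeated applications of Proposition~\ref{propFondalct1}, I will verify that the images of the horizontal sides --- pieces of $\wh f^{r_1+r_2}(T_1^{-j}\wh\phi_b)$ for $j\in\{1,2\}$ --- cross through the strip between $T_2\wh\phi_b$ and $T_2^2\wh\phi_b$, while the images of the vertical sides, lying on iterates of $T_1^{-j}\wh\phi_a$, cut cleanly through the horizontal sides of $T_2^2\wh R_2$. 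Homma's theorem (Theorem~\ref{PropHomma}) will then provide a chart realizing the pre-Markovian configuration of Definition~\ref{def:markov}, and combining with the first step via Remark~\ref{RemHomma} (passing to a horizontal subrectangle of $T_1^{-i-1}\wh R_1^i$) upgrades this to a Markovian intersection.

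The main obstacle will be in this last step: ensuring that the leaves $\wh f^{r_1+r_2}(T_1^{-j}\wh\phi_a)$ and $\wh f^{r_1+r_2}(T_1^{-j}\wh\phi_b)$ separate from those framing $T_2^2\wh R_2$ in the correct combinatorial pattern, with no unintended crossings. This will require a $T_2$-version of Lemma~\ref{Lemma10} applied to these pushed-forward leaves seen as Brouwer lines, and the specific exponent $2$ in $T_2^2$ (analogous to the $i+2$ of the preceding lemma) is precisely dictated by this ordering analysis.
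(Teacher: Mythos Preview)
There is a genuine gap in your approach. The core step --- certifying that $\wh f^{k_1r_1+r_1+r_2}(T_1^{-i-1}\wh R_1^i)$ (or your ``vertical subrectangle'' of it) has a \emph{pre}-Markovian intersection with $T_2^2\wh R_2$ --- cannot work as stated. For a pre-Markovian intersection $R_1\cap R_2$ the two horizontal sides of $R_1$ must lie on \emph{opposite} sides of the strip defined by $R_2$; they must be separated, not ``cross through''. But both horizontal sides of your rectangle are pieces of $\wh f^{r_1+r_2}(T_1^{-j}\wh\phi_b)$ for $j\in\{1,2\}$: these are forward iterates of Brouwer lines of the same type and both remain in $L(\wh\phi_b)$, i.e.\ on the \emph{same} side of $T_2^2\wh R_2$. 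The admissibility of $\gamma|_{[a,c]}$ relates $\wh\phi_a$ to $\wh\phi_c$, not $T_1^{-j}\wh\phi_b$ to the $T_2$-translates of $\wh\phi_b$, so repeated use of Proposition~\ref{propFondalct1} gives no control on the latter. Passing to the subrectangle from your first step does not help either: its horizontal sides are still carried by the same leaves $T_1^{-j}\wh\phi_b$.

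The missing idea is to manufacture an \emph{asymmetric} horizontal cut. The paper uses the admissibility $\wh f^{-r_1-r_2}(\wh\phi_c)\cap\wh\phi_a\neq\emptyset$ to build a path $\wh\sigma\subset R(\wh f^{-r_1-r_2}(\wh\phi_c))$ joining $\wh\phi_a$ to $\wh\phi_c$; one then argues that $\wh\sigma$ must traverse both vertical sides of $\wh f^{k_1r_1}(T_1^{-i-1}\wh R_1^i)$, yielding a subarc $\wh\sigma'$ that slices this rectangle into two horizontal subrectangles $\wh R_1^{i,T}$, $\wh R_1^{i,B}$. After applying $\wh f^{r_1+r_2}$, the $\wh\sigma'$-side lands in $R(\wh\phi_c)$ (beyond $T_2^2\wh R_2$) while the original $T_1^{-j}\wh\phi_b$-side stays in $L(\wh\phi_b)$ (before it). This asymmetry is exactly what yields a pre-Markovian intersection for each half, hence a Markovian one for $T_1^{-i-1}\wh R_1^i$. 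Your outline should be revised to introduce this cutting path; Homma's theorem and the first-step subrectangle are not substitutes for it.
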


\begin{proof}
The configuration of this lemma is depicted in Figure~\ref{FigConnectionHorse2}.

\begin{figure}
\begin{center}

\tikzset{every picture/.style={line width=0.75pt}} 

\begin{tikzpicture}[x=0.75pt,y=0.75pt,yscale=-1.2,xscale=1.2]

\draw  [draw opacity=0][fill={rgb, 255:red, 208; green, 2; blue, 27 }  ,fill opacity=0.15 ] (257.87,103.15) .. controls (292.35,125.17) and (326.51,120.8) .. (347.41,103.3) .. controls (349.51,107.9) and (349.53,107.44) .. (353.28,110.21) .. controls (316.88,133.01) and (281.94,130.25) .. (252.81,109) .. controls (255.81,106.2) and (254.91,108.15) .. (257.87,103.15) -- cycle ;
\draw  [draw opacity=0][fill={rgb, 255:red, 127; green, 2; blue, 208 }  ,fill opacity=0.15 ] (245.67,191.9) .. controls (276.6,168.03) and (328.24,171.2) .. (355.3,193.47) .. controls (351.91,195.72) and (350.55,196.35) .. (348.4,199.98) .. controls (325.44,186.54) and (280.92,184.43) .. (254.12,197.63) .. controls (251.21,194.44) and (249.12,193.72) .. (245.67,191.9) -- cycle ;
\draw [color={rgb, 255:red, 65; green, 117; blue, 5 }  ,draw opacity=1 ]   (282,85.45) .. controls (282.18,115.3) and (321.58,115.1) .. (322,85.45) ;
\draw [shift={(298.06,107.4)}, rotate = 2.44] [fill={rgb, 255:red, 65; green, 117; blue, 5 }  ,fill opacity=1 ][line width=0.08]  [draw opacity=0] (7.14,-3.43) -- (0,0) -- (7.14,3.43) -- (4.74,0) -- cycle    ;
\draw [color={rgb, 255:red, 245; green, 166; blue, 35 }  ,draw opacity=1 ]   (212,85.45) .. controls (213.08,125.24) and (261.58,125.24) .. (262,85.45) ;
\draw [shift={(239.83,115.15)}, rotate = 180.93] [fill={rgb, 255:red, 245; green, 166; blue, 35 }  ,fill opacity=1 ][line width=0.08]  [draw opacity=0] (7.14,-3.43) -- (0,0) -- (7.14,3.43) -- (4.74,0) -- cycle    ;
\draw [color={rgb, 255:red, 245; green, 166; blue, 35 }  ,draw opacity=1 ]   (342,85.45) .. controls (343.08,125.24) and (391.58,125.24) .. (392,85.45) ;
\draw [shift={(369.83,115.15)}, rotate = 180.93] [fill={rgb, 255:red, 245; green, 166; blue, 35 }  ,fill opacity=1 ][line width=0.08]  [draw opacity=0] (7.14,-3.43) -- (0,0) -- (7.14,3.43) -- (4.74,0) -- cycle    ;
\draw [color={rgb, 255:red, 245; green, 166; blue, 35 }  ,draw opacity=1 ]   (282,220.42) .. controls (282.31,190.26) and (322.09,189.82) .. (322,220.42) ;
\draw [shift={(297.69,198.17)}, rotate = 355.31] [fill={rgb, 255:red, 245; green, 166; blue, 35 }  ,fill opacity=1 ][line width=0.08]  [draw opacity=0] (8.04,-3.86) -- (0,0) -- (8.04,3.86) -- (5.34,0) -- cycle    ;
\draw [color={rgb, 255:red, 65; green, 117; blue, 5 }  ,draw opacity=1 ]   (342,220.42) .. controls (342.31,180.49) and (391.86,180.26) .. (392,220.42) ;
\draw [shift={(370.41,190.62)}, rotate = 180.8] [fill={rgb, 255:red, 65; green, 117; blue, 5 }  ,fill opacity=1 ][line width=0.08]  [draw opacity=0] (8.04,-3.86) -- (0,0) -- (8.04,3.86) -- (5.34,0) -- cycle    ;
\draw [color={rgb, 255:red, 65; green, 117; blue, 5 }  ,draw opacity=1 ]   (212,220.42) .. controls (212.31,180.49) and (261.86,180.26) .. (262,220.42) ;
\draw [shift={(239.67,190.53)}, rotate = 178.95] [fill={rgb, 255:red, 65; green, 117; blue, 5 }  ,fill opacity=1 ][line width=0.08]  [draw opacity=0] (7.14,-3.43) -- (0,0) -- (7.14,3.43) -- (4.74,0) -- cycle    ;
\draw [line width=1.5]    (182,220.31) -- (422,220.31) ;
\draw [line width=1.5]    (182,85.33) -- (422,85.33) ;
\draw [color={rgb, 255:red, 65; green, 117; blue, 5 }  ,draw opacity=1 ]   (257.87,103.15) .. controls (292.48,124.69) and (325.97,121.52) .. (347.41,103.3) ;
\draw [color={rgb, 255:red, 245; green, 166; blue, 35 }  ,draw opacity=1 ]   (245.67,191.9) .. controls (276.71,167.52) and (328.58,171.77) .. (355.3,193.47) ;
\draw [color={rgb, 255:red, 245; green, 166; blue, 35 }  ,draw opacity=1 ]   (254.12,197.63) .. controls (281.46,184.02) and (326.46,186.52) .. (348.4,199.98) ;
\draw [color={rgb, 255:red, 65; green, 117; blue, 5 }  ,draw opacity=1 ]   (252.81,109) .. controls (282.08,129.94) and (317.42,133.44) .. (353.28,110.21) ;
\draw  [draw opacity=0][fill={rgb, 255:red, 127; green, 2; blue, 208 }  ,fill opacity=0.15 ] (226.43,202.04) .. controls (272.29,55.24) and (342.17,55.36) .. (376.32,198.71) .. controls (372.82,199.88) and (370.49,201.04) .. (366.99,203.04) .. controls (332.21,65.04) and (282.04,65.38) .. (237.93,204.04) .. controls (233.76,203.54) and (230.76,202.71) .. (226.43,202.04) -- cycle ;
\draw [color={rgb, 255:red, 245; green, 166; blue, 35 }  ,draw opacity=0.5 ]   (226.43,202.04) .. controls (272.29,55.12) and (342.38,55.55) .. (376.32,198.71) ;
\draw [color={rgb, 255:red, 245; green, 166; blue, 35 }  ,draw opacity=0.5 ]   (237.93,204.04) .. controls (282.25,65.32) and (332.29,65.36) .. (366.99,203.04) ;
\draw [color={rgb, 255:red, 65; green, 117; blue, 5 }  ,draw opacity=0.5 ]   (226.43,202.04) .. controls (230.99,202.76) and (234.26,203.17) .. (237.93,204.04) ;
\draw [color={rgb, 255:red, 65; green, 117; blue, 5 }  ,draw opacity=0.5 ]   (366.99,203.04) .. controls (369.93,200.89) and (372.59,199.78) .. (376.32,198.71) ;
\draw [color={rgb, 255:red, 74; green, 144; blue, 226 }  ,draw opacity=1 ]   (306.54,107.26) .. controls (310.56,150.48) and (309.97,157.02) .. (306.97,198.02) ;
\draw [shift={(309.35,148.69)}, rotate = 89.68] [fill={rgb, 255:red, 74; green, 144; blue, 226 }  ,fill opacity=1 ][line width=0.08]  [draw opacity=0] (7.14,-3.43) -- (0,0) -- (7.14,3.43) -- (4.74,0) -- cycle    ;
\draw [color={rgb, 255:red, 1; green, 59; blue, 151 }  ,draw opacity=1 ]   (303.12,91.68) .. controls (302.96,94.3) and (305.7,96.62) .. (305.77,100.17) ;
\draw [color={rgb, 255:red, 1; green, 59; blue, 151 }  ,draw opacity=1 ]   (320.63,80.65) .. controls (321.75,86.41) and (315.66,90.84) .. (309.33,93.58) ;
\draw [shift={(306.63,94.65)}, rotate = 340.2] [fill={rgb, 255:red, 1; green, 59; blue, 151 }  ,fill opacity=1 ][line width=0.08]  [draw opacity=0] (7.14,-3.43) -- (0,0) -- (7.14,3.43) -- (4.74,0) -- cycle    ;
\draw [color={rgb, 255:red, 208; green, 87; blue, 102 }  ,draw opacity=1 ]   (366.06,132.37) .. controls (352.89,137.03) and (343.41,138.01) .. (327.26,122.4) ;
\draw [shift={(325.2,120.37)}, rotate = 45.47] [fill={rgb, 255:red, 208; green, 87; blue, 102 }  ,fill opacity=1 ][line width=0.08]  [draw opacity=0] (7.14,-3.43) -- (0,0) -- (7.14,3.43) -- (4.74,0) -- cycle    ;
\draw [color={rgb, 255:red, 141; green, 43; blue, 204 }  ,draw opacity=1 ]   (221.7,135.22) .. controls (237.09,134.68) and (241.25,135.16) .. (253.78,144.38) ;
\draw [shift={(256.06,146.08)}, rotate = 217.1] [fill={rgb, 255:red, 141; green, 43; blue, 204 }  ,fill opacity=1 ][line width=0.08]  [draw opacity=0] (7.14,-3.43) -- (0,0) -- (7.14,3.43) -- (4.74,0) -- cycle    ;
\draw [color={rgb, 255:red, 141; green, 43; blue, 204 }  ,draw opacity=1 ]   (380.64,165.44) .. controls (361.89,164.8) and (353.8,166.97) .. (331.8,184.11) ;
\draw [shift={(329.72,185.75)}, rotate = 321.68] [fill={rgb, 255:red, 141; green, 43; blue, 204 }  ,fill opacity=1 ][line width=0.08]  [draw opacity=0] (7.14,-3.43) -- (0,0) -- (7.14,3.43) -- (4.74,0) -- cycle    ;

\draw (302.12,221.66) node [anchor=north] [inner sep=0.75pt]  [color={rgb, 255:red, 225; green, 141; blue, 0 }  ,opacity=1 ,xscale=1.2,yscale=1.2]  {$\wh \phi _{a}$};
\draw (368.12,220.82) node [anchor=north] [inner sep=0.75pt]  [color={rgb, 255:red, 65; green, 117; blue, 5 }  ,opacity=1 ,xscale=1.2,yscale=1.2]  {$T_{1}^{-1} \wh \phi _{b}$};
\draw (236.79,221.66) node [anchor=north] [inner sep=0.75pt]  [color={rgb, 255:red, 65; green, 117; blue, 5 }  ,opacity=1 ,xscale=1.2,yscale=1.2]  {$T_{1}^{-2} \wh \phi _{b}$};
\draw (277,79.93) node [anchor=south] [inner sep=0.75pt]  [color={rgb, 255:red, 65; green, 117; blue, 5 }  ,opacity=1 ,xscale=1.2,yscale=1.2]  {$\wh \phi _{c}$};
\draw (380,80.43) node [anchor=south] [inner sep=0.75pt]  [color={rgb, 255:red, 225; green, 141; blue, 0 }  ,opacity=1 ,xscale=1.2,yscale=1.2]  {$T_{2} \wh \phi _{b}$};
\draw (236.62,78.93) node [anchor=south] [inner sep=0.75pt]  [color={rgb, 255:red, 225; green, 141; blue, 0 }  ,opacity=1 ,xscale=1.2,yscale=1.2]  {$T_{2}^{2} \wh \phi _{b}$};
\draw (327,80) node [anchor=south] [inner sep=0.75pt]  [color={rgb, 255:red, 1; green, 59; blue, 151 }  ,opacity=1 ,xscale=1.2,yscale=1.2]  {$\wh f^{r_1+r_2}(\wh {\sigma } ')$};
\draw (367.36,127.98) node [anchor=west] [inner sep=0.75pt]  [color={rgb, 255:red, 208; green, 87; blue, 102 }  ,opacity=1 ,xscale=1.2,yscale=1.2]  {$T_{2}^{2}\wh {R}_{2}$};
\draw (220.74,137.41) node [anchor=east] [inner sep=0.75pt]  [font=\small,color={rgb, 255:red, 141; green, 43; blue, 204 }  ,opacity=1 ,xscale=1.2,yscale=1.2]  {$\wh {f}^{k_{1} r_{1} +r_{1} +r_{2}}\big( T_{1}^{i-1}\wh {R}_{1}^{i}\big)$};
\draw (382.28,162.74) node [anchor=west] [inner sep=0.75pt]  [font=\small,color={rgb, 255:red, 141; green, 43; blue, 204 }  ,opacity=1 ,xscale=1.2,yscale=1.2]  {$\wh {f}^{k_{1} r_{1}}\big( T_{1}^{i-1}\wh {R}_{1}^{i}\big)$};

\end{tikzpicture}

\caption{Proof of Theorem~\ref{ThConnectionHorse}: the Markovian intersection $\wh f^n(\wh R_1)\cap \wh R_2$.}\label{FigConnectionHorse2}
\end{center}
\end{figure}
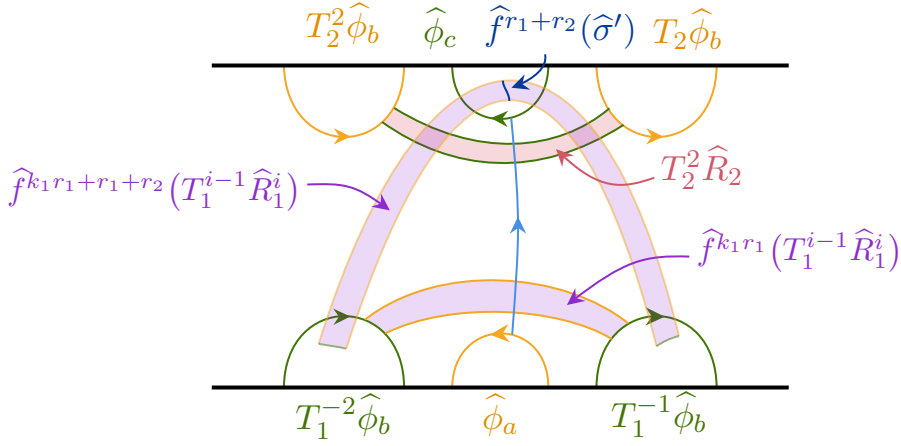

Note that the leaf $\wh \phi_b$ separates $T_1^{-i-1}\wh f^{k_1r_1}(\wh R_1^i)$ from $T_2^2\wh R_2$ (see Figure~\ref{FigConnectionHorse1}): recall that by Lemma~\ref{Lemma10} (more precisely, its version consisting in replacing $\wh\phi_a$ with $\wh\phi_b$), the vertical sides of $T_1^{-i-1}\wh f^{k_1r_1}(\wh R_1^i)$ --- that are made of pieces of $T_1^j\wh f^{k_1r_1}(\wh\phi_a)$ --- are disjoint from $\wh\phi_b$ (by the choice of $\delta_m$ and $\delta'_m$ made after Lemma~\ref{Lemma11}); a similar property holds for $T_2^2\wh R_2$.
Moreover, the leaves $\wh\phi_a$ and $\wh\phi_b$ are included in different connected components of the complement of $T_1^{-i-1}\wh f^{k_1r_1}(\wh R_1^i) \cup T_1^{-2}R(\wh\phi_b) \cup R(T_1^{-1} \phi_b)$. 
Similarly, the leaves $\wh\phi_b$ and $\wh\phi_c$ are included in different connected components of the complement of $T_2^2\wh R_2 \cup L(T_2\wh\phi_b) \cup L(T_2^2\wh \phi_b)$.

By hypothesis, we have that $\wh f^{-r_1-r_2}(\wh\phi_c)\cap \wh\phi_a\neq\emptyset$. As $R(\wh\phi_c)\subset R(\wh f^{-r_1-r_2}(\wh\phi_c))$, and as $R(\wh\phi_c)$ is a topological disk, there exists a path $\wh\sigma$, included in $R(\wh f^{-r_1-r_2}(\wh\phi_c))$, and linking $\wh\phi_a$ to $\wh\phi_c$. Note that by the above remark, the path $\wh\sigma$ is disjoint from $R(T_1^{-2}\wh\phi_b) \cup R(T_1^{-1} \wh\phi_b)$. As it links points of different connected components of the complement of $\wh f^{k_1r_1}(T_1^{-i-1}\wh R_1^i) \cup R(T_1^{-2}\wh\phi_b) \cup R(T_1^{-1} \wh\phi_b)$, it has to cross both vertical sides of $\wh f^{k_1r_1}(T_1^{-i-1}\wh R_1^i)$. Hence, there is a subpath $\wh\sigma'$ of $\sigma$ whose interior is included in the interior of the rectangle $\wh f^{k_1r_1}(T_1^{-i-1}\wh R_1^i)$ and that links both vertical sides of $T_1^{-i-1}\wh f^{k_1r_1}(\wh R_1^i)$. 

This path $\wh f^{-k_1r_1}(\wh\sigma')$ delimits two horizontal subrectangles of $T_1^{-i-1}\wh R_1^i$, that we denote $T_1^{-i-1}\wh R_{1}^{i,T}$ and $T_1^{-i-1}\wh R_1^{i,B}$. 

The image $\wh f^{r_1+r_2}(\wh\sigma')$ is included in $R(\wh\phi_c)$, while the horizontal sides of $\wh f^{k_1r_1+r_1+r_2}(T_1^{-i-1}\wh R_1^i)$ are included in $R(T_1^{-2}\wh\phi_b)$ and $R(T_1^{-1}\wh\phi_b)$, which are both included in $L(\wh\phi_b)$. Hence, the horizontal sides of both rectangles $\wh f^{k_1r_1+r_1+r_2}(T_1^{-i-1}\wh R_{1}^{i,T})$ and $\wh f^{k_1r_1+r_1+r_2}(T_1^{-i-1}\wh R_{1}^{i,B})$ lie in different connected components of the complement of $T_2^2\wh R_2 \cup L(T_2\wh\phi_b) \cup L(T_2^2\wh \phi_b)$. 

For their part, the vertical sides of both rectangles $\wh f^{k_1r_1+r_1+r_2}(T_1^{-i-1}\wh R_{1}^{i,T})$ and $\wh f^{k_1r_1+r_1+r_2}(T_1^{-i-1}\wh R_{1}^{i,B})$ are pieces of $\wh f^{k_1r_1+r_1+r_2}(T_1^j \wh\phi_a)$ and hence are disjoint from $L(T_2\wh\phi_b) \cup L(T_2^2\wh \phi_b)$; indeed for orientation reasons the leaves $T_1^j \wh\phi_a$, with $j\in\Z$, are included in $L(\wh\phi_b)$ which is disjoint from all the $L(T_2^\ell\wh\phi_b)$ for $\ell\in\Z$. 

We have proved we are in the configuration of Definition~\ref{def:markov}, this implies that the intersections $\wh f^{k_1r_1+r_1+r_2}(T_1^{-i-1}\wh R_1^{i,T})\cap T_2^2\wh R_2$ and $\wh f^{k_1r_1+r_1+r_2}(T_1^{-i-1}\wh R_1^{i,B})\cap T_2^2\wh R_2$ are pre-Markovian, proving the lemma.
\end{proof}

This lemma finishes the proof of our theorem, as the $\wh R_1^i$ are horizontal subrectangles of $\wh R_1$.
\end{proof}

\section{Heteroclinic connections between chaotic classes}

\subsection{A graph $G$}\label{SubSecConstrG}

We define an infinite graph coding the rotational behaviour of $f$ on $\bigcup_{i\in I_{\mathrm h}}\rho_i$. This construction is not canonical.

The vertices of this graph are some rectangles and the edges are given by Markovian intersections. We will build these rectangles in two steps, first getting some periodic points $z'_\omega$ whose trajectories are not simple, and then building from these rectangles $R_\omega$ and hence rotational horseshoes. From these we will get a second family of periodic points $z_\omega$ rotating as these horseshoes.
\bigskip

By Theorem~\ref{thm:ShapeRotationSet} and the construction of the surfaces $S_i$ following it, there exists a countable family $(r'_\omega)_{\omega\in \Omega} \in \rote(f)\cap H_1(S,\Q)$ that is dense in $\bigcup_{i\in I_{\mathrm h}} \rho_i$. 
Each $r'_\omega$ can be supposed to be the rotation vector of a periodic point $z'_\omega$, whose tracking geodesic $\gamma'_\omega$ is non simple (Proposition~\ref{LemNotSimpleTracking}). We also suppose (thanks to Proposition~\ref{LemNotSimpleTracking}) that the tracking geodesics $\gamma'_\omega$ are dense in $\bigcup_{i\in I_{\mathrm{h}}} \dot\Lambda_i$. Denote $q'_\omega$ the period of $z'_\omega$.

As the tracking geodesic $\gamma'_\omega$ is not simple, by Lemma~\ref{LemNotSimpleIntersect} (or alternatively \cite[Proposition 9.18]{pa}), the transverse trajectory $I^\Z_\F(z'_\omega)$ has a self $\F$-transverse intersection at $I^{t_\omega^-}_\F(z'_\omega) = I^{t_\omega^+}_\F(z'_\omega)$, with $t_\omega^-<t_\omega^+$. Note that for any $n\in\N$, the transverse trajectory $I^\Z_\F(z'_\omega)$ also has a self $\F$-transverse intersection at 
\begin{equation}\label{EqPlaceInter}
I^{t_\omega^-}_\F(z'_\omega) = I^{t_\omega^++nq'_\omega}_\F(z'_\omega),
\end{equation}
and that
\[\frac{\big[I^{[t_\omega^-,t_\omega^++nq'_\omega]}_\F(z'_\omega)\big]}{t_\omega^++nq'_\omega-t_\omega^-} \underset{n\to+\infty}\longrightarrow r'_\omega.\]
Therefore, for any $\omega\in\Omega$ one can choose $n_\omega$ large enough so that the family 
\[(r_\omega)_{\omega\in \Omega} := \left(\frac{\big[I^{[t_\omega^-,t_\omega^++n_\omega q'_\omega]}_\F(z'_\omega)\big]}{t_\omega^++n_\omega q'_\omega-t_\omega^-}\right)_{\omega\in \Omega}\]
of elements of $H_1(S,\Q)$ is dense in $\bigcup_{i\in I_{\mathrm h}} \rho_i$. 
Finally, we require that there exists $u_0\in (t_\omega^-, t_\omega^++n_\omega q'_\omega)$ such that the transverse trajectories $I^{(-\infty, u_0)}_\F(z'_\omega)$ and $I^{(u_0, +\infty)}_\F(z'_\omega)$ intersect $\F$-transversally at $I^{t_\omega^-}_\F(z'_\omega) = I^{t_\omega^++n_\omega q'_\omega}_\F(z'_\omega)$ (\emph{i.e.}~we require the intervals where the transverse intersection holds to be disjoint).

By Theorem~\ref{ThConnectionHorse}, this allows to build, for any $\omega\in\Omega$, a rectangle $R_\omega\subset \wt S$, an integer $q_\omega>0$ and a deck transformation $T_\omega\in\G$ such that (recall that Markovian intersections were defined in Definition~\ref{def:markov})
\begin{equation}\label{EqDefROmega}
\wt f^{q_\omega}(R_\omega)\cap_M T_\omega R_\omega 
\qquad \text{and}\qquad
\frac{[T_\omega]}{q_\omega} = r_\omega
\end{equation}
(with the notations above, one has $q_\omega = t_\omega^++n_\omega q'_\omega-t_\omega^-$ and $T_\omega$ is a deck transformation associated to the closed loop $I^{[t_\omega^-,t_\omega^++n_\omega q'_\omega]}_\F(z'_\omega)$). 
By Proposition~\ref{LemPointFixe}, this implies the existence of a point $\wt z_\omega\in \wt S$ such that $\wt f^{q_\omega}(\wt z_\omega) = T_\omega \wt z_\omega$. In particular, the projection $z_\omega$ of $\wt z_\omega$ on $S$ is periodic. If $n_\omega$ is large enough, then the uniform measure on the orbit of $z_\omega$ belongs to $\cl_i$ where $i\in I_{\mathrm h}$ is such that $z'_\omega \in \cl_i$ (by abuse of notation, we will denote $z_\omega\in\cl_i$), and the tracking geodesics $\gamma_{z_\omega}$ are dense in $\bigcup_{i\in I_{\mathrm{h}}} \dot\Lambda_i$; in the sequel we suppose these properties satisfied (in particular, if $r'_\omega\in\rho_i$, then $r_\omega \in\rho_i$ too).

\begin{definition}
The graph $G$ is defined as follows. Its vertices are the rectangles $R_\omega$ for $\omega\in\Omega$. 
Its edges are given by the relation $\to$ of Definition~\ref{DefConnecRect}. 
\end{definition}

To this graph $G$ are naturally associated subgraphs $(G_i)_{i\in I_{\mathrm{h}}}$ as follows: for $i\in I_{\mathrm{h}}$, the graph $G_i$ is the complete subgraph of $G$ whose vertices are the $R_\omega$ for which $z_\omega\in \rho_i$. 

The following lemma enlightens the structure of $G$.

\begin{lemma}\label{LemLotConnectionsRectangles}
Let $f\in\Homeo_0(S)$.
Let $i\in I_{\mathrm h}$. Then for any $\omega, \omega'\in G_i$, we have $R_\omega\to R_{\omega'}$.
\end{lemma}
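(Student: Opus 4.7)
The plan is to apply the second part of Theorem~\ref{ThConnectionHorse} to a transverse path $\gamma:[a,c]\to\dom(\F)$ whose initial portion $\gamma|_{[a,b]}$ reproduces the construction of $R_\omega$ (from the self-$\F$-transverse intersection of $I^\Z_\F(z'_\omega)$) and whose final portion $\gamma|_{[b,c]}$ reproduces the construction of $R_{\omega'}$ (from the self-$\F$-transverse intersection of $I^\Z_\F(z'_{\omega'})$). Taking $k_1 = k_\omega$ and $k_2 = k_{\omega'}$ in the theorem, the rectangles $\wh R_1$, $\wh R_2$ it produces are precisely $R_\omega$ and $R_{\omega'}$, and the theorem outputs $n\in\N$ and a deck transformation $U\in\G$ such that $\wt f^n(R_\omega)\cap U R_{\omega'}$ is Markovian, which is exactly the statement $R_\omega\to R_{\omega'}$ of Definition~\ref{DefConnecRect}.

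To build the bridging path, I use that $z'_\omega$ and $z'_{\omega'}$ both belong to the chaotic class $\cl_i$. By Definition~\ref{DefRelEqMeas}, there is a finite chain of ergodic measures $\delta_{z'_\omega} = \mu_0, \mu_1,\dots,\mu_m = \delta_{z'_{\omega'}}$ in $\cl_i$ with consecutive pairs dynamically transverse (steps of the form $\dot\Lambda_{\mu_k} = \dot\Lambda_{\mu_{k+1}}$ may be absorbed). If some intermediate $\mu_k$ has a simple tracking geodesic, I replace it via Proposition~\ref{LemNotSimpleTracking} by a periodic orbit in $\cl_i$ whose tracking geodesic is non-simple and close enough to preserve dynamical transversality with its neighbours in the chain. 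Lemma~\ref{LemNotSimpleIntersect} then provides, for each $k$, typical points $y_k,y_{k+1}$ (with $y_0=z'_\omega$ and $y_m=z'_{\omega'}$) and appropriate lifts such that $I^\Z_{\wt\F}(\wt y_k)$ and $I^\Z_{\wt\F}(\wt y_{k+1})$ intersect $\wt\F$-transversally.

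I then concatenate these trajectories at their successive $\wt\F$-transverse intersections using Proposition~\ref{propFondalct1}: each such concatenation produces an admissible transverse path whose order is the sum of the orders of the two pieces. Iterating, I obtain an admissible transverse path $\gamma:[a,c]\to\dom(\F)$ (with a split point $b$) whose $[a,b]$-part contains, as a subpath, the distinguished self-intersection $I^{t_\omega^-}_\F(z'_\omega) = I^{t_\omega^++n_\omega q'_\omega}_\F(z'_\omega)$ that was used in \eqref{EqDefROmega} to define $R_\omega$, and whose $[b,c]$-part contains the analogous self-intersection used to define $R_{\omega'}$. Since both $z'_\omega$ and $z'_{\omega'}$ are periodic with bi-infinite transverse trajectories, I can freely translate the bridging intersection so that it occurs after the self-intersection on the $\omega$-side and before the self-intersection on the $\omega'$-side, giving $\gamma|_{[a,b]}$ and $\gamma|_{[b,c]}$ the correct structure. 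Applying the second part of Theorem~\ref{ThConnectionHorse} to this $\gamma$, with $k_1=k_\omega$, $k_2=k_{\omega'}$, $T_1=T_\omega$, $T_2=T_{\omega'}$ (up to conjugation by the chosen lifts), yields the required Markovian intersection.

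The main obstacle is the matching between the bridging path produced by Proposition~\ref{propFondalct1} and the specific rectangles $R_\omega$, $R_{\omega'}$ already fixed by the construction of the graph $G$: the rectangles depend on the entire subpath realising the self-intersection, so the concatenation must not disturb those subpaths but only extend the trajectory on either side. This is a bookkeeping issue — verifying that the $\wt\F$-transverse intersection used to bridge can be placed strictly beyond the relevant self-intersection times $t^-_\omega, t^+_\omega+n_\omega q'_\omega$ on one side and strictly before $t^-_{\omega'}, t^+_{\omega'}+n_{\omega'} q'_{\omega'}$ on the other — but it is made possible by the biinfinite nature of the trajectories of the periodic points $y_k$ and by the fact that an $\F$-transverse intersection of two biinfinite transverse trajectories can always be realised at arbitrarily late/early times along those trajectories (by iterating under $f^{q}$ with $q$ the common period).
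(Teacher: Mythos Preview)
Your proposal is correct and follows essentially the same approach as the paper: build a chain of measures in $\cl_i$ linking $z'_\omega$ to $z'_{\omega'}$, replace intermediates by periodic points with non-simple tracking geodesics via Proposition~\ref{LemNotSimpleTracking}, obtain $\F$-transverse intersections between consecutive trajectories via Lemma~\ref{LemNotSimpleIntersect}, concatenate using (iterated) Proposition~\ref{propFondalct1}, and apply the second part of Theorem~\ref{ThConnectionHorse}. The paper handles your ``bookkeeping issue'' exactly as you suggest, by using periodicity to place the bridging intersections beyond the distinguished self-intersection times $t_\omega^-,\,t_\omega^++n_\omega q'_\omega$ (and similarly for $\omega'$); note that the construction of $G$ already arranged for these self-intersections to be realised on disjoint time intervals, which is precisely what makes this placement possible.
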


\begin{proof}
Let $i\in I_{\mathrm h}$ and $R_\omega$, $R_{\omega'}\in G_i$. Denote $\mu$ and $\mu'$ the uniform measures on the periodic orbits of respectively $z'_\omega$ and $z'_{\omega'}$. By definition, there exist $\mu=\nu_1, \nu_2,\dots,\nu_\ell = \mu'$ such that for any $k$, there exists a geodesic in $\dot\Lambda_{\nu_k}$ and a geodesic in $\dot \Lambda_{\nu_{k+1}}$ that intersect transversally. 
Theorem~\ref{thm:equidistributiontheoremintro} ensures that tracking geodesics of typical points are dense in the $\dot\Lambda_{\nu_k}$, so for $\nu_k$-a.e.\ $z_k$ and $\nu_{k+1}$-a.e.\ $z_{k+1}$ the tracking geodesics $\gamma_{z_k}$ and $\gamma_{z_{k+1}}$ intersect transversally. 
By Proposition~\ref{LemNotSimpleTracking}, one can suppose that each $z_k$ is a periodic point whose tracking geodesic is not simple. 

By Lemma~\ref{LemNotSimpleIntersect}, for any $1\le k<\ell$ the transverse trajectories $I^\Z_\F(z_k)$ and $I^\Z_\F(z_{k+1})$ intersect $\F$-transversally, as well as both $I^\Z_\F(z_1)$ and $I^\Z_\F(z_\ell)$ have a self $\F$-transverse intersection.
Hence, 
\begin{itemize}
\item for any $1\le k<\ell$ there exists $s_k<t_k<u_k$ and $s'_k<t'_k<u'_k$ such that $I^{[s_k, u_k]}_\F(z_k)$ and $I^{[s'_k, u'_k]}_\F(z_{k+1})$ intersect $\F$-transversally at $I^{t_k}_\F(z_k)=I^{t'_k}_\F(z_{k+1})$ (see \eqref{EqPlaceInter});
\item there exists $s_0<t_0<u_0<s'_0<t'_0<u'_0$ such that $I^{[s_0, u_0]}_\F(z_1)$ and $I^{[s'_0, u'_0]}_\F(z_{1})$ intersect $\F$-transversally at $I^{t_0}_\F(z_1)=I^{t'_0}_\F(z_{1})$; moreover $t_0 = t_\omega^-$ and $t'_0 = t_\omega^+ +n_\omega q'_\omega$ (we consider the same self $\F$-transverse intersection of the trajectory of $z_1$ as the one used to create the rectangle $R_\omega$);
\item there exists $s_\ell<t_\ell<u_\ell<s'_\ell<t'_\ell<u'_\ell$ such that $I^{[s_\ell, u_\ell]}_\F(z_\ell)$ and $I^{[s'_\ell, u'_\ell]}_\F(z_{\ell})$ intersect $\F$-transversally at $I^{t_\ell}_\F(z_\ell)=I^{t'_\ell}_\F(z_{\ell})$; moreover $t_\ell = t_{\omega'}^-$ and $t'_\ell = t_{\omega'}^+ +n_{\omega'} q'_{\omega'}$ (see \eqref{EqPlaceInter}).
\end{itemize}
By periodicity of the points $z_k$, one can suppose that for any $1\le k\le \ell$ one has $u'_{k-1}\le s_k$. 

This allows to apply \cite[Corollary 21]{lct1} (which basically consists in applying $\ell-1$ times Proposition~\ref{propFondalct1}) that ensures that there exists $y\in S$ such that the concatenation 
\[I^{[s_0, t_1]}_\F(z_{1}) I^{[t'_1, t_2]}_\F(z_2)\dots I^{[t_{\ell-2}, t'_{\ell-1}]}_\F(z_{\ell-1}) I^{[t_{\ell-1}, u'_{\ell}]}_\F(z_{\ell})\]

of transverse trajectories is $\F$-equivalent to a subpath of $I^\Z_\F(y)$. Recall that the subpath $I^{[s_0, t_1]}_\F(z_{1})$ has a self $\F$-transverse intersection that creates the rectangle $R_\omega$, and that the subpath $I^{[t_{\ell-1}, u'_{\ell}]}_\F(z_{\ell})$ has a self $\F$-transverse intersection that creates the rectangle $R_{\omega'}$.
This allows to apply the second part of Theorem~\ref{ThConnectionHorse}, which implies that $R_\omega\to R_{\omega'}$.
\end{proof}

%
%

\begin{rem}
One may wonder if it is possible to get a stronger result of the following kind: for any finite graph $G'\subset G$, there exists a semi-conjugation of $f$ on a compact subset of $S$ to the Markov chain given by the subgraph $G'$. Such a result may require some freeness of the subgroup of the $\pi_1(S)$ generated by the deck transformations associated to the rectangles, as in \cite[Proposition 9.16]{pa} (the result we have in our case is Proposition~\ref{PropConnectRectEnsRot}, that corresponds to \cite[Proposition 9.17]{pa}).
\end{rem}

\begin{lemma}\label{PropRotFRotG0}
Let $f\in\Homeo_0(S)$.
Then for any $i\in I_{\mathrm h}$, we have $\overline{\rho_i} = \overline{\rot(G_i)}$.
\end{lemma}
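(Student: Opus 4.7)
The plan is to prove the two inclusions separately, the easy direction relying on convexity of $\overline{\rho_i}$ (Theorem~\ref{thm:ShapeRotationSet}) and the harder one on the completeness of the graph $G_i$ provided by Lemma~\ref{LemLotConnectionsRectangles}. First I would observe that, in the construction of Subsection~\ref{SubSecConstrG}, each rectangle $R_\omega$ is a rotational horseshoe for $\wt f^{q_\omega}$ associated with a single deck transformation $T_\omega$, so that in the notation of Definition~\ref{DefConnectRectEnsRot} one has $\rot_\omega=\{r_\omega\}$ with $r_\omega=[T_\omega]/q_\omega$, and consequently
\[\rot(G_i)=\bigcup_{R_{\omega_1}\to\cdots\to R_{\omega_\ell}\,\text{in }G_i}\conv\bigl(\{r_{\omega_1},\dots,r_{\omega_\ell}\}\bigr).\]

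For the inclusion $\overline{\rot(G_i)}\subset\overline{\rho_i}$, I would note that by construction $r_\omega\in\rho_i$ for every $\omega\in G_i$, since the uniform measure on the orbit of $z_\omega$ lies in $\cl_i$. By Theorem~\ref{thm:ShapeRotationSet} the set $\overline{\rho_i}$ is convex, so any finite convex combination of $r_\omega$'s lies in $\overline{\rho_i}$. Hence $\rot(G_i)\subset\overline{\rho_i}$, and passing to closures yields the claim.

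For the reverse inclusion, the crucial input is Lemma~\ref{LemLotConnectionsRectangles}, which asserts that $G_i$ is a complete graph: for any $\omega,\omega'\in G_i$ one has $R_\omega\to R_{\omega'}$. Consequently, every finite subset $\{R_{\omega_1},\dots,R_{\omega_\ell}\}$ of vertices of $G_i$ can be enumerated as a path in $G_i$, so $\conv(\{r_{\omega_1},\dots,r_{\omega_\ell}\})\subset\rot(G_i)$. Taking the union over all finite subsets gives $\conv(\{r_\omega\mid\omega\in G_i\})\subset\rot(G_i)$. Combining this with the density of $(r_\omega)_{\omega\in G_i}$ in $\rho_i$ and the convexity of $\overline{\rho_i}$, one deduces that $\overline{\rho_i}$ is contained in the closed convex hull of $\{r_\omega\mid\omega\in G_i\}$, and hence in $\overline{\rot(G_i)}$.

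The only nontrivial point I expect to encounter is the density of the subfamily $(r_\omega)_{\omega\in G_i}$ in $\rho_i$: the construction of Subsection~\ref{SubSecConstrG} only explicitly guarantees density of the full family $(r_\omega)_{\omega\in\Omega}$ in $\bigcup_{i\in I_{\mathrm h}}\rho_i$. I would circumvent this by refining the initial enumeration, choosing for each class $i\in I_{\mathrm h}$ a countable subfamily $(r'_\omega)_{z'_\omega\in\cl_i}$ that is dense in $\rho_i$; such a choice is possible because the rational points of $\inte_{V_i}(\rho_i)$, realised as rotation vectors of periodic orbits by Theorem~\ref{thm:ShapeRotationSet}, are dense in $\overline{\rho_i}$.
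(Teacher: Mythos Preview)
Your proof is correct and the first inclusion matches the paper's argument exactly. For the reverse inclusion, however, you take a more elaborate route than necessary: you invoke Lemma~\ref{LemLotConnectionsRectangles} to show $G_i$ is complete, then deduce that all finite convex hulls $\conv(\{r_{\omega_1},\dots,r_{\omega_\ell}\})$ lie in $\rot(G_i)$. The paper bypasses this entirely: since each singleton $\{r_\omega\}=\rot_\omega$ is already in $\rot(G_i)$ via the trivial path of length one, the density of $(r_\omega)_{\omega\in G_i}$ in $\rho_i$ immediately yields $\rho_i\subset\overline{\rot(G_i)}$, with no need for convex combinations or the strong connectivity of $G_i$. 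Your argument buys nothing extra here, though it would be the right approach if one wanted to show $\rot(G_i)$ itself (not its closure) contains large convex pieces.

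Your final paragraph raises a legitimate point: the paper's construction only explicitly asserts density of the full family $(r_\omega)_{\omega\in\Omega}$ in $\bigcup_i\rho_i$, and then simply asserts in the proof that $(r_\omega)_{r_\omega\in\rho_i}$ is dense in $\rho_i$. Your proposed fix---refining the enumeration so that each class-wise subfamily is dense in its $\rho_i$---is exactly how one should read the construction, and it is harmless to make this explicit.
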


\begin{proof}
The inclusion $\rot(G_i)\subset \overline{\rho_i}$ is trivial by construction of $G$ (by Theorem~\ref{thm:ShapeRotationSet}, the set $\overline{\rho_i}$ is convex); this implies that $\overline{\rot(G_i)}\subset \overline{\rho_i}$. The other inclusion $\rho_i \subset \overline{\rot(G_i)}$ comes from the density of the $(r_\omega)_{r_\omega\in\rho_i}$ in $\rho_i$.
\end{proof}

\begin{proof}[Proof of Proposition~\ref{PropRealByCompact}]
This is a direct consequence of Proposition~\ref{PropConnectRectEnsRot}, as the graphs $G_i$ are strongly connected (Lemma~\ref{LemLotConnectionsRectangles}).
\end{proof}
%
%
%
%
%
%
%

\subsection{Connections between chaotic classes}

Let us define five relations between classes; these relations will turn out being equivalent (Theorem~\ref{TheoEquiConnec2}) and correspond to heteroclinic connections between chaotic classes.

The first relation deals with convergence of empirical measures.

\begin{definition}\label{DefRelTo}
If $\mu_1$ and $\mu_2$ are measures of $\Merg(f)$ belonging to chaotic classes, we note $\mu_1 \to \mu_2$ if there exist $(x_k)\in S^\N$ and four sequences of times $n_k^{1,-} < n_k^{1,+} < n^{2,-}_k < n^{2,+}_k$ with $\lim_k n_k^{1,+}-n_k^{1,-} = \lim_k n_k^{2,+}-n_k^{2,-} =+\infty$ and such that
\begin{equation}\label{EqEtoile}
\frac {1}{n^{1,+}_k - n^{1,-}_k}\sum_{i=n^{1,-}_k}^{n^{1,+}_k-1} \delta_{f^i(x_k)} \xrightharpoonup[k\to+\infty]{} \mu_1 
\quad \textrm{and} \quad
\frac {1}{n^{2,+}_k - n^{2,-}_k}\sum_{i=n^{2,-}_k}^{n^{2,+}_k-1} \delta_{f^i(x_k)} \xrightharpoonup[k\to+\infty]{} \mu_2
\end{equation}
(for the weak-$*$ topology).

If $i, j\in I_{\mathrm h}$, we note $\cl_i \overset *\to \cl_j$ if there exist $\mu_1\in \cl_i$ and $\mu_2\in \cl_j$ such that $\mu_1\to\mu_2$.
\end{definition}

The second relation is formulated in terms of the forcing theory.

\begin{definition}\label{DefToStar}
For $i,j\in I_{\mathrm{h}}$, we write $\cl_i\overset{\F}{\to} \cl_j$ if there exist $a<b<c$, a transverse admissible path $\beta:[a,c]\to \mathrm{dom}(I)$, a lift $\wt\beta$ of $\beta$ to $\wt S$ and covering automorphisms $T_1,T_2\in \G$ such that:
\begin{itemize}
\item $\wt\beta|_{[a,b]}$ and $T_1(\wt\beta|_{[a,b]})$ have an $\wt{\mathcal F}$-transverse intersection at $\wt\beta(t_1)=T_1(\wt\beta)(s_1)$, where $s_1<t_1$, 
\item $\wt\beta|_{[b,c]}$ and $T_2(\wt\beta|_{[b,c]})$ have an $\wt{\mathcal F}$-transverse intersection at $\wt\beta(t_2)=T_2(\wt\beta)(s_2)$, where $s_2<t_2$,
\item for $k=1, 2$, denoting $\gamma_k$ the closed geodesic in the free homotopy class of the closed loop $\beta|_{[s_k, t_k]}$, we have $\gamma_1\subset\Lambda_{\cl_i}$ and $\gamma_2\subset \Lambda_{\cl_j}$.
\end{itemize}
\end{definition}

This relation depends \emph{a priori} on the choice of the isotopy $I$ and the foliation $\F$; however we will see it is in fact independent from these.

The third relation is about intersections of essential curves.

\begin{definition}\label{DefToWedge}
For $i,j\in I_{\mathrm{h}}$, we write $\cl_i\overset{\wedge}{\to} \cl_j$ if for any essential closed loops $\alpha_i, \alpha_j$ of $S$ such that $[\alpha_i]\in \pi_1(S_i, \Z)$ and $[\alpha_j]\in \pi_1(S_j, \Z)$, there exists $n\ge 0$ such that $f^n(\alpha_i)\cap \alpha_j \neq\emptyset$.
\end{definition}

The fourth relation concerns intersections of open essential sets.

\begin{definition}\label{DefToOpen}
For $i,j\in I_{\mathrm{h}}$, we write $\cl_i\overset{O}{\to} \cl_j$ if for any open subsets $B_i^-, B_j^+$ of $S$ such that:
\begin{itemize}
\item for any $\mu\in \cl_i$ and any $\mu'\in \cl_j$ we have $\mu(B_i^-) = \mu'(B_j^+) = 1$;
\item $f^{-1}(B_i^-) \subset B_i^-$ and $f(B_j^+)\subset B_j^+$;
\item $i_* \pi_1(S_i,\R)\subset i_* \pi_1(B_i^-, \R)$ and  $i_* \pi_1(S_j,\R)\subset i_* \pi_1(B_j^+, \R)$;
\end{itemize}
there exists $n\ge 0$ such that $f^n(B_i^-)\cap B_j^+ \neq\emptyset$.
\end{definition}

Finally, the last definition involves Markovian intersections in the graph $G$. 

\begin{definition}\label{DefRelMarkov}
Let $i,j\in I_{\mathrm{h}}$. We write $\cl_i\overset M\to \cl_j$ if there exist $\omega, \omega'\in\Omega$ such that $z_\omega\in\cl_i$, $z_{\omega'}\in\cl_j$ and a path in $G$ going from $R_\omega$ to $R_{\omega'}$.
\end{definition}

The fact that the $G_i$ are strongly connected (Lemma~\ref{LemLotConnectionsRectangles}) implies the following property. 
Let $i, j\in I_{\mathrm h}$ such that $\cl_i\overset M\to \cl_j$. Then for any $\omega, \omega'\in\Omega$ such that $z_\omega\in \cl_i$ and $z_{\omega'}\in\cl_j$, there is an oriented path in $G$ from $R_\omega$ to $R_{\omega'}$. Note that this property holds for $i=j$.

Lemma~\ref{LemLotConnectionsRectangles} also implies that the $G_i$ are strongly connected in $G$. We will see later (Proposition~\ref{PropToOrderRel}) that they actually coincide with the strong connected components of $G$. 

In view of the proof of Theorem~\ref{TheoEquiConnec2}, let us establish some implications between the relations between classes.

\begin{lemma}\label{LemFImpliesM}
Let $f\in\Homeo_0(S)$ and $i,j\in I_{\mathrm{h}}$. If $\cl_i\overset \F\to \cl_j$ then $\cl_i\overset M\to \cl_j$.
\end{lemma}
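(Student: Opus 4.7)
The proof should build, starting from the data of $\cl_i\overset\F\to \cl_j$, a Markovian chain in $G$ connecting some $R_\omega$ with $z_\omega\in\cl_i$ to some $R_{\omega'}$ with $z_{\omega'}\in\cl_j$. The central tools will be Theorem~\ref{ThConnectionHorse} (second part), together with the gluing technique used in the proof of Lemma~\ref{LemLotConnectionsRectangles}.

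First, I will select $\omega$ and $\omega'$ carefully. Since $i\in I_{\mathrm h}$, the set $\dot\Lambda_i$ is not a geodesic lamination, so it contains a geodesic transversely crossing $\gamma_1$. The density of the tracking geodesics $(\gamma'_{\omega})_{\omega\in G_i}$ in $\Lambda_{\cl_i}$ (ensured by the construction of $G$ via Proposition~\ref{LemNotSimpleTracking}) together with the openness of transverse intersections then lets me pick $\omega\in G_i$ such that $\gamma'_\omega$ intersects $\gamma_1$ transversely. Symmetrically, choose $\omega'\in G_j$ with $\gamma'_{\omega'}$ transversely intersecting $\gamma_2$.

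Second, I will assemble the trajectory data into a single admissible transverse path $\beta^\ast$ whose two extremities carry the self-intersections that create $R_\omega$ and $R_{\omega'}$. The trajectory of $z'_\omega$ contains, by construction, the self-$\F$-transverse intersection giving rise to $R_\omega$; similarly for $z'_{\omega'}$ and $R_{\omega'}$. The remaining input needed is to show that $I^\Z_\F(z'_\omega)$ intersects $\beta$ $\F$-transversely, and that $\beta$ intersects $I^\Z_\F(z'_{\omega'})$ $\F$-transversely. This is where the transversal crossing of $\gamma'_\omega$ with $\gamma_1$ (respectively $\gamma'_{\omega'}$ with $\gamma_2$) gets used: adapting Lemma~\ref{LemNotSimpleIntersect} (with $\beta$ playing the role of a trajectory of a typical point $z^\ast$ such that $\beta$ is equivalent to a subpath of $I^\Z_\F(z^\ast)$) together with the accumulation/intersection dichotomy from the proof of \cite[Theorem~5.8]{alepablo} and \cite[Proposition~3.3]{guiheneuf2023area}, one rules out the pure accumulation cases and obtains genuine $\F$-transverse intersections. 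Repeated application of Proposition~\ref{propFondalct1} (as in \cite[Corollary~21]{lct1}) then concatenates these $\F$-transverse intersections into a single admissible transverse path $\beta^\ast$ whose two end segments contain respectively the self-intersection of $I^\Z_\F(z'_\omega)$ creating $R_\omega$ and the self-intersection of $I^\Z_\F(z'_{\omega'})$ creating $R_{\omega'}$.

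Finally, I apply the second part of Theorem~\ref{ThConnectionHorse} to $\beta^\ast$: up to replacing $R_\omega$ and $R_{\omega'}$ by suitable translates by powers of their own horseshoe deck transformations, one obtains an integer $N$ and a deck transformation $V\in\G$ such that the intersection $\wt f^N(R_\omega)\cap V\,R_{\omega'}$ is Markovian. By Definition~\ref{DefConnecRect} this is precisely the edge $R_\omega\to R_{\omega'}$ in $G$, hence a path in $G$ from a vertex corresponding to $\cl_i$ to a vertex corresponding to $\cl_j$, which means $\cl_i\overset M\to\cl_j$.

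The principal difficulty is the middle step: producing genuine $\F$-transverse intersections between $\beta$ and the two periodic trajectories, when $\beta$ is only an admissible path (not a priori coming from a $\mu$-typical point). One must reuse the tripartite alternative (accumulation on either side versus $\F$-transverse intersection) from the proof of Lemma~\ref{LemNotSimpleIntersect} and carefully exclude the two accumulation possibilities, which is where the hypothesis $\gamma_k\subset\Lambda_{\cl_{\star}}$ (and hence the chaoticity of the classes $\cl_i,\cl_j$) is essential.
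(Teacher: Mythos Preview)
Your overall architecture---select $\omega,\omega'$, build a single admissible path carrying both self-intersections, then invoke the second part of Theorem~\ref{ThConnectionHorse}---is the same as the paper's. The divergence, and the gap, is in how the bridge between $\beta$ and the trajectories $I^\Z_\F(z'_\omega)$, $I^\Z_\F(z'_{\omega'})$ is built.

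You try to obtain $\F$-transverse intersections between $\beta$ and $I^\Z_\F(z'_\omega)$ by invoking the trichotomy behind Lemma~\ref{LemNotSimpleIntersect}. That trichotomy, however, is formulated for trajectories of typical points of ergodic measures in $\Merg(f)$ whose tracking geodesics cross; the accumulation alternatives are ruled out via \cite[Proposition~3.3]{guiheneuf2023area}, which itself uses the infinite recurrent nature of those trajectories. The path $\beta$ is a \emph{finite} admissible path; even if you pass to the periodic loop $\beta|_{[s_1,t_1]}$ and the periodic point $z_1$ it generates, the tracking geodesic of $z_1$ is $\gamma_1$, which may well be a \emph{simple} closed geodesic. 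In that case the hypothesis of Lemma~\ref{LemNotSimpleIntersect} (``neither $\dot\Lambda_{\mu_1}$ nor $\dot\Lambda_{\mu_2}$ is a single simple closed geodesic'') fails on the $\beta$ side, and the exclusion of the accumulation cases is no longer available. You flag this as ``the principal difficulty'' but do not resolve it.

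The paper bypasses this obstruction entirely by using Theorem~\ref{ThGT} instead of Lemma~\ref{LemNotSimpleIntersect}. Concretely (this is Claim~\ref{ClaimCool}), one first notes that the admissible concatenation $\alpha_1^N\,\beta|_{[t_1,s_2]}\,\alpha_2^N$ can be made to cross, for $N$ large and with the help of Lemma~\ref{LemUseResidFinite}, four pairwise disjoint translates of the thickened geodesics $B_{D'}(\wt\gamma_{\wt z'_1})$ and then four of $B_{D'}(\wt\gamma_{\wt z'_2})$. Theorem~\ref{ThGT} then outputs directly an admissible path containing arbitrarily long pieces of $I_\F(z'_1)$ and of $I_\F(z'_2)$, without ever needing $\gamma_1$ or $\gamma_2$ to be non-simple. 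From there Theorem~\ref{ThConnectionHorse} applies exactly as you intended. So the fix is to replace your middle step by this application of Theorem~\ref{ThGT}.
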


\begin{proof}
As $\cl_i\overset \F\to \cl_j$, there exist $a<b<c$, a transverse admissible path $\beta:[a,c]\to \mathrm{dom}(I)$, and covering automorphisms $T_1,T_2\in \G$ such that:
\begin{itemize}
\item $\wt\beta|_{[a,b]}$ and $T_1(\wt\beta|_{[a,b]})$ have an $\wt{\mathcal F}$-transverse intersection at $\wt\beta(t_1)=T_1(\wt\beta)(s_1)$, where $s_1<t_1$, 
\item $\wt\beta|_{[b,c]}$ and $T_2(\wt\beta|_{[b,c]})$ have an $\wt{\mathcal F}$-transverse intersection at $\wt\beta(t_2)=T_2(\wt\beta)(s_2)$, where $s_2<t_2$,
\item for $i=1, 2$, denoting $\gamma_i$ the closed geodesic in the free homotopy class of the closed loop $\beta|_{[s_i,t_i]}$, we have $\gamma_1\subset\Lambda_{\cl_i}$ and $\gamma_2\subset \Lambda_{\cl_j}$.
\end{itemize}

For $k=1,2$ denote $\alpha_k$ the transverse loop $\beta_{[s_k,t_k]}$. By Theorem~\ref{ThConnectionHorse} (or more simply \cite[Theorem M]{lct2}) there exists $z_k$ an $f$-periodic orbit whose transverse trajectory is freely homotopic to $\alpha_k$; by hypothesis, one has $z_1\in\cl_i$ and $z_2\in\cl_j$.

As in the proof of Lemma~\ref{LemLotConnectionsRectangles}, using Theorem~\ref{thm:equidistributiontheoremintro} and Proposition~\ref{LemNotSimpleTracking}, for $k=1,2$ we can find $z'_k$ a periodic orbit whose tracking geodesic $\gamma_{z'_k}$ is not simple and intersects $\gamma_{z_k}$.
As the tracking geodesics of the $z'_\omega$ are dense in $\dot\Lambda_{\cl_i}$, one can suppose that $z'_k = z'_{\omega_k}$ for some $\omega_k\in\Omega$.

\begin{claim}\label{ClaimCool}
For any $M\in\N$ there exists $a'<b'<c'$ and a transverse admissible path $\beta':[a',c']\to \mathrm{dom}(I)$ such that $\beta'|_{[a',b']}$ has a subpath $\F$-equivalent to $I^{[0,M]}_\F(z'_1)$ and $\beta'|_{[b', c']}$ has a subpath $\F$-equivalent to $I^{[0,M]}_\F(z'_2)$.
\end{claim}

\begin{proof}
We will see this is a consequence of Theorem~\ref{ThGT}. Apply this theorem to the periodic points $z'_1$ and $z'_2$; this gives us a constant $D'>0$.

Using Lemma~\ref{LemUseResidFinite}, and fixing lifts $\wt z'_k$ of $z'_k$ (for $k=1,2$) to $\wt S$, we deduce that there exists $N>0$ such that for $k=1,2$, denoting $\wt\alpha_k$ a lift of $\alpha_k$ to $\wt S$, the trajectory $\wt\alpha_k^N$ crosses 4 of the sets $(R_k^j V_{D'}(\wt\gamma_{\wt z'_{k}}))_{1\le j\le 4}$ that have the same orientation (with $R_1^j\in\G$). 
By \cite[Proposition~23]{lct1} (this is also a consequence of the proof of Theorem~\ref{ThConnectionHorse}), the concatenation $\alpha_1^N \beta_{[t_1, s_2]} \alpha_2^N$ is admissible, and crosses all the sets $(T_1R_1^j V_{D'}(\wt\gamma_{\wt z'_{1}}))_{1\le j\le 4}$ and $(T_2R_2^j V_{D'}(\wt\gamma_{\wt z'_{2}}))_{1\le j\le 4}$ for some $T_1, T_2\in\G$. By considering a bigger $N$ if necessary and using Lemma~\ref{LemUseResidFinite}, one can suppose that the sets $(T_1R_1^j V_{D'}(\wt\gamma_{\wt z'_{1}}))_{1\le j\le 4}$ and $(T_2R_2^j V_{D'}(\wt\gamma_{\wt z'_{2}}))_{1\le j\le 4}$ are pairwise disjoint.

Theorem~\ref{ThGT} applied to the trajectories $\alpha_1^N \beta_{[t_1, s_2]} \alpha_2^N$, $I^\Z_\F(z'_1)$ and $I^\Z_\F(z'_2)$ then asserts that there exists an $\wt f$-admissible transverse path $\beta'$ as well as $a'<b'<c'$ such that $\beta'|_{[a',b']}$ has a subpath $\F$-equivalent to $I^{[0,M]}(z'_1)$ and $\beta'|_{[b', c']}$ has a subpath $\F$-equivalent to $I^{[0,M]}(z'_2)$.
\end{proof}

Let us come back to the proof of Lemma~\ref{LemFImpliesM}. 
As in the proof of Lemma~\ref{LemLotConnectionsRectangles}, we use Theorem~\ref{ThConnectionHorse} that implies that $R_{\omega_1}\to R_{\omega_2}$, and hence $\cl_i\overset M\to \cl_j$.
\end{proof}

\begin{lemma}\label{LemStarImpliesF}
Let $f\in\Homeo_0(S)$ and $i,j\in I_{\mathrm{h}}$. If $\cl_i\overset *\to \cl_j$ then $\cl_i\overset \F\to \cl_j$.
\end{lemma}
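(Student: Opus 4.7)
Let $(x_k)$, $n_k^{1,\pm}<n_k^{2,\pm}$, $\mu_1\in\cl_i$ and $\mu_2\in\cl_j$ be sequences witnessing $\cl_i\overset*\to\cl_j$. The aim is to exhibit a single admissible transverse path carrying two $\F$-transverse self-intersections whose associated closed loops have geodesic representatives in $\Lambda_{\cl_i}$ and $\Lambda_{\cl_j}$ respectively. The core idea is to use the orbit of $x_k$ itself (for $k$ large) as the witness path and to apply the quantitative part of Theorem~\ref{ThGT} on the two time windows where the empirical measures concentrate on $\mu_1$ and $\mu_2$.

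First apply Proposition~\ref{LemNotSimpleTracking} to $\mu_1$ and $\mu_2$ in order to fix periodic points $z'_1\in\cl_i$ and $z'_2\in\cl_j$ with non-simple closed tracking geodesics $\gamma_1,\gamma_2$, each chosen close to a generic typical tracking geodesic of $\mu_1$, respectively $\mu_2$; let $T_1,T_2\in\G$ be primitive deck transformations associated to $\gamma_1,\gamma_2$. Combining the convergence \eqref{EqEtoile} with the tracking property of Theorem~\ref{DefTrackGeod} (rotational speeds $\vartheta_{\mu_k}>0$), one shows that the lifted orbit of $x_k$ between $n_k^{1,-}$ and $n_k^{1,+}$ makes a hyperbolic displacement of order $n_k^{1,+}-n_k^{1,-}$ in a direction close but not parallel to $\gamma_1$, so that its geometric intersection number with $\gamma_1$ grows linearly with this time; Lemma~\ref{LemUseResidFinite} then produces, for $k$ large enough and any prescribed $D'$, four pairwise disjoint translates $R_1^jB_{D'}(\wt\gamma_1)$ with the same orientation crossed by the orbit. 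Symmetrically for $\gamma_2$ on the second time window. Setting $y_0=f^{n_k^{1,-}}(x_k)$, $n'_0=n_k^{1,+}-n_k^{1,-}$ and $n_0=n_k^{2,+}-n_k^{1,-}$, the orbit of $y_0$ satisfies the crossing conditions of the quantitative part of Theorem~\ref{ThGT}.

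Applying Theorem~\ref{ThGT} with a large parameter $M$ (for which it outputs the constants $D'$ and $m_1$ used above) yields an $\wt f$-admissible transverse path $\wt\beta:[t_0,t_2]\to\wt\dom(\F)$ and some $t_1\in(t_0,t_2)$ such that $\wt\beta|_{[t_{k-1},t_k]}$ has an $\F$-transverse self-intersection associated to a deck transformation $U_k\in\G$ for $k=1,2$. Applying Theorem~\ref{ThConnectionHorse} to each of these two self-intersections separately produces periodic orbits $p_1$ and $p_2$ whose tracking geodesics $\gamma_{p_k}$ are the projections on $S$ of the axes of $U_k$. Each self-intersection sits within the long piece of $I^\Z_\F(z'_k)$ (of transverse length $\ge M$) that enters $\wt\beta|_{[t_{k-1},t_k]}$, so the loop freely homotopic to $\gamma_{p_k}$ shadows $\gamma_k$ over at least one fundamental domain — this is the same mechanism as in the final part of Theorem~\ref{ThGT}. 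Since $\gamma_k$ is non-simple, $\gamma_{p_k}$ must then cross $\gamma_k$ transversally, whence the uniform measures on the orbits of $p_k$ and $z'_k$ are dynamically transverse and $p_1\in\cl_i$, $p_2\in\cl_j$. The loops produced by the two $\F$-transverse self-intersections of $\wt\beta$ thus have geodesic representatives $\gamma_{p_1}\in\Lambda_{\cl_i}$ and $\gamma_{p_2}\in\Lambda_{\cl_j}$, which is precisely Definition~\ref{DefToStar} for $\cl_i\overset\F\to\cl_j$.

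The delicate step is the geometric assertion of the second paragraph: translating abstract weak-$*$ convergence of empirical measures into an effective count of geometric crossings of the fixed closed geodesics $\gamma_1,\gamma_2$. The argument relies on pairing the rotation information from \eqref{EqEtoile} with the stronger tracking statement of Theorem~\ref{DefTrackGeod}, and crucially on having chosen $\gamma_k$ via Proposition~\ref{LemNotSimpleTracking} close to but distinct from a typical tracking geodesic of $\mu_k$; this ensures the orbit tracks in a direction transverse to $\gamma_k$ and excludes the pathological case where the rotation direction of $\mu_k$ would be parallel to $\gamma_k$ in $\wt S$.
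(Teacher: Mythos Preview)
Your overall architecture is the paper's: feed the orbit of $x_k$ into Theorem~\ref{ThGT} against two well-chosen periodic points, then read off the two $\F$-transverse self-intersections on the resulting admissible path $\wt\beta$. The genuine gap is in the choice of the periodic points and the ``delicate step'' you flag yourself. You apply Proposition~\ref{LemNotSimpleTracking} to $\mu_1,\mu_2$ themselves, so $\gamma_k$ is \emph{close} to a $\mu_k$-typical tracking geodesic; you then claim ``close but distinct'' forces transversality and hence linearly many crossings. That inference is false: two geodesics in $\wt S$ with nearby endpoint pairs on $\partial\wt S$ can be ultraparallel, and Proposition~\ref{LemNotSimpleTracking} gives no more than endpoint-closeness. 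Separately, you invoke Theorem~\ref{DefTrackGeod} to say the orbit of $x_k$ itself has linear hyperbolic displacement in a fixed direction, but that theorem concerns $\mu$-typical points and weak-$*$ convergence of empirical measures does not promote $x_k$ to typical; the orbit of $x_k$ need not track any geodesic.

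The paper fixes both issues at once by a different choice: since $\mu_k$ lies in a chaotic class, pick first a measure $\nu_k$ \emph{dynamically transverse} to $\mu_k$ and only then pass to a periodic $z_k$ via Proposition~\ref{LemNotSimpleTracking}. This guarantees from the outset that $\gamma_{z_k}$ crosses $\mu_k$-typical tracking geodesics, which is what drives the crossing count through Lemma~\ref{LemUseResidFinite}. The paper also handles your last step more directly: it chooses $M$ large enough (at least $t_k-s_k+2q_k$) so that the long piece $I^{[s'_k,t'_k]}_\F(z_k)$ inside $\wt\beta$ already contains the known self $\F$-transverse intersection of $I^\Z_\F(z_k)$; the associated loop is then literally $I^{[s_k,t_k]}_\F(z_k)$ (after the adjustment via \cite[Proposition~8.5]{pa}), whose geodesic representative is manifestly in $\Lambda_{\cl_i}$ resp.\ $\Lambda_{\cl_j}$. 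Your detour through the axes of the deck transformations $U_k=R_3^kT_k^3(R_2^k)^{-1}$ produced by Theorem~\ref{ThGT} is unnecessary and the shadowing claim for those axes is not substantiated.
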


\begin{proof}
Let $i,j\in I_{\mathrm{h}}$ be such that $\cl_i\overset *\to \cl_j$. Then there exist $\mu_1\in \cl_i$, $\mu_2\in \cl_j$, $(x_k)\in S^\N$ and four sequences of times $n_k^{1,-} < n_k^{1,+} < n^{2,-}_k < n^{2,+}_k$ with $\lim_k n_k^{1,+}-n_k^{1,-} = \lim_k n_k^{2,+}-n_k^{2,-} =+\infty$ and such that \eqref{EqEtoile} holds. 
As $\mu_1\in \cl_i$, $\mu_2\in \cl_j$, there exists $\nu_1\in \cl_i$, $\nu_2\in \cl_j$ such that $\mu_1$ and $\nu_1$ are dynamically transverse, and  $\mu_2$ and $\nu_2$ are dynamically transverse. By Proposition~\ref{LemNotSimpleTracking}, we can suppose that $\nu_1$ and $\nu_2$ are periodic measures whose tracking geodesics are closed and non simple. Let $z_1$, $z_2$ be associated periodic points, $q_1, q_2$ their periods and $\wt z_1$, $\wt z_2$ some lifts of them to $\wt S$. 

By Lemma~\ref{LemNotSimpleIntersect} (or \cite[Proposition 9.18]{pa}), the transverse trajectory $I^\Z_\F(z_1)$ has a self $\F$-transverse intersection at $I^{s_1}_\F(z_1) = I^{t_1}_\F(z_1)$, $s_1<t_1$, and the transverse trajectory $I^\Z_\F(z_2)$ has a self $\F$-transverse intersection at $I^{s_2}_\F(z_2) = I^{t_2}_\F(z_2)$, $s_2<t_2$. 
By \cite[Proposition~8.5]{pa} we see that for some $\ell\in\{0,1\}$, the geodesic in the free homotopy class of $I^{[s_1,t_1+\ell q_1]}_\F(z_1)$ crosses $\gamma_{z_1}$ and is non simple. From now we replace $t_1$ by $t_1+\ell q_1$ (and the same for $z_2$).

Let us apply Theorem~\ref{ThGT} to the periodic points $z_{1}$ and $z_{2}$, and $M = \max(s_1-t_1+2q_1, s_2-t_2+2q_2)$, which gives us a constant $D'$.
Using Lemma~\ref{LemUseResidFinite}, and fixing lifts $\wt x_k$ of $x_k$ to $\wt S$, we deduce that for any $k$ large enough the trajectory $I^{[n_k^{1,-}, n_k^{1,+}]}_{\wt\F}(\wt x_k)$ crosses $4$ of the sets $(R_1^j V_{D'}(\wt\gamma_{\wt z_{1}}))_{1\le j\le 4}$ (with $R_1^j\in\G$) that are pairwise disjoint and have the same orientation, and the trajectory $I^{[n_k^{2,-}, n_k^{2,+}]}_{\wt\F}(\wt x_k)$ crosses $4$ of the sets $(R_2^j V_{D'}(\wt\gamma_{\wt z_{2}}))_{1\le j\le 4}$ (with $R_2^j\in\G$) that are pairwise disjoint and have the same orientation.

Theorem~\ref{ThGT} then asserts that there exists an $\wt f$-admissible transverse path $\wt\beta$ made of the concatenation of some paths $I^{[s'_1,t'_1]}_\F(z_1)$, $I^{[u_1, u_2]}_\F(y_0)$ and $I^{[s'_2,t'_2]}_\F(z_2)$, with $t'_1-s'_1\ge M$ and $t'_2-s'_2\ge M$. 
Hence the subpath $I^{[s'_1,t'_1]}_\F(z_1)$ has a self $\F$-transverse intersection, and by what we have stated above the geodesic in the free homotopy class of the loop created by the self $\F$-transverse intersection is included in $\Lambda_{\cl_i}$. 
Similarly, the subpath $I^{[s'_2,t'_2]}_\F(z_1)$ has a self $\F$-transverse intersection, and by what we have stated above the geodesic in the free homotopy class of the loop created by the self $\F$-transverse intersection is included in $\Lambda_{\cl_j}$.
\end{proof}

\begin{lemma}\label{LemMImpliesStar}
Let $f\in\Homeo_0(S)$ and $i,j\in I_{\mathrm{h}}$. If $\cl_i\overset M\to \cl_j$ then $\cl_i\overset *\to \cl_j$.

Moreover, the deck transformations $T_1$ and $T_2$ of Definition~\ref{DefToStar} of the relation $\overset *\to$ can be supposed to have non simple axes. 
\end{lemma}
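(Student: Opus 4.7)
The plan is to use the Markov path furnished by the hypothesis, together with Proposition~\ref{LemPointFixe}, to build orbits that shadow the rotational behaviour of $\cl_i$ for a long initial window and then transit to $\cl_j$ for a long final window, and then extract weak-$*$ limits of the empirical measures.

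By $\cl_i\overset M\to\cl_j$, fix $\omega,\omega'\in\Omega$ with $z_\omega\in\cl_i$, $z_{\omega'}\in\cl_j$ and an oriented path $R_\omega=R_{\omega_0}\to\cdots\to R_{\omega_L}=R_{\omega'}$ in $G$ with edge labels $(n_\ell,T_\ell)$; set $N=\sum_\ell n_\ell$. For each $k\in\N$, I would apply Proposition~\ref{LemPointFixe} to the Markov word obtained by concatenating $k$ copies of the self-loop of $R_\omega$ with data $(q_\omega,T_\omega)$ coming from~\eqref{EqDefROmega}, the above path, and $k$ copies of the self-loop of $R_{\omega'}$. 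This produces $\wt x_k\in\inte(R_\omega)$ whose $\wt f$-orbit satisfies $\wt f^{jq_\omega}(\wt x_k)\in T_\omega^jR_\omega$ for $0\le j\le k$ and $\wt f^{kq_\omega+N+jq_{\omega'}}(\wt x_k)\in T_\omega^k T_0\cdots T_{L-1}T_{\omega'}^jR_{\omega'}$ for $0\le j\le k$. Let $x_k$ be its projection to $S$ and take $n_k^{1,-}=0$, $n_k^{1,+}=kq_\omega$, $n_k^{2,-}=kq_\omega+N$, $n_k^{2,+}=kq_\omega+N+kq_{\omega'}$. By weak-$*$ compactness of $\M(f)$, extract a subsequence along which the two empirical averages in~\eqref{EqEtoile} converge to $f$-invariant probabilities $\tilde\mu_1,\tilde\mu_2$; the displacements encoded in the Markov word then give $\rho(\tilde\mu_1)=r_\omega$ and $\rho(\tilde\mu_2)=r_{\omega'}$, both nonzero.

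Since $\wt f^{jq_\omega}(\wt x_k)\in T_\omega^jR_\omega$, $R_\omega$ is compact, and $T_\omega$ acts as a translation along its axis, the lift of the orbit of $x_k$ on $[0,kq_\omega]$ stays in a uniformly bounded tubular neighbourhood of the axis of $T_\omega$. By the construction of $R_\omega$ in Subsection~\ref{SubSecConstrG}, this axis is the closed geodesic freely homotopic to an iterate of the transverse loop built from $z'_\omega$, namely $\gamma_{z'_\omega}\subset\dot\Lambda_i$. Theorem~\ref{DefTrackGeod} then forces any ergodic component $\nu$ of $\tilde\mu_1$ to have tracking geodesic $\gamma_{z'_\omega}$, so $\dot\Lambda_\nu=\dot\Lambda_{\mu_{z'_\omega}}$, and by the first clause of Definition~\ref{DefRelEqMeas} we have $\nu\in\cl_i$; the symmetric argument gives that every ergodic component of $\tilde\mu_2$ lies in $\cl_j$.

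The main obstacle is that $\tilde\mu_1,\tilde\mu_2$ need not themselves be ergodic, while Definition~\ref{DefRelTo} requires convergence of the empirical measures to specific ergodic $\mu_1\in\cl_i$, $\mu_2\in\cl_j$. I would handle this by fixing ergodic $\mu_1\in\cl_i$ and $\mu_2\in\cl_j$ with tracking geodesics $\gamma_{z'_\omega}$ and $\gamma_{z'_{\omega'}}$ given by Theorem~\ref{thm:equidistributiontheoremintro}, and then via a diagonal argument choosing $x_k$ as a $\mu_1$-Birkhoff-generic point inside the nested Markov sub-rectangle of $R_\omega$ supplied by Proposition~\ref{LemPointFixe}, matched with $\mu_2$-Birkhoff-generic behaviour on the second window in $R_{\omega'}$; the uniform boundedness of the sub-rectangles and continuity of $f$ ensure that the finite-time empirical measures of $x_k$ stay close to $\mu_1$ on $[n_k^{1,-},n_k^{1,+}]$ and to $\mu_2$ on $[n_k^{2,-},n_k^{2,+}]$ against any countable dense family of test observables. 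Finally, for the moreover statement: when one translates the resulting $\overset *\to$ relation into an $\overset\F\to$ relation via Lemma~\ref{LemStarImpliesF}, the deck transformations $T_1,T_2$ produced through Theorem~\ref{ThGT} are associated to the loops around $z'_\omega$ and $z'_{\omega'}$, whose tracking geodesics are non-simple by the choice made in Subsection~\ref{SubSecConstrG} (Proposition~\ref{LemNotSimpleTracking}), so $T_1$ and $T_2$ can be taken with non-simple axes.
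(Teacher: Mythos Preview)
Your overall strategy matches the paper's: exploit the Markov connection $R_\omega\to R_{\omega'}$ to build orbits that first shadow the $T_\omega$-horseshoe and then the $T_{\omega'}$-horseshoe, take weak-$*$ limits of the empirical measures, and identify the tracking geodesics of the limits with the axes of $T_\omega$ and $T_{\omega'}$. You also correctly isolate the main obstacle: the limit measures $\tilde\mu_1,\tilde\mu_2$ need not be ergodic, while Definition~\ref{DefRelTo} demands ergodic targets.

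The genuine gap is in your resolution of this obstacle. You propose to ``choose $x_k$ as a $\mu_1$-Birkhoff-generic point inside the nested Markov sub-rectangle supplied by Proposition~\ref{LemPointFixe}'', but this cannot work: Proposition~\ref{LemPointFixe} produces \emph{some} point with the prescribed itinerary; it does not let you select which point. You have no mechanism to force the produced $x_k$ to be Birkhoff-generic for a predetermined ergodic $\mu_1$, and certainly none to simultaneously force $\mu_2$-generic behaviour on a later window. A second, related issue: from ``the lift of the orbit of $x_k$ on $[0,kq_\omega]$ stays in a bounded tube around the axis'' you cannot directly conclude that the tracking geodesic of an ergodic component $\nu$ of $\tilde\mu_1$ is that axis, because the support of $\nu$ lies only in the projection of this tube to $S$, whose lift is the union of all deck translates of the tube.

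The paper handles both issues at once, and differently. It builds a \emph{single} point $\wt x$ (not a sequence $x_k$) lying in the infinite intersection
\[
\wt K=\Big(\bigcap_{\ell\ge 0}\wt f^{\ell n_\omega}(T_\omega^{-\ell}R_\omega)\Big)\cap\Big(\bigcap_{\ell\ge 0}\wt f^{-\ell n_{\omega'}-n_{\omega,\omega'}}(T_{\omega,\omega'}T_{\omega'}^{\ell}R_{\omega'})\Big),
\]
so that the \emph{full} backward orbit of $x$ tracks $\gamma_\omega$ and the full forward orbit tracks $\gamma_{\omega'}$, with uniform constants (Claim~\ref{ClaimLemMImpliesStar}). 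The associated sets $K_i,K_j$ are respectively backward- and forward-invariant, so any ergodic component $\nu_i$ (resp.\ $\nu_j$) of the backward (resp.\ forward) empirical limit $\mu_i$ (resp.\ $\mu_j$) has its support inside $K_i$ (resp.\ $K_j$); this is what pins down the tracking geodesic of $\nu_i,\nu_j$ and hence forces $\nu_i\in\cl_i$, $\nu_j\in\cl_j$. Finally, since $\supp(\nu_i)\subset\supp(\mu_i)$ is accumulated by negative iterates of $x$ and $\supp(\nu_j)$ by positive iterates, one finds the required time windows for \eqref{EqEtoile} along the orbit of the single point $x$, converging to $\nu_i$ and $\nu_j$ themselves rather than to $\mu_i,\mu_j$. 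This is the step you are missing.
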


This lemma will be the consequence of the following claim, that will also be used in next lemma.

\begin{claim}\label{ClaimLemMImpliesStar}
Let $f\in\Homeo_0(S)$ and $i,j\in I_{\mathrm{h}}$. If $\cl_i\overset M\to \cl_j$ then for any $\omega, \omega'\in\Omega$ such that $z_\omega\in\cl_i$, $z_{\omega'}\in\cl_j$, there exist $\wt x\in \wt S$, $D>0$, lifts $\wt\gamma_{\omega}$ and $\wt\gamma_{\omega}$ of tracking geodesics of $z_\omega$ and $z_{\omega'}$, and $\vartheta_i>0$ and $\vartheta_j>0$, such that for any $n\ge 0$,
\begin{equation}\label{EqKTrack2}
d\big(\wt f^{-n}(\wt x), \wt\gamma_{\omega}(-n\vartheta_i)\big)\le D
\qquad \text{and}\qquad
d\big(\wt f^n(\wt x), \wt\gamma_{\omega'}(n\vartheta_j)\big)\le D.
\end{equation}
\end{claim}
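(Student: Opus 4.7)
The plan is to exploit the Markov property of Markovian intersections via Proposition~\ref{LemPointFixe} to build a point $\wt x$ whose orbit is symbolically trapped in loops of $R_\omega$ (with self-transition labelled $(q_\omega, T_\omega)$) in the past, traverses a finite chain of rectangles from $R_\omega$ to $R_{\omega'}$, and is trapped in loops of $R_{\omega'}$ (with label $(q_{\omega'}, T_{\omega'})$) in the future. Because the rectangles are compact and $\wt f$ is $\G$-equivariant, the iterates of $\wt x$ necessarily shadow the axes of $T_\omega$ and $T_{\omega'}$, which are lifts of the required tracking geodesics.

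The first step is to produce the bridge from $R_\omega$ to $R_{\omega'}$. By hypothesis $\cl_i\overset M\to\cl_j$ gives some $\omega_0,\omega_0'$ with $z_{\omega_0}\in\cl_i$, $z_{\omega_0'}\in\cl_j$ and a path in $G$ from $R_{\omega_0}$ to $R_{\omega_0'}$. Lemma~\ref{LemLotConnectionsRectangles} ensures that $G_i$ and $G_j$ are strongly connected inside themselves, so pre- and post-composing with paths inside $G_i$ and $G_j$ yields a finite path $R_\omega = R_{\omega_0}\to\cdots\to R_{\omega_\ell} = R_{\omega'}$ in $G$.

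For each $n\in\N$, I then apply Proposition~\ref{LemPointFixe} to the finite symbolic word made of $n$ loops at $R_\omega$, followed by the bridge, followed by $n$ loops at $R_{\omega'}$, working in $\wt S$ with the translates of the rectangles prescribed by the deck transformations labelling the edges. This produces points $\wt x_n$ such that $\wt f^{-mq_\omega}(\wt x_n)\in T_\omega^{-m}R_\omega$ and $\wt f^{N_0+mq_{\omega'}}(\wt x_n)\in U T_{\omega'}^{m}R_{\omega'}$ for every $0\le m\le n$, where $N_0\in\N$ and $U\in\G$ depend only on the bridge, not on $n$. Normalizing by a deck transformation so that $\wt x_n$ remains in a fixed fundamental domain of $\wt S$, and extracting a convergent subsequence, produces a limit $\wt x\in\wt S$ for which these inclusions hold for all $m\ge 0$.

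Finally, I translate these inclusions into distance bounds. Proposition~\ref{LemPointFixe} applied to the loop $\wt f^{q_{\omega'}}(R_{\omega'})\cap_M T_{\omega'}R_{\omega'}$ also provides a lift $\wt z_{\omega'}\in R_{\omega'}$ of $z_{\omega'}$ with $\wt f^{q_{\omega'}}\wt z_{\omega'} = T_{\omega'}\wt z_{\omega'}$. Then $(T_{\omega'}^m\wt z_{\omega'})_{m\in\Z}$ stays within $\diam(R_{\omega'})$ of the axis $\wt\gamma_{\omega'}$ of the hyperbolic isometry $T_{\omega'}$, and $\wt\gamma_{\omega'}$ projects on $S$ to the tracking geodesic of $z_{\omega'}$ with rotation speed $\vartheta_j := \ell(T_{\omega'})/q_{\omega'}$, where $\ell$ is the translation length. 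This gives the forward bound at times of the form $N_0+mq_{\omega'}$; for intermediate times $N_0+mq_{\omega'}+r$ with $0\le r<q_{\omega'}$, the equivariance $\wt f^r\circ T_{\omega'}^m = T_{\omega'}^m\circ \wt f^r$ together with the compactness of $\wt f^r(R_{\omega'})$ produces only an additional bounded error. After replacing $\wt\gamma_{\omega'}$ by $U\wt\gamma_{\omega'}$ and reparametrizing, one obtains the forward bound in \eqref{EqKTrack2}; the backward bound is symmetric with $R_\omega$, $T_\omega$ and $\vartheta_i$. The main technical subtlety is the identification of the tracking geodesic of a periodic orbit with the projection of the axis of the associated deck transformation, and the matching of the rotation speed with the translation length divided by the period; once this is granted, compactness of the rectangles does the rest.
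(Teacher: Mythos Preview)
Your proof is correct and follows essentially the same strategy as the paper's: use the strong connectivity of $G_i$ and $G_j$ (Lemma~\ref{LemLotConnectionsRectangles}) to build a bridge $R_\omega\to\cdots\to R_{\omega'}$, then apply Proposition~\ref{LemPointFixe} to the chain $(\text{loops at }R_\omega)\cdot(\text{bridge})\cdot(\text{loops at }R_{\omega'})$ and read off the tracking estimates from the boundedness of the rectangles and the identification of the tracking geodesics with the axes of $T_\omega$, $T_{\omega'}$. The only stylistic difference is that the paper defines directly the compact set $\wt K=\bigcap_{\ell\ge 0}\wt f^{\ell q_\omega}(T_\omega^{-\ell}R_\omega)\cap\bigcap_{\ell\ge 0}\wt f^{-\ell q_{\omega'}-N_0}(UT_{\omega'}^{\ell}R_{\omega'})$ and takes $\wt x\in\wt K$, whereas you obtain $\wt x$ as a limit of finite-stage points $\wt x_n$; these are equivalent. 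One minor remark: your ``normalizing by a deck transformation'' step is unnecessary (and would in fact disturb the inclusions if nontrivial), since your $\wt x_n$ already lie in the fixed compact $R_\omega$ by the $m=0$ inclusion, so compactness of $R_\omega$ alone gives the convergent subsequence.
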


\begin{rem}\label{RemClaimLemMImpliesStar}
The conclusion of the claim persists if we replace the hypothesis $\cl_i\overset M\to \cl_j$ by a path of connections $\cl_i\overset M\to \cl_{i_1}\overset M\to\dots \overset M\to \cl_j$. This implies that the conclusion of Lemma~\ref{LemMImpliesStar} also persists under this weaker conclusion.
\end{rem}

\begin{proof}
Suppose that $\cl_i\overset M\to \cl_j$. Lemma~\ref{LemLotConnectionsRectangles} implies that for any $\omega, \omega'\in\Omega$ such that $z_\omega\in\cl_i$, $z_{\omega'}\in\cl_j$, there exists a path in $\G$ going from $R_\omega$ to $R_{\omega'}$. We treat the case where this path has length 1 (\emph{i.e.}~$R_\omega\to R_{\omega'}$), the general case being more technical but similar.

So there are $n_\omega, n_{\omega, \omega'}, n_{\omega'}\in\N$ and $T_\omega, T_{\omega, \omega'}, T_{\omega'}\in\G$ such that the following intersections are Markovian (in $\wt S$): $\wt f^{n_\omega}(R_\omega) \cap T_\omega(R_\omega)$, $\wt f^{n_{\omega, \omega'}}(R_\omega) \cap T_{\omega, \omega'}(R_{\omega'})$ and $\wt f^{n_{\omega'}}(R_{\omega'}) \cap T_{\omega'}(R_{\omega'})$. By Proposition~\ref{PropConnectRectEnsRot}, the following intersection is nonempty and compact: 
\[\wt K = \left(\bigcap_{\ell\in\N} \wt f^{-\ell n_{\omega'}-n_{\omega,\omega'}}(T_{\omega,\omega'} T_{\omega'}^\ell R_{\omega'})\right)\cap \left(\bigcap_{\ell\in\N} \wt f^{\ell n_{\omega}}(T_{\omega}^{-\ell} R_{\omega})\right).\]

Let $\wt\gamma_\omega$ and $\wt\gamma_{\omega'}$ be the geodesic axes of respectively $T_\omega$ and $T_{\omega,\omega'}T_{\omega'}T_{\omega,\omega'}^{-1}$. 
Let also
\[K_i = \pr_S\left(\bigcup_{j=0}^{n_{\omega'}-1}\bigcap_{\ell\in\N} \wt f^{-\ell n_{\omega'}-n_{\omega,\omega'}-j}(T_{\omega'}^\ell T_{\omega,\omega'} R_{\omega'})\right)\]
and 
\[K_j = \pr_S\left(\bigcup_{j=0}^{n_{\omega}-1}\bigcap_{\ell\in\N} \wt f^{\ell n_{\omega}+j}(T_{\omega}^{-\ell} R_{\omega})\right).\]
Note that the set $K_i$ is backward invariant and $K_j$ is forward invariant.
Because the rectangles $R_\omega$ and $R_{\omega'}$ are bounded, there exist $D>0$, $\vartheta_i>0$ and $\vartheta_j>0$ such that for any $x_i\in K_i$, there is a lift $\wt x_i$ of $x_i$ to $\wt S$ and for any $x_j\in K_j$, there is a lift $\wt x_j$ of $x_j$ to $\wt S$ such that for any $n\in\N$, we have 
\begin{equation}\label{EqKTrack}
d\big(\wt f^{-n}(\wt x_i), \wt\gamma_{\omega}(-n\vartheta_i)\big)\le D
\qquad \text{and}\qquad
d\big(\wt f^n(\wt x_j), \wt\gamma_{\omega'}(n\vartheta_j)\big)\le D.
\end{equation}
This implies that for any $x\in K$, \eqref{EqKTrack2} holds. 
\end{proof}

\begin{proof}[Proof of Lemma~\ref{LemMImpliesStar}]
Consider any $\omega, \omega'\in\Omega$ such that $z_\omega\in\cl_i$, $z_{\omega'}\in\cl_j$, and that the closed geodesics $\gamma_\omega$ and $\gamma_{\omega'}$ are non simple.

Let $x_0$ given by Claim~\ref{ClaimLemMImpliesStar}. 
Choose $\mu_j$ and $\mu_i$ some weak-$*$ limits of the respective sequences:
\[\frac{1}{n}\sum_{k=0}^{n-1} \delta_{f^k(x)}
\qquad \text{and}\qquad
\frac{1}{n}\sum_{k=0}^{n-1} \delta_{f^{-k}(x)}.\]
Choose some ergodic measures $\nu_i$ and $\nu_j$ that are typical for the ergodic decompositions of respectively $\mu_i$ and $\mu_j$. 
Because $K_i$ is backward invariant and $K_j$ is forward invariant, we deduce that $\supp (\nu_i)\subset K_i$ and $\supp(\nu_j)\subset K_j$. Hence for $\nu_i$-a.e.~$x_i$ and $\nu_j$-a.e.~$x_j$, \eqref{EqKTrack} holds. 
In particular, $\nu_i,\nu_j\in \Merg(f)$. Moreover, $\wt\gamma_{\wt\omega}$ is a tracking geodesic for $\wt x_i$ and $\wt\gamma_{\omega'}$ is a tracking geodesic for $\wt x_j$; this implies (because $\gamma_\omega$ and $\gamma_{\omega'}$ are non simple) that $\nu_i\in \cl_i$ and $\nu_j\in \cl_j$.

Finally, as $\nu_i$ is typical for the ergodic decomposition of $\mu_i$, any point of the support of $\nu_i$ is accumulated by negative iterates of $x$; similarly any point of the support of $\nu_j$ is accumulated by positive iterates of $x$. This implies the existence of $n_k^{1,-} < n_k^{1,+} < n^{2,-}_k < n^{2,+}_k$ with $\lim_k n_k^{1,+}-n_k^{1,-} = \lim_k n_k^{2,+}-n_k^{2,-} =+\infty$ and such that
\begin{equation*}
\frac {1}{n^{1,+}_k - n^{1,-}_k}\sum_{i=n^{1,-}_k}^{n^{1,+}_k-1} \delta_{f^i(x)} \xrightharpoonup[k\to+\infty]{} \nu_i 
\quad \textrm{and} \quad
\frac {1}{n^{2,+}_k - n^{2,-}_k}\sum_{i=n^{2,-}_k}^{n^{2,+}_k-1} \delta_{f^i(x)} \xrightharpoonup[k\to+\infty]{} \nu_j.
\end{equation*}
This shows that $\nu_i\overset *\to \nu_j$.
\end{proof}

\begin{lemma}\label{LemMImpliesWedge}
Let $f\in\Homeo_0(S)$ and $i,j\in I_{\mathrm{h}}$. If $\cl_i\overset M\to \cl_j$ then $\cl_i\overset \wedge\to \cl_j$.
\end{lemma}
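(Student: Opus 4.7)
The plan is to use Claim~\ref{ClaimLemMImpliesStar} together with Remark~\ref{RemClaimLemMImpliesStar} to produce a point $\wt x\in\wt S$ whose orbit ``bridges'' $\wt\Lambda_i$ and $\wt\Lambda_j$, and then to exploit the fact that $\wt f$ extends as the identity on $\partial\wt S$ in order to force topological crossings in the universal cover.

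First I will use the freedom in the choice of $\omega,\omega'$ given by Lemma~\ref{LemLotConnectionsRectangles}. As the tracking geodesics $(\gamma_\omega)_{z_\omega\in\cl_i}$ are dense in $\dot\Lambda_i$ by the construction of Subsection~\ref{SubSecConstrG}, the endpoints on $\partial\wt S$ of lifts $\wt\gamma_\omega$ are dense in $\partial\wt S_i\cap\partial\wt S$. Since $\alpha_i$ is essential in $S_i$, the endpoints $\zeta_i^\pm\in\partial\wt S$ of a lift $\wt\alpha_i$ lie in this same limit set, so by density I can pick $\omega$ and a lift $\wt\gamma_\omega$ whose endpoints interlace with $\zeta_i^\pm$, thereby forcing $\wt\gamma_\omega$ to cross $\wt\alpha_i$ at some $\wt q_i\in\wt S$. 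Making the construction parameter $n_\omega$ of $R_\omega$ large enough (so that $R_\omega$ follows a long enough piece of $\wt\gamma_\omega$), I arrange $\wt q_i\in R_\omega$. I do the analogous choice for $\omega'$, producing $\wt q_j\in\wt\alpha_j\cap R_{\omega'}$.

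Next, by Claim~\ref{ClaimLemMImpliesStar} together with Remark~\ref{RemClaimLemMImpliesStar}, and after possibly replacing $\wt\alpha_i,\wt\alpha_j$ by $\G$-translates so as to match the lifts supplied by the claim, I obtain a point $\wt x$ in the compact invariant set $\wt K\subset R_\omega$ appearing in the proof of the claim, with $\wt f^n(\wt x)\to\xi_{\omega'}^+$ as $n\to+\infty$, where $\xi_{\omega'}^+\in\partial\wt S$ is the forward endpoint of $\wt\gamma_{\omega'}$. By minimality of the diagonal action of $\G$ on $(\partial\wt S)^2\setminus\Delta$ (the group $\G$ being cocompact Fuchsian), I can then find $T\in\G$ such that the endpoints of $T\wt\alpha_j$ separate $\xi_{\omega'}^+$ from both $\zeta_i^-$ and $\zeta_i^+$ on $\partial\wt S$.

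To conclude, note that $\wt f^n(\wt\alpha_i)$ is a curve sharing the endpoints $\zeta_i^\pm$ with $\wt\alpha_i$, since $\wt f^n$ fixes $\partial\wt S$ pointwise. Using the Markov structure of $\wt K$, the intersection $\wt\alpha_i\cap R_\omega$ contains a sub-arc $A$ through $\wt q_i$ whose points share the Markov code of $\wt x$, hence whose forward orbits also shadow $\wt\gamma_{\omega'}$. For $n$ large, $\wt f^n(A)$ lies in a $D$-neighbourhood of $\wt\gamma_{\omega'}(n\vartheta_j)$, hence on the side of $T\wt\alpha_j$ containing $\xi_{\omega'}^+$. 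Since the endpoints $\zeta_i^\pm$ of $\wt f^n(\wt\alpha_i)$ remain on the opposite side, the continuous curve $\wt f^n(\wt\alpha_i)$ must cross $T\wt\alpha_j$ in $\wt S$, which yields $f^n(\alpha_i)\cap\alpha_j\neq\emptyset$.

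The hard part will be that very last step: upgrading the shadowing property from the single point $\wt x$ to a genuine sub-arc of $\wt\alpha_i$. This requires making the crossing of $\wt\alpha_i$ with $R_\omega$ compatible with the horizontal/vertical structure of the rectangle, which I expect to achieve by adjusting $R_\omega$ via Homma's Theorem~\ref{PropHomma} (preserving its Markovian intersections with $\wt f^{n_\omega}(R_\omega)$ and with the next rectangle in the chosen path of $G$) so that $\wt\alpha_i$ crosses $R_\omega$ from one horizontal side to the other, placing $A$ inside a single vertical strip of the Markov partition.
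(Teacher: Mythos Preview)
Your setup (steps 1--3) matches the paper's, but the last two steps contain a genuine gap that you yourself flag: upgrading the shadowing from the single point $\wt x$ to a sub-arc $A\subset\wt\alpha_i$. For this to work, $A$ would have to cross $R_\omega$ \emph{vertically} (from $R_\omega^-$ to $R_\omega^+$) so that it is pushed forward by the Markov dynamics. But $\alpha_i$ is an arbitrary essential loop in $S_i$; there is no reason any lift should cross $R_\omega$ in that specific way. Your proposed fix via Homma's theorem does not work: the horizontal sides of $R_\omega$ are pieces of $T_1^p\wt f^{-k_1r_1}(\wh\phi_b)$ coming from the forcing construction, and the pre-Markovian intersection with $T_\omega R_\omega$ relies on exactly that. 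You cannot deform $R_\omega$ to accommodate $\wt\alpha_i$ while keeping the Markovian intersections intact.

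The paper sidesteps this entirely by using \emph{both} directions of the tracking in Claim~\ref{ClaimLemMImpliesStar}. Having chosen $\omega,\omega'$ so that $\gamma_\omega$ meets $\alpha_i$ and $\gamma_{\omega'}$ meets $\alpha_j$, one picks lifts $\wt\alpha_i,\wt\alpha_j$ that both separate $\alpha(\wt\gamma_\omega)$ from $\omega(\wt\gamma_{\omega'})$ on $\partial\wt S$. Then for $n$ large, $\wt f^{-n}(\wt x)$ and $\wt f^n(\wt x)$ lie on opposite sides of \emph{each} of $\wt\alpha_i$ and $\wt\alpha_j$. Since $\wt f$ is the identity on $\partial\wt S$, the curve $\wt f^{2n}(\wt\alpha_i)$ has the same endpoints as $\wt\alpha_i$; if it were disjoint from $\wt\alpha_j$ it would lie on the same side of $\wt\alpha_j$ as $\wt\alpha_i$, forcing $\wt f^n(\wt x)=\wt f^{2n}(\wt f^{-n}(\wt x))$ to stay on the $\alpha(\wt\gamma_\omega)$-side of $\wt\alpha_j$, a contradiction. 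No arc needs to be tracked; only the single orbit of $\wt x$ and the boundary-fixing property of $\wt f$ are used.
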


\begin{proof}
Consider two essential closed loops $\alpha_i, \alpha_j$ of $S$ such that $[\alpha_i]\in \pi_1(S_i, \Z)$ and $[\alpha_j]\in \pi_1(S_j, \Z)$, as well as $\omega_i, \omega_j\in\Omega$ such that $z_{\omega_i}\in\cl_i$, $z_{\omega_j}\in\cl_j$, $\gamma_{z_{\omega_i}}$ and $[\alpha_i]$ intersect geometrically, and $\gamma_{z_{\omega_j}}$ and $[\alpha_j]$ intersect geometrically.

If $\alpha_i$ and $\alpha_j$ are not disjoint, there is nothing to prove, so we suppose this is not the case.

As $\cl_i\overset M\to \cl_j$, one can use Claim~\ref{ClaimLemMImpliesStar}: there exist $\wt x\in \wt S$, $D>0$, $\vartheta_i>0$ and $\vartheta_j>0$ such that \eqref{EqKTrack2} holds. Consider two lifts $\wt\alpha_i$ and $\wt\alpha_j$ of $\alpha_i$ and $\alpha_j$ to $\wt S$ such that both separate $\alpha(\wt\gamma_{\omega})$ from $\omega(\wt\gamma_{\omega'})$ (the $\alpha$ and $\omega$ limits of these geodesics, that belong to $\partial\wt S$).

This implies that there exist $n\in\N$ such that $\wt f^{-n}(\wt x)$ and $\wt f^n(\wt x)$ belong to different unbounded connected components of the complements of both $\wt\alpha_i$ and $\wt\alpha_j$. As the action of $\wt f$ on $\partial \wt S$ is the identity, we deduce that $\wt f^{2n}(\wt \alpha_i)\cap\alpha_j\neq\emptyset$. 
\end{proof}

\begin{prop}\label{LemOImpliesF}
Let $f\in\Homeo_0(S)$.
For any $i\in I_{\mathrm h}$, there exist three open filled connected sets $B_i^o, B_i^+, B_i^-$ of $S$, with $B_i^o\subset B_i^+\cap B_i^-$, such that 
\begin{itemize}
\item for any $\mu\in \cl_i$, we have $\mu(B_i^-) = \mu(B_i^+) = \mu(B_i^o) = 1$;
\item $f^{-1}(B_i^-) \subset B_i^-$, $f(B_i^+)\subset B_i^+$ and $f(B_i^o) = B_i^o$;
\item $i_* \pi_1(S_i)\subset i_* \pi_1(B_i^-)\cap i_* \pi_1(B_i^+) \cap i_* \pi_1(B_i^o)$
\end{itemize}
and satisfying:
\begin{itemize}
\item if $f^n(B_i^-) \cap B_j^+ \neq \emptyset$ for some $n\in\N$, then $\cl_i \overset *{\to}\cl_j$;
\item if $B_i^o \cap B_j^o \neq \emptyset$, then $\cl_i \overset *{\to}\cl_j$ or $\cl_j \overset *{\to}\cl_i$;
\item if $\cl_i \overset *{\to}\cl_j$, then $B_i^-\cap B_j^-\neq\emptyset$, $B_i^+\cap B_j^+\neq\emptyset$ and $B_i^+\cap B_j^-\neq\emptyset$, and there exists $n\in\N$ such that $f^n(B_i^-) \cap B_j^+ \neq \emptyset$.
\end{itemize}
\end{prop}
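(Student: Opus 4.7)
The plan is to construct the three sets from small essential neighborhoods of the graph's horseshoes, and then reduce each implication to the already-established equivalences $\overset{M}{\to}\,\Leftrightarrow\,\overset{\ast}{\to}$ via Lemmas~\ref{LemMImpliesStar} and \ref{LemStarImpliesF}. Concretely, for each $i\in I_{\mathrm h}$ let $\Omega_i=\{\omega\in\Omega:z_\omega\in\cl_i\}$ and choose a connected, filled, open neighborhood $V_i\subset S$ of $S_i\cup\bigcup_{\omega\in\Omega_i}\pi(R_\omega)$ small enough that $i_\ast\pi_1(V_i)=i_\ast\pi_1(S_i)$ and that $V_i$ still contains the support of every $\mu\in\cl_i$ (this is possible because $\mathrm{supp}(\mu)\subset S_i$ for any $\mu\in\cl_i$ by construction of $S_i$). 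Then define
\[
B_i^+:=\bigcup_{n\ge 0}f^n(V_i),\qquad B_i^-:=\bigcup_{n\ge 0}f^{-n}(V_i),
\]
and take $B_i^o$ as the (filled) connected component of the fully $f$-invariant set $\bigcup_{n\in\Z}f^n(V_i)$ containing $V_i$. Because Poincar\'e recurrence applied to any $\mu\in\cl_i$ with $\mu(V_i)=1$ gives some $N\ge 1$ with $V_i\cap f^N(V_i)\ne\emptyset$, $f$ fixes the connected component containing $V_i$, so $f(B_i^o)=B_i^o$; the forward/backward invariance of $B_i^\pm$ is immediate. Openness, connectedness, filledness, full $\mu$-measure for $\mu\in\cl_i$, and the essential-homotopy condition $i_\ast\pi_1(S_i)\subset i_\ast\pi_1(B_i^\bullet)$ all then follow directly.

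To handle the equivalences I first prove the easier direction: if $\cl_i\overset{\ast}{\to}\cl_j$, take the defining sequence $(x_k)$ and times $n_k^{1,-}<n_k^{1,+}<n_k^{2,-}<n_k^{2,+}$ from Definition~\ref{DefRelTo}. For $k$ large enough the orbit of $x_k$ visits $V_i$ on $[n_k^{1,-},n_k^{1,+}]$ and $V_j$ on $[n_k^{2,-},n_k^{2,+}]$; setting $y_k:=f^{n_k^{1,+}}(x_k)$ puts $y_k$ in $B_i^+\cap B_j^-$, and similar choices of intermediate iterates produce points in $B_i^-\cap B_j^-$, $B_i^+\cap B_j^+$, and in $f^n(B_i^-)\cap B_j^+$ for $n=n_k^{2,+}-n_k^{1,+}$. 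Conversely, suppose $f^n(B_i^-)\cap B_j^+\ne\emptyset$: there is $x$ with $f^m(x)\in V_i$ for some $m\ge 0$ and $f^{n-k}(x)\in V_j$ for some $k\ge 0$, and by Poincar\'e recurrence we may assume the $V_i$-visit precedes the $V_j$-visit in time. This orbit segment passes through a rectangle $R_\omega$ with $\omega\in G_i$ and then through $R_{\omega'}$ with $\omega'\in G_j$; applying Theorem~\ref{ThConnectionHorse} (combined with Theorem~\ref{ThGT} to produce sufficiently long admissible subpaths) yields a Markovian chain $R_\omega\to\cdots\to R_{\omega'}$ in $G$, so $\cl_i\overset{M}{\to}\cl_j$, and then Lemma~\ref{LemMImpliesStar} gives $\cl_i\overset{\ast}{\to}\cl_j$. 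The case $B_i^o\cap B_j^o\ne\emptyset$ is handled identically: a point $x$ in both invariant sets has a bi-infinite orbit visiting both $V_i$ and $V_j$, and the temporal order of the visits determines which of the two directions $\cl_i\overset{\ast}{\to}\cl_j$ or $\cl_j\overset{\ast}{\to}\cl_i$ we obtain.

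The main obstacle is ensuring the three topological requirements on $B_i^o$ (open, connected, filled) coexist with genuine $f$-invariance. Standard constructions of invariant neighborhoods (e.g. $\bigcap_n f^n(\overline{V_i})$) typically fail to be open, while open constructions (e.g. $\bigcup_n f^n(V_i)$) typically fail to be connected; the argument sketched above avoids both pitfalls by combining Poincar\'e recurrence (to force overlap of iterates of $V_i$) with a component-selection + filling procedure. The second delicate point is converting a single orbit visit to $V_i$ then $V_j$ into the genuine empirical-measure convergence of Definition~\ref{DefRelTo}, which is why the proof routes through $\overset{M}{\to}$ rather than attempting direct construction of the sequences $(x_k)$ in \eqref{EqEtoile}.
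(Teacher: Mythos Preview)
Your construction has two genuine gaps that make the argument fail as written.

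First, the claim that $\mathrm{supp}(\mu)\subset S_i$ for every $\mu\in\cl_i$ is not established in the paper and is not true in general. The surface $S_i$ is defined geometrically as the projection of the convex hull of a connected component of the lift of $\Lambda_i$; it encodes where the \emph{tracking geodesics} live, not where the \emph{orbits} live. A $\mu$-typical point $z$ has $\gamma_z\subset\Lambda_i$, but $z$ itself can lie far from $S_i$. So there is no reason your neighbourhood $V_i$ of $S_i\cup\bigcup_\omega\pi(R_\omega)$ should have full $\mu$-measure, and the whole construction of $B_i^\pm,B_i^o$ collapses.

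Second, and more seriously, the converse implication ``$f^n(B_i^-)\cap B_j^+\neq\emptyset\Rightarrow \cl_i\overset{*}{\to}\cl_j$'' is not justified. You assert that an orbit visiting $V_i$ then $V_j$ ``passes through a rectangle $R_\omega$ with $\omega\in G_i$ and then through $R_{\omega'}$ with $\omega'\in G_j$'', but $V_i$ is merely an open neighbourhood: a point in $V_i$ need not lie in any $R_\omega$, and being close to a rectangle gives no Markovian intersection data. This is precisely why the paper does \emph{not} build $B_i$ from neighbourhoods of rectangles. Instead it defines $E_i$ as the set of recurrent points $y$ whose transverse trajectories $I^{[m_y^-,0]}(x)$ and $I^{[0,m_y^+]}(x)$ (for all $x$ in a small disk $V_y$) already have geometric intersection number $\ge N_0$ with some $\gamma_{k_y}$; this is exactly the hypothesis needed to feed into Theorem~\ref{ThGT} (via Lemma~\ref{LemUseResidFinite}) and produce the admissible path required for $\overset{\F}{\to}$. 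The whole point is that membership in $B_i$ must encode enough \emph{rotational} information about the orbit, not just proximity to $S_i$. Your $V_i$ carries no such information, so the step ``visit $V_i$, then visit $V_j$, hence apply Theorem~\ref{ThConnectionHorse}'' has no content.

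Minor issues: you do not fill $B_i^\pm$, so they need not be filled and $B_i^o\subset B_i^+\cap B_i^-$ can fail after you fill $B_i^o$; and connectedness of $B_i^\pm$ requires a nontrivial argument (the paper gives one using the essential loop in $W_i^+$ and the action on lifts).
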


\begin{proof}
As in the beginning of Subsection~\ref{SubSecConstrG}, we use Theorem~\ref{thm:ShapeRotationSet}, this time to get a finite family of periodic points $(p_k)_k$ such that any closed geodesic included in one of the open surfaces $S_i$ ($i\in I_{\mathrm h}$) crosses one of the tracking geodesics $\gamma_{p_k}$.
By Proposition~\ref{LemNotSimpleTracking} one can moreover suppose that the tracking geodesic $\gamma_k$ of each $p_k$ is not simple. 

Let $D'>0$ be a constant given by Theorem~\ref{ThGT} working for all the couples of periodic points in the family $(p_k)_k$, and $N_0>0$ be the constant given by Lemma~\ref{LemUseResidFinite} applied to $D'$. 
Consider $i\in I_{\mathrm h}$, $\mu\in \cl_i$ and a point $z$ that is typical for $\mu$. There exists $k_\mu$ such that the periodic measure associated with $p_{k_\mu}$ belongs to $\cl_i$, and such that $\gamma_{p_{k_\mu}}$ intersects any tracking geodesic of a $\mu$-typical point (we use the density of tracking geodesics in $\Lambda_\mu$, see Theorem~\ref{thm:equidistributiontheoremintro}).


Define $E_i$ as the set of recurrent points $y\in S$ such that the following is true: there exists $n_y^-<m_y^-<0<m_y^+,<n_y^+$ and an open disk $V_y$ containing $y$ such that:
\begin{itemize}
\item for any $x\in V_y$, both trajectories $I^{[m_y^-,0]}(x)$ and $I^{[0,m_y^+]}(x)$ have geometric intersection numbers at least $N_0$ with $\gamma_{k_y}$ for some $k_y$ such that the periodic measure associated with $p_{k_y}$ belongs to $\cl_i$;
\item both trajectories $I^{[n_y^-,0]}(y)$ and $I^{[0,n_y^+]}(y)$ have geometric intersection numbers at least $N_0$ with $\gamma_{k_y}$;
\item $f^{n_y^-}(y), f^{n_y^+}(y)\in V_y$.
\end{itemize}
By the previous paragraph, the set $E_i$ has full $\mu$-measure for any $\mu\in\cl_i$.
%
%

We then set (recall that the \emph{fill} of an open set is the union of this set with the connected components of its complement whose lifts to $\wt S$ are bounded)
\[B_i = \bigcup_{y\in E_i}V_y,
\qquad
B_i^o = \operatorname{fill}\left(\bigcup_{n\in\Z} f^{n} (B_i)\right),\]
\[B_i^- = \operatorname{fill}\left(\bigcup_{n\ge 0} f^{-n} (B_i)\right)
\qquad\text{and}\qquad
B_i^+ = \operatorname{fill}\left(\bigcup_{n\ge 0} f^{n} (B_i)\right).\]

Consider a point $z\in S$ that is typical for some $\mu\in \cl_i$, whose tracking geodesic crosses any closed geodesic of $S_i$ and whose rotation vector is not rational. In particular, we have $z\in E_i \subset B_i$. Consider the connected component $W_i^+$ of $B_i^+$ containing $z$.
By \cite[Lemma~8.4 and Remark~8.5]{paper2PAF}, we get $i_* \pi_1(S_i)\subset i_* \pi_1(B_i^+)$. 
By construction, for any $\mu\in \cl_i$, we have $\mu(B_i^+) = 1$.

Let us prove these sets are connected.
Because $W_i^+$ is essential and open, any of its lifts $\wt W_i^+$ to $\wt S$ satisfies $\wt f(\wt W_i^+)\subset \wt W_i^+$.
Suppose there exists $U_i^+$ a connected component of $B_i^+$ different from $W_i^+$. Let 
$y\in E_i\cap U_i^+$, we have $f^{n_y^+}(U_i^+)\subset U_i^+$ (because $B_i^+$ is positively $f$-invariant).
Let $\wt U_i^+$ be a lift of $U_i^+$ to $\wt S$
and $T_0\in\G$ such that $\wt f^{n_y^+}(\wt U_i^+)\subset T_0\wt U_i^+$. By construction, the axis of $T_0$ crosses geometrically a path $\wt \beta_i$ in $\wt W_i^+$ lifting a simple closed loop of $W_i^+$. There exists $m\in\Z$ such that $U_i^+$ lies between $T_0^{-1}\wt \beta_i$ and $\wt \beta_i$. Hence $\wt f^{n_y^+}(\wt U_i^+)\subset T_0\wt U_i^+$ lies between $\wt \beta_i$ and $T_0\wt \beta_i$. But the union $\wt W_i^R$ of the connected components of the complement of $\wt W_i^+$ that lie to the right of $\wt \beta_i$ is negatively $\wt f$-invariant, so $\wt f(\wt U_i^+) \subset \wt W_i^R \subset \wt f(\wt W_i^R)$ and hence $\wt U_i^+ \subset \wt W_i^R$, a contradiction.
\bigskip

Suppose now that for some $n\in\N$ we have $f^n(B_i^-) \cap B_j^+ \neq \emptyset$. The fact that the lifts of $B_i^-$ and $B_j^+$ are unbounded implies that there exist $m\ge 0$ and $y\in B_i$ such that $f^m(y)\in B_j$.
Applying Theorem~\ref{ThGT} as in Claim~\ref{ClaimCool}, we get that $\cl_i \overset\F\to \cl_j$; applying Lemmas~\ref{LemFImpliesM} and \ref{LemMImpliesStar} implies that $\cl_i \overset*\to \cl_j$.
Hence, $\cl_i \overset *{\not\to}\cl_j$ implies that for any $n\in\N$ we have $f^n(B_i^-) \cap B_j^+ = \emptyset$; moreover the conditions $\cl_i \overset *{\not\to}\cl_j$ and $\cl_j \overset *{\not\to}\cl_i$ imply that for any $n\in\Z$ we have $f^n(B_i^o) \cap B_j^o = \emptyset$.

Suppose now that for some $n\in\Z$ we have $B_i^o \cap B_j^o \neq \emptyset$. As in the previous paragraph, this implies that there exists $m\in\Z$ and $y\in B_i$ such that $f^m(y)\in B_j$. If $m\ge 0$, then the previous paragraph implies that $\cl_i \overset*\to \cl_j$, and if $m\le 0$ we get that $\cl_j \overset*\to \cl_i$.

Finally, suppose that $\cl_i \overset*\to \cl_j$. By definition, there exist $\mu_1\in \cl_i$, $\mu_2\in \cl_j$, $(x_k)\in S^\N$ and four sequences of times $n_k^{1,-} < n_k^{1,+} < n^{2,-}_k < n^{2,+}_k$ with $\lim_k n_k^{1,+}-n_k^{1,-} = \lim_k n_k^{2,+}-n_k^{2,-} =+\infty$ and such that \eqref{EqEtoile} holds. By the fact that $\mu_1(B_i^-) = \mu_2(B_j^-) = 1$, there exists $z_1\in B_i^-, z_2\in B_j^-$ that are respectively $\mu_1$ and $\mu_2$-typical. Hence, for $k$ large enough there exists $m_1^k<m_2^k$ such that $f^{m_1^k}(x_k)\in B_i^-$ and $f^{m_2^k}(x_k)\in B_j^-$. In particular, there exists $m\ge 0$ such that $f^m(B_i) \cap B_j\neq \emptyset$. This proves the last point of the lemma and finishes the proof.
\end{proof}
%

\begin{proof}[Proof of Theorem~\ref{TheoEquiConnec2}]
We prove the following implications:
\begin{center}
\begin{tikzpicture}[scale=.6]
\tikzset{implication/.style={double, double equal sign distance, -implies}}

\node (O) at (0, 2) {$O$};
\node (F) at (2,2) {$\F$};
\node (W) at (0,0) {$\wedge$};
\node (M) at (2,0) {$M$};
\node (S) at (3.7, 1) {$*$};

\draw[implication] (O) -- (F);
\draw[implication] (F) -- (M);
\draw[implication] (M) -- (W);
\draw[implication] (W) -- (O);
\draw[implication] (M) -- (S);
\draw[implication] (S) -- (F);
\end{tikzpicture}
\end{center}

\smallskip\noindent\textbf{\textit{$\F$ $\implies$ $M$}:} This is Lemma~\ref{LemFImpliesM}.

\smallskip\noindent\textbf{\textit{$M$ $\implies$ $*$}:} This is Lemma~\ref{LemMImpliesStar}.

\smallskip\noindent\textbf{\textit{$*$ $\implies$ $\F$}:} This is Lemma~\ref{LemStarImpliesF}.

\smallskip\noindent\textbf{\textit{$M$ $\implies$ $\wedge$}:} This is Lemma~\ref{LemMImpliesWedge}.

\smallskip\noindent\textbf{\textit{$\wedge$ $\implies$ $O$}:} This implication is trivial.

\smallskip\noindent\textbf{\textit{$O$ $\implies$ $\F$}:} Proposition~\ref{LemOImpliesF} implies that if $\cl_i \overset \F{\not\to}\cl_j$ then $\cl_i \overset O{\not\to}\cl_j$.
\end{proof}

\subsection{Further properties of $\to$}\label{SubSecRelTG}

Using Theorem~\ref{TheoEquiConnec2}, one can replace from now the relations $\overset *\to$, $\overset \F\to$, $\overset M\to$, $\overset \wedge\to$ and $\overset O\to$ by a single relation $\to$. 

\begin{prop}\label{PropToOrderRel}
Let $f\in\Homeo_0(S)$. Then $\to $ is an order relation.
\end{prop}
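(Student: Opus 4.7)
The plan is to verify in turn that $\to$ is reflexive, transitive, and antisymmetric, using the $\overset{M}{\to}$ characterization of Theorem~\ref{TheoEquiConnec2} throughout. For reflexivity, note that by construction each $R_\omega$ is a rotational horseshoe: the Markovian intersection $\wt f^{q_\omega}(R_\omega)\cap_M T_\omega R_\omega$ in \eqref{EqDefROmega} gives a length-one path $R_\omega\to R_\omega$ in $G$, so $\cl_i\overset M\to\cl_i$ for every $i\in I_{\mathrm h}$. Transitivity is similarly straightforward: if $\cl_i\overset M\to\cl_j\overset M\to\cl_k$ are witnessed by paths ending at $R_{\omega'}\in G_j$ and starting at $R_{\omega''}\in G_j$ respectively, then Lemma~\ref{LemLotConnectionsRectangles} gives a path from $R_{\omega'}$ to $R_{\omega''}$ inside $G_j$, and concatenation produces a path in $G$ realizing $\cl_i\overset M\to\cl_k$.

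Antisymmetry is the main point. Suppose $\cl_i\overset M\to\cl_j$ and $\cl_j\overset M\to\cl_i$; the strategy is to build a single periodic orbit whose periodic measure will belong to both classes. Concatenating the two given paths, and using Lemma~\ref{LemLotConnectionsRectangles} to close up inside each $G_i$ and $G_j$, we obtain a cycle $R_{\omega_0}\to R_{\omega_1}\to\cdots\to R_{\omega_L}=R_{\omega_0}$ in $G$ containing vertices from both $G_i$ and $G_j$. Modifying the cycle by inserting, at each vertex $R_{\omega_k}$, a large number $p_k$ of repetitions of the horseshoe self-loop $\wt f^{q_{\omega_k}}(R_{\omega_k})\cap_M T_{\omega_k}R_{\omega_k}$, Proposition~\ref{LemPointFixe} produces a periodic point $z$ of $f$ whose orbit in $\wt S$ faithfully realizes the modified cycle. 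Because $R_{\omega_k}$ was built from an $\F$-transverse self-intersection of $I^\Z_\F(z'_{\omega_k})$ (Subsection~\ref{SubSecConstrG}) and the orbit of $z$ spends $p_k q_{\omega_k}$ iterates shadowing the horseshoe at $R_{\omega_k}$, the transverse trajectory $I^\Z_\F(z)$ inherits, for $p_k$ large enough, an $\F$-transverse self-intersection whose associated closed loop is freely homotopic to the one creating the intersection in $R_{\omega_k}$; the closed geodesic of this loop is $\gamma_{\omega_k}$, which lies in $\dot\Lambda_i$ when $\omega_k\in G_i$ and in $\dot\Lambda_j$ when $\omega_k\in G_j$.

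Applying \cite[Proposition~8.5]{pa} to each such self-intersection of $I^\Z_\F(z)$ as in the proof of Lemma~\ref{LemStarImpliesF}, some (possibly once-shifted) closed geodesic in the corresponding homotopy class is non-simple and crosses the tracking geodesic $\gamma_z$; with the $p_k$ chosen sufficiently large relative to the period of $z$, the $\ell=0$ case applies and these geodesics are exactly $\gamma_{\omega_k}$. Hence $\gamma_z$ transversally crosses both a closed geodesic of $\dot\Lambda_i$ and a closed geodesic of $\dot\Lambda_j$. The periodic measure $\mu_z$ on the orbit of $z$ satisfies $\gamma_z\in\dot\Lambda_{\mu_z}$, so $\mu_z$ is dynamically transverse both to some $\mu\in\cl_i$ and to some $\mu'\in\cl_j$; by Definition~\ref{DefRelEqMeas} we conclude $\mu\sim\mu_z\sim\mu'$, hence $\cl_i=\cl_j$. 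The main obstacle is the last step of the construction: ensuring that the self-intersections appearing in $I^\Z_\F(z)$ inside each $R_{\omega_k}$ really are controlled by the $\ell=0$ case, so that the emerging geodesics remain in $\dot\Lambda_i$ or $\dot\Lambda_j$ and not some composite axis; this should follow from the non-triviality of the deck transformations $T_{\omega_k}$ together with a careful counting argument on the length of the self-intersecting subpaths against the total period of $z$, using the rotational horseshoe structure from Theorem~\ref{ThConnectionHorse}.
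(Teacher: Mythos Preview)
Your treatment of reflexivity and transitivity is correct and matches the paper. For antisymmetry your overall strategy---build a periodic point from a cycle in $G$ and show its tracking geodesic intersects geodesics of both classes---is exactly the paper's. The difference is in execution, and your version has a real gap while the paper's is direct.

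The problematic step is the claim that $I^\Z_\F(z)$ ``inherits'' an $\F$-transverse self-intersection whose loop is freely homotopic to the one creating $R_{\omega_k}$. Proposition~\ref{LemPointFixe} only tells you that the orbit of $\wt z$ visits certain translates of the rectangles; it says nothing about the transverse trajectory of $z$, and there is no mechanism in the paper linking ``orbit passes through $R_\omega$'' back to ``$I^\Z_\F(z)$ contains the same self-intersection pattern as $I^\Z_\F(z'_\omega)$''. Your subsequent appeal to \cite[Proposition~8.5]{pa} then compounds the issue: you want the $\ell=0$ case to hold by taking $p_k$ large ``relative to the period of $z$'', but the period of $z$ grows with the $p_k$, so the condition is circular. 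You flag this yourself in the last paragraph, and it is not a detail---without it the argument does not close.

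The paper bypasses all of this by working purely with deck transformations. The periodic point $z$ obtained from Proposition~\ref{LemPointFixe} satisfies $\wt f^\tau(\wt z) = T_\omega^\ell T\, T_{\omega'}^{\ell'} T'\,\wt z$ for some fixed $T,T'\in\G$ and any $\ell,\ell'\ge 0$; its tracking geodesic is simply the axis of $T_\omega^\ell T\, T_{\omega'}^{\ell'} T'$. A hyperbolic-geometry fact (referenced to the end of \cite[Section~5.3]{alepablo}) says that for $\ell,\ell'$ large this element has conjugates whose axes are arbitrarily close to the axes of $T_\omega$ and of $T_{\omega'}$. Since $\gamma_{\omega}$ and $\gamma_{\omega'}$ were chosen non-simple in Subsection~\ref{SubSecConstrG}, any geodesic close enough to a lift of $\gamma_\omega$ crosses another lift of $\gamma_\omega$, and likewise for $\gamma_{\omega'}$. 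Hence $\gamma_z$ crosses geodesics in both $\dot\Lambda_i$ and $\dot\Lambda_j$, giving $i=j$. No forcing theory or analysis of $I^\Z_\F(z)$ is needed.
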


\begin{rem}\label{RemGiStrongConnec}
This implies that the $G_i$ are the strong connected components of $G$. Indeed, they are strongly connected by Lemma~\ref{LemLotConnectionsRectangles}, and there are no bigger strongly connected sets because by Proposition~\ref{PropToOrderRel}, if $\cl_i\to\cl_j$ and $\cl_j\to \cl_i$, then $i=j$.
\end{rem}

\begin{proof}
We first prove that if $\cl_i\to\cl_j$ and $\cl_j\to \cl_i$, then $i=j$. 

We use the relation $\overset M\to$ to prove it: there is $\omega, \omega'\in\Omega$ such that $z_\omega\in\cl_i$ and $z_{\omega'}\in\cl_j$ and a path in $G$ linking $R_\omega$ to $R_{\omega'}$ as well as (by Lemma~\ref{LemLotConnectionsRectangles}) a path in $G$ linking $R_{\omega'}$ to $R_\omega$. 
Let $T_\omega, T_{\omega'}\in \G$ such that \eqref{EqDefROmega} (page~\pageref{EqDefROmega}) holds. By Proposition~\ref{LemPointFixe}, there exist $T, T'\in\G$ such that for any $\ell, \ell'\in\N$, there exists a periodic point $x$ and a lift $\wt x$ of $x$ such that 
\[\wt f^\tau(\wt x) = T_{\omega}^{\ell} T T_{\omega'}^{\ell'} T' \wt x\]
($\tau>0$ is the period of $x$, and the deck transformations $T$ and $T'$ correspond to the transitions between $R_\omega$ and $R_{\omega'}$, and between $R_{\omega'}$ and $R_\omega$). If $\ell$ and $\ell'$ are large enough, there are conjugates of $T_{\omega}^{\ell} T T_{\omega'}^{\ell'} T'$ whose geodesic axes are close to respectively the one of $T_\omega$ and $T_{\omega'}$ (see the end of \cite[Section~5.3]{alepablo} for more details about this fact).
This implies that if $\ell$ and $\ell'$ are large enough, then the tracking geodesic of $x$ crosses tracking geodesics of elements of both $\cl_i$ and $\cl_j$, hence that $i=j$.
\bigskip

The fact that $\cl_i\overset M\to\cl_j$ and $\cl_j\overset M\to \cl_k$ imply $\cl_i\overset *\to \cl_k$ was stated in Remark~\ref{RemClaimLemMImpliesStar}.
\end{proof}

\begin{lemma}\label{PropBetween}
If $i,j,k\in I_{\mathrm{h}}$ are such that $\cl_i\to \cl_k$ and $S_j$ separates $S_i$ from $S_k$ in $S$, then $\cl_i\to \cl_j \to \cl_k$.
\end{lemma}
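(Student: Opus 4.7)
I will prove $\cl_j\to\cl_k$; the relation $\cl_i\to\cl_j$ follows by applying the identical argument to $f^{-1}$, which exchanges past and future (hence $\cl_i$ and $\cl_k$) and preserves the topological separation. All arguments rely on the equivalences of Theorem~\ref{TheoEquiConnec2}, using the characterizations $\overset{M}{\to}$ and $\overset{\F}{\to}$.

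By $\cl_i\overset{M}{\to}\cl_k$ together with Lemma~\ref{LemLotConnectionsRectangles}, for any $z_\omega\in\cl_i$ and $z_{\omega'}\in\cl_k$ there is a path in $G$ from $R_\omega$ to $R_{\omega'}$. Claim~\ref{ClaimLemMImpliesStar} and Remark~\ref{RemClaimLemMImpliesStar} then produce $\wt x\in\wt S$, constants $D,\vartheta_i,\vartheta_k>0$ and lifts $\wt\gamma_\omega\subset\wt S_i^*$, $\wt\gamma_{\omega'}\subset\wt S_k^*$ (in specific lifts $\wt S_i^*, \wt S_k^*$ of $S_i, S_k$) such that the backward and forward iterates of $\wt x$ are respectively $D$-close to $\wt\gamma_\omega(-n\vartheta_i)$ and $\wt\gamma_{\omega'}(n\vartheta_k)$. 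Because $S_j$ separates $S_i$ from $S_k$ in $S$, one may arrange the lifts so that some lift $\wt S_j^*$ of $S_j$ has a boundary closed geodesic $\partial^+\subset\partial \wt S_j^*$ separating $\wt S_j^*$ from $\wt S_k^*$ in $\wt S$. The forward orbit of $\wt x$ must then cross $\partial^+$ at some time $n_2\ge 0$, transiting through a bounded neighborhood of $\wt S_j^*$ along a nearly-geodesic arc of bounded length.

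Using the density of the tracking geodesics $\gamma_{z'_\omega}$ in $\dot\Lambda_j$ established in Subsection~\ref{SubSecConstrG}, together with Proposition~\ref{LemNotSimpleTracking}, I pick $\tilde\omega\in\Omega$ with $z'_{\tilde\omega}\in\cl_j$ and $\gamma_{z'_{\tilde\omega}}$ a non-simple closed geodesic of $S_j$ of sufficiently large length that the arc of the orbit of $\wt x$ through $\wt S_j^*$ has geometric intersection number at least $N_0$ with $\gamma_{z'_{\tilde\omega}}$, where $N_0$ is given by Lemma~\ref{LemUseResidFinite} applied with $M_0=4$ and $R=D'$, and $D'$ is the constant from Theorem~\ref{ThGT} applied to $z'_{\tilde\omega}$ and $z_{\omega'}$. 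Lemma~\ref{LemUseResidFinite} then supplies four pairwise disjoint, same-oriented lifts of $\gamma_{z'_{\tilde\omega}}$ at pairwise distance $\ge D'$ that are crossed by the transit arc, while a second application to the long forward tracking of $\wt x$ by $\wt\gamma_{\omega'}$ yields four analogous lifts of $\gamma_{z_{\omega'}}$ crossed during times after $n_2$. Taking $\wt y_0=\wt f^{-C}(\wt x)$ for a sufficiently large $C\ge m_1$ and choosing the intermediate threshold $n'_0$ between the two crossing regimes, the orbit of $\wt y_0$ satisfies the hypotheses of Theorem~\ref{ThGT} with $\wt\gamma_1=\wt\gamma_{z'_{\tilde\omega}}$ and $\wt\gamma_2=\wt\gamma_{z_{\omega'}}$.

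Theorem~\ref{ThGT} then yields an admissible transverse path $\wt\beta:[t_0,t_2]\to\wt{\dom}(\F)$ with an intermediate parameter $t_1\in(t_0,t_2)$ such that $\wt\beta|_{[t_0,t_1]}$ has an $\F$-transverse self-intersection with a translate by a conjugate of $T_{z'_{\tilde\omega}}^{3}$, and $\wt\beta|_{[t_1,t_2]}$ has one with a translate by a conjugate of $T_{z_{\omega'}}^{-3}$. The closed geodesics in the free homotopy classes of the resulting loops lie in $\dot\Lambda_j$ and $\dot\Lambda_k$ respectively, so Definition~\ref{DefToStar} gives $\cl_j\overset{\F}{\to}\cl_k$, hence $\cl_j\to\cl_k$ by Theorem~\ref{TheoEquiConnec2}. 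The main obstacle in this plan is to guarantee that the (bounded-length) transit arc through $\wt S_j^*$ truly meets $\gamma_{z'_{\tilde\omega}}$ in at least $N_0$ points; this is overcome by taking $\gamma_{z'_{\tilde\omega}}$ to be of very large length inside $S_j$ (permissible thanks to the density of tracking geodesics in $\dot\Lambda_j$), ensuring that its lifts form a sufficiently dense family near the transit arc.
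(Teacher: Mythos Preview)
Your approach is fundamentally different from the paper's and contains a genuine gap at precisely the point you flag as the ``main obstacle''.

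The paper's proof is a short topological argument using the characterizations $\overset{O}{\to}$ and $\overset{\wedge}{\to}$: given any $B_i^-$, $B_j^+$ as in Definition~\ref{DefToOpen} and assuming them disjoint, one chooses an essential loop $\alpha_i\subset B_i^-$ with $[\alpha_i]\in\pi_1(S_i)$ and an essential loop $\alpha_k$ with $[\alpha_k]\in\pi_1(S_k)$ lying outside the component of $S\setminus B_j^+$ that contains $B_i^-$ (this is possible because $S_j$ separates $S_i$ from $S_k$). Since $\cl_i\overset{\wedge}{\to}\cl_k$, some $f^n(\alpha_i)$ meets $\alpha_k$; being a connected set starting in one component of $S\setminus B_j^+$ and reaching another, it must intersect $B_j^+$. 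Hence $\cl_i\overset{O}{\to}\cl_j$. The argument for $\cl_j\to\cl_k$ is symmetric. No dynamics beyond the $\overset{\wedge}{\to}$ relation is invoked, and no quantitative estimates are needed.

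Your argument, by contrast, tries to manufacture the hypotheses of Theorem~\ref{ThGT} from the single heteroclinic orbit produced by Claim~\ref{ClaimLemMImpliesStar}. The problem is a circular dependency that you do not resolve: the constant $D'$ in Theorem~\ref{ThGT} depends on the periodic points $z'_{\tilde\omega}$, $z_{\omega'}$, and then $N_0$ in Lemma~\ref{LemUseResidFinite} depends on both $D'$ and the geodesic $\gamma_{z'_{\tilde\omega}}$; yet you propose to choose $\gamma_{z'_{\tilde\omega}}$ \emph{after} knowing $N_0$, to force enough intersections. Even setting this aside, the resolution you offer (``take $\gamma_{z'_{\tilde\omega}}$ very long'') does not work. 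From Claim~\ref{ClaimLemMImpliesStar} you only know that $\wt x$ lies within $D$ of both $\wt\gamma_\omega(0)$ and $\wt\gamma_{\omega'}(0)$; the portion of the orbit (or its transverse trajectory) crossing $\wt S_j^*$ therefore stays in a region of hyperbolic diameter controlled by $D$ and the displacement of $f$. But Theorem~\ref{ThGT} requires the four crossed lifts $R_1^jB_{D'}(\wt\gamma_{z'_{\tilde\omega}})$ to have pairwise disjoint $D'$-neighbourhoods, hence to be pairwise at distance $\ge 2D'$. A transit arc of bounded diameter cannot cross four geodesics that are pairwise that far apart unless you have an \emph{a priori} bound $D'\lesssim D$, which you do not---$D'$ may well grow with the length of $\gamma_{z'_{\tilde\omega}}$. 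Increasing the length of $\gamma_{z'_{\tilde\omega}}$ makes its lifts more numerous but not farther apart from one another, so it cannot help satisfy the disjointness condition. In short, there is no mechanism here to force the orbit to ``see'' $\cl_j$ in the strong sense Theorem~\ref{ThGT} demands; the separation hypothesis is purely topological and should be exploited topologically, as the paper does.
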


\begin{proof}
Let us use the characterization $\overset O\to$ of $\to$. Consider $B_i^-, B_j^+\subset S$ such that $f^{-1}(B_i^-) \subset B_i^-$,  $f(B_j^+)\subset B_j^+$, $i_* \pi_1(S_i,\R)\subset i_* \pi_1(B_i^-, \R)$ and $i_* \pi_1(S_j,\R)\subset i_* \pi_1(B_j^+, \R)$. Let us prove that there exists $n\ge 0$ such that $f^n(B_i^-) \cap B_j^+\neq\emptyset$.
Suppose that $B_i^- \cap B_j^+\neq\emptyset$, otherwise the property is proved.

Let $\alpha_i\subset B_i^-$ and $\alpha_k\in\pi_1(S_k)$ be essential loops. We suppose that $\alpha_k$ is disjoint from the connected component of the complement of $B_j^+$ containing $B_i^-$ (such a loop exists by the hypothesis that $S_j$ separates $S_i$ from $S_k$ in $S$ and because $B_i^-$ and $B_j^+$ were supposed disjoint). By the characterization $\overset\wedge\to$ of $\to$, there exists $n\ge 0$ such that $f^n(\alpha_i)\cap\alpha_k \neq\emptyset$, which implies that $f^n(\alpha_i)\cap B_j^+ \neq\emptyset$, proving that $\cl_i\to \cl_j$. The proof of $\cl_j\to\cl_k$ is identical.
\end{proof}

\subsection{A graph $\Tr$ associated to the surface}\label{SubSecTree}

%

\begin{figure}[t]
\begin{center}

\tikzset{every picture/.style={line width=0.75pt}} 

\begin{tikzpicture}[x=0.75pt,y=0.75pt,yscale=-1,xscale=.9]

\draw  [color={rgb, 255:red, 189; green, 16; blue, 224 }  ,draw opacity=1 ][fill={rgb, 255:red, 144; green, 19; blue, 254 }  ,fill opacity=0.1 ] (478.72,127.11) .. controls (493.49,128.53) and (501.2,131.1) .. (513,124.89) .. controls (509.2,121.68) and (506.34,116.53) .. (503.01,114.25) .. controls (492.34,107.39) and (582.06,41.1) .. (587.87,50.25) .. controls (603.07,80.07) and (614.43,110.92) .. (617.61,139.7) .. controls (619.08,165.39) and (618.7,177.88) .. (592.7,186.98) .. controls (568.7,195.16) and (516.88,198.98) .. (472.15,197.4) .. controls (484.34,200.25) and (488.91,129.11) .. (478.72,127.11) -- cycle ;
\draw  [color={rgb, 255:red, 189; green, 16; blue, 224 }  ,draw opacity=1 ][fill={rgb, 255:red, 245; green, 166; blue, 35 }  ,fill opacity=0.15 ] (382.75,137.2) .. controls (398.44,135.96) and (405.32,134.83) .. (414.75,125.69) .. controls (439.01,115.68) and (455.3,111.4) .. (463.11,118.12) .. controls (466.15,121.97) and (472.72,123.97) .. (478.72,127.11) .. controls (488.15,130.25) and (484.44,198.25) .. (472.15,197.4) .. controls (442.15,195.68) and (401.3,192.82) .. (370.75,190.06) .. controls (382.15,191.11) and (393.87,139.11) .. (382.75,137.2) -- cycle ;
\draw  [color={rgb, 255:red, 189; green, 16; blue, 224 }  ,draw opacity=1 ][fill={rgb, 255:red, 74; green, 144; blue, 226 }  ,fill opacity=0.15 ] (383.6,15.2) .. controls (405.77,13.87) and (461.01,7.1) .. (499.87,10.25) .. controls (534.44,13.39) and (567.58,10.25) .. (587.87,50.25) .. controls (580.91,43.1) and (494.34,105.1) .. (503.01,114.25) .. controls (494.34,110.25) and (477.2,116.15) .. (473.41,124.69) .. controls (469.3,122.73) and (465.2,120.44) .. (463.11,118.12) .. controls (455.3,109.3) and (424.91,121.3) .. (414.75,125.69) .. controls (410.15,128.72) and (409.3,129.87) .. (404.64,132.46) .. controls (395.01,121.3) and (399.77,123.01) .. (385.03,120.06) .. controls (395.87,119.58) and (391.01,14.15) .. (383.6,15.2) -- cycle ;
\draw [color={rgb, 255:red, 74; green, 144; blue, 226 }  ,draw opacity=1 ][line width=1.5]  [dash pattern={on 1.69pt off 2.76pt}]  (489.87,112.82) .. controls (505.3,109.68) and (482.82,64.25) .. (497.1,60.82) ;
\draw  [fill={rgb, 255:red, 255; green, 255; blue, 255 }  ,fill opacity=1 ][line width=1.5]  (513,124.89) .. controls (501.51,129.01) and (489.72,131.43) .. (473.41,124.69) .. controls (483.38,108.17) and (503.28,108.06) .. (513,124.89) -- cycle ;
\draw [line width=1.5]    (463.11,118.12) .. controls (478.19,132.84) and (509.19,132.51) .. (523.11,118.12) ;

\draw  [color={rgb, 255:red, 189; green, 16; blue, 224 }  ,draw opacity=1 ][fill={rgb, 255:red, 248; green, 231; blue, 28 }  ,fill opacity=0.2 ] (273.25,110.37) .. controls (287.32,78.91) and (280.46,50.06) .. (271.82,34.08) .. controls (299.03,27.78) and (341.6,17.49) .. (383.6,15.2) .. controls (391.89,14.35) and (395.32,119.5) .. (385.03,120.06) .. controls (378.46,120.64) and (369.89,124.35) .. (365.05,132.26) .. controls (371.03,135.78) and (375.6,135.5) .. (382.75,137.2) .. controls (393.6,138.64) and (382.18,190.63) .. (370.75,190.06) .. controls (334.75,187.77) and (321.89,185.2) .. (273.25,182.94) .. controls (283.03,184.06) and (285.6,127.2) .. (273.53,127.8) .. controls (284.18,125.78) and (287.03,125.2) .. (293.14,123.1) .. controls (288.18,116.06) and (286.18,112.91) .. (273.25,110.37) -- cycle ;
\draw  [color={rgb, 255:red, 189; green, 16; blue, 224 }  ,draw opacity=1 ][fill={rgb, 255:red, 65; green, 117; blue, 5 }  ,fill opacity=0.1 ] (69.29,114.25) .. controls (88.96,104.94) and (109.82,107.51) .. (120.25,112.05) .. controls (140.1,128.94) and (166.39,124.08) .. (180.25,112.05) .. controls (201.82,100.08) and (229.82,107.8) .. (243.25,116.33) .. controls (250.1,122.08) and (265.82,128.37) .. (273.53,127.8) .. controls (285.25,127.22) and (283.53,182.94) .. (273.25,182.94) .. controls (241.82,181.22) and (200.39,181.22) .. (155.25,180.08) .. controls (100.68,177.8) and (71.53,162.94) .. (69.29,114.25) -- cycle ;
\draw  [color={rgb, 255:red, 189; green, 16; blue, 224 }  ,draw opacity=1 ][fill={rgb, 255:red, 208; green, 2; blue, 27 }  ,fill opacity=0.1 ] (69.29,114.25) .. controls (68.25,52.54) and (142.85,39.63) .. (222.39,39.8) .. controls (236.85,39.05) and (257.53,38.08) .. (271.82,34.08) .. controls (286.96,62.94) and (281.53,90.65) .. (273.25,110.37) .. controls (268.1,111.22) and (257.82,114.08) .. (253.55,122.91) .. controls (248.39,121.51) and (246.96,118.37) .. (243.25,116.33) .. controls (227.25,104.65) and (194.68,102.65) .. (180.25,112.05) .. controls (175.53,117.22) and (176.73,115.84) .. (170.14,118.82) .. controls (161.25,103.8) and (142.1,100.65) .. (130.55,118.62) .. controls (126.39,116.65) and (124.68,116.37) .. (120.25,112.05) .. controls (109.82,106.94) and (85.53,107.22) .. (69.29,114.25) -- cycle ;
\draw  [fill={rgb, 255:red, 255; green, 255; blue, 255 }  ,fill opacity=1 ][line width=1.5]  (170.14,118.82) .. controls (158.66,122.94) and (146.87,125.36) .. (130.55,118.62) .. controls (140.53,102.09) and (160.42,101.99) .. (170.14,118.82) -- cycle ;
\draw [line width=1.5]    (120.25,112.05) .. controls (135.33,126.77) and (166.33,126.44) .. (180.25,112.05) ;

\draw  [line width=1.5]  (69.29,114.25) .. controls (68.79,44.25) and (166.39,40.8) .. (222.39,39.8) .. controls (278.39,38.8) and (293.84,22.44) .. (381.29,15.25) .. controls (468.75,8.06) and (555.05,1.77) .. (579.01,36.53) .. controls (602.97,71.3) and (621.58,128.82) .. (617.58,162.54) .. controls (613.58,196.25) and (542.58,196.13) .. (496.91,197.87) .. controls (451.25,199.61) and (310.29,181.75) .. (221.79,181.25) .. controls (133.29,180.75) and (69.79,184.25) .. (69.29,114.25) -- cycle ;
\draw  [fill={rgb, 255:red, 255; green, 255; blue, 255 }  ,fill opacity=1 ][line width=1.5]  (293.14,123.1) .. controls (281.66,127.22) and (269.87,129.64) .. (253.55,122.91) .. controls (263.53,106.38) and (283.42,106.28) .. (293.14,123.1) -- cycle ;
\draw [line width=1.5]    (243.25,116.33) .. controls (258.33,131.06) and (289.33,130.72) .. (303.25,116.33) ;

\draw  [fill={rgb, 255:red, 255; green, 255; blue, 255 }  ,fill opacity=1 ][line width=1.5]  (404.64,132.46) .. controls (393.16,136.58) and (381.37,139) .. (365.05,132.26) .. controls (375.03,115.74) and (394.92,115.63) .. (404.64,132.46) -- cycle ;
\draw [line width=1.5]    (354.75,125.69) .. controls (369.83,140.41) and (400.83,140.08) .. (414.75,125.69) ;

\draw  [color={rgb, 255:red, 208; green, 2; blue, 27 }  ,draw opacity=1 ][fill={rgb, 255:red, 208; green, 2; blue, 27 }  ,fill opacity=0.15 ] (189.53,56.08) .. controls (205.82,51.22) and (213.82,49.22) .. (229.82,52.94) .. controls (234.1,66.94) and (233.53,84.65) .. (229.82,93.51) .. controls (218.1,90.65) and (206.1,90.65) .. (191.82,92.65) .. controls (187.25,78.94) and (186.96,64.37) .. (189.53,56.08) -- cycle ;
\draw [color={rgb, 255:red, 208; green, 2; blue, 27 }  ,draw opacity=1 ][line width=1.5]    (142.96,46.65) .. controls (150.96,45.22) and (169.82,63.51) .. (210.1,60.94) .. controls (250.39,58.37) and (250.39,38.37) .. (259.82,36.65) ;
\draw [color={rgb, 255:red, 208; green, 2; blue, 27 }  ,draw opacity=1 ][line width=1.5]  [dash pattern={on 1.69pt off 2.76pt}]  (259.82,36.65) .. controls (272.96,34.37) and (279.82,106.65) .. (267.25,110.94) ;
\draw [color={rgb, 255:red, 208; green, 2; blue, 27 }  ,draw opacity=1 ][line width=1.5]  [dash pattern={on 1.69pt off 2.76pt}]  (142.96,46.65) .. controls (132.1,49.22) and (141.25,107.51) .. (149.82,106.37) ;
\draw  [color={rgb, 255:red, 65; green, 117; blue, 5 }  ,draw opacity=1 ][fill={rgb, 255:red, 65; green, 117; blue, 5 }  ,fill opacity=0.15 ] (180.96,123.51) .. controls (193.82,118.94) and (207.53,115.51) .. (226.39,124.37) .. controls (230.1,137.51) and (232.39,165.51) .. (229.53,175.22) .. controls (216.96,174.37) and (199.82,173.22) .. (182.96,175.8) .. controls (175.53,159.22) and (175.82,134.94) .. (180.96,123.51) -- cycle ;
\draw [color={rgb, 255:red, 208; green, 2; blue, 27 }  ,draw opacity=1 ][line width=1.5]    (149.82,106.37) .. controls (157.82,104.94) and (170.68,83.8) .. (211.25,84.37) .. controls (251.82,84.94) and (239.85,127.34) .. (229.82,129.22) .. controls (219.79,131.11) and (188.28,123.28) .. (173.82,128.37) ;
\draw [color={rgb, 255:red, 65; green, 117; blue, 5 }  ,draw opacity=1 ][line width=1.5]    (147.53,122.94) .. controls (165.53,123.8) and (152.65,139.28) .. (203.74,138.37) .. controls (254.83,137.46) and (254.91,122.47) .. (263.19,126.19) ;
\draw [color={rgb, 255:red, 65; green, 117; blue, 5 }  ,draw opacity=1 ][line width=1.5]  [dash pattern={on 1.69pt off 2.76pt}]  (147.53,122.94) .. controls (138.83,122.92) and (141.37,179.46) .. (149.19,179.82) ;
\draw [color={rgb, 255:red, 65; green, 117; blue, 5 }  ,draw opacity=1 ][line width=1.5]  [dash pattern={on 1.69pt off 2.76pt}]  (263.19,126.19) .. controls (270.65,129.1) and (272.1,183.1) .. (263.55,182.37) ;
\draw [color={rgb, 255:red, 208; green, 2; blue, 27 }  ,draw opacity=1 ][line width=1.5]    (234.83,149.46) .. controls (206.54,138.03) and (146.26,151.25) .. (127.37,142.19) .. controls (108.48,133.12) and (98.76,126.58) .. (101.82,105.8) .. controls (104.87,85.01) and (122.06,82.9) .. (144.96,81.22) .. controls (167.87,79.55) and (182.1,69.8) .. (210.39,68.94) .. controls (238.68,68.08) and (260.94,113.49) .. (267.25,110.94) ;
\draw [color={rgb, 255:red, 65; green, 117; blue, 5 }  ,draw opacity=1 ][line width=1.5]    (171.74,171.64) .. controls (187.74,168.37) and (222.9,166.12) .. (235.74,168.55) .. controls (248.57,170.98) and (260.34,181.95) .. (264.46,182.19) ;
\draw  [fill={rgb, 255:red, 255; green, 255; blue, 255 }  ,fill opacity=1 ][line width=1.5]  (501.71,58.46) .. controls (490.23,62.58) and (478.44,65) .. (462.12,58.26) .. controls (472.1,41.74) and (491.99,41.63) .. (501.71,58.46) -- cycle ;
\draw [line width=1.5]    (451.82,51.69) .. controls (466.9,66.41) and (497.9,66.08) .. (511.82,51.69) ;

\draw  [color={rgb, 255:red, 217; green, 201; blue, 0 }  ,draw opacity=1 ][fill={rgb, 255:red, 248; green, 231; blue, 28 }  ,fill opacity=0.25 ] (316.82,41.01) .. controls (336.82,31.01) and (340.75,35.49) .. (359.6,38.92) .. controls (361.32,54.06) and (361.03,78.35) .. (359.03,93.2) .. controls (348.46,91.49) and (341.96,89.59) .. (322.82,93.01) .. controls (325.68,77.01) and (321.97,53.58) .. (316.82,41.01) -- cycle ;
\draw [color={rgb, 255:red, 65; green, 117; blue, 5 }  ,draw opacity=1 ][line width=1.5]    (149.19,179.82) .. controls (158.83,181.1) and (146.83,156.92) .. (197.92,156.01) .. controls (249.01,155.1) and (245.43,168.43) .. (285.89,146.34) .. controls (326.35,124.25) and (298.48,113.19) .. (296.18,93.77) .. controls (293.87,74.36) and (298.46,68.63) .. (308.46,64.06) .. controls (318.46,59.49) and (376.47,60.46) .. (378.75,61.49) ;
\draw [color={rgb, 255:red, 217; green, 201; blue, 0 }  ,draw opacity=1 ][line width=1.5]    (282.46,31.78) .. controls (287.53,30.33) and (294.62,40.42) .. (304.46,44.92) .. controls (314.3,49.42) and (323.65,42.63) .. (336.75,42.06) .. controls (349.84,41.5) and (359.81,47.83) .. (366.18,41.78) .. controls (372.54,35.72) and (365.03,16.92) .. (373.32,15.78) ;
\draw [color={rgb, 255:red, 217; green, 201; blue, 0 }  ,draw opacity=1 ][line width=1.5]  [dash pattern={on 1.69pt off 2.76pt}]  (373.32,15.78) .. controls (388.75,13.49) and (391.32,118.92) .. (380.18,120.06) ;
\draw [color={rgb, 255:red, 217; green, 201; blue, 0 }  ,draw opacity=1 ][line width=1.5]    (365.89,190.06) .. controls (362.32,189.46) and (338.68,186.83) .. (325.89,171.2) .. controls (313.1,155.58) and (310.9,112.36) .. (305.89,87.2) .. controls (300.89,62.05) and (349.06,70.16) .. (362.18,69.78) .. controls (375.29,69.39) and (367.08,120.63) .. (380.18,120.06) ;
\draw [color={rgb, 255:red, 217; green, 201; blue, 0 }  ,draw opacity=1 ][line width=1.5]  [dash pattern={on 1.69pt off 2.76pt}]  (282.46,31.78) .. controls (271.32,33.77) and (280.18,110.34) .. (284.46,112.91) ;
\draw [color={rgb, 255:red, 217; green, 201; blue, 0 }  ,draw opacity=1 ][line width=1.5]  [dash pattern={on 1.69pt off 2.76pt}]  (372.18,134.79) .. controls (387.6,132.5) and (377.03,190.91) .. (365.89,190.06) ;
\draw  [color={rgb, 255:red, 74; green, 144; blue, 226 }  ,draw opacity=1 ][fill={rgb, 255:red, 74; green, 144; blue, 226 }  ,fill opacity=0.2 ] (425.82,73.8) .. controls (442.1,68.94) and (450.1,66.94) .. (466.1,70.65) .. controls (470.39,84.65) and (469.82,102.37) .. (466.1,111.22) .. controls (454.39,108.37) and (442.39,108.37) .. (428.1,110.37) .. controls (423.53,96.65) and (423.25,82.08) .. (425.82,73.8) -- cycle ;
\draw [color={rgb, 255:red, 217; green, 201; blue, 0 }  ,draw opacity=1 ][line width=1.5]    (284.46,112.91) .. controls (290.18,118.63) and (281.03,62.34) .. (308.18,54.91) .. controls (335.32,47.49) and (379.1,47.1) .. (395.96,65.68) .. controls (412.82,84.25) and (372.53,101.1) .. (482.53,95.1) ;
\draw [color={rgb, 255:red, 74; green, 144; blue, 226 }  ,draw opacity=1 ][line width=1.5]    (417.96,86.25) .. controls (456.83,86.09) and (522.82,96.53) .. (525.96,65.68) .. controls (529.1,34.82) and (502.15,29.68) .. (473.39,28.53) .. controls (444.63,27.39) and (410.97,45.61) .. (410.53,61.68) .. controls (410.09,77.74) and (406.27,81.57) .. (459.95,77.8) .. controls (513.64,74.02) and (511.96,55.68) .. (497.1,60.82) ;
\draw [color={rgb, 255:red, 74; green, 144; blue, 226 }  ,draw opacity=1 ][line width=1.5]    (419.68,104.54) .. controls (505.1,97.68) and (479.87,115.69) .. (489.87,112.82) ;
\draw  [fill={rgb, 255:red, 255; green, 255; blue, 255 }  ,fill opacity=1 ][line width=1.5]  (580,158.46) .. controls (568.51,162.58) and (556.72,165) .. (540.41,158.26) .. controls (550.38,141.74) and (570.28,141.63) .. (580,158.46) -- cycle ;
\draw [line width=1.5]    (530.11,151.69) .. controls (545.19,166.41) and (576.19,166.08) .. (590.11,151.69) ;

\draw  [color={rgb, 255:red, 245; green, 166; blue, 35 }  ,draw opacity=1 ][fill={rgb, 255:red, 245; green, 166; blue, 35 }  ,fill opacity=0.2 ] (415.01,142.82) .. controls (431.3,137.96) and (433.87,136.25) .. (449.87,139.96) .. controls (454.15,153.96) and (453.01,176.53) .. (449.3,185.39) .. controls (439.01,183.1) and (426.72,183.96) .. (416.15,182.25) .. controls (418.15,171.39) and (419.01,152.53) .. (415.01,142.82) -- cycle ;
\draw [color={rgb, 255:red, 245; green, 166; blue, 35 }  ,draw opacity=1 ][line width=1.5]    (407.01,159.1) .. controls (426.44,157.39) and (451.77,149.48) .. (458.15,162.82) .. controls (464.53,176.16) and (458.83,196.71) .. (465.01,197.1) ;
\draw [color={rgb, 255:red, 245; green, 166; blue, 35 }  ,draw opacity=1 ][line width=1.5]  [dash pattern={on 1.69pt off 2.76pt}]  (473.41,124.69) .. controls (481.87,127.39) and (476.15,197.96) .. (465.01,197.1) ;
\draw [color={rgb, 255:red, 245; green, 166; blue, 35 }  ,draw opacity=1 ][line width=1.5]    (395.58,135.39) .. controls (405.87,132.25) and (391.01,151.68) .. (437.58,148.82) .. controls (484.15,145.96) and (468.15,121.68) .. (473.41,124.69) ;
\draw [color={rgb, 255:red, 245; green, 166; blue, 35 }  ,draw opacity=1 ][line width=1.5]  [dash pattern={on 1.69pt off 2.76pt}]  (395.58,135.39) .. controls (388.44,136.53) and (387.87,191.39) .. (396.15,192.25) ;
\draw [color={rgb, 255:red, 245; green, 166; blue, 35 }  ,draw opacity=1 ][line width=1.5]    (396.15,192.25) .. controls (402.03,192.68) and (402.1,180.79) .. (409.58,177.96) .. controls (417.06,175.13) and (433.01,175.39) .. (455.01,179.96) ;
\draw [color={rgb, 255:red, 217; green, 201; blue, 0 }  ,draw opacity=1 ][line width=1.5]    (455.58,169.39) .. controls (403.3,165.96) and (371.73,187.37) .. (346.18,163.49) .. controls (320.63,139.61) and (320.53,119.3) .. (316.53,99.01) .. controls (312.53,78.73) and (345.89,76.35) .. (362.18,84.63) .. controls (378.46,92.92) and (333.32,132.92) .. (344.75,136.92) .. controls (356.18,140.92) and (367.32,132.92) .. (372.18,134.79) ;
\draw [color={rgb, 255:red, 80; green, 227; blue, 194 }  ,draw opacity=1 ][line width=1.5]    (503.01,114.25) .. controls (492.44,105.39) and (582.72,42.53) .. (587.87,50.25) ;
\draw [color={rgb, 255:red, 80; green, 227; blue, 194 }  ,draw opacity=1 ][line width=1.5]  [dash pattern={on 1.69pt off 2.76pt}]  (503.01,114.25) .. controls (505.58,119.4) and (521.72,100.78) .. (543.02,84.8) .. controls (564.31,68.82) and (590.72,55.53) .. (587.87,50.25) ;
\draw [color={rgb, 255:red, 144; green, 19; blue, 254 }  ,draw opacity=1 ]   (533.42,196.8) .. controls (540.02,195.8) and (546.62,162.2) .. (553.62,162.6) ;
\draw [color={rgb, 255:red, 144; green, 19; blue, 254 }  ,draw opacity=1 ] [dash pattern={on 0.84pt off 2.51pt}]  (537.62,157) .. controls (548.02,163) and (560.02,194.4) .. (569.62,192.6) ;
\draw [color={rgb, 255:red, 144; green, 19; blue, 254 }  ,draw opacity=1 ]   (569.62,192.6) .. controls (575.62,191.93) and (601.82,150.8) .. (592.82,145) .. controls (583.82,139.2) and (575.02,152) .. (571.02,149) ;
\draw [color={rgb, 255:red, 144; green, 19; blue, 254 }  ,draw opacity=1 ]   (515.42,197.4) .. controls (522.02,196.4) and (537.62,156.8) .. (544.82,160.2) ;
\draw [color={rgb, 255:red, 144; green, 19; blue, 254 }  ,draw opacity=1 ]   (505.42,127.8) .. controls (499.16,130.16) and (494.83,143.68) .. (505.42,151) .. controls (516.01,158.32) and (534.66,155.61) .. (537.62,157) ;
\draw [color={rgb, 255:red, 144; green, 19; blue, 254 }  ,draw opacity=1 ] [dash pattern={on 0.84pt off 2.51pt}]  (544.82,160.2) .. controls (555.22,166.2) and (570.02,192.4) .. (579.62,190.6) ;
\draw [color={rgb, 255:red, 144; green, 19; blue, 254 }  ,draw opacity=1 ] [dash pattern={on 0.84pt off 2.51pt}]  (553.62,162.6) .. controls (565.62,164.6) and (589.02,188.6) .. (596.82,185.4) ;
\draw [color={rgb, 255:red, 144; green, 19; blue, 254 }  ,draw opacity=1 ]   (579.62,190.6) .. controls (585.62,189.93) and (616.62,142.6) .. (586.02,127.8) .. controls (555.42,113) and (599.82,75.4) .. (597.62,70) ;
\draw [color={rgb, 255:red, 144; green, 19; blue, 254 }  ,draw opacity=1 ]   (596.82,185.4) .. controls (602.82,184.73) and (622.22,138) .. (601.62,124.2) .. controls (581.02,110.4) and (604.02,87) .. (602.42,82) ;
\draw [color={rgb, 255:red, 144; green, 19; blue, 254 }  ,draw opacity=1 ] [dash pattern={on 0.84pt off 2.51pt}]  (510.42,120.8) .. controls (506.22,115.2) and (535.32,108.72) .. (559.82,98.8) .. controls (584.31,88.88) and (600.02,75.2) .. (602.42,82) ;
\draw [color={rgb, 255:red, 144; green, 19; blue, 254 }  ,draw opacity=1 ] [dash pattern={on 0.84pt off 2.51pt}]  (507.42,118) .. controls (504.02,115.4) and (524.92,104.12) .. (549.42,94.2) .. controls (573.91,84.28) and (595.22,63.2) .. (597.62,70) ;
\draw [color={rgb, 255:red, 144; green, 19; blue, 254 }  ,draw opacity=1 ]   (507.42,118) .. controls (511.42,120.2) and (546.03,95.88) .. (556.62,103.2) .. controls (567.21,110.52) and (564.62,143) .. (557.62,145.8) ;
\draw [color={rgb, 255:red, 144; green, 19; blue, 254 }  ,draw opacity=1 ]   (510.42,120.8) .. controls (514.42,123) and (541.03,102.68) .. (551.62,110) .. controls (562.21,117.32) and (555.02,146.8) .. (548.02,149.6) ;
\draw [color={rgb, 255:red, 144; green, 19; blue, 254 }  ,draw opacity=1 ] [dash pattern={on 0.84pt off 2.51pt}]  (505.42,127.8) .. controls (515.02,124.8) and (550.22,150.2) .. (558.02,147) ;
\draw [color={rgb, 255:red, 144; green, 19; blue, 254 }  ,draw opacity=1 ] [dash pattern={on 0.84pt off 2.51pt}]  (548.02,149.6) .. controls (543.82,153.4) and (520.58,136.9) .. (512.42,148.2) .. controls (504.25,159.5) and (510.07,197.4) .. (515.42,197.4) ;
\draw [color={rgb, 255:red, 144; green, 19; blue, 254 }  ,draw opacity=1 ]   (543.42,154.4) .. controls (547.62,148.8) and (509.82,127.2) .. (516.42,123.6) ;
\draw [color={rgb, 255:red, 144; green, 19; blue, 254 }  ,draw opacity=1 ] [dash pattern={on 0.84pt off 2.51pt}]  (516.42,123.6) .. controls (525.02,117.6) and (563.62,144.2) .. (571.02,149) ;
\draw [color={rgb, 255:red, 144; green, 19; blue, 254 }  ,draw opacity=1 ] [dash pattern={on 0.84pt off 2.51pt}]  (521.42,165.4) .. controls (513.25,176.7) and (528.07,196.8) .. (533.42,196.8) ;
\draw [color={rgb, 255:red, 144; green, 19; blue, 254 }  ,draw opacity=1 ] [dash pattern={on 0.84pt off 2.51pt}]  (543.42,154.4) .. controls (539.22,158.2) and (531.58,142.3) .. (523.42,153.6) ;

\draw (121.17,68) node  [color={rgb, 255:red, 174; green, 0; blue, 22 }  ,opacity=1 ,xscale=1.2,yscale=1.2]  {$\cl_{1}$};
\draw (103.17,151.19) node  [color={rgb, 255:red, 65; green, 117; blue, 5 }  ,opacity=1 ,xscale=1.2,yscale=1.2]  {$\cl_{2}$};
\draw (303.84,164.52) node  [color={rgb, 255:red, 170; green, 157; blue, 0 }  ,opacity=1 ,xscale=1.2,yscale=1.2]  {$\cl_{3}$};
\draw (450.51,129.19) node  [color={rgb, 255:red, 194; green, 121; blue, 0 }  ,opacity=1 ,xscale=1.2,yscale=1.2]  {$\cl_{4}$};
\draw (409.84,30.52) node  [color={rgb, 255:red, 0; green, 87; blue, 187 }  ,opacity=1 ,xscale=1.2,yscale=1.2]  {$\cl_{5}$};
\draw (553.17,48.52) node  [color={rgb, 255:red, 0; green, 165; blue, 128 }  ,opacity=1 ,xscale=1.2,yscale=1.2]  {$\cl_{6}$};
\draw (499.17,179.19) node  [color={rgb, 255:red, 144; green, 19; blue, 254 }  ,opacity=1 ,xscale=1.2,yscale=1.2]  {$\cl_{7}$};
\end{tikzpicture}

\vspace{30pt}

\tikzset{every picture/.style={line width=0.75pt}} 

\begin{tikzpicture}[x=0.75pt,y=0.75pt,yscale=-1,xscale=1]

\draw    (262.54,201.58) -- (311.55,180.04) ;
\draw [shift={(314.29,178.83)}, rotate = 156.27] [fill={rgb, 255:red, 0; green, 0; blue, 0 }  ][line width=0.08]  [draw opacity=0] (7.14,-3.43) -- (0,0) -- (7.14,3.43) -- (4.74,0) -- cycle    ;
\draw    (261.17,125.58) -- (260.57,181.08) ;
\draw [shift={(260.54,184.08)}, rotate = 270.61] [fill={rgb, 255:red, 0; green, 0; blue, 0 }  ][line width=0.08]  [draw opacity=0] (7.14,-3.43) -- (0,0) -- (7.14,3.43) -- (4.74,0) -- cycle    ;
\draw    (330,170) -- (380.83,194.05) ;
\draw [shift={(383.54,195.33)}, rotate = 205.32] [fill={rgb, 255:red, 0; green, 0; blue, 0 }  ][line width=0.08]  [draw opacity=0] (7.14,-3.43) -- (0,0) -- (7.14,3.43) -- (4.74,0) -- cycle    ;
\draw    (382.41,141.52) -- (330,170) ;
\draw [shift={(385.04,140.08)}, rotate = 151.47] [fill={rgb, 255:red, 0; green, 0; blue, 0 }  ][line width=0.08]  [draw opacity=0] (8.04,-3.86) -- (0,0) -- (8.04,3.86) -- (5.34,0) -- cycle    ;

\draw  [color={rgb, 255:red, 208; green, 2; blue, 27 }  ,draw opacity=1 ][fill={rgb, 255:red, 255; green, 221; blue, 227 }  ,fill opacity=1 ]  (260.5, 129) circle [x radius= 16.62, y radius= 16.62]   ;
\draw (260.5,129) node  [color={rgb, 255:red, 152; green, 0; blue, 17 }  ,opacity=1 ,xscale=1.2,yscale=1.2]  {$S_{1}$};
\draw  [color={rgb, 255:red, 74; green, 144; blue, 226 }  ,draw opacity=1 ][fill={rgb, 255:red, 224; green, 241; blue, 255 }  ,fill opacity=1 ]  (399.5, 131) circle [x radius= 16.62, y radius= 16.62]   ;
\draw (399.5,131) node  [color={rgb, 255:red, 26; green, 87; blue, 156 }  ,opacity=1 ,xscale=1.2,yscale=1.2]  {$S_{5}$};
\draw  [color={rgb, 255:red, 245; green, 166; blue, 35 }  ,draw opacity=1 ][fill={rgb, 255:red, 255; green, 247; blue, 229 }  ,fill opacity=1 ]  (400.5, 199) circle [x radius= 16.62, y radius= 16.62]   ;
\draw (400.5,199) node  [color={rgb, 255:red, 210; green, 135; blue, 13 }  ,opacity=1 ,xscale=1.2,yscale=1.2]  {$S_{4}$};
\draw  [color={rgb, 255:red, 217; green, 201; blue, 0 }  ,draw opacity=1 ][fill={rgb, 255:red, 255; green, 249; blue, 201 }  ,fill opacity=1 ]  (330, 170) circle [x radius= 16.62, y radius= 16.62]   ;
\draw (330,170) node  [color={rgb, 255:red, 186; green, 172; blue, 0 }  ,opacity=1 ,xscale=1.2,yscale=1.2]  {$S_{3}$};
\draw  [color={rgb, 255:red, 65; green, 117; blue, 5 }  ,draw opacity=1 ][fill={rgb, 255:red, 240; green, 250; blue, 226 }  ,fill opacity=1 ]  (260.5, 201) circle [x radius= 16.62, y radius= 16.62]   ;
\draw (260.5,201) node  [color={rgb, 255:red, 54; green, 100; blue, 1 }  ,opacity=1 ,xscale=1.2,yscale=1.2]  {$S_{2}$};

\end{tikzpicture}

\caption{An example of the classes $(\cl_i)_{i\in I_{\mathrm h}\cup I^1}$ (defined in Definitions~\ref{DefRelEqMeas} and \ref{DefClassesI1}) of a homeomorphism of a closed genus 6 surface. The classes $(\cl_i)_{1\le i\le 5}$ belong to $I_{\mathrm h}$, while the classes $\cl_6$ and $\cl_7$ belong to $I^1$. The geodesic lamination $\dot\Lambda_6$ is made of a single closed geodesic, while $\dot\Lambda_7$ is a minimal geodesic lamination with non closed leaves (\cite[Theorem~D]{alepablo}.}\label{FigTree0}
\end{center}
\end{figure}

Let us consider the finite graph $\Tr$ (see Figure~\ref{FigTree0}) whose vertices are the surfaces $(S_i)_{i\in I_{\mathrm h}}$ and for which we put an oriented edge $S_i \to S_j$ if $i\neq j$, $\cl_i\to \cl_j$ and there is no $k\in I_{\mathrm h}$ such that $S_k$ separates $S_i$ from $S_j$. 
As the correspondence $\cl_i\leftrightarrow S_i$ is 1 to 1 in $I_{\mathrm h}$, we will sometimes label the vertices of $\Tr$ with the classes $\cl_i$. 
As $\to $ is an order relation (Proposition~\ref{PropToOrderRel}), this definition indeed leads to an oriented graph without closed loops. 

By Lemma~\ref{PropBetween}, having only the data of the relations $\cl_i\to \cl_j$ for $S_i$ and $S_j$ adjacent in the graph $\Tr$ allows to recover the whole relation $\to$: $S_i\to S_j$ iff there is a path in $\Tr$ from $S_i$ to $S_j$.

Note that the graph $G$ is a refinement of the graph $\Tr$  (see Remark~\ref{RemGiStrongConnec}), more precisely quotienting down the strong connected components of $G$ leads to the graph obtained from the graph $\Tr$ by adding all the edges $S_i\to S_j$ and not only the ones for ``adjacent'' surfaces $S_i$. 

\begin{prop}\label{PropRotfEqualRotTr}
Let $f\in\Homeo_0(S)$. Then
\[\overline{\rot(G)} = \bigcup_{\mathrm{p} \text{ path in }\Tr}\conv\bigg(\bigcup_{i\in \mathrm{p}}\overline{\rho_i}\bigg),\]
where a path in $\Tr$ is a sequence $\mathrm{p} = (i_k)_{1\le k\le \ell(\mathrm{p})}$ such that for any $1\le k<\ell(\mathrm{p})$ one has $\cl_{i_k}\to \cl_{i_{k+1}}$ (we allow the possibility $\ell(\mathrm{p}) = 1$, hence paths made of a single class $\cl_i$). 
\end{prop}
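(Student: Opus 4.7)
The plan is to prove the two inclusions separately, relying on the identification of the $G_i$ as the strongly connected components of $G$ (Remark~\ref{RemGiStrongConnec}), the identity $\overline{\rho_i}=\overline{\rot(G_i)}$ (Lemma~\ref{PropRotFRotG0}), and the refinement property for $\Tr$ recorded just after Proposition~\ref{PropToOrderRel} ($S_i\to S_j$ iff there is a path from $S_i$ to $S_j$ in $\Tr$).

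For the inclusion $\supset$, I would fix a path $\mathrm{p}=(k_1,\dots,k_n)$ in $\Tr$ and a vector $\rho=\sum_{t=1}^n\lambda_t\rho_t$ with $\rho_t\in\overline{\rho_{k_t}}$ and $\lambda_t\ge 0$ summing to $1$. Using $\overline{\rho_{k_t}}=\overline{\rot(G_{k_t})}$ together with the strong connectedness of $G_{k_t}$ (Lemma~\ref{LemLotConnectionsRectangles}), each $\rho_t$ is approximated by rotation vectors of closed loops in $G_{k_t}$. From each relation $\cl_{k_t}\to\cl_{k_{t+1}}$ and the $\overset M\to$ characterization, there is a bridging path in $G$ from some rectangle of $G_{k_t}$ to some rectangle of $G_{k_{t+1}}$. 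Concatenating very long cyclic excursions inside each $G_{k_t}$ (whose length proportions realize the weights $\lambda_t$) with these bridging paths yields a sequence of finite paths in $G$ whose rotation vectors converge to $\rho$; this is exactly the argument carried out for Point~\ref{P1PropConnectRectEnsRot} of Proposition~\ref{PropConnectRectEnsRot}. Therefore $\rho\in\overline{\rot(G)}$.

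For the inclusion $\subset$, I would consider an arbitrary finite path $R_{i_1}\to\cdots\to R_{i_\ell}$ in $G$ and group its vertices by strongly connected component of $G$. By Remark~\ref{RemGiStrongConnec} and the fact that $\to$ is an order on $I_{\mathrm h}$ (Proposition~\ref{PropToOrderRel}), this produces a sequence of pairwise distinct classes $\cl_{j_1},\dots,\cl_{j_m}$ with $\cl_{j_1}\to\cdots\to\cl_{j_m}$. By the refinement property above, this sequence extends to a path $\mathrm{p}$ in $\Tr$ containing all the $\cl_{j_t}$. By construction of $G$, every $\rot_{i_s}$ lies in $\overline{\rho_{j_t}}$ whenever $R_{i_s}\in G_{j_t}$ (since each $R_\omega$ was built from a periodic orbit $z'_\omega$ belonging to the corresponding class, with $r_\omega\in\rho_{j_t}$), hence
\[\conv\Big(\bigcup_{s=1}^\ell\rot_{i_s}\Big)\subset \conv\Big(\bigcup_{t=1}^m\overline{\rho_{j_t}}\Big)\subset \conv\Big(\bigcup_{k\in\mathrm{p}}\overline{\rho_k}\Big).\]
The graph $\Tr$ being a finite DAG and each $\overline{\rho_i}\subset\rot(f)$ being compact, the right-hand side of the proposition is a finite union of compact convex sets, hence closed; taking the closure on the left therefore preserves the inclusion.

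The expected main obstacle is essentially bookkeeping: verifying that the rotation data attached to each $R_{i_s}$ via Definition~\ref{DefConnectRectEnsRot} lies in the correct $\overline{\rho_{j_t}}$ (which reduces to tracking how the rectangles $R_\omega$ were built in Subsection~\ref{SubSecConstrG} from the periodic points $z'_\omega$), and controlling the bounded bridging contributions in the first inclusion so that they vanish relatively to the long cyclic excursions in the limit. Beyond this, no genuinely new dynamical ingredient is required: the whole proof is an exercise in translating between paths in $G$, sequences of connections between chaotic classes, and paths in $\Tr$.
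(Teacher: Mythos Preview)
Your proposal is correct and follows essentially the same strategy as the paper's proof: for $\supset$, approximate each $\rho_t\in\overline{\rho_{k_t}}$ by some $r_\omega$, bridge the $G_{k_t}$ via $\overset{M}{\to}$, and use strong connectedness of each $G_{k_t}$ to thread a single path in $G$; for $\subset$, pass from a path in $G$ to a chain of classes and recognise this as a path in $\Tr$.

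Two minor differences are worth noting. First, for $\subset$ the paper inserts Lemma~\ref{PropBetween} to refine the $G$-path so that consecutive rectangles lie in \emph{adjacent} surfaces before grouping; given the proposition's own definition of ``path in $\Tr$'' (any sequence with $\cl_{i_k}\to\cl_{i_{k+1}}$), this refinement is not actually needed, and your more direct grouping argument already yields a valid path---so your route is slightly leaner here (your invocation of the ``refinement property'' is likewise superfluous but harmless). Second, you make explicit why one can pass to the closure on the left: since $|I_{\mathrm h}|\le 2g-2$ and $\to$ is an order, the right-hand side is a finite union of convex hulls of finite unions of compact sets, hence closed. The paper leaves this implicit; your observation fills a small gap in presentation.
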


Combined with Proposition~\ref{PropConnectRectEnsRot}, this gives immediately the following corollary:

\begin{coro}\label{CoroPropRotfEqualRotTr}
Let $f\in\Homeo_0(S)$. Then
\[\bigcup_{\mathrm{p} \text{ path in }\Tr}\conv\bigg(\bigcup_{i\in \mathrm{p}}\overline{\rho_i}\bigg) \subset \rot(f).\]
\end{coro}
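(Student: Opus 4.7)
The plan is simply to chain the two preceding results. By Proposition~\ref{PropRotfEqualRotTr} (which has just been stated immediately before the corollary), the left-hand side of the claimed inclusion is exactly $\overline{\rot(G)}$, where $G$ is the graph from Subsection~\ref{SubSecConstrG}. So the corollary reduces to the single inclusion $\overline{\rot(G)}\subset \rot(f)$.

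To get that inclusion, I would invoke Point~\ref{P1PropConnectRectEnsRot} of Proposition~\ref{PropConnectRectEnsRot}, which asserts precisely $\overline{\rot(G)}\subset \rot(f)$ in the abstract setting of any family of rectangles carrying rotational horseshoes connected by Markovian edges. The only thing to verify is that the graph $G$ built in Subsection~\ref{SubSecConstrG} genuinely fits the framework of Definition~\ref{DefConnectRectEnsRot}. This is immediate from the construction: each vertex $R_\omega$ satisfies $\wt f^{q_\omega}(R_\omega)\cap_M T_\omega R_\omega$ by \eqref{EqDefROmega}, so $R_\omega$ is a rotational horseshoe for $f^{q_\omega}$ with a single deck transformation $T_\omega$ (i.e.\ $k_\omega=1$ in the notation of Definition~\ref{DefConnectRectEnsRot}). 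Consequently $\rot_\omega=\{[T_\omega]/q_\omega\}=\{r_\omega\}$ is a single point, and the edges of $G$ are given by Markovian intersections in the sense of Definition~\ref{DefConnecRect}, matching exactly the hypothesis of Proposition~\ref{PropConnectRectEnsRot}.

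Combining the two: Proposition~\ref{PropRotfEqualRotTr} gives
\[
\bigcup_{\mathrm{p} \text{ path in }\Tr}\conv\Bigl(\bigcup_{i\in \mathrm{p}}\overline{\rho_i}\Bigr)=\overline{\rot(G)},
\]
and Proposition~\ref{PropConnectRectEnsRot}.\ref{P1PropConnectRectEnsRot} gives $\overline{\rot(G)}\subset \rot(f)$. The chain of these two facts yields the corollary. There is no real obstacle here, since both ingredients have already been established; the corollary is simply a repackaging. The only point to mention explicitly in the write-up is the verification that $G$ meets the hypotheses of Definition~\ref{DefConnectRectEnsRot}, which as noted above is a direct consequence of the construction in Subsection~\ref{SubSecConstrG}.
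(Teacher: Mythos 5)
Your argument is exactly the paper's: the corollary is stated as an immediate combination of Proposition~\ref{PropRotfEqualRotTr} (identifying the union over paths in $\Tr$ with $\overline{\rot(G)}$) and Point~\ref{P1PropConnectRectEnsRot} of Proposition~\ref{PropConnectRectEnsRot} (giving $\overline{\rot(G)}\subset\rot(f)$), and your verification that the graph $G$ of Subsection~\ref{SubSecConstrG} fits Definition~\ref{DefConnectRectEnsRot} via \eqref{EqDefROmega} is correct. Nothing is missing.
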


\begin{proof}[Proof of Proposition~\ref{PropRotfEqualRotTr}]
First, let $\mathrm{p} = (i_k)_{1\le k\le \ell(\mathrm{p})}$ be a path in $\Tr$, and pick $\rho\in \conv(\cup_{i\in \mathrm{p}}\overline{\rho_i})$ and $\varep>0$. We write $\rho = \sum_{i\in \mathrm{p}}\lambda_i v_i$, with $\lambda_i\ge 0$, $\sum_i\lambda_i = 1$ and $v_i\in\overline{\rho_i}$. As the $r_\omega$ are dense in $\bigcup_{i\in I_{\mathrm h}}\rho_i$, we can find a family $(\omega_i)_{i\in \mathrm{p}} \in \Omega^{\ell(\mathrm{p})}$ such that for any $k$ we have $\|v_i - r_{\omega_i}\|\le\varep$. 

For any $1\le k\le \ell(\mathrm{p})-1$ we have $\cl_{i_k}\to\cl_{i_{k+1}}$. Hence, using the characterization $\overset M\to$ of $\to $ (Theorem~\ref{TheoEquiConnec2}), for any $1\le k\le \ell(\mathrm{p})-1$ there exist $\omega,\omega'\in \Omega$ such that $z_\omega\in \cl_{i_k}$ and $z_{\omega'}\in \cl_{i_{k+1}}$; in other words there is a path starting in $G_{i_k}$ and finishing in $G_{i_{k+1}}$.
As the $G_i$ are strongly connected (Lemma~\ref{LemLotConnectionsRectangles}), this allows to find a path in $G$ visiting all the $(R_{\omega_i})_{i\in \mathrm{p}}$. It implies that $\conv( \{r_{\omega_i}\mid i\in \mathrm{p}\}) \subset \rot(G)$. 
We have proved that $\bigcup_{\mathrm{p} \text{ path in }\Tr}\conv\big(\bigcup_{i\in \mathrm{p}}\overline{\rho_i}\big) \subset \overline{\rot(G)}$.
\bigskip

Now, let $\rho\in\rot(G)$. This means there exists a path $R_{\omega_1}\to\dots\to R_{\omega_\ell}$ in $G$ such that $\rho\in \conv\big(\{r_{\omega_j}\mid 1\le j\le \ell\}\big)$. By Lemma~\ref{PropBetween} and Lemma~\ref{LemLotConnectionsRectangles}, for any $j$ we can replace $R_{\omega_j}\to R_{\omega_{j+1}}$ by some path in $G$ such that any two consecutive rectangles in this path are associated with adjacent (or equal) surfaces in $\Tr$; this gives a path $R_{\omega'_1}\to\dots\to R_{\omega'_{\ell'}}$ in $G$ containing all the $R_{\omega_j}$, and such that $\rho\in\conv\big(\{r_{\omega'_j}\mid 1\le j\le \ell' \}\big)$. 

Moreover, by Proposition~\ref{PropToOrderRel}, there exist $1= k_1\le\dots \le k_{m+1} = \ell'+1$ and $S_{i_1} \to \dots\to S_{i_m}$ such that for any $j$, we have $r_{\omega'_{k_j}},\dots, r_{\omega'_{k_{j+1}-1}}\in \rho_{i_j}$; in other words $R_{\omega'_{k_j}}\to\dots \to R_{\omega'_{k_{j+1}-1}}$ is a path of $G_i$. So 
\[\rho\in\conv\big(\{r_{\omega'_j} \mid 1\le j\le \ell' \}\big) \subset \conv\bigg(\bigcup_{ 1\le j\le m }\rho_{i_j}\bigg).\]

As we have chosen consecutive rectangles to be in adjacent surfaces, the path
$S_{i_1} \to \dots\to S_{i_m}$ is a path in $\Tr$.
\end{proof}

\subsection{Some open invariant sets associated to the graph $\Tr$}\label{SecOpenInv}

Let us finish with a few comments relative to Proposition~\ref{LemOImpliesF} and the graph $\Tr$. 

Let us consider the connected components $(G_\alpha)_{\alpha\in\mathcal A}$ of $G$, which correspond to the connected components of $\Tr$ (Remark~\ref{RemGiStrongConnec}). For any $\alpha\in\mathcal A$, identified with the set of $i\in I_{\mathrm h}$ such that $G_i\subset G_\alpha$, set 
\[B_\alpha = \bigcup_{i\in \alpha} B_i^o.\]
By Proposition~\ref{LemOImpliesF}, this gives a collection $(B_\alpha)_{\alpha\in\mathcal A}$ of pairwise disjoint, connected, essential and filled open sets, satisfying $f(B_\alpha) = B_\alpha$, and such that for any $\alpha\in\mathcal A$ we have
\[\left\langle i_* \pi_1(S_i) \mid {i\in \alpha}\right\rangle \subset i_*\pi_1(B_\alpha).\]
The set $B_\alpha$ is also of full measure for any $\mu\in\cl_i$ with $i\in\alpha$.

On each connected component $G_\alpha$, there is a filtration by open sets, that could be interpreted as a Lyapunov filtration for the rotational behaviour: for any $\alpha\in\mathrm A$ and any $i\in\alpha$, one can set 
\[U_i^+ = \bigcup_{\substack{j\in \alpha\\ \cl_i\to \cl_j}}B_j^+
\qquad\text{and}\qquad
U_i^- = \bigcup_{\substack{j\in \alpha\\ \cl_j\to \cl_i}}B_j^-.\]
By Proposition~\ref{LemOImpliesF}, these are connected, essential and filled open sets, satisfying $f(U_i^+) \subset U_i^+$ and $f^{-1}(U_i^-) \subset U_i^-$, and such that for any $i$ we have
\[\left\langle i_* \pi_1(S_j) \mid {\cl_i\to \cl_j}\right\rangle \subset i_*\pi_1(U_i^+) 
\quad\text{and}\quad
\left\langle i_* \pi_1(S_j) \mid {\cl_j\to \cl_i}\right\rangle \subset i_*\pi_1(U_i^-).\]
Proposition~\ref{LemOImpliesF} also implies that if $\cl_i\to \cl_j$ and $i\neq j$, then $U_i^- \cap U_j^+ = \emptyset$.
The sets $U_i^-$ are also of full measure for any $\mu\in\cl_j$ with $\cl_j\to\cl_i$, and the sets $U_i^+$ are of full measure for any $\mu\in\cl_j$ with $\cl_i\to\cl_j$.

\small

\bibliographystyle{alpha}
\bibliography{Biblio}

\end{document}